\newcommand{\om}{\omega}
\newcommand{\fy}{\varphi}
\newcommand{\p}{\partial}
\newcommand{\I}{\infty}
\newcommand{\wt}[1]{\widetilde {#1}}
\renewcommand\tilde{\wt}
\newcommand{\R}{\mathbb{R}}
\newcommand{\C}{\mathbb{C}}
\newcommand{\N}{\mathbb{N}}
\renewcommand{\Re}{\mathop{\mathrm{Re}}}
\renewcommand{\bar}{\overline}
\renewcommand{\hat}{\widehat}
\def\calL{{\mathcal L}}
\def\la{\lambda}
\numberwithin{equation}{section}
\newtheorem{thm}{Theorem}[section]
\newtheorem{lem}[thm]{Lemma}
\newtheorem{prop}[thm]{Proposition}
\theoremstyle{remark}
\newtheorem{rem}{Remark}[section]
\newtheorem{defn}{Definition}[section]
\newcommand{\EQ}[1]{\begin{equation}  \begin{split} #1 \end{split} \end{equation} }
\newcommand{\Del}[1]{}
\def\calR{\mathcal{R}}
\def\eps{\varepsilon}
\def\f{\frac}
\def\til{\tilde}
\def\fy{\varphi}
\def\calR{\mathcal{R}}
\def\wt{\widetilde}
\def\cD{\mathcal{D}}
\renewcommand\ln{\log}
\def\cQ{\mathcal{Q}}
\begin{document}

\title[Stable Type II solutions]{Type II blow up solutions with optimal stability properties for the critical focussing nonlinear wave equation on $\R^{3+1}$}
\author{Stefano Burzio, Joachim Krieger}

\subjclass{35L05, 35B40}

\keywords{critical wave equation, blowup}
\thanks{Support of the Swiss National Fund is gratefully acknowledged.}
\begin{abstract}
We show that the finite time type II blow up solutions for the energy critical nonlinear wave equation 
\[
\Box u = -u^5
\]
on $\R^{3+1}$ constructed in \cite{KST}, \cite{KS1} are stable along a co-dimension one Lipschitz manifold of data perturbations in a suitable topology, provided the scaling parameter $\lambda(t) = t^{-1-\nu}$ is sufficiently close to the self-similar rate, i. e. $\nu>0$ is sufficiently small. This result is qualitatively optimal in light of the result of \cite{CNLW4}. The paper builds on the analysis of \cite{CondBlow}. 
\end{abstract}

\maketitle

\section{Introduction}

The critical focussing nonlinear wave equation on $\R^{3+1}$ given by 
\begin{equation}\label{eq:critmain}
\Box u = -u^5,\,\Box  = -\partial_t^2 + \triangle,
\end{equation}
has received a lot of attention recently as a key model for a critical nonlinear wave equation displaying interesting type II dynamics, the latter referring to energy class Shatah-Struwe type solutions $u(t, x)$ which have a priori bounded $\dot{H}^1$ norm on their life-span $I$, i. e. with the property 
\begin{equation}\label{eq:typeII}
\sup_{t\in I}\big\|\nabla_{t,x}u(t, \cdot)\big\|_{L_x^2}<\infty.
\end{equation}
Throughout the paper, we shall be interested exclusively in the case of radial solutions. In that case, a rather complete abstract classification theory for type II dynamics in terms of the ground state
\[
W(x) = \frac{1}{\big(1+\frac{|x|^2}{3}\big)^{\frac12}}
\]
has been developed in \cite{DKM4}, see the discussion in \cite{CondBlow}. On the other hand, the first 'non-trivial' type II dynamics, were constructed explicitely in \cite{KS}, \cite{KST}, \cite{KS1}, \cite{DoKr}, \cite{DoMHKS}
. As far as finite time type II blow up solutions are concerned, the issue of their {\it{stability properties}}  has been shrouded in some mystery. The fact that there is a {\it{continuum of blow up rates}} in the works \cite{KST}, \cite{KS1}, seemed to suggest that these solutions, and maybe also their analogues for critical Wave Maps and other models, such as in \cite{KST1}, \cite{KST2}, are intrinsically less stable than 'generic type II blow ups', and that the {\it{requirement of optimal stability}} of some sort may in fact single out a more or less {\it{unique blow up dynamics}} for type II solutions. An example of 'optimally stable' type II blow up was exhibited in the context of the $4+1$-dimensional critical NLW in the work \cite{HR}, see also the brief historical comments in \cite{CondBlow}. Note that the linearisation of \eqref{eq:critmain} around the ground state $W$
has a unique unstable eigenmode $\phi_d$, and in accordance with this, \cite{HR} exhibits a co-dimensional one manifold of data perturbations of $W$ (in the $4+1$-dimensional context) resulting in the stable blow up. 
\\
In this article we show that the solutions constructed in \cite{KST}, \cite{KS1}, corresponding to $\lambda(t) = t^{-1-\nu}$ and {\it{with $\nu>0$ small enough}} are also optimally stable in a suitable sense. However, due to the fact that these solutions are only of finite regularity, and in effect experience a shock along the light cone centered at the singularity, an appreciation of our result requires carefully reviewing the nature of them. 

\subsection{The type II blow up solutions of \cite{KST}, \cite{KS1}}

Solutions of \eqref{eq:critmain} are divided into those of type II, satisfying \eqref{eq:typeII}, as well as solutions of type I which violate this condition. The celebrated result in \cite{DKM4} provides a general criterion characterising abstract radial type II solutions in terms of the ground states $\pm W(x)$. In particular, assuming that $u(t, x)$ is a type II solution of \eqref{eq:critmain} which is radial and develops a singularity at time $t = T$, then near $T$, we can write 
\begin{equation}\label{eq:DKMstructure}
u(t, x) = \sum_{j=1}^N \kappa_j W_{\lambda_j(t)}(x) + \epsilon(t, x),\,W_{\lambda}(x) = \lambda^{\frac12}W(\lambda x), 
\end{equation}
where $\epsilon(t, x)$ can be extended continuously as an energy class solutions past the singularity $t = T$, $\kappa_j = \pm 1$, $\lim_{t\rightarrow T}(T-t)\lambda_j(t) = +\infty$, and $\lim_{t\rightarrow T}\big|\log(\frac{\lambda_j(t)}{\lambda_k(t)}\big| = +\infty$, provided $j\neq k$. 
\\
We note here that this appears the only result for a non-integrable PDE where this kind of a continuous in time solution resolution has been proved. 
\\
We also observe that the solutions in \cite{KST}, \cite{KS1}, \cite{DoMHKS} appear to be the only known finite time type II blow up solutions for \eqref{eq:critmain}, all with $N = 1$, and that in fact solutions of the form \eqref{eq:DKMstructure} with $N\geq 2$ are not known at this time (and might not exist). 
\\

We now detail briefly these specific blow up solutions. Let $\nu>0$, but otherwise arbitrary, and denote $\lambda(t) = t^{-1-\nu}$. 
\begin{thm}\label{thm:KST}(\cite{KST}, \cite{KS1}) There is $t_0>0$ and a radial solution $u_{\nu}(t, x)$ of the form 
\[
u_{\nu}(t, x) = W_{\lambda(t)}(x) + \eta(t, x),
\]
where the term\footnote{The notation $H^{s-}$ means in any $H^{s'}$, $s'<s$.} $\eta(t,\cdot)\in H^{1+\frac{\nu}{2}-}, \eta_t(t,\cdot)\in H^{\frac{\nu}{2}-}$ for any $t\in (0, t_0]$, and we have the asymptotic vanishing relation 
\[
\lim_{t\rightarrow 0}\int_{|x|<t}[\big|\nabla_{t,x}\eta\big|^2 + \frac{1}{6}\eta^6]\,dx = 0.
\]
The correction term $\eta$ satisfies $\eta|_{\{|x|<t\}}\in C^\infty$, while it is only of regularity $H^{1+\frac{\nu}{2}-}$ across the light cone. In fact, 
there is a splitting $\eta(t, x) = \eta_e(t, x) + \epsilon(t, x)$, with (here $N$ may be picked arbitrarily large, depending on the number of steps used to construct $\eta_e$)
\[
\big\|\epsilon(t, \cdot)\big\|_{H^{1+\frac{\nu}{2}-}} + \big\|\epsilon_t(t, \cdot)\big\|_{H^{\frac{\nu}{2}-}} <t^N,
\]
and such that using the new variables $R = \lambda(t)r$, $r = |x|$, $a = \frac{r}{t}$,  there is an expansion near $a = 1$ of the form 
\begin{equation}\label{eq:expansion1}
\eta_e(t, x) = \frac{\lambda^{\frac12}}{(\lambda t)^2}\cdot \sum_{l\geq 0}\sum_{0\leq k\leq k(l)}c_l(a, t)(\log R)^k R^{1-l}, 
\end{equation}
and such that 
\begin{equation}\label{eq:expansion2}
c_l(a, t) = q_0^{(l)}(t, a) + \sum_{i=1}^\infty (1-a)^{\beta(i)+1}\sum_{j=0}^{j(i)}q_{ij}^{(l)}(t, a)\big(\log(1-a)\big)^j.
\end{equation}
Here the coefficients $q_0^{(l)}(t, a), q_{ij}^{(l)}(t, a)$ are of class $C^\infty$, while the exponents $\beta(i)$ are of the form
\[
\beta(i) = \sum_{k\in K_i}\big((2k-\frac{3}{2})\nu - \frac12)\big) + \sum_{k\in K_i'}\big((2k-\frac{1}{2})\nu - \frac12)\big)
\]
for suitable finite sets of positive integers $K_i, K_i'$. In particular, $\beta(i)\geq \frac{\nu-1}{2}$. The sums in \eqref{eq:expansion1},  \eqref{eq:expansion2} are absolutely convergent, and the most singular terms in \eqref{eq:expansion2} are of the form 
\[
(1-a)^{\frac{\nu+1}{2}}\log(1-a).
\]
\end{thm}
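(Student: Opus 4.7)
The plan is to construct $u_\nu = W_{\lambda(t)} + \eta$ as the limit of a sequence of approximate solutions $u_k$, correcting the error at each step, and then produce the final tail $\epsilon$ by a fixed-point argument. Two pairs of coordinates organise the analysis: in the inner region one uses $R = \lambda(t)\, r$ together with a slow time $\tau = \int_t^{\infty}\lambda(s)\,ds \sim \nu^{-1} t^{-\nu}$, in which the rescaled ground state becomes the stationary profile $\lambda^{1/2} W(R)$; near the light cone one uses the homogeneous variable $a = r/t$, in which the cone $\{r=t\}$ becomes the fixed hypersurface $\{a=1\}$ and the wave operator degenerates in a controlled Fuchsian manner. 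The zeroth error $e_0 := \Box W_{\lambda(t)} + W_{\lambda(t)}^5$ is produced entirely by the time-dependence of $\lambda$, is of size $O(\nu^2 t^{-2})$ in a natural norm, and vanishes as $\nu\to 0$, so the correction problem is perturbative for $\nu$ small.

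At each step I would invert the linearised operator against the current error $e_k$ and set $u_{k+1} = u_k + v_{k+1}$. In the inner region the spatial linearisation reduces to $\mathcal{L} = -\partial_R^2 - (2/R)\,\partial_R - 5 W^4$, for which an explicit distorted Fourier theory and Green's function are available; the zero-energy resonance $\Lambda W = (\tfrac12 + R\,\partial_R) W$ coming from the scaling symmetry plays a central role. Each inversion of $\mathcal{L}$ raises the $R$-weight by two and, at each coincidence between the relevant exponents, introduces an extra factor of $\log R$. Iterating this mechanism against sources generated by the prefactor $\lambda^{1/2}(\lambda t)^{-2}$ produces exactly the double series $\sum_l \sum_k c_l(a,t)(\log R)^k R^{1-l}$ of \eqref{eq:expansion1}, with the $c_l(a,t)$ determined step by step by the spatial profile of the source.

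Extending the construction across the light cone is the technical heart of the argument. In the self-similar variable $a$ the wave operator is degenerate at $a = 1$, and Frobenius analysis there, once the rescaling $\lambda^{1/2}(\lambda t)^{-2}$ and the time-law $\lambda(t) = t^{-1-\nu}$ are incorporated, produces two families of characteristic exponents $(2k - \tfrac{3}{2})\nu - \tfrac12$ and $(2k - \tfrac12)\nu - \tfrac12$. Iterated inversion compounds these additively, giving the exponents $\beta(i)$ of \eqref{eq:expansion2}; logarithms $(\log(1-a))^j$ appear at every resonance between two such exponents. The free coefficients at each step are pinned down by matching, in an intermediate annulus $t^{\gamma} \ll 1-a \ll 1$, the outer Frobenius expansion against the $R\to\infty$ asymptotics of the inner expansion. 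Absolute convergence of \eqref{eq:expansion1}--\eqref{eq:expansion2} and smoothness of the coefficients $q_0^{(l)}, q_{ij}^{(l)}$ follow from careful weight tracking through the iteration, and the minimal exponent $\beta(1) = (\nu-1)/2$ produces the leading singular term $(1-a)^{(\nu+1)/2}\log(1-a)$, from which the Sobolev threshold $H^{1+\nu/2-}$ for $\eta_e$ follows by direct computation.

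After $K$ iterations the remaining error satisfies $e_K = O(t^N)$ for any prescribed $N$, and one then solves the full equation for $\epsilon$, namely $(\Box - 5 W_{\lambda}^4)\epsilon = -e_K + \mathcal{N}(\epsilon)$ with $\mathcal{N}$ the cubic-through-quintic remainder, by a contraction mapping on truncated backward light cones using $L^2$-weighted energies in $(\tau, R)$ and the spectral calculus for $\mathcal{L}$; the factor $t^N$ absorbs all losses for $N$ large relative to $\nu$. The main obstacle is the third step: producing a Fuchsian theory at the light cone sharp enough to isolate the precise exponents $\beta(i)$, matching it consistently with the inner distorted Fourier construction, and carrying out the combinatorial bookkeeping that preserves absolute convergence of the double series and smoothness of the $q_{ij}^{(l)}$ despite the proliferation of logarithmic and power-type branches produced under iteration.
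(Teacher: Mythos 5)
Your proposal tracks the actual route of \cite{KST}, \cite{KS1}, which the present paper re-runs as the $\gamma_1=\gamma_2=0$ case of Lemma~\ref{lem:corrections} and Theorem~\ref{thm:main approximate}: alternately invert the elliptic operator $L_0=\partial_R^2+\tfrac{2}{R}\partial_R+5W^4$ in the inner variable $R$ to produce the $R^{1-l}(\log R)^k$ structure, invert the degenerate operators $L_\beta$ in the self-similar variable $a=r/t$ near $a=1$ to produce the exponents $\beta(i)$ and $(\log(1-a))^j$ factors, and close with a contraction on the distorted Fourier side for the tail $\epsilon$.

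There is, however, a concrete error in how you identify the smallness that drives the iteration. You assert that $e_0:=\Box W_{\lambda(t)}+W_{\lambda(t)}^5$ has size $O(\nu^2 t^{-2})$ and ``vanishes as $\nu\to 0$,'' concluding that the correction problem is perturbative for $\nu$ small. This is not so: with $\lambda(t)=t^{-1-\nu}$ and $\mathcal{D}=\tfrac12+R\partial_R$ one computes
\[
t^2\,\partial_t^2 W_{\lambda(t)}=\lambda^{\frac12}(t)\Big[(1+\nu)^2(\mathcal{D}^2W)(R)+(1+\nu)(\mathcal{D}W)(R)\Big],
\]
and neither the prefactors $(1+\nu)^2,(1+\nu)$ nor the fixed profiles $\mathcal{D}W,\mathcal{D}^2W$ tend to zero as $\nu\to 0$. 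The scheme is perturbative because each elliptic inversion gains a factor $\mu_{0,0}(t)^{-2}=(\lambda t)^{-2}=t^{2\nu}$, so the successive $v_j$ carry increasing powers of $\mu_{0,0}^{-1}$, cf.\ \eqref{v2k-1}--\eqref{e2k}; the scheme is closed by taking $t_0$ small (equivalently $\tau_0$ large), not $\nu$ small. Consistently, Theorem~\ref{thm:KST} holds for every $\nu>0$ (``Let $\nu>0$, but otherwise arbitrary''), and $\nu\ll 1$ enters only in the stability statement Theorem~\ref{thm:Main}; a scheme hinging on $\nu$ smallness could not produce, say, the solutions at $\nu=1$. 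A minor secondary remark: the matching you describe on an intermediate annulus $t^\gamma\ll 1-a\ll 1$ is in the actual construction absorbed into the function algebra $IS^m(R_{0,0}^k(\log R_{0,0})^l,\mathcal{Q})$ of Definition~\ref{defn:functionspace}, whose members depend jointly on $R_{0,0}$, $a$, $b$, $b_1$; this encodes the same idea without a separate matching step.
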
 

We observe that a similar asymptotic expansion as in \eqref{eq:expansion1}, \eqref{eq:expansion2} near $a = 1$ may also be inferred for the error $\epsilon(t, x)$, and thus the singularity of $\eta(t, x)$ is indeed confined exactly to the forward light cone $|x| = t$ centered in the singularity, see \cite{KS1}. However, the methods for determining $\eta_e(t,x)$ and $\epsilon(t, x)$ differ importantly. The first is in fact obtained by approximating the wave equation by a finite number of elliptic equations approximating the wave equation in a suitable sense, while the second quantity is obtained as solution of a wave equation via a suitable parametrix method. Both of these techniques will play an important role in this paper. 
We shall see next that the limited regularity and more precisely the shock across the light cone entails a certain rigidity for such solutions, which will be reflected in terms of the stability properties of this kind of blow up. 

\subsection{The effect of symmetries on the solutions of Theorem~\ref{thm:KST}} In the sequel, we shall assume $0<\nu\ll 1$. Restricting to the radial setting, the symmetry group acting on solutions of \eqref{eq:critmain} is restricted to time translations as well as scaling transformations $u(t, x)\longrightarrow \lambda^{\frac12}u(\lambda t, \lambda x)$, and it is then natural to subject the special solutions $u_{\nu}(t, x)$ to such transformations. Let us consider the effect on the principal singular term, which is of the schematic form 
\[
\frac{\lambda^{\frac12}(t)}{(\lambda(t)\cdot t)^2}\cdot R\log(1+R^2)\cdot (1-a)^{\frac12+\frac{\nu}{2}}\log(1-a),\,a = \frac{r}{t},\,R = \lambda(t)r,\,\lambda(t) = t^{-1-\nu}. 
\]
Calling this term $\eta_{p}(t, x)$, we find by simple inspection that for $T\neq 0$
\begin{align*}
&\eta_{p}(t,\cdot) - \eta_{p}(t-T,\cdot)\in H^{1+\frac{\nu}{2}-}_{loc}
\end{align*}
and this is in effect optimal, i. e. the preceding difference is in no $H^s_{loc}$ for any $s\geq 1+\frac{\nu}{2}$. This is of course simply due to the fact that time translating $u_{\nu}$ will shift the forward light cone on which the solution experiences a shock, and so the difference will be no smoother than $u_{\nu}$. The same phenomenon occurs for the difference
\[
\eta_{p}(t, x) - \lambda^{\frac12}\eta_p(\lambda t, \lambda x),\,\lambda\neq 1. 
\]

What we shall intend in this article is to consider {\it{smooth perturbations}} of the solutions $u_{\nu}(t, x)$, i. e. consider the evolution corresponding to the initial data on the time slice $t=t_{0}$ 
\[
u_{\nu}[t_0] + \big(\epsilon_0, \epsilon_1\big),\,u_{\nu}[t_0] = \big(u_{\nu}(t_0, \cdot),\,\partial_tu_{\nu}(t_0, \cdot)\big),
\]
where $\big(\epsilon_0, \epsilon_1\big)\in H^{\frac32+}(\R^3)\times H^{\frac12+}(\R^3)$, in a way made more precise in the sequel, see  section \ref{sec:datapert}. In particular, we see that the differences 
\[
u_{\nu}[t_0-T] - u_{\nu}[t_0 ],\,u_{\nu,\lambda}[t_0], - u_{\nu}[t_0],\,T\neq 0,\,\lambda\neq 1, 
\]
are not of this form, since $1+\frac{\nu}{2}<\frac32+$ for $\nu\ll 1$. In Figure \ref{fig:unu} below we have plotted the leading behavior of the function $u_{\nu}(t,x)$, the shock along the forward light cone generated from the origin is evident. Moreover, in Figure \ref{fig:translation} one can see that the difference $u_{\nu}[t_0-T] - u_{\nu}[t_0 ]$ manifest cusp type singularities at $|x|=t_{0}$ and $|x|=t_0-T$. 

\begin{figure}[h!]
        \centering
        \includegraphics[width=\textwidth]{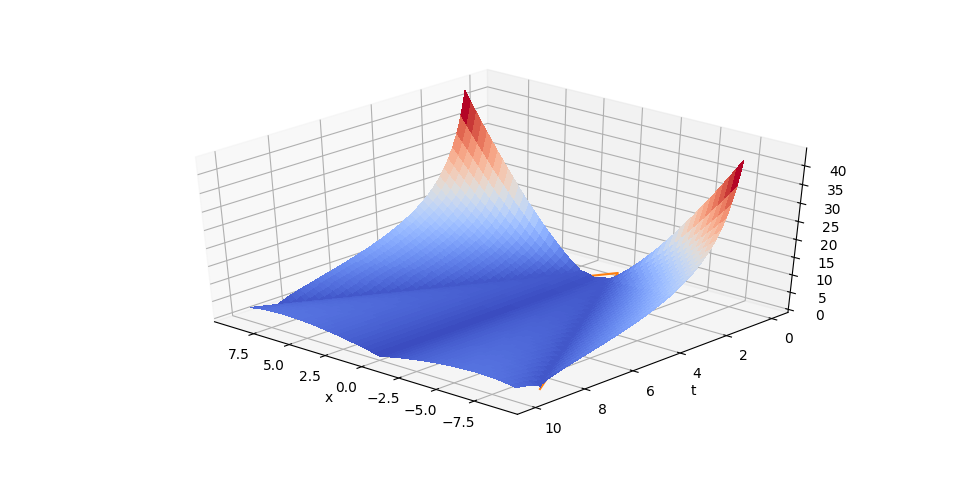}
        \caption{Graph of $u_{\nu}(t,x)$ for $t>0$ and $\nu=0.1$.}\label{fig:unu}
\end{figure}
   
\begin{figure}[h!]
        \centering
        \includegraphics[width=\textwidth]{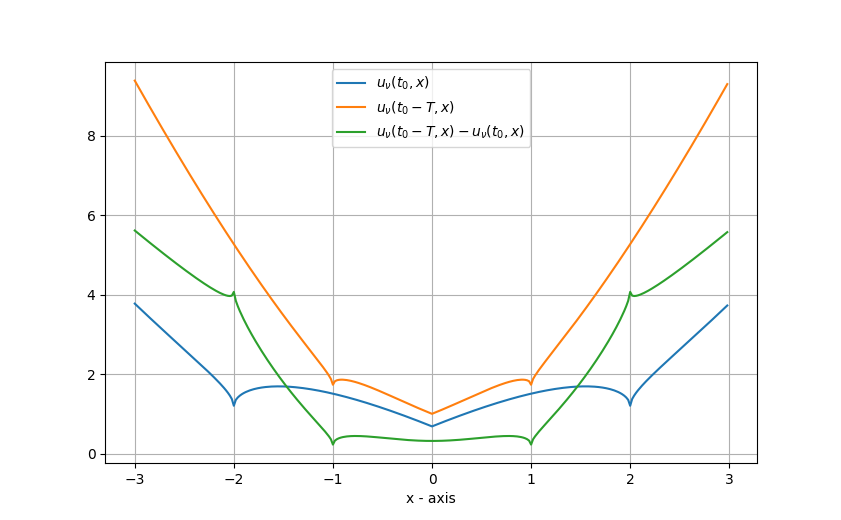}
        \caption{Graphs of initial data with $\nu=0.1$, $t_{0}=2$, and $T=1$.}\label{fig:translation}
\end{figure}

This reveals that the role of the symmetries in describing the evolutions of the data 
\begin{equation}\label{eq:roughstructure}
u_{\nu}[t_0] + \big(\epsilon_0, \epsilon_1\big),\,\big(\epsilon_0, \epsilon_1\big)\in H^{\frac32+}(\R^3)\times H^{\frac12+}(\R^3), 
\end{equation}
is not a priori clear, and in fact, we shall show that the blow up corresponding to (a certain subclass of) such data perturbations takes place in the same space-time location and with the same scaling law, which may sound paradoxical at first, but is explained by the role of the topology of the data. 
\\
In fact, what our main result shall reveal, and what is also borne out by the result \cite{KS}, while the abstract general classification theory by Duyckaerts-Kenig-Merle takes place in the largest possible space $H^1$ in which the problem \eqref{eq:critmain} is well-posed, an understanding of the precise possible dynamics (involving blow up speeds, stability properties, etc) rely crucially on finer topological properties of the data in spaces more restrictive than $H^1$. It is conceivable that such considerations have much broader applicability for certain nonlinear {\it{hyperbolic}} problems. 

\subsection{Conditional stability of type II solutions}

Before stating the main theorem of this paper about stability properties of the solutions in Theorem~\ref{thm:KST} with $\nu\ll 1$, we place it briefly into a broader context. It is intuitively clear that when analysing the stability of any of the type II solutions in \eqref{eq:DKMstructure} with $N = 1$, say, the linearisation of the equation \eqref{eq:critmain} around $W$, and thence the operator\footnote{It arises by passing from radial $u(x) = v(R)$ to $Rv(R)$, $R = |x|$}
\begin{equation}\label{eq:L}
\mathcal{L} = -\partial_R^2 - 5W^4(R),\,W(R): = \frac{1}{(1+\frac{R^2}{3})^{\frac12}},
\end{equation}
will play a pivotal role. This operator, when restricted to functions on $[0,\infty)$ with Dirichlet condition at $R=0$, has a simple negative eigenvalue $\xi_d<0$ (the subscript $d$ referring to 'discrete spectrum'), and a corresponding $L^2$-normalized positive ground state $\phi_d$ with 
\[
\mathcal{L}\phi_d = \xi_d\phi_d, 
\]
see \cite{KS}, \cite{KST}. This mode will cause exponential growth for the linearised flow $e^{it\sqrt{\mathcal{L}}}$, and only a co-dimension one condition will ensure that the forward flow will remain bounded. That a corresponding center-stable manifold may be constructed for perturbations of type II solutions for the nonlinear problem \eqref{eq:critmain} was first shown in the context of the special solution $u(t, x) = W(x)$ and perturbations in a topology which is significantly stronger than $H^1$ in \cite{KS}, and later in vastly larger generality (for perturbations of only regularity $\dot{H}^1$) in \cite{CNLW4}. Here we let $u(t, x)$ be a general solution of regularity $\dot{H}^1$, which may be obtained as limit of a sequence of smooth solutions. We shall refer to such solutions as 'Shatah-Struwe solutions'. 
\begin{thm}\label{thm:codim1stable}(\cite{CNLW4}) Let 
\begin{equation}\label{eq:oneprofile}
u(t, x) = W_{\lambda(t)}(x) + v(t, x)
\end{equation}
be a type II blow up solution on $I\times \R^3$ for \eqref{eq:critmain}, such that 
\[
\sup_{t\in I}\big\|\nabla_{t,x}v(t, \cdot)\big\|_{L_x^2}\leq \delta\ll 1
\]
for some sufficiently small $\delta>0$, where as usual $I$ denotes the maximal life span of the Shatah-Struwe solution $u$. Also, assume that $t_0\in I$. Then there exists a co-dimension one Lipschitz manifold $\Sigma$ in a small neighbourhood of the data $\big(u(t_0, \cdot), u_t(t_0, \cdot)\big)\in \Sigma$ in the energy topology $\dot{H}^1(\R^3)\times L^2(\R^3)$ and such that initial data $\big(u_0, u_1\big)\in \Sigma$ result in a type II solution, while initial data 
\[
\big(u_0, u_1\big)\in B_{\delta}\backslash \Sigma,
\]
where $B_{\delta}\subset \dot{H}^1(\R^3)\times L^2(\R^3)$ is a sufficiently small ball centred at $\big(u(t_0, \cdot), u_t(t_0, \cdot)\big)$, either lead to blow up in finite time, or solutions scattering to zero, depending on the 'side of $\Sigma$' these data are chosen from. 
\end{thm}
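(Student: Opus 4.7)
The plan is to construct $\Sigma$ as a center-stable manifold for the linearisation of \eqref{eq:critmain} around the reference solution $u$, using a Lyapunov-Perron fixed point in a suitable Strichartz-type space. First I would write a nearby Shatah-Struwe solution as $\tilde u = u + w$; subtracting the equations for $u$ and $\tilde u$ gives
\begin{equation*}
\Box w + 5 u^4 w = -N(u,w), \qquad N(u,w) = (u+w)^5 - u^5 - 5 u^4 w,
\end{equation*}
with $N$ at least quadratic in $w$. Since $u = W_{\lambda(t)} + v$ with $\|\nabla_{t,x} v\|_{L^2} \le \delta \ll 1$, the potential $5u^4$ may be written as $5 W_{\lambda(t)}^4$ plus a small remainder to be treated perturbatively. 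Passing to rescaled variables $R = \lambda(t) r$ and $\tau = \int_{t_0}^{t} \lambda(s)\,ds$, the principal part of the linear operator becomes the autonomous flow driven by the operator $\L$ of \eqref{eq:L}, up to modulational terms of order $\dot\lambda/\lambda$.

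Next I would exploit the spectral structure of $\L$: on $L^2(0,\infty)$ with Dirichlet condition at $R=0$ it has a single simple negative eigenvalue $\xi_d<0$ with $L^2$-normalised ground state $\phi_d$, and otherwise continuous spectrum $[0,\infty)$. Accordingly I split
\begin{equation*}
w(t,\cdot) = a(t)\,r^{-1}\lambda(t)^{1/2}\phi_d(\lambda(t) r) + w_\perp(t,\cdot),
\end{equation*}
with $w_\perp$ in the range of the continuous projector $P_c$. Projecting the evolution onto $\phi_d$ yields a scalar second-order ODE of the schematic form
\begin{equation*}
\ddot a - |\xi_d|\lambda(t)^2 a = (\text{modulation coupling}) + \langle N(u,w),\phi_d\rangle,
\end{equation*}
carrying one exponentially growing and one exponentially decaying mode in the rescaled time $\tau$. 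Vanishing of the growing mode is a single scalar condition on the data, which will account for the codimension one of $\Sigma$.

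The Lyapunov-Perron step then parametrises global-in-forward-time perturbations of controlled size in a Strichartz norm $X$ adapted to $\L$. The analytic input is a dispersive/Strichartz theory for $P_c e^{i\tau\sqrt{\L}}$ on radial $\dot H^1$, complemented by an endpoint radial Morawetz-type estimate accounting for the slow decay of $W$. Given the stable part of the initial perturbation, the unstable coefficient $a$ is determined by the requirement that $a(\tau)$ remain bounded as $\tau\to\infty$; by Duhamel this is an integral equation, contractive on a small ball of $X$. The resulting Lipschitz map from the stable subspace into the unstable direction has graph $\Sigma$, automatically invariant under the forward flow.

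I expect the main obstacle to be twofold. First, since $v$ is only $\dot H^1$-small with no a priori pointwise or Strichartz decay, the linearisation around $u$ differs from that around the clean profile $W_{\lambda(t)}$ by a non-dispersive, time-dependent potential; one must show that the spectral projections, Strichartz bounds and Morawetz estimates are all stable under such a background perturbation, which amounts to a quantitative perturbative non-autonomous scattering theory. Second, the dichotomy on either side of $\Sigma$ (finite-time blow up versus scattering to zero) is not an output of the manifold construction itself: it requires combining the exponential ejection of $a$ with the Duyckaerts-Kenig-Merle rigidity theorem from \cite{DKM4} to rule out secondary soliton production and identify the correct asymptotic behaviour, in terms of sign of the excited mode, on each side of $\Sigma$.
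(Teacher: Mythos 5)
This theorem is quoted from \cite{CNLW4} and is not proved in the present paper, so there is no in-house argument against which to compare your attempt. Your sketch captures the right skeleton: the simple negative eigenvalue $\xi_d$ of $\mathcal L$ supplies the single unstable direction and hence the codimension-one condition, and some rigidity result must be invoked for the dichotomy away from $\Sigma$.

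There is, however, a genuine gap in the proposed mechanism. A Lyapunov--Perron contraction in Strichartz-type norms around the linearisation $\Box + 5u^4$ is essentially the argument of \cite{KS}, and that argument requires data in a topology strictly stronger than $\dot H^1\times L^2$: at energy regularity the nonlinearity $N(u,w)$ is not closed in any naively chosen Strichartz space, and the time-dependent potential difference $5u^4 - 5W_{\lambda(t)}^4$ coming from the background $v$ is only $\dot H^1$-small, with no pointwise or dispersive decay whatsoever. You flag this as ``the main obstacle'' and propose resolving it by a ``quantitative perturbative non-autonomous scattering theory,'' but no such theory exists for this class of backgrounds, and circumventing precisely this obstruction is the entire point of \cite{CNLW4}. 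The construction there replaces the hard fixed point by the Nakanishi--Schlag framework: a hyperbolic ejection lemma quantifying the growth of the unstable mode coefficient once non-zero, the variational one-pass (no-return) theorem excluding repeated near-soliton excursions, and Bahouri--G\'erard/Kenig--Merle concentration-compactness to identify blow up versus scattering on the two sides of $\Sigma$; the Lipschitz regularity of $\Sigma$ is then extracted from the ejection hyperbolicity rather than from a contraction mapping. Your closing appeal to \cite{DKM4} is in the right spirit but misattributes the dichotomy mechanism: the operative tool in \cite{CNLW4} is the one-pass theorem, not the channels-of-energy classification of \cite{DKM4}.
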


Note that by contrast to the result in \cite{KS} which precisely describes the dynamics of the perturbed solutions but at the expense of a much more restrictive class of perturbations, there is no description of the perturbed solutions in the preceding theorem other than the assertion that the solutions are of type II. 
\\
The question we shall now address is whether the {\it{specific dynamics}} of the solutions in Theorem~\ref{thm:KST} are preserved for a suitable class of perturbations, essentially as in \eqref{eq:roughstructure}. Note that such perturbations only constitute a very small subset of the surface $\Sigma$ in the preceding theorem, as evidenced by the fact that if $\mathcal{S}_{T, \lambda}$ denotes re-scaling by $\lambda$ and time-tranlsation by $T$, then if $(T,\lambda)\neq (0, 1)$, any two data pairs  
\[
u_{\nu}[t_0] + \big(\epsilon_0, \epsilon_1\big),\,\big(\mathcal{S}_{T, \lambda}u_{\nu}\big)[t_0] + \big(\epsilon_0', \epsilon_1'\big)
\]
with $\big(\epsilon_0, \epsilon_1\big)\in H^{\frac32+}\times H^{\frac12+}, \big(\epsilon_0', \epsilon_1'\big)\in H^{\frac32+}\times H^{\frac12+}$ will be distinct. 
\\
We aim now to understand the evolution of a certain class of  data $u_{\nu}[t_0] + \big(\epsilon_0, \epsilon_1\big)$, with $t_0$ as in Theorem~\ref{thm:KST}, backward in time. Precisely stating the conditions on the perturbation $\big(\epsilon_0, \epsilon_1\big)$ requires certain technical preliminaries involving the spectral theory and representation associated to $\mathcal{L}$, mostly developed in \cite{KST}. 

\subsection{Spectral theory associated with the linearisation $\mathcal{L}$}

Here we quote from \cite{KST}, specifically Lemma 4. 2 as well as Proposition 4. 3 in loc. cit. Let $\mathcal{L}$ be given by \eqref{eq:L}, restricted to $L^2((0,\infty)$, with domain 
\[
\text{Dom}(\mathcal{L}) = \{f\in L^2((0,\infty): f,\,f'\in AC([0, R])\forall R>0,\,f(0) = 0, f''\in L^2((0,\infty)\}
\]
Then $\mathcal{L}$ is self-adjoint with this domain, and its spectrum consists of 
\[
\text{spec}(\mathcal{L}) = \{\xi_d\}\cup [0,\infty),
\]
with $\xi_d<0$ the unique negative eigenvalue of $\mathcal{L}$ and associated $L^2$-normalized and positive ground state $\phi_d(R)$. There is a resonance at zero given by the function 
\[
\phi_0(R) = R(1-\frac{R^2}{3})(1+\frac{R^2}{3})^{-\frac32},\,\mathcal{L}\phi_0 = 0. 
\]
The latter is simply a reflection of the scaling invariance of the problem. 
\\
Importantly, the operator $\mathcal{L}$ induces a 'distorted Fourier transform' $\mathcal{F}(f) = \hat{f}$, which allows for a nice Fourier representation in terms of generalised eigenfunctions $\phi(R, \xi)$. We have the following 
\begin{prop}\label{prop:Fourier}(\cite{KST}) For each $z\in \C$, one can define a basis of generalised eigenfunctions 
\begin{equation}\label{eq:FourierExpansion}
\phi(R, z) = \phi_0(R) + R^{-1}\sum_{j=1}^\infty (R^2 z)^j\phi_j(R^2)
\end{equation}
given by an absolutely convergent sum, with $\phi_j(u)$ holomorphic on the complex numbers with $\Re u>-1/2$, and satisfying bounds 
\[
\big|\phi_j(u)\big|\leq \frac{C^j}{(j-1)!}|u|\langle u\rangle^{-\frac12}.
\]
Denoting the Jost solutions $f_{\pm}(R, \xi)$ which satisfy $\mathcal{L}f_{\pm} = \xi f_{\pm}$ as well as $f_{\pm}(R, \xi)\sim e^{\pm iR\xi^{\frac12}}$ as $R\rightarrow +\infty$, there is a representation 
\[
\phi(R, \xi) = \sum_{\pm}a_{\pm}(\xi)f_{\pm}(R, \xi),
\]
with $a_{\pm}(\xi)\rightarrow 1$ as $\xi\rightarrow 0 $ as well as $\big|a_{\pm}(\xi)\big|\lesssim \xi^{-\frac12}$ as $\xi\rightarrow\infty$. 
Further, there is a function $\rho(\xi)\in C^\infty\big((0,\infty)\big)$ with the asymptotic behaviour 
\[
\rho(\xi)\sim\begin{cases}
\xi^{-\frac12}, \hspace{0,5cm}&0<\xi\ll 1,\\
\xi^{\frac12},&\xi\gg 1, 
\end{cases}
\]
as well as symbol behaviour with respect to differentiation, and such that defining 
\begin{align*}
&\mathcal{F}(f)(\xi) := \hat{f}(\xi) := \lim_{b\rightarrow +\infty}\int_0^b \phi(R, \xi)f(R)\,dR,\,\xi\geq 0,\\
& \hat{f}(\xi_d) = \int_0^\infty \phi_d(R) f(R)\,dR, 
\end{align*}
the map $f\longrightarrow \hat{f}$ is an isometry from $L^2_{dR}$ to $L^2(\{\xi_d\}\cup \R^+, \rho)$, and we have 
\[
f(R) = \hat{f}(\xi_d)\phi_d(R) + \lim_{\mu\rightarrow\infty}\int_0^\mu \phi(R, \xi)\hat{f}(\xi)\rho(\xi)\,d\xi,
\]
the limits being in the suitable $L^2$-sense. 
\end{prop}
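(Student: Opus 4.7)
The plan is to follow the classical Weyl--Titchmarsh template for half-line Schr\"odinger operators, adapted to the peculiarity that $\mathcal{L}$ possesses both a single negative eigenvalue $\xi_d$ and a zero-energy resonance $\phi_0$. Self-adjointness with the stated domain is standard since $5W^4(R) = O(\langle R\rangle^{-4})$ at infinity (limit point at $\infty$) and the Dirichlet condition at $R = 0$ removes the limit-circle ambiguity; that $\xi_d$ is the unique negative eigenvalue follows from Sturm oscillation, since $\phi_0$ has exactly one zero on $(0,\infty)$, forcing precisely one bound state below zero.

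First I would construct the regular solution $\phi(R,z)$ by a perturbative expansion in powers of $z$ around $\phi_0$. Writing $\phi(R,z) = \phi_0(R) + \sum_{j\geq 1} z^j \psi_j(R)$ and inverting $\mathcal{L}\psi_{j+1} = \psi_j$ through variation of parameters with the fundamental system $\{\phi_0,\theta_0\}$ of the zero-energy equation normalized by $W[\phi_0,\theta_0] = 1$, one obtains the explicit recursion
\begin{equation*}
\psi_{j+1}(R) = \theta_0(R)\int_0^R \phi_0(R')\psi_j(R')\,dR' - \phi_0(R)\int_0^R \theta_0(R')\psi_j(R')\,dR'.
\end{equation*}
Substituting the ansatz \eqref{eq:FourierExpansion} and using that $\phi_0$, $\theta_0$ are explicit rational functions in $R^2$ involving $(1+R^2/3)^{-3/2}$, one checks inductively that the normalized coefficients $\phi_j(u)$ extend holomorphically to $\Re u > -\tfrac12$, the threshold $-\tfrac12$ being dictated by the branch behavior of $\theta_0$ at the zero of the denominator combined with the $R^{-1}$ prefactor in \eqref{eq:FourierExpansion}. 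The factorial gain $C^j/(j-1)!$ is then squeezed out of the double integrals in the recursion by the standard book-keeping, as in Lemma~4.2 of \cite{KST}.

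Next I would build the Jost solutions via the Volterra equation
\begin{equation*}
f_\pm(R,\xi) = e^{\pm iR\xi^{1/2}} + \int_R^\infty \frac{\sin\bigl((R-R')\xi^{1/2}\bigr)}{\xi^{1/2}}\bigl(-5W^4(R')\bigr)f_\pm(R',\xi)\,dR',
\end{equation*}
whose absolute convergence is guaranteed by $\int^\infty R^{-4}\,dR<\infty$, and for which standard iteration yields joint analyticity in $\xi$ on $\{\Im\xi^{1/2}\geq 0\}\setminus\{0\}$ together with the prescribed outgoing/incoming asymptotics. Since $\phi(\cdot,\xi)$ is pinned by the Dirichlet condition while $\{f_+,f_-\}$ span the solution space for $\xi>0$, one has $\phi = a_+f_+ + a_-f_-$ with $a_\pm(\xi) = \pm W[\phi,f_\mp]/(2i\xi^{1/2})$. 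The limit $a_\pm(\xi)\to 1$ as $\xi\to 0$ is read off from $\phi(R,0)=\phi_0(R)$ together with the fact that $f_\pm(R,0)$ coincides with $\phi_0$ up to a bounded multiple of $\theta_0$ that drops out of the Wronskian; the large-$\xi$ bound $|a_\pm(\xi)|\lesssim \xi^{-1/2}$ follows by the WKB normalization, since on any compact $R$-interval $\phi(R,\xi)$ grows only polynomially in $\xi$ by \eqref{eq:FourierExpansion} while $f_\pm$ oscillate at unit amplitude.

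Finally, the spectral measure $\rho(\xi)$ and the Plancherel identity are obtained from the standard half-line spectral representation: with $m(\xi+i0)$ the Weyl $m$-function, $d\rho_{\mathrm{ac}}(\xi) = \pi^{-1}\Im m(\xi+i0)\,d\xi$, and a direct computation via the connection formula gives $\rho(\xi) \asymp \xi^{1/2}\bigl(|a_+(\xi)|^2 + |a_-(\xi)|^2\bigr)^{-1}$. Plugging in the asymptotics of $a_\pm$ yields $\rho(\xi)\sim\xi^{-1/2}$ near zero (a direct signature of the zero-energy resonance; without it one would have $\rho\sim\xi^{1/2}$) and $\rho(\xi)\sim\xi^{1/2}$ at infinity, with symbol behavior under differentiation descending from the analogous bounds on $a_\pm$. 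The isometry $\mathcal{F}\colon L^2_{dR}\to L^2(\{\xi_d\}\cup\R^+,\rho)$ and the inversion formula are then immediate consequences of the spectral theorem applied to $\mathcal{L}$, with $\xi_d$ contributing the rank-one projector onto $\phi_d$. The main technical obstacle is precisely the third step of the second paragraph: tracking \emph{simultaneously} the holomorphy of $\phi_j(u)$ on $\Re u > -\tfrac12$ \emph{and} the factorial decay $C^j/(j-1)!$, as the naive recursive bounds tend to sacrifice one feature to save the other.
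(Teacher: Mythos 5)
This proposition is imported verbatim from \cite{KST} (Lemma~4.2 and Proposition~4.3 there); the present paper does not prove it, so the only comparison available is with the [KST] argument itself. Your outline matches that argument's architecture faithfully: perturbative construction of the regular solution $\phi(R,z)$ around the zero-energy resonance $\phi_0$, Jost solutions via Volterra iteration, connection coefficients by Wronskians, and the spectral density via the Weyl $m$-function. You also correctly identify the genuinely hard step (simultaneously preserving holomorphy of $\phi_j$ on $\Re u > -\tfrac12$ and the factorial bound $C^j/(j-1)!$ through the variation-of-parameters recursion), even if you only gesture at it.

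There is, however, a concrete error in the last paragraph that produces an internal contradiction. You assert $\rho(\xi)\asymp \xi^{1/2}\bigl(|a_+|^2+|a_-|^2\bigr)^{-1}$ and then ``plug in'' $a_\pm(0)=1$, $|a_\pm|\lesssim\xi^{-1/2}$ to conclude $\rho\sim\xi^{-1/2}$ near zero and $\rho\sim\xi^{1/2}$ at infinity. But the formula as written gives $\rho\sim\xi^{1/2}$ near zero and $\rho\sim\xi^{3/2}$ at infinity, not what you state. The correct relation has a $\xi^{-1/2}$, not a $\xi^{1/2}$: using $\phi(0)=0$, $\phi'(0)=1$ one gets $W[\phi,f_+]=-f_+(0)$ and hence $a_- = -f_+(0)/(2i\xi^{1/2})$, so $|f_+(0,\xi)|^2 = 4\xi\,|a_\pm(\xi)|^2$. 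Combined with the Dirichlet Weyl--Titchmarsh identity
\[
\rho(\xi) = \frac{1}{\pi}\Im m(\xi+i0) = \frac{\xi^{1/2}}{\pi\,|f_+(0,\xi)|^2},
\]
this yields
\[
\rho(\xi) = \frac{1}{4\pi\,\xi^{1/2}\,|a_\pm(\xi)|^2},
\]
which does reproduce the stated asymptotics ($\rho\sim\xi^{-1/2}$ at $0$, $\rho\sim\xi^{1/2}$ at $\infty$, the former indeed caused by the resonance forcing $f_+(0,\xi)\to 0$). So the final conclusions you wrote are right, but they do not follow from the formula you gave; replace $\xi^{1/2}$ by $\xi^{-1/2}$ there. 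A minor sign slip in your Wronskian normalization of $a_\pm$ is harmless since only $|a_\pm|$ enters.
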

We also observe the following estimate describing classical $H^s_{dR}$-norms in terms of the distorted Fourier transform, and which follows by a simple interpolation argument: 
\begin{lem}\label{lem:Sobolev} Assume $s\geq 0$. Then we have 
\[
\big\|f(R)\big\|_{H^s(\R_+)}\lesssim \big\|\langle\xi\rangle^{\frac{s}{2}}\hat{f}(\xi)\big\|_{L^2_{d\rho}} + \big|\xi_d\big|,
\]
\end{lem}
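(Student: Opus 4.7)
The plan is to prove Lemma~\ref{lem:Sobolev} by complex interpolation between the two natural endpoints $s=0$ and $s=2$, together with iteration of the $s=2$ case for higher integer values of $s$. At $s=0$ there is nothing to do: the bound is exactly Plancherel's identity for the distorted Fourier transform contained in Proposition~\ref{prop:Fourier}, namely
\[
\|f\|_{L^2_{dR}}^2 = |\hat{f}(\xi_d)|^2 + \int_0^\infty |\hat{f}(\xi)|^2 \rho(\xi)\,d\xi.
\]
The rank-one contribution from the discrete spectrum, which is really what the $|\xi_d|$ term on the right-hand side is meant to absorb, is then handled trivially under interpolation since it is bounded independently of $\xi$.

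For the endpoint $s=2$ I would use the fact that $\mathcal{L}$ acts diagonally under $\mathcal{F}$: $\widehat{\mathcal{L}f}(\xi)=\xi\hat{f}(\xi)$ on the continuous spectrum and $\widehat{\mathcal{L}f}(\xi_d)=\xi_d\hat{f}(\xi_d)$. Combined with Plancherel this gives
\[
\|\mathcal{L}f\|_{L^2_{dR}}^2 = \xi_d^2|\hat{f}(\xi_d)|^2 + \int_0^\infty \xi^2|\hat{f}(\xi)|^2\rho(\xi)\,d\xi.
\]
Since $\mathcal{L}=-\partial_R^2-5W^4$ and $W^4\in L^\infty\cap C^\infty$, a standard elliptic regularity argument on the half-line with Dirichlet condition at $R=0$ yields $\|\partial_R^2 f\|_{L^2}\lesssim \|\mathcal{L}f\|_{L^2}+\|f\|_{L^2}$. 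Combining this with the $s=0$ bound produces the $s=2$ version
\[
\|f\|_{H^2(\R_+)}\lesssim \|\langle\xi\rangle\hat{f}(\xi)\|_{L^2_{d\rho}} + |\hat{f}(\xi_d)|.
\]
For general $s\in(0,2)$ one then interpolates: $H^s(\R_+)$ is the complex interpolation space $[L^2,H^2]_{s/2}$, while the weighted spaces $L^2(\langle\xi\rangle^s d\rho)$ interpolate in the obvious way between $L^2(d\rho)$ and $L^2(\langle\xi\rangle^2 d\rho)$. For $s>2$ one iterates the argument with higher powers of $\mathcal{L}$, using $\|\mathcal{L}^k f\|_{L^2}$ to control $\|\partial_R^{2k} f\|_{L^2}$ (the commutators with the smooth bounded potential $W^4$ produce only lower-order Sobolev terms, which are handled inductively) and then again interpolating in intermediate strips.

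The main technical point I expect to have to watch is the elliptic regularity step at the boundary $R=0$: since we are on a half-line with Dirichlet data, one cannot naively quote full-line estimates, but either extending $f$ oddly across $R=0$ or a direct Sturm--Liouville argument works cleanly because $W^4$ extends smoothly across the origin. A secondary subtlety is the identification of the abstract interpolation space $[L^2,\mathrm{Dom}(\mathcal{L})]_{s/2}$ with $H^s(\R_+)$; this follows from the perturbative identity $\mathcal{L}=-\partial_R^2+\mathrm{bounded}$ and standard functional calculus, but must be written down carefully in order not to conflict with the Dirichlet boundary condition present in $\mathrm{Dom}(\mathcal{L})$.
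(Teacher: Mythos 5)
Your proposal takes essentially the same route the paper has in mind: the paper only remarks that the lemma ``follows by a simple interpolation argument'' without spelling out the endpoints, and you correctly identify them as $s=0$ (the Plancherel identity from Proposition~\ref{prop:Fourier}) and $s=2$ (the spectral identity $\widehat{\mathcal{L}f}(\xi)=\xi\hat{f}(\xi)$ together with elliptic regularity on the half-line with Dirichlet data), then interpolate for $0<s<2$ and iterate with powers of $\mathcal{L}$ for $s>2$. One point worth making explicit when you write this up: the statement as printed carries $|\xi_d|$ on the right-hand side, which is a fixed numerical constant and cannot be right --- taking $f=t\phi_d$ with $t\to\infty$ makes the left side grow linearly while the right side stays constant. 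It is evidently a typo for $|\hat{f}(\xi_d)|$, and your derivation of the $s=2$ endpoint implicitly uses the corrected version. The elliptic-regularity step at $R=0$ that you flag as a possible subtlety does indeed go through as you say: since $W^4(R)=(1+R^2/3)^{-2}$ is smooth and even, the odd extension of a Dirichlet solution across $R=0$ satisfies the same equation on all of $\R$ and full-line estimates apply, so there is no gap there.
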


When passing from the standard coordinates $r = |x|, t$ to the new ones $R = \lambda(t)r, \tau = \int_t^\infty \lambda(s)\,ds$, the time derivative will be replaced by a dilation type operator of essentially the form $\partial_{\tau} + \frac{\dot{\lambda}}{\lambda}R\partial_R$, and translation to the Fourier variables will require expressing the operator $R\partial_R$ in terms of the distorted Fourier transform. Specifically, we need to understand how $R\partial_R$ acts on $x_d, x(\xi)$ for a function 
\[
\tilde{\epsilon}(R) = x_d\phi_d(R) + \int_0^\infty x(\xi)\phi(R, \xi)\rho(\xi)\,d\xi.
\]
The precise result here comes also from \cite{KST}: 
\begin{thm}\label{thm:transferenceop}(\cite{KST}
We have the identity 
\[
\left(\begin{array}{c}\hat{(R\partial_R) f}(\xi_d)\\ \hat{(R\partial_R)f}(\xi)\end{array}\right) = (\tilde{\mathcal{A}} + \mathcal{K})\left(\begin{array}{c}\hat{f}(\xi_d)\\ \hat{f}(\xi)\end{array}\right), 
\]
where the matrix operators $\mathcal{A}, \mathcal{K}$ on the right are given by 
\[
\tilde{\mathcal{A}} = \left(\begin{array}{cc}0& 0\\0&-2\xi\partial_{\xi}-\frac32 - \frac{\rho'(\xi)\xi}{\rho(\xi)}\end{array}\right),\,\mathcal{K} = \left(\begin{array}{cc}\mathcal{K}_{dd}&\mathcal{K}_{dc}\\ \mathcal{K}_{cd}&\mathcal{K}_{cc}\end{array}\right),
\]
and the individual components of $\mathcal{K}$ are given by $\mathcal{K}_{dd} = -\frac12$, $\mathcal{K}_{dc}(\xi) = K_d(\xi)$ a smooth function rapidly decaying toward $\xi = +\infty$, 
\[
\mathcal{K}_{dc}f = -\int_0^\infty K_d(\xi) f(\xi)\rho(\xi)\,d\xi,
\]
and finally, $\mathcal{K}_{cc}$ is a Calderon-Zygmund type operator given by a kernel 
\[
K_0(\xi, \eta) = \frac{\rho(\eta)}{\xi - \eta}F(\xi, \eta), 
\]
where the function $F(\cdot, \cdot)$ is of regularity at least $C^2$ on $(0, \infty)\times (0, \infty)$, and satisfies the bounds 
\begin{align*}
\big|F(\xi, \eta)\big|\lesssim \begin{cases}\xi + \eta,\hspace{3.5cm}&\xi + \eta\leq 1,\\
(\xi+\eta)^{-1}(1+\big|\xi^{\frac12} - \eta^{\frac12}\big|)^{-N},&\xi+\eta\geq 1
\end{cases}
\end{align*}
\begin{align*}
\big|\partial_{\xi}F(\xi, \eta)\big| + \big|\partial_{\eta}F(\xi, \eta)\big|\lesssim \begin{cases}1,\hspace{4cm}&\xi + \eta\leq 1,\\
(\xi+\eta)^{-\frac32}(1+\big|\xi^{\frac12} - \eta^{\frac12}\big|)^{-N},&\xi+\eta\geq 1
\end{cases}
\end{align*}
\begin{align*}
\sum_{j+k = 2}\big|\partial_{\xi}^j\partial_{\eta}^kF(\xi, \eta)\big| + \big|\partial_{\eta}F(\xi, \eta)\big|\lesssim \begin{cases}(\xi+\eta)^{-\frac12},\hspace{3cm}&\xi + \eta\leq 1,\\
(\xi+\eta)^{-2}(1+\big|\xi^{\frac12} - \eta^{\frac12}\big|)^{-N},&\xi+\eta\geq 1
\end{cases}
\end{align*}
Here $N$ can be chosen arbitrarily (with the implicit constant depending on $N$). 
\end{thm}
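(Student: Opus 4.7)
The plan is to derive an explicit pointwise identity for $R\partial_R\phi(R,\xi)$ in which a transparent piece $2\xi\partial_\xi\phi(R,\xi)$ is isolated (which, after integration by parts in $\xi$ against $\rho(\xi)\,d\xi$, will produce the differential operator $\tilde{\mathcal{A}}$), together with a correction built from the resolvent of $\mathcal{L}$ that supplies the kernel $\mathcal{K}$. A short direct computation using $\mathcal{L} = -\partial_R^2 - 5W^4$ gives the commutator
\[
[\mathcal{L}, R\partial_R] = 2\mathcal{L} + V_{\mathrm{dil}}(R), \qquad V_{\mathrm{dil}}(R) := 10 W^4(R) + 5R(W^4)'(R),
\]
with $V_{\mathrm{dil}}$ smooth and decaying like $R^{-4}$ at infinity. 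Applied to $\mathcal{L}\phi(\cdot,\xi) = \xi\phi(\cdot,\xi)$, and combined with $(\mathcal{L}-\xi)\partial_\xi\phi = \phi$ (obtained by differentiating $\mathcal{L}\phi = \xi\phi$ in $\xi$), this gives the pivotal identity
\[
(\mathcal{L}-\xi)\bigl(R\partial_R\phi(\cdot,\xi) - 2\xi\partial_\xi\phi(\cdot,\xi)\bigr) = V_{\mathrm{dil}}(R)\,\phi(R,\xi).
\]

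I would then invert $\mathcal{L}-\xi$ using its spectral resolution from Proposition~\ref{prop:Fourier}. Expanding $V_{\mathrm{dil}}\phi(\cdot,\xi)$ in the distorted Fourier basis and dividing by $\eta-\xi$ (with the appropriate boundary-value prescription along the continuous spectrum) yields
\[
R\partial_R\phi(R,\xi) - 2\xi\partial_\xi\phi(R,\xi) = \frac{F_d(\xi)}{\xi_d-\xi}\phi_d(R) + \mathrm{p.v.}\int_0^\infty \frac{F(\xi,\eta)}{\eta-\xi}\phi(R,\eta)\rho(\eta)\,d\eta + g(\xi)\phi(R,\xi),
\]
where $F_d(\xi) = \lan V_{\mathrm{dil}}\phi(\cdot,\xi),\phi_d\ran_{L^2_{dR}}$, $F(\xi,\eta) = \lan V_{\mathrm{dil}}\phi(\cdot,\xi),\phi(\cdot,\eta)\ran_{L^2_{dR}}$, and $g(\xi)$ is a diagonal coefficient fixed by the principal value. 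Substituting into the inversion formula for $f$ and integrating the $2\xi\partial_\xi\phi$ term by parts against $\rho(\xi)\,d\xi$, the product rule together with $g(\xi)\phi(R,\xi)$ produces exactly $\tilde{\mathcal{A}} = -2\xi\partial_\xi - \tfrac{3}{2} - \rho'(\xi)\xi/\rho(\xi)$; the remaining terms yield $\mathcal{K}_{dd},\mathcal{K}_{dc},\mathcal{K}_{cd}$ and, for the continuous-continuous block, the kernel $K_0(\xi,\eta) = \rho(\eta) F(\xi,\eta)/(\xi-\eta)$.

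It remains to establish the stated size and regularity properties of $F(\xi,\eta)$. For small frequencies $\xi+\eta\le 1$, I would insert the absolutely convergent expansion \eqref{eq:FourierExpansion} for both $\phi(R,\xi)$ and $\phi(R,\eta)$. The leading $j=0$ term is $\phi_0(R)\phi_0(R)$, but $\lan V_{\mathrm{dil}}\phi_0,\phi_0\ran = 0$ (from $V_{\mathrm{dil}}\phi_0 = \mathcal{L}(R\partial_R\phi_0)$ combined with $\mathcal{L}\phi_0 = 0$ and a boundary-term computation using $\phi_0\to\mathrm{const}$ at infinity), so the leading contribution is linear in $\xi,\eta$, giving $|F(\xi,\eta)|\lesssim \xi+\eta$ together with the matching low-frequency derivative bounds. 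For high frequencies $\xi+\eta\gec 1$, I would decompose $\phi = a_+ f_+ + a_- f_-$; each of the four resulting integrals $\int V_{\mathrm{dil}}(R)f_{\pm_1}(R,\xi)f_{\pm_2}(R,\eta)\,dR$ carries an oscillating factor $e^{iR(\pm_1\xi^{1/2}\pm_2\eta^{1/2})}$, and repeated integration by parts against this oscillation, exploiting that $V_{\mathrm{dil}}$ is Schwartz-in-$R$, extracts the factor $(1+|\xi^{1/2}-\eta^{1/2}|)^{-N}$, with $(\xi+\eta)^{-1}$ coming from the $\xi^{-1/2}+\eta^{-1/2}$ produced whenever $\partial_R$ falls on the phase. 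Differentiating $F$ in $\xi$ or $\eta$ introduces factors $R$ from $\partial_\xi f_\pm(R,\xi)$, absorbed by the decay of $V_{\mathrm{dil}}$ at the cost of further $(\xi+\eta)^{-1/2}$ powers, yielding the second and third displayed bounds.

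The main technical obstacle is the non-stationary phase analysis in the transition region $|\xi^{1/2}-\eta^{1/2}|\lesssim 1$, where naive integration by parts loses control. Handling this requires the finer symbol-type asymptotics of the Jost solutions $f_\pm(R,\xi) = e^{\pm iR\xi^{1/2}}\sigma_\pm(R,\xi)$ in which $\sigma_\pm$ is a symbol in $\xi$ uniformly in $R\gec 1$, combined with the smoothness and polynomial decay of $V_{\mathrm{dil}}$, so as to obtain uniform $C^2$ estimates on $F(\xi,\eta)$ across the diagonal. Once these symbol-type estimates are secured, the representation of $R\partial_R$ as $\tilde{\mathcal{A}}+\mathcal{K}$ follows.
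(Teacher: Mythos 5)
The paper supplies no proof here: the result is imported verbatim from \cite{KST} (their Theorem 5.1 and associated Proposition), and your outline is essentially the argument given there. You correctly compute $[\mathcal{L}, R\partial_R] = 2\mathcal{L} + V_{\mathrm{dil}}$ with $V_{\mathrm{dil}} = 10W^4 + 5R(W^4)'$, deduce $(\mathcal{L}-\xi)\bigl(R\partial_R\phi(\cdot,\xi) - 2\xi\partial_\xi\phi(\cdot,\xi)\bigr) = V_{\mathrm{dil}}\phi(\cdot,\xi)$, pair against $\phi(\cdot,\eta)$ to expose the $(\eta-\xi)^{-1}$ singularity, and propose obtaining the kernel bounds by oscillatory estimates against the Jost asymptotics — this is precisely the structure of the \cite{KST} proof. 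The one place your sketch is genuinely loose is the phrase ``invert $\mathcal{L}-\xi$ using its spectral resolution'': $\mathcal{L}-\xi$ is not invertible for $\xi$ in the continuous spectrum, and what actually closes the argument is the Lagrange identity on $[0,b]$ followed by $b\to\infty$. Since $R\partial_R\phi - 2\xi\partial_\xi\phi$ and $\phi(\cdot,\eta)$ are bounded but not decaying (the $O(R)$ terms in $R\partial_R\phi$ and $2\xi\partial_\xi\phi$ cancel, leaving oscillatory terms of size one), the Wronskian boundary term at $R\to\infty$ does not vanish; its evaluation as an oscillatory limit in $\eta-\xi$ is exactly the source of the diagonal pieces $-\tfrac32$, $-\rho'(\xi)\xi/\rho(\xi)$, and $\mathcal{K}_{dd}=-\tfrac12$, none of which your sketch pins down beyond attributing them to ``the product rule together with $g(\xi)$.'' Your identification of the transition regime $|\xi^{1/2}-\eta^{1/2}|\lesssim 1$ as the technically demanding part of the kernel estimates is accurate; \cite{KST} handle it via the symbol-type expansions of the Jost solutions, as you anticipate.
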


\subsection{Description of the data perturbation in terms of the distorted Fourier transform}
\label{sec:datapert}
In the sequel, we shall mainly describe functions $f(R)$ in terms of their distorted Fourier transform $\hat{f}(\xi)$, $\hat{f}(\xi_d)$. In particular, we shall describe the precise class of data perturbations $(\epsilon_0, \epsilon_1)$ via properties of their distorted Fourier transforms: for a pair of functions $\big(x_0(\xi), x_1(\xi)\big)$, which will represent the continuous spectral part\footnote{Recall that $R = \lambda(t)r$, where for now $\lambda(t) = t^{-1-\nu}$.} of $R\epsilon_0$, and in a more roundabout way the continuous spectral part of  $R\epsilon_1(R)$, we introduce the following norm:
\begin{equation}\label{eq:Stildenorm}\begin{split}
\big\|(x_0, x_1)\big\|_{\tilde{S}}&: = \big\|x_0\big\|_{\tilde{S}_1} + \big\|x_1\big\|_{\tilde{S}_2}\\&: = \big\|\langle\xi\rangle^{\frac12+2\delta_0}\min\{\tau_{0,0}\xi^{\frac12},1\}^{-1}\xi^{\frac12-\delta_0}x_0\big\|_{L^2_{d\xi}} + \big\| \langle\xi\rangle^{\frac{1}{2}+2\delta_0}\xi^{-\delta_0}x_1\big\|_{L^2_{d\xi}}.  
\end{split}\end{equation}
Here $1\gg\delta_0>0$ is a small constant held fixed throughout, and the constant $\tau_{0,0}$ is defined via 
\[
\tau_{0,0}: = \int_{t_0}^\infty s^{-1-\nu}\,ds. 
\]
This norm is in fact exactly the same as the one used in \cite{CondBlow}. We easily observe that 
\begin{align*}
\big\|\langle\xi\rangle^{1+\delta_0}x_0\big\|_{L^2_{d\xi}} + \big\|\langle\xi\rangle^{\frac12+\delta_0}x_1\big\|_{L^2_{d\xi}}\lesssim_{\tau_{0,0}}\big\|(x_0, x_1)\big\|_{\tilde{S}},
\end{align*}
as well as 
\begin{align*}
\big\|\xi^{-\delta_0}x_0\big\|_{L^2_{d\xi}(0<\xi<1)}\lesssim_{\tau_{0,0}}\big\|(x_0, x_1)\big\|_{\tilde{S}},
\end{align*}
and so we find, setting\footnote{The notation $P_c$ means projection onto continuous spectral part, i. e. projecting away the discrete spectral part which is the multiple of $\phi_d$.} $(P_c\tilde{\epsilon}_0)(R) = \int_0^\infty \phi(R, \xi)x_0(\xi)\rho(\xi)\,d\xi$, we have 
\begin{align*}
&\big\|\chi_{R\leq C\tau_{0,0}}(P_c\tilde{\epsilon}_0)(R)\big\|_{H^{\frac32+2\delta_0}_{dR}}\\&\lesssim \big\|\chi_{R\leq C\tau_{0,0}}\int_0^1 \phi(R, \xi)x_0(\xi)\rho(\xi)\,d\xi\big\|_{H^{\frac32+2\delta_0}_{dR}}\\
& + \big\|\chi_{R\leq C\tau_{0,0}}\int_1^\infty \phi(R, \xi)x_0(\xi)\rho(\xi)\,d\xi\big\|_{H^{\frac32+2\delta_0}_{dR}}\\
&\lesssim (C\tau_{0,0})^{\frac12}\big\|\langle\xi\rangle^{\frac34}x_0(\xi)\rho(\xi)\big\|_{L^1_{d\xi}(\xi<1)} + \big\|\langle\xi\rangle^{\frac34+\delta_0}x_0(\xi)\big\|_{L^2_{d\rho}(\xi>1)}.
\end{align*}
We have used here that $\phi(R, \xi)$ us uniformly bounded, which follows from the preceding proposition. Then we have 
\begin{align*}
&\big\|\langle\xi\rangle^{\frac34}x_0(\xi)\rho(\xi)\big\|_{L^1_{d\xi}(\xi<1)}\lesssim \big\|\xi^{-\delta_0}x_0(\xi)\big\|_{L^2_{d\xi}(\xi<1)},\\&\big\|\langle\xi\rangle^{\frac34+\delta_0}x_0(\xi)\big\|_{L^2_{d\rho}(\xi>1)}\lesssim \big\|\langle\xi\rangle^{1+\delta_0}x_0\big\|_{L^2_{d\xi}(\xi>1)},
\end{align*}
where we have used the asymptotics for $\rho$ from the preceding proposition, and so we in fact have 
\begin{equation}\label{eq:HS}
\big\|\chi_{R\leq C\tau_{0,0}}(P_c\tilde{\epsilon}_0)(R)\big\|_{H^{\frac32+2\delta_0}_{dR}}\lesssim_{\tau_{0,0}}\big\|x_0\big\|_{\tilde{S}_1}. 
\end{equation}
Thus the 'physical data' corresponding to the distorted Fourier variable in $\tilde{S}_1$ is actually of regularity $H^{\frac32+}_{loc}$. 
To reconstruct the full perturbation $\epsilon_0$, we also need to prescribe the discrete spectral part $x_{0,d}$, and then set 
\begin{equation}\label{eq:x0epsilon0}
\epsilon_0(r) = R^{-1}\tilde{\epsilon}_0(R) = R^{-1}[x_{0,d}\phi_d(R) + \int_0^\infty x_0(\xi)\phi(R, \xi)\rho(\xi)\,d\xi],
\end{equation}
where $R = \lambda(t_0)r$ and $\lambda(t) = t^{-1-\nu}$. 
\\

The relation of the second Fourier variable $x_1(\xi)$ and $\epsilon_1$ is a bit more complicated, see \cite{CondBlow}, due to the fact that here all of $x_0, x_{0,d}, x_1, x_{1,d}$ are involved. This is due to the fact that the description of the perturbed solution $u(t, x)$ shall actually be in terms of the new variables\footnote{Actually, we shall be more specific later, and in fact introduce slightly perturbed $\lambda_{\gamma_1,\gamma_2}$, $\tau_{\gamma_1,\gamma_2}$ to get the right description.} $R = \lambda(t) r$, $\tau = \int_{t}^\infty \lambda(s)\,ds$, which mix time and space. Specifically, consider a function 
\[
\tilde{\epsilon}(\tau, R) = x_d(\tau)\phi_d(R) +  \int_0^\infty x(\tau, \xi)\phi(R, \xi)\rho(\xi)\,d\xi,\,\tilde{\epsilon} = R\epsilon.
\]
Then we obtain the relation 
\[
-\frac{R\epsilon_t}{\lambda} = \big(\partial_{\tau} + \frac{\dot{\lambda}}{\lambda}(R\partial_R - 1)\big)\tilde{\epsilon},\,\dot{\lambda} = \lambda_{\tau}. 
\]
Introducing the notation $\underline{x}: = \left(\begin{array}{c}x_d\\ x\end{array}\right)$ and passing to the Fourier variables by using Theorem~\ref{thm:transferenceop}, we find 
\[
-\left(\begin{array}{c}\mathcal{F}\big(\frac{R}{\lambda}\epsilon_t\big)\\ \langle\phi_d, \frac{R}{\lambda}\epsilon_t\rangle\end{array}\right) = \mathcal{D}_{\tau}\underline{x}(\tau,\cdot) + \beta_{\nu}(\tau)\mathcal{K}\underline{x}(\tau, \cdot),\,\beta_{\nu}(\tau) = \frac{\dot{\lambda}}{\lambda}(\tau),
\]
where we have introduced the important dilation type operator 
\[
\mathcal{D}_{\tau}: = \partial_{\tau} + \frac{\dot{\lambda}}{\lambda}\mathcal{A},\,\,\,\mathcal{A} = \left(\begin{array}{cc}0& 0\\0&-2\xi\partial_{\xi}-\frac52 - \frac{\rho'(\xi)\xi}{\rho(\xi)}\end{array}\right)
\]
More explicitly, we have 
\begin{equation}\label{eq:datatransference1}
-\mathcal{F}\big(\frac{R}{\lambda}\epsilon_t\big)\big|_{t = t_0} = x_1 + \beta_{\nu}(\tau_{0,0})\mathcal{K}_{cc}x_0 + \beta_{\nu}(\tau_{0,0})\mathcal{K}_{cd}x_{0d}
\end{equation}
\begin{equation}\label{eq:datatransference2}
-\langle\phi_d, \frac{R}{\lambda}\epsilon_t\rangle\big|_{t = t_0} = x_{1d} + \beta_{\nu}(\tau_{0,0})\mathcal{K}_{dd}x_{0d} + \beta_{\nu}(\tau_{0,0})\mathcal{K}_{dc}x_{0}
\end{equation}
where we have set $x_1 = \big(\partial_{\tau} - \frac{\dot{\lambda}}{\lambda}(2\xi\partial_{\xi} +\frac52 + \frac{\rho'(\xi)\xi}{\rho(\xi)})\big)x(\tau, \xi)\big|_{\tau = \tau_{0,0}}$, as well as $x_{1d} = \partial_{\tau}x_d(\tau)\big|_{\tau = \tau_{0,0}}$, and as before we use 
\[
\tau_{0,0} = \int_{t_0}^\infty s^{-1-\nu}\,ds,
\]
which thus corresponds to the new time variable with respect to the scaling law $\lambda(t) = t^{-1-\nu}$ evaluated at initial time $t = t_0$. 
\\

For future reference, we note that we shall sometimes use the notation $\mathcal{D}_{\tau} = \partial_{\tau} - \frac{\dot{\lambda}}{\lambda}(2\xi\partial_{\xi}+\frac52 +\frac{\rho'(\xi)\xi}{\rho(\xi)})$ when this operator acts on scalar functions $x(\tau, \xi)$, while it acts on vector valued functions $\underline{x}$ via the above formula. 
\\

Finally, the relations \eqref{eq:x0epsilon0} in conjunction with \eqref{eq:datatransference1}, \eqref{eq:datatransference2} give the translation from the data quadruple $(x_0, x_1)\in \tilde{S},\,(x_{0d}, x_{1d})\in \R^2$ to a data pair $(\epsilon, \epsilon_t)|_{t = t_0} = (\epsilon_0, \epsilon_1)$. An argument analogous to the one used to establish \eqref{eq:HS} implies then that $\epsilon_1\in H^{\frac12+2\delta_0}_{loc}$. 

\section{The main theorem and outline of the proof}

\subsection{The main theorem}
We shall now consider what happens to the evolution of the perturbed data $u_{\nu}[t_0]$, with $u_{\nu}$ as in Theorem~\ref{thm:KST}. In light of Theorem~\ref{thm:codim1stable}, we only expect such perturbations to yield a type II dynamics (backwards in time, i. e. for $t<t_0$), provided we impose a suitable co-dimension one condition on the perturbation imposed. That this is indeed all that is required follows from

\begin{thm}\label{thm:Main} Assume $0<\nu\ll 1$, and assume $t_0 = t_0(\nu)>0$ is sufficiently small, so that the solutions $u_{\nu}(t, x)$ in Theorem~\ref{thm:KST} exist on $(0, t_0]\times \R^3$. Let $\delta_1 = \delta_1(\nu)>0$ be small enough, and let 
$\mathcal{B}_{\delta_1}\subset \tilde{S}\times \R$ be the $\delta_1$-vicinity of $\big((0,0), 0\big)\in \tilde{S}\times \R$, where $\tilde{S}$ is the Banach space defined as the completion of $C_0^\infty(0,\infty)\times C_0^\infty(0,\infty)$ with respect to the norm \eqref{eq:Stildenorm}. 
\\
Then there is a Lipschitz function $\gamma_1: \mathcal{B}_{\delta_1}\rightarrow\R$, such that for any triple $\big((x_0, x_1), x_{0d}\big)\in \mathcal{B}_{\delta_1}$, the quadruple 
\[
\big((x_0, x_1), (x_{0d}, x_{1d})\big),\,x_{1d} = \gamma_1(x_0, x_1, x_{0d})
\]
determines a data perturbation pair $(\epsilon_0, \epsilon_1)\in H^{\frac32+2\delta_0}_{rad, loc}(\R^3)\times H^{\frac12+2\delta_0}_{rad, loc}(\R^3)$ via \eqref{eq:x0epsilon0}, \eqref{eq:datatransference1}, \eqref{eq:datatransference2}, and such that the perturbed initial data 
\begin{equation}\label{eq:perturbeddata1}
u_{\nu}[t_0] + (\epsilon_0, \epsilon_1)
\end{equation}
lead to a solution $\tilde{u}(t, x)$ on $(0,t_0]\times \R^3$ admitting the description 
\begin{align*}
\tilde{u}(t, x) = W_{\tilde{\lambda}(t)}(x) + \epsilon(t, x),\,\big(\epsilon(t, \cdot), \epsilon_t(t, \cdot)\big)\in H^{1+\frac{\nu}{2}-}_{loc}\times H^{\frac{\nu}{2}-}_{loc}
\end{align*}
where the parameter $\tilde{\lambda}(t)$ equals $\lambda(t)$ asymptotically 
\[
\lim_{t\rightarrow 0}\frac{\tilde{\lambda}(t)}{\lambda(t)} = 1,\,\lambda(t) = t^{-1-\nu}.
\]
In particular, the blow up phenomenon described in Theorem~\ref{thm:KST} is stable under a suitable co-dimension one class of data perturbations. 
\end{thm}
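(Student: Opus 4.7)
The plan is to write the perturbed solution in the form
\[
\tilde u(t,x) = W_{\tilde\lambda(t)}(x) + \eta(t,x) + \epsilon(t,x),
\]
where $\eta$ is the correction from Theorem~\ref{thm:KST}, $\tilde\lambda(t)$ is a modulated scaling parameter close to $\lambda(t)=t^{-1-\nu}$, and $\epsilon$ is the new unknown error. Using that $W_{\lambda(t)}+\eta = u_{\nu}$ solves \eqref{eq:critmain}, substitution yields a wave equation
\[
\Box\epsilon + 5W_{\tilde\lambda}^{4}\epsilon = \mathcal{N}(\epsilon,\eta,\tilde\lambda) + \mathcal{M}(\tilde\lambda,\lambda,\eta),
\]
where $\mathcal{N}$ collects the quintic self-interactions of $\epsilon+\eta$ together with the cross terms coming from $W_{\tilde\lambda}$, and $\mathcal{M}$ is the source produced by the modulation and by the mismatch between $W_{\lambda}$ and $W_{\tilde\lambda}$. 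I would then pass to the adapted coordinates $R=\tilde\lambda(t)\,r$, $\tau=\int_t^\infty\tilde\lambda(s)\,ds$, set $\tilde\epsilon(\tau,R)=R\epsilon$, and reduce the linear part to $-\partial_\tau^2 + (\tfrac{\dot{\tilde\lambda}}{\tilde\lambda}R\partial_R)$-type terms plus $\mathcal{L}$. Applying the distorted Fourier transform of Proposition~\ref{prop:Fourier} and invoking Theorem~\ref{thm:transferenceop} to replace $R\partial_R$ by $\mathcal{A}+\mathcal{K}$ turns the problem into a coupled system
\[
\bigl(\mathcal{D}_\tau^{2}+\xi\bigr)\,\underline{x}(\tau) \;=\; \underline{F}(\underline{x},\eta,\tilde\lambda) + \beta_{\nu}(\tau)\,\mathcal{K}\text{-terms},
\]
for $\underline{x}(\tau)=(x_d(\tau),x(\tau,\xi))$, with initial data at $\tau=\tau_{0,0}$ prescribed by \eqref{eq:x0epsilon0}--\eqref{eq:datatransference2}.

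The heart of the argument is a contraction in $\tilde{S}$ for the continuous-spectral component combined with a stable-manifold analysis of the discrete one. For $x(\tau,\xi)$ I would write a Duhamel formula built from the half-wave propagators $\cos(\sqrt{\xi}(\tau-\sigma))$ and $\sin(\sqrt{\xi}(\tau-\sigma))/\sqrt{\xi}$ conjugated by the $\mathcal{A}$-flow, exactly as in \cite{CondBlow}. The weighted norm $\tilde{S}$ absorbs the $\beta_{\nu}(\tau)=O(\tau^{-1})$ commutator $\mathcal{K}$-terms by the Calderón--Zygmund bounds on $\mathcal{K}_{cc}$ stated in Theorem~\ref{thm:transferenceop}, while the nonlinearity $\underline{x}\mapsto\underline{F}$ is Lipschitz on the $\delta_1$-ball of $\tilde{S}\times\R$ with a contraction constant of size $\lesssim \tau_{0,0}^{-c\delta_0}\sim t_0^{c(1+\nu)\delta_0}$; taking $t_0=t_0(\nu)$ small enough produces a unique fixed point.

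For the discrete mode, the equation takes the form $\ddot x_d + \xi_d\,x_d = f(\tau)$ with $\xi_d<0$ and right-hand side $f$ depending Lipschitz-continuously on the already-constructed continuous part. The homogeneous solutions are $e^{\pm\sqrt{|\xi_d|}\,\tau}$, so boundedness as $\tau\to+\infty$ (equivalent to type II behaviour as $t\to 0^+$) forces the single linear constraint
\[
x_{1d} + \sqrt{|\xi_d|}\,x_{0d} \;=\; -\int_{\tau_{0,0}}^{\infty} e^{-\sqrt{|\xi_d|}(\sigma-\tau_{0,0})} f(\sigma)\,d\sigma,
\]
which I would solve by a second contraction to obtain $x_{1d}=\gamma_1(x_0,x_1,x_{0d})$ with Lipschitz dependence. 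The modulation $\tilde\lambda(t)$ is simultaneously fixed by the orthogonality condition that $\tilde\epsilon(\tau,\cdot)$ be orthogonal to the zero-energy resonance $\phi_0$, yielding a first-order ODE whose solution satisfies $\tilde\lambda/\lambda\to 1$ as $t\to 0$.

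The principal obstacle is the shock of $\eta$ across the light cone $|x|=t$: the expansion \eqref{eq:expansion1}--\eqref{eq:expansion2} shows that the dominant singular contribution to $\mathcal{M}$ and to the cross terms in $\mathcal{N}$ scales like $(1-a)^{(\nu+1)/2}\log(1-a)$, which lives only in $H^{1+\nu/2-}$, whereas the $\tilde{S}$ topology encodes the stronger $H^{3/2+2\delta_0}_{\mathrm{loc}}$ regularity established in \eqref{eq:HS}. Closing the estimates therefore requires that the limited spatial regularity of this singular piece be compensated by a temporal smallness factor $\tau_{0,0}^{-c(1+\nu)\delta_0}$ coming from the parametrix weights; this is precisely where the smallness assumption $0<\nu\ll 1$ enters decisively, since for small $\nu$ the singular exponent $(\nu+1)/2$ is close to $1/2$ and permits absorption into the $\xi^{\delta_0}$ weight defining $\tilde{S}$. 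Verifying that this absorption survives the iteration, and that the modulation-induced sources $\mathcal{M}$ remain subordinate to $\mathcal{N}$ in $\tilde{S}$, is the delicate technical point on which the argument hinges; once it is established, the Banach fixed point theorem delivers the Lipschitz manifold parametrised by $\gamma_1$ and the asserted asymptotic $\tilde\lambda(t)/\lambda(t)\to 1$.
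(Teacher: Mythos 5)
Your sketch reproduces the general architecture of the problem (coordinate change, distorted Fourier transform, contraction in $\tilde{S}$ for the continuous part plus a stable-manifold equation for $x_d$), but it misses the central difficulty that the paper is actually built to overcome, and the modulation step you propose would not give a co-dimension \emph{one} result.

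The key point is that the iteration scheme of \cite{CondBlow} does not close for data satisfying only the single discrete-mode condition \eqref{eq:cond1}. The nonlinearity $N(u_\nu,\tilde\epsilon)$ must be estimated via $\tilde\epsilon/R$, and Proposition~\ref{prop:lingrowthcond} shows that this quantity splits into a bounded piece plus a resonant piece $\tilde\epsilon_2$ that grows linearly in $\tau$ unless the \emph{two} vanishing conditions \eqref{eq:vanishing} hold for the continuous-spectral data $(x_0,x_1)$ at time $\tau_0$. Without those conditions there is no smallness gain from the weights in $\tilde{S}$ — the growth is not a mere $\tau_0^{-c\delta_0}$ loss that gets absorbed but a genuine linear divergence of the resonance $\phi_0$ coefficient — so the contraction you invoke simply fails. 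Your proposal never mentions these conditions, and in particular the claim that smallness of $t_0$ alone lets you absorb the singular cross terms does not survive scrutiny: the issue is not just spatial regularity of the shock but the unconditional growth of the resonant mode.

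The paper's solution is structurally different from yours. Rather than dynamically modulating a single $\tilde\lambda(t)$ via an orthogonality condition against $\phi_0$ (a first-order ODE, hence one free scalar), the paper introduces the explicit static two-parameter family of scaling laws $\lambda_{\gamma_1,\gamma_2}(t)$ in \eqref{eq:lambdagammaonetwo}, constructs a corresponding two-parameter family of approximate solutions $u_{approx}^{(\gamma_1,\gamma_2)}$ in Theorem~\ref{thm:main approximate} whose $\gamma$-dependent part is designed to be \emph{smoother} than the underlying $u_\nu$, and then (Proposition~\ref{prop:choiceofgamma}) fixes $(\gamma_1,\gamma_2)$ as Lipschitz functions of $(x_0,x_1,x_{0d})$ precisely so that the re-expressed Fourier data $(x_0^{(\gamma_1,\gamma_2)}, x_1^{(\gamma_1,\gamma_2)})$ satisfy the two vanishing relations \eqref{eq:vanishing2}. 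This is what reduces the problem from a co-dimension-three condition to the genuine co-dimension-one condition on $x_{1d}$ alone. Your single scalar degree of freedom from the $\phi_0$-orthogonality can kill at most one of the two conditions (the cosine relation is the one degenerate to $\phi_0$ in the limit $\xi\to 0$), and would leave you with a co-dimension-two manifold at best, which is not the assertion of the theorem. A further practical point: keeping the original correction $\eta$, which was built for the unperturbed $\lambda(t)=t^{-1-\nu}$, while swapping the bulk to $W_{\tilde\lambda}$ with a time-varying $\tilde\lambda$, creates error terms that must be recomputed through the elliptic/parametrix hierarchy of \cite{KS1}; this is exactly what Lemma~\ref{lem:corrections} and the construction of $v_{smooth,a}$ accomplish, and it is not a step one can skip.
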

\begin{rem} We could have replaced $\tilde{\lambda}$ by $\lambda$ in the preceding theorem and included the arising modification in the error term $\epsilon(t, x)$. The formulation of the theorem emphasises part of the proof strategy, which shall indeed consist in a (very slight) modification of the scaling law $\lambda(t)= t^{-1-\nu}$ to force two important vanishing conditions. It is this part which is indeed analogous to the usual 'modulation method'. 
\end{rem}

\subsection{Outline of the proof}

The proof will consist of two stages, the first replacing the blow up solution $u_{\nu}(t, x)$ by a two parameter family $u_{approx}^{(\gamma_1,\gamma_2)}$ of approximate blow up solutions, where the parameters $\gamma_1,\gamma_2$ will depend on the perturbation $(\epsilon_0, \epsilon_1)$ and thus on the original data set $(x_0, x_1, x_{0d})$, and the second stage will involve completing the approximate solution $u_{approx}^{(\gamma_1,\gamma_2)}$ to an exact one of the form 
\[
u_{approx}^{(\gamma_1,\gamma_2)} + \epsilon(t, x),
\]
whose data at time $t = t_0$ will coincide with $u_{\nu}[t_0] + (\epsilon_0, \epsilon_1)$ at time $t = t_0$, {\it{provided we restrict the data to a suitable dilate of the light cone}} $r\leq C t_0$. In fact, we do not care about what happens outside of the light cone, as our solutions will remain regular there for simple a priori non-concentration of energy reasons, exactly as in \cite{KST}.  
\\
Explaining the reason for introducing $u_{approx}^{(\gamma_1,\gamma_2)}$ necessitates outlining the strategy for controlling the error term $\epsilon(t, x)$, which will be done via Fourier methods, exactly as was done in \cite{CondBlow}. 
\\

{\it{The method of \cite{CondBlow}.}} Assume we intend to construct a solution of the form $u(t, x) = u_{\nu}(t,x) + \epsilon(t,x)$, with $u_{\nu}$ as in Theorem~\ref{thm:KST}. Recall that $u_{\nu}$ consists of a bulk part $W_{\lambda(t)}(x)$ with $\lambda(t) = t^{-1-\nu}$ and an error part. Passing to the new variables $R = \lambda(t)r, \tau = \int_t^\infty \lambda(s)\,ds$, one derives the following equation for the variable $\tilde{\epsilon}(\tau, R): =R\epsilon(t, r)$, see also \cite{KST}, \cite{KS1}: 
\begin{equation}\label{eq:epseqn}\begin{split}
&(\partial_{\tau} + \dot{\lambda}\lambda^{-1}R\partial_R)^2 \tilde{\eps} - \beta_{\nu}(\tau)(\partial_{\tau} + \dot{\lambda}\lambda^{-1}R\partial_R)\tilde{\eps} + \calL\tilde{\eps}\\
&=\lambda^{-2}(\tau)R N(\eps)+\partial_\tau(\dot{\lambda}\lambda^{-1})\tilde{\eps};\,\beta_{\nu}(\tau) = \dot{\lambda}(\tau)\lambda^{-1}(\tau), 
\end{split}\end{equation}
where the operator $\calL$ is given by 
\[
\calL = -\partial_R^2 - 5W^4(R)
\]
and we have 
\[
RN(\eps) = 5(u_{\nu}^4 - u_0^4)\tilde{\eps} + RN(u_{\nu}, \tilde{\eps}),\,u_0 = W_{\lambda(t)}(x) = \frac{\lambda^{\frac12}(t)}{\big(1+\frac{(\lambda(t)|x|)^2}{3}\big)^{\frac12}},
\]
\[
RN(u_{\nu}, \tilde{\eps}) = R(u_{\nu}+\frac{\tilde{\eps}}{R})^5 - R u_{\nu}^5 - 5u_{\nu}^4\tilde{\eps}
\]
To solve this equation inside the forward light cone centered at the origin, one translates it to the Fourier side, i. e. one writes 
\begin{equation}\label{eq:FourierRep}
\tilde{\eps}(\tau, R) = x_d(\tau)\phi_d(R) + \int_0^\infty x(\tau, \xi)\phi(R, \xi)\rho(\xi)\,d\xi,
\end{equation}
see Prop.~\ref{prop:Fourier}. Taking advantage of Theorem~\ref{thm:transferenceop}, and using simple algebraic manipulations, see (2.3) in \cite{CondBlow}, one derives the following equation system in terms of the Fourier coefficients
\[
\underline{x}(\tau, \xi) = \left(\begin{array}{c}x_d(\tau)\\ x(\tau, \xi)\end{array}\right):
\]
\begin{equation}\label{eq:transport}
\big(\mathcal{D}_{\tau}^2 + \beta_{\nu}(\tau)\mathcal{D}_{\tau} + \underline{\xi}\big)\underline{x}(\tau, \xi) = \calR(\tau, \underline{x}) + \underline{f}(\tau, \underline{\xi}),
\end{equation}
where we have
\begin{equation}\label{eq:Rterms}
\calR(\tau, \underline{x})(\xi) = \Big(-4\beta_{\nu}(\tau)\mathcal{K}\mathcal{D}_{\tau}\underline{x} - \beta_{\nu}^2(\tau)(\mathcal{K}^2 + [\mathcal{A}, \mathcal{K}] + \mathcal{K} +  \beta_{\nu}' \beta_{\nu}^{-2}\mathcal{K})\underline{x}\Big)(\xi)
\end{equation}
 with $\beta_{\nu}(\tau) = \frac{\dot{\lambda}(\tau)}{\lambda(\tau)}$, and we set $\underline{f} = \left(\begin{array}{c}f_d\\ f\end{array}\right)$ where
 \begin{equation}\label{eq:fterms}\begin{split}
 &f(\tau, \xi) = \mathcal{F}\big( \lambda^{-2}(\tau)\big[5(u_{\nu}^4 - u_0^4)\tilde{\eps} + RN(u_{\nu}, \tilde{\eps})\big]\big)\big(\xi\big)\\
 &f_d(\tau) = \langle \lambda^{-2}(\tau)\big[5(u_{\nu}^4 - u_0^4)\tilde{\eps} + RN(u_{\nu}, \tilde{\eps})\big], \phi_d(R)\rangle.
 \end{split}\end{equation}
Also $ \mathcal{D}_{\tau} $ denotes the key operator 
 \[
 \mathcal{D}_{\tau} = \partial_{\tau} + \beta_{\nu}(\tau)\mathcal{A},\quad \mathcal{A} = \left(\begin{array}{cc}0&0\\0&\mathcal{A}_c\end{array}\right)
 \]
 and we have 
 \[
 \mathcal{A}_c = -2\xi\partial_{\xi} - \Big (\frac{5}{2}  + \frac{\rho'(\xi)\xi}{\rho(\xi)} \Big),
 \]
while $\mathcal{K}$ is as in Theorem~\ref{thm:transferenceop}. 
\\
As {\it{initial data}} for the problem \eqref{eq:transport}, we shall of course use 
\begin{equation}\label{eq:initdata}
\underline{x}(\tau_0) = \left(\begin{array}{c}x_{0d}\\ x_0(\xi)\end{array}\right),\,\mathcal{D}_{\tau}\underline{x}(\tau_0) = \left(\begin{array}{c}x_{1d}\\ x_1(\xi)\end{array}\right)
\end{equation}
where the components $x_{0,1}(\xi), x_{0d}$ shall be freely described (within the constraints of Theorem~\ref{thm:Main}), while the last component $x_{1d}$ shall be determined via a suitable Lipschitz function in terms of the first three components. 
This is again due to the exponential growth of the component $x_d(\tau)$ due to the unstable mode. The method of solution of \eqref{eq:transport} uses an iterative scheme, beginning with the zeroth iterate solving 
\begin{equation}\label{eq:linhom1}
\big(\mathcal{D}_{\tau}^2 + \beta_{\nu}(\tau)\mathcal{D}_{\tau} + \underline{\xi}\big)\underline{x}(\tau, \xi) = \underline{0},\,\underline{x}(\tau_0) = \left(\begin{array}{c}x_{0d}\\ x_0(\xi)\end{array}\right),\,\mathcal{D}_{\tau}\underline{x}(\tau_0) = \left(\begin{array}{c}x_{1d}\\ x_1(\xi)\end{array}\right).
\end{equation}
This can be solved explicitly as in Lemma 2.1 in \cite{CondBlow}, which we quote here: 
\begin{lem}\label{lem:linhom} The equation \eqref{eq:linhom1} is solved for the continuous spectral part $x(\tau, \xi)$ via the following parametrix: 
\begin{equation}\label{eq:linhomparam1}\begin{split}
x(\tau, \xi) = &\frac{\lambda^{\frac{5}{2}}(\tau)}{\lambda^{\frac{5}{2}}(\tau_0)}\frac{\rho^{\frac{1}{2}}(\frac{\lambda^{2}(\tau)}{\lambda^{2}(\tau_0)}\xi)}{\rho^{\frac{1}{2}}(\xi)}\cos\Big[\lambda(\tau)\xi^{\frac{1}{2}}\int_{\tau_0}^{\tau}\lambda^{-1}(u)\,du\Big]x_0\big(\frac{\lambda^{2}(\tau)}{\lambda^{2}(\tau_0)}\xi\big)\\
& + \frac{\lambda^{\frac{3}{2}}(\tau)}{\lambda^{\frac{3}{2}}(\tau_0)}\frac{\rho^{\frac{1}{2}}(\frac{\lambda^{2}(\tau)}{\lambda^{2}(\tau_0)}\xi)}{\rho^{\frac{1}{2}}(\xi)}\frac{\sin\Big[\lambda(\tau)\xi^{\frac{1}{2}}\int_{\tau_0}^{\tau}\lambda^{-1}(u)\,du\Big]}{\xi^{\frac12}}x_1\big(\frac{\lambda^{2}(\tau)}{\lambda^{2}(\tau_0)}\xi\big)\\
\end{split}\end{equation}
Moreover, writing $\underline{x}_0 = \left(\begin{array}{c}x_{0d}\\ x_0(\xi)\end{array}\right)$, $\underline{x}_1 = \left(\begin{array}{c}x_{1d}\\ x_1(\xi)\end{array}\right)$ and picking $\tau_0\gg 1$ sufficiently large, there is $c_d = 1 + O(\tau_0^{-1})$ as well as $\gamma_d = -|\xi_d|^{\frac12} + O(\tau_0^{-1})$ such that if we impose the co-dimension one condition 
\begin{equation}\label{eq:cond1}
x_{1d} = \gamma_d x_{0d},
\end{equation}
then the discrete spectral part of $\underline{x}(\tau,\xi)$ admits for any $\kappa>0$ the representation
\[
x_d(\tau) = \big(1+O_{\kappa}(\tau^{-1}e^{\kappa(\tau-\tau_0)})\big)e^{-|\xi_d|^{\frac12}(\tau - \tau_0)}c_d x_{0d}
\]
One also has for $i\geq 1$
\[
(-\partial_{\tau})^ix_d(\tau) = \big(1+O_{\kappa}(\tau^{-1}e^{\kappa(\tau-\tau_0)})\big)|\xi_d|^{\frac{i}{2}}e^{-|\xi_d|^{\frac12}(\tau - \tau_0)}c_d x_{0d}
\]
\end{lem}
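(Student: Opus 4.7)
The plan is to decouple the system: since $\mathcal{A} = \mathrm{diag}(0, \mathcal{A}_c)$ and multiplication by $\underline{\xi}$ is block diagonal, \eqref{eq:linhom1} splits into a scalar second order ODE for the discrete component $x_d(\tau)$ and an independent linear evolution for the continuous component $x(\tau, \xi)$. Each is then handled by an explicit ansatz.

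For the continuous component, introduce the characteristic frequency $\eta = \lambda^{2}(\tau)\xi/\lambda^{2}(\tau_{0})$ (the natural invariant of the transport piece of $\mathcal{D}_\tau$) and the phase $\Phi = \lambda(\tau)\xi^{1/2}\int_{\tau_{0}}^{\tau}\lambda^{-1}(u)\,du = \lambda(\tau_{0})\eta^{1/2}T(\tau)$ with $T(\tau) := \int_{\tau_{0}}^{\tau}\lambda^{-1}(u)\,du$, and make the separated WKB ansatz
\[
x(\tau,\xi) = A(\tau,\xi)\,F(\Phi(\tau,\xi))\,y(\eta(\tau,\xi)),\qquad F\in\{\cos,\sin\}.
\]
A direct computation based on the modified Leibniz rule
\[
\mathcal{D}_\tau(fh) = (\mathcal{D}_\tau f)h + f(\mathcal{D}_\tau h) + \beta_\nu(\tau)\,m(\xi)\,fh,\qquad m(\xi)= \tfrac{5}{2}+\tfrac{\rho'(\xi)\xi}{\rho(\xi)},
\]
(the extra term reflecting the multiplicative piece $-m(\xi)$ inside $\mathcal{A}_c$), together with the identities $\mathcal{D}_\tau y(\eta) = -\beta_\nu m\,y(\eta)$ and $\mathcal{D}_\tau F(\Phi) = \xi^{1/2}F'(\Phi) - \beta_\nu m F(\Phi)$, reduces the left hand side of \eqref{eq:linhom1} to
\[
\bigl[(\mathcal{D}_\tau^{2}A + \beta_\nu\mathcal{D}_\tau A + \xi A)F(\Phi) + 2\xi^{1/2}(\mathcal{D}_\tau A)F'(\Phi) + \xi A\,F''(\Phi)\bigr]\,y(\eta).
\]
With $F'' = -F$ the whole expression vanishes provided $\mathcal{D}_\tau A = 0$. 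Seeking $A$ in the factored form $\lambda^{\alpha}(\tau)\rho^{\beta}(\xi)\xi^{\gamma}\psi(\eta)$, the condition $\mathcal{D}_\tau A = 0$ forces $\beta = -1/2$ and $\alpha = 2\gamma + 5/2$: the choice $\gamma=0$ gives the cosine fundamental mode with prefactor $\lambda^{5/2}/\lambda_{0}^{5/2}$, while $\gamma=-1/2$ gives the sine fundamental mode with prefactor $\lambda^{3/2}/\lambda_{0}^{3/2}$. The factor $\psi(\eta)$ is fixed by matching the data at $\tau=\tau_0$, using $\Phi|_{\tau_0}=0$, $\eta|_{\tau_0}=\xi$, and the observation that (because $\mathcal{D}_\tau A = 0$) $\mathcal{D}_\tau x|_{\tau_0}$ collapses to the single contribution $A(\tau_0,\xi)\xi^{1/2}F'(0)y(\xi)$. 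Summing the cosine and sine modes with data $y=x_0$ and $y=x_1$ respectively reproduces \eqref{eq:linhomparam1}.

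For the discrete component, the equation becomes the scalar ODE $\ddot{x}_d + \beta_\nu(\tau)\dot{x}_d + \xi_d x_d = 0$ with $\xi_d<0$ and $\beta_\nu(\tau)=-\tfrac{1+\nu}{\nu\tau}+O(\tau^{-2})$. A Levinson-type argument (variation of parameters around the unperturbed modes $e^{\pm|\xi_d|^{1/2}\tau}$, with the resulting Volterra integral equation iterated from $\tau=\infty$ for the decaying branch $\phi_-$ and from $\tau=\tau_0$ for the growing branch $\phi_+$) produces two linearly independent fundamental solutions $\phi_\pm(\tau) = e^{\pm|\xi_d|^{1/2}\tau}\bigl(1+O_\kappa(\tau^{-1}e^{\kappa\tau})\bigr)$ valid for every $\kappa>0$. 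The co-dimension one condition \eqref{eq:cond1} is precisely the projection onto $\phi_-$: it suffices to set $\gamma_d := \dot\phi_-(\tau_0)/\phi_-(\tau_0) = -|\xi_d|^{1/2}+O(\tau_0^{-1})$. The unique compatible solution $x_d(\tau) = x_{0d}\phi_-(\tau)/\phi_-(\tau_0)$ then admits the claimed asymptotic expansion with $c_d = 1 + O(\tau_0^{-1})$, and the formulas for $(-\partial_\tau)^i x_d(\tau)$ follow by term-by-term differentiation of the asymptotic representation of $\phi_-$.

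The main technical obstacle is the Levinson-type construction producing the sharp error $O_\kappa(\tau^{-1}e^{\kappa(\tau-\tau_0)})$ with $\kappa$ arbitrarily small: one must run the Volterra iteration in a function space weighted by $e^{-|\xi_d|^{1/2}\tau}$ and control the mild exponential loss from the $\int\beta_\nu\,d\tau$ terms precisely enough to absorb any small sub-dominant exponential growth. Once the fundamental solutions of the discrete ODE are in hand, the continuous-part computation is a purely algebraic verification driven by the single scalar identity $\mathcal{D}_\tau A = 0$, and the assembly of the parametrix from the two fundamental modes and the data is routine.
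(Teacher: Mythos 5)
The paper does not prove this lemma; it simply quotes it from Lemma~2.1 of~\cite{CondBlow} with the remark ``this can be solved explicitly as in Lemma~2.1 in \cite{CondBlow}, which we quote here.'' Your reconstruction is nonetheless correct and follows what is almost certainly the intended route. The block--diagonal decoupling is valid since $\mathcal{A}$ has zero discrete block; the characteristic coordinate $\eta = \lambda^{2}(\tau)\xi/\lambda^{2}(\tau_{0})$ and the modified Leibniz rule are exactly right; the algebra reducing the left side to $[(\mathcal{D}_\tau^{2}A + \beta_\nu\mathcal{D}_\tau A + \xi A)F + 2\xi^{1/2}(\mathcal{D}_\tau A)F' + \xi A F'']\,y(\eta)$ checks out, as does the system $\beta=-\tfrac12$, $\alpha=2\gamma+\tfrac52$ that makes $\mathcal{D}_\tau A = 0$ for $A = \lambda^{\alpha}\rho^{\beta}\xi^{\gamma}\psi(\eta)$; and the data matching at $\tau_0$ using $\Phi|_{\tau_0}=0$, $\eta|_{\tau_0}=\xi$, $\mathcal{D}_\tau x|_{\tau_0} = A(\tau_0,\xi)\xi^{1/2}F'(0)y(\xi)$ reproduces \eqref{eq:linhomparam1}. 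The discrete ODE analysis via a Levinson/Volterra construction of the decaying and growing branches, with $\gamma_d = \dot\phi_-(\tau_0)/\phi_-(\tau_0)$, is the standard and correct reading of the co-dimension one condition.

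One small inaccuracy that does not affect the argument: from $\lambda(t)=t^{-1-\nu}$ and $\tau = t^{-\nu}/\nu$ one finds $\lambda(\tau)=(\nu\tau)^{1+\nu^{-1}}$, hence $\beta_\nu(\tau)=\dot\lambda/\lambda = +\tfrac{1+\nu}{\nu\tau}$, not $-\tfrac{1+\nu}{\nu\tau}$ as you wrote (the paper's own remark ``$\lambda(\tau)=c(\nu)\tau^{-1-\nu^{-1}}$'' appears to carry the same sign typo). Since you only use $\beta_\nu = O(\tau^{-1})$ and absorb the resulting polynomial prefactor $e^{-\tfrac12\int\beta_\nu}$ into the $O_\kappa(\tau^{-1}e^{\kappa(\tau-\tau_0)})$ error, the sign is immaterial. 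One could also streamline the discrete part by first substituting $x_d = e^{-\tfrac12\int_{\tau_0}^{\tau}\beta_\nu}w$ to eliminate the first-order term; the resulting perturbation of $-\xi_d$ is $O(\tau^{-2})$ and hence absolutely integrable, so Levinson's classical theorem applies directly, which is slightly cleaner than running the iteration with the non-integrable $\beta_\nu$ itself.
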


This co-dimension one condition will have to be slightly modified in nonlinear ways for the higher iterates, but to leading order remains the same throughout and is responsible for the co-dimension one condition of Theorem~\ref{thm:Main}. 
\\

There are, however, two additional vanishing conditions in the work \cite{CondBlow}, imposed on the {\it{continuous spectral parts}} $x_{0,1}(\xi)$, and which arise due to the need to bound the nonlinear terms in $N(u_{\nu}, \tilde{\eps})$ in \eqref{eq:fterms}. These conditions arise when bounding $\frac{\tilde{\eps}(\tau, R)}{R}$ upon expressing $\tilde{\eps}(\tau, R)$ as in \eqref{eq:FourierRep} and inserting the parametrix \eqref{eq:linhomparam1} for the continuous spectral parts. This is the content of Prop. 3.1 in \cite{CondBlow}:
\begin{prop}\label{prop:lingrowthcond}Assume the data $(x_0, x_1)\in \langle\xi\rangle^{-1-\delta_0}\xi^{0+\delta_0}L^2_{d\xi}\times  \langle\xi\rangle^{-\frac{1}{2}-\delta_0}\xi^{0+\delta_0}L^2_{d\xi}$. Furthermore, assume that we have the vanishing relations 
 \begin{equation}\label{eq:vanishing}
 \int_0^\infty\frac{\rho^{\frac{1}{2}}(\xi)x_0(\xi)}{\xi^{\frac{1}{4}}}\cos[\nu\tau_0\xi^{\frac{1}{2}}]\,d\xi = 0,\,\int_0^\infty\frac{\rho^{\frac{1}{2}}(\xi)x_1(\xi)}{\xi^{\frac{3}{4}}}\sin[\nu\tau_0\xi^{\frac{1}{2}}]\,d\xi = 0. 
 \end{equation}
 at time $\tau = \tau_0$. Assume that $x(\tau, \xi)$ is given by \eqref{eq:linhomparam1}. Then the function\footnote{Here $P_c$ denotes the projection onto the continuous spectral part} $P_c\tilde{\epsilon}(\tau, R)$ represented by the Fourier coefficients $x(\tau, \xi)$ via 
 \[
 P_c\tilde{\epsilon}(\tau, R) = \int_0^\infty \phi(R, \xi)x(\tau, \xi)\rho(\xi)\,d\xi
 \]
 satisfies 
 \[
 P_c\tilde{\epsilon}(\tau, R) = \tilde{\epsilon}_1(\tau, R) + \tilde{\epsilon}_2(\tau, R),
 \]
 where we have 
 \[
 \big\|\frac{\tilde{\epsilon}_1(\tau, R)}{R}\big\|_{L^\infty_{dR}}\lesssim \big\|(\langle\xi\rangle^{\frac12+2\delta_0}\xi^{\frac12-\delta_0}x_0, \langle\xi\rangle^{\frac{1}{2}+2\delta_0}\xi^{-\delta_0}x_1)\big\|_{L^2_{d\xi}}
 \]
 \[
 \big\| \tilde{\epsilon}_2(\tau, R)\big\|_{L^\infty_{dR}}\lesssim \tau \big\|(\langle\xi\rangle^{\frac12+2\delta_0}\xi^{\frac12-\delta_0}x_0, \langle\xi\rangle^{\frac{1}{2}+2\delta_0}\xi^{-\delta_0}x_1)\big\|_{L^2_{d\xi}}
 \]
 Here $\delta_0>0$ is the small constant used to define $\tilde{S}$ in \eqref{eq:Stildenorm}. 
  \end{prop}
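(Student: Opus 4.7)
\medskip
\noindent\emph{Proof proposal.} The plan is to analyze the oscillatory integral
\[
P_c\tilde{\epsilon}(\tau, R) = \int_0^\infty \phi(R, \xi)\, x(\tau, \xi)\, \rho(\xi)\, d\xi
\]
with $x(\tau,\xi)$ substituted from the parametrix \eqref{eq:linhomparam1}. First I would remove the rescaling in the argument of $x_0, x_1$ by the change of variables $\eta = \frac{\lambda^2(\tau)}{\lambda^2(\tau_0)}\xi$. This reduces the phase to $\eta^{1/2}\sigma_\tau$ with $\sigma_\tau := \lambda(\tau_0)\int_{\tau_0}^{\tau}\lambda^{-1}(u)\,du$; a direct computation for $\lambda(t) = t^{-1-\nu}$ gives $\sigma_\tau = \nu\tau_0\big[1 - (\tau_0/\tau)^{1/\nu}\big]$, which is uniformly bounded by $\nu\tau_0$ and tends to $\nu\tau_0$ as $\tau\to\infty$. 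This is exactly the phase appearing in the vanishing conditions \eqref{eq:vanishing}, which is the structural reason these conditions are imposed.

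Next, I would split the $\eta$-integral into a ``high-frequency'' region $\eta \gtrsim R^{-2}$ and a ``low-frequency'' region $\eta \lesssim R^{-2}$. On the high-frequency region I would use the Jost representation $\phi(R,\eta) = a_+(\eta) f_+(R,\eta) + a_-(\eta) f_-(R,\eta)$ from Proposition~\ref{prop:Fourier}: the oscillation $e^{\pm iR\eta^{1/2}}$ together with $|a_\pm(\eta)|\lesssim \eta^{-1/2}$ at infinity and the weights built into $\tilde{S}$ produces an $L^\infty_{dR}$ bound with an effective $R^{-1}$ decay, by a Cauchy-Schwarz argument using the $L^2_{d\xi}$ weighted bounds on $(x_0, x_1)$. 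This contribution goes into $\tilde{\epsilon}_1$. On the low-frequency region, use the Taylor-type series $\phi(R,\eta) = \phi_0(R) + R^{-1}\sum_{j\geq 1}(R^2\eta)^j\phi_j(R^2)$. The terms $j\geq 1$ are $O(R)$ near $R=0$ with good decay bounds (from the estimate $|\phi_j(u)|\leq \frac{C^j}{(j-1)!}|u|\langle u\rangle^{-1/2}$), so dividing by $R$ still yields $L^\infty_{dR}$ bounds, absorbed into $\tilde{\epsilon}_1$.

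The delicate term is the $\phi_0(R)$ contribution, since $\phi_0(R)/R$ is bounded but does not vanish at $R=0$. Here I would write
\begin{align*}
\phi_0(R)\!\int_0^{R^{-2}}\!\!\cos[\eta^{1/2}\sigma_\tau]\,\text{weights}(\eta,\tau)\,x_0(\eta)\,d\eta
= \phi_0(R)\!\int_0^\infty\!\!\cos[\eta^{1/2}\nu\tau_0]\rho^{1/2}(\eta)\,x_0(\eta)\,d\eta + \mathrm{Err}
\end{align*}
where the first term vanishes by hypothesis \eqref{eq:vanishing} and $\mathrm{Err}$ collects (i) the phase difference $\cos[\eta^{1/2}\sigma_\tau]-\cos[\eta^{1/2}\nu\tau_0]$, bounded by $\eta^{1/2}|\sigma_\tau - \nu\tau_0|$; (ii) the discrepancy between the rescaled weight $\rho^{1/2}\bigl(\frac{\lambda^2(\tau_0)}{\lambda^2(\tau)}\eta\bigr)$ and $\rho^{1/2}(\eta)$ evaluated at $\eta = 0$; and (iii) the tail $\int_{R^{-2}}^\infty$. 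The analogous argument applies to the $x_1$ piece using the $\sin$ vanishing condition. Estimates (i)-(iii) are $O(\tau)$ in the worst case (the $\lambda^2(\tau)/\lambda^2(\tau_0)$ factor combined with the weight $\rho$ being responsible for the linear growth), and they yield $\|\tilde{\epsilon}_2\|_{L^\infty_{dR}}\lesssim \tau\|(x_0,x_1)\|_{\tilde S}$ since $\phi_0$ is bounded in $R$.

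The main obstacle I anticipate is Step (ii) above: carefully tracking the weight discrepancy $\rho^{1/2}(\frac{\lambda^2(\tau_0)}{\lambda^2(\tau)}\eta)/\rho^{1/2}(\eta)$ and the corresponding Jacobian, and matching them precisely against the vanishing integrals $\int\cos[\nu\tau_0\eta^{1/2}]\rho^{1/2}(\eta)x_0(\eta)/\eta^{1/4}\,d\eta = 0$ given in \eqref{eq:vanishing}, which use the ``limit'' weights rather than the $\tau$-dependent ones. The bookkeeping must show that every $\tau$-dependent discrepancy produces an \emph{integrable} factor in $\eta$ that pairs with the weighted $L^2$ norms defining $\tilde{S}$, and that the surviving $\tau$-growth is at worst linear. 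Aside from this, one should check that the substitution and splitting interact well with the asymptotic behavior of $\rho$ near $0$ and $\infty$ from Proposition~\ref{prop:Fourier}, so that the final norm appearing is precisely $\|(x_0,x_1)\|_{\tilde S}$.
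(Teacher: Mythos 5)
The paper does not reprove this proposition; it is quoted verbatim from \cite{CondBlow} (Prop.~3.1 there), so a line-by-line comparison with the paper's own argument is not possible. On the merits, your outline is structurally sound: rescaling by $\eta = \frac{\lambda^{2}(\tau)}{\lambda^{2}(\tau_0)}\xi$ reduces the phase to $\sigma_\tau\eta^{1/2}$ with $\sigma_\tau = \lambda(\tau_0)\int_{\tau_0}^{\tau}\lambda^{-1}(u)\,du = \nu\tau_0\big[1 - (\tau_0/\tau)^{1/\nu}\big]$ (your explicit computation is correct), and the resonance $\phi_0(R)$ from the expansion \eqref{eq:FourierExpansion} is indeed the unique source of linear growth, which is precisely what the vanishing conditions \eqref{eq:vanishing} neutralize; this matches the remark in the text that the growth of $\tilde{\epsilon}_2$ is due to the resonant part of $\tilde{\epsilon}$.

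Two points need sharpening. First, after the change of variables the argument of $\phi$ is $\mu^{-1}\eta$ with $\mu = \lambda^2(\tau)/\lambda^2(\tau_0)\geq 1$, so the Taylor/Jost split should occur at $\mu^{-1}\eta\sim R^{-2}$, i.e.\ at $\eta\sim\mu R^{-2}$, and the Jost oscillation runs at frequency $R(\mu^{-1}\eta)^{1/2}$; the threshold $\eta\sim R^{-2}$ you wrote is off by the factor $\mu$. Second, and more importantly, your items (i)--(ii) do not yet exhibit the cancellation that actually yields linear $\tau$-growth. Carrying the measure $\rho(\xi)\,d\xi$ and the parametrix weights through the rescaling, the $\phi_0$-contribution carries a prefactor $\mu^{1/4}\rho^{1/2}(\mu^{-1}\eta)\approx c\,\mu^{1/2}\eta^{-1/4}$ on its integrand, i.e.\ a factor $\frac{\lambda(\tau)}{\lambda(\tau_0)}\sim(\tau/\tau_0)^{1+\nu^{-1}}$, which grows much faster than $\tau$. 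The vanishing condition removes exactly this leading piece; what remains carries the phase-error factor $|\sigma_\tau-\nu\tau_0|=\nu\tau_0(\tau_0/\tau)^{1/\nu}$, and the identity
\[
\frac{\lambda(\tau)}{\lambda(\tau_0)}\cdot\big|\sigma_\tau-\nu\tau_0\big|\sim\Big(\frac{\tau}{\tau_0}\Big)^{1+\nu^{-1}}\cdot\nu\tau_0\Big(\frac{\tau_0}{\tau}\Big)^{1/\nu}=\nu\tau
\]
is what produces the claimed bound. Your write-up attributes the linear growth loosely to the rescaling factor combined with the weight $\rho$, which hides the fact that the raw prefactor alone would give $\tau^{1+\nu^{-1}}$; the phase cancellation furnished by the vanishing condition is exactly what brings this down to $\tau$, and this is the one place where the specific power law $\lambda(\tau)\sim\tau^{1+\nu^{-1}}$ really enters. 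That product computation is the heart of the estimate and should be stated explicitly.
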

  We note here that the growth of $\tilde{\epsilon}_2(\tau, R)$ is precisely due to the growth of the 'resonant part' of $\tilde{\eps}$, i. e. a multiple of the resonance $\phi_0(R)$, see the discussion preceding Prop. ~\ref{prop:Fourier}. 
We also observe that the expressions $\cos[\nu\tau_0\xi^{\frac{1}{2}}]$, $\sin[\nu\tau_0\xi^{\frac{1}{2}}]$ can alternatively be written as 
\[
\cos[\lambda(\tau_0)\xi^{\frac12}\int_{\tau_0}^\infty\lambda^{-1}(s)\,ds],\,\sin[\lambda(\tau_0)\xi^{\frac12}\int_{\tau_0}^\infty\lambda^{-1}(s)\,ds],
\]
upon noting that in terms of the variable $\tau\in [\tau_0, \infty)$, we have (abuse of notation) $\lambda(\tau) = c(\nu)\tau^{-1-\nu^{-1}}$. 
As the parametrix in \eqref{eq:linhomparam1} is valid for arbitrary $\lambda$, we see that the generalisation of the vanishing conditions \eqref{eq:vanishing} to more general $\lambda(t)\sim t^{-1-\nu}$ as $t\rightarrow\infty$ becomes (again upon passing to the new time variable $\tau = \int_t^\infty\lambda(s)\,ds$) 
 \begin{equation}\label{eq:vanishing1}\begin{split}
 &\int_0^\infty\frac{\rho^{\frac{1}{2}}(\xi)x_0(\xi)}{\xi^{\frac{1}{4}}}\cos[\lambda(\tau_0)\xi^{\frac12}\int_{\tau_0}^\infty\lambda^{-1}(s)\,ds]\,d\xi = 0,\\&\int_0^\infty\frac{\rho^{\frac{1}{2}}(\xi)x_1(\xi)}{\xi^{\frac{3}{4}}}\sin[\lambda(\tau_0)\xi^{\frac12}\int_{\tau_0}^\infty\lambda^{-1}(s)\,ds]\,d\xi = 0. 
 \end{split}\end{equation}

The key for proving Theorem~\ref{thm:Main} shall be to get rid of these two conditions on the continuous spectral part of the data, and thence reduce things to the unique condition involving the discrete spectral part. 
\\
To achieve this, we shall pass from the splitting $u(t, x) = u_{\nu}(t, x) + \epsilon(t, x)$ to a slightly modified one 
\begin{equation}\label{eq:modifieddecomp}
u(t, x) = u_{approx}^{(\gamma_1,\gamma_2)}(t, x) + \bar{\epsilon}(t, x), 
\end{equation}
where 
\[
u_{approx}^{(\gamma_1,\gamma_2)}(t, x) = W_{\lambda_{\gamma_1,\gamma_2}(t)}(x) + \eta(t, x)
\]
will be an approximate solution built analogously to $u_{\nu}(t, x)$ (as in Theorem~\ref{thm:KST}), but where the bulk part $W_{\lambda_{\gamma_1,\gamma_2}(t)}(x)$ is now scaled according to 
\begin{equation}\label{eq:lambdagammaonetwo}
\lambda_{\gamma_1,\gamma_2}(t): = \Big(1+\gamma_1\cdot \frac{t^{k_0\nu}}{\langle t^{k_0\nu} \rangle} + \gamma_2\log t\cdot \frac{t^{k_0\nu}}{\langle t^{k_0\nu} \rangle}\Big)t^{-1-\nu}, \,k_0 = [N\nu^{-1}], 
\end{equation}
for some $N\gg 1$, and this is clearly asymptotically equal to $\lambda(t)$: $\lim_{t\rightarrow 0}\frac{\lambda_{\gamma_1,\gamma_2}(t)}{\lambda(t)} = 1$. 
As we shall want to match the data \eqref{eq:perturbeddata1} at time $t = t_0$, at least in the forward light cone, we impose for some $C>1$ the condition 
\begin{equation}\label{eq:newdataperutbation}
\chi_{r\leq Ct_0}u_{\nu}[t_0] + (\epsilon_0, \epsilon_1) = \chi_{r\leq Ct_0}u_{approx}^{(\gamma_1,\gamma_2)}[t_0] + (\bar{\epsilon}_0, \bar{\epsilon}_1)
\end{equation}
on the data $(\bar{\epsilon}_0, \bar{\epsilon}_1) = \bar{\epsilon}[t_0]$ of the 'new perturbation' $\bar{\epsilon}$ at $t = t_0$. 
\\
We note that the proper re-scaled variables to describe $\bar{\epsilon}$ are now given by 
\begin{equation}\label{eq:changeofframe}
\tau_{\gamma_1,\gamma_2}: = \int_{t}^\infty \lambda_{\gamma_1,\gamma_2}(s)\,ds,\,R_{\gamma_1,\gamma_2} = \lambda_{\gamma_1,\gamma_2}(t)r, 
\end{equation}
In analogy to \eqref{eq:x0epsilon0}, \eqref{eq:datatransference1}, \eqref{eq:datatransference2}, we can then determine $(x_0^{(\gamma_1,\gamma_2)}, x_1^{(\gamma_1,\gamma_2)})$ as well as $(x_{0d}^{(\gamma_1,\gamma_2)}, x_{1d}^{(\gamma_1,\gamma_2)})$, such that 
\begin{equation}\label{eq:x0barepsilon0}
\bar{\epsilon}_0(r(R_{\gamma_1,\gamma_2})) = R_{\gamma_1,\gamma_2}^{-1}[x_{0d}^{(\gamma_1,\gamma_2)}\phi_d(R_{\gamma_1,\gamma_2}) + \int_0^\infty x_0^{(\gamma_1,\gamma_2)}(\xi)\phi(R_{\gamma_1,\gamma_2}, \xi)\rho(\xi)\,d\xi]
\end{equation}
\begin{equation}\label{eq:datatransference3}\begin{split}
-\mathcal{F}\big(\frac{R_{\gamma_1,\gamma_2}}{\lambda_{\gamma_1,\gamma_2}}\bar{\epsilon}_1\big)\big|_{t = t_0}& = x_1^{(\gamma_1,\gamma_2)} + \beta_{\nu}^{(\gamma_1,\gamma_2)}(\tau_{\gamma_1,\gamma_2})\big|_{t = t_0}\mathcal{K}_{cc}x_0^{(\gamma_1,\gamma_2)}\\&\hspace{4.5cm} + \beta_{\nu}^{(\gamma_1,\gamma_2)} (\tau_{\gamma_1,\gamma_2} )\big|_{t = t_0}\mathcal{K}_{cd}x_{0d}
\end{split}\end{equation}
\begin{equation}\label{eq:datatransference4}\begin{split}
-\langle\phi_d, \frac{R_{\gamma_1,\gamma_2}}{\lambda_{\gamma_1,\gamma_2}}\bar{\epsilon}_1\rangle\big|_{t = t_0}& = x_{1d}^{(\gamma_1,\gamma_2)} + \beta_{\nu}^{(\gamma_1,\gamma_2)}(\tau_{\gamma_1,\gamma_2})\big|_{t = t_0}\mathcal{K}_{dd}x_{0d}^{(\gamma_1,\gamma_2)}\\&\hspace{4.5cm} + \beta_{\nu}^{(\gamma_1,\gamma_2)}(\tau_{\gamma_1,\gamma_2})\big|_{t = t_0}\mathcal{K}_{dc}x_{0}^{(\gamma_1,\gamma_2)},
\end{split}\end{equation}
and we use the notation $\beta_{\nu}^{(\gamma_1,\gamma_2)} = \frac{\dot{\lambda}_{\gamma_1,\gamma_2}}{\lambda_{\gamma_1,\gamma_2}}$, where the $\dot{\lambda}_{\gamma_1,\gamma_2}$ indicates differentiation with respect to the new time variable $\tau_{\gamma_1,\gamma_2}$. 
\\

At this stage, we can succinctly formulate the key technical steps required to complete the proof of Theorem~\ref{thm:Main}. 
\begin{itemize}
\item {\it{Show that given $x_0, x_1, x_{0d}$ as in the statement of Theorem~\ref{thm:Main}, there are unique choices of $\gamma_1,\gamma_2, x_{1d}$ such that the Fourier variables 
\[
x_0^{(\gamma_1,\gamma_2)} , x_1^{(\gamma_1,\gamma_2)} , x_{0d}^{(\gamma_1,\gamma_2)} , x_{1d}^{(\gamma_1,\gamma_2)}
\]
satisfy the vanishing relations}} 
\begin{equation}\label{eq:vanishing2}\begin{split}
 &\int_0^\infty\frac{\rho^{\frac{1}{2}}(\xi)x_0^{(\gamma_1,\gamma_2)} (\xi)}{\xi^{\frac{1}{4}}}\cos[\lambda_{(\gamma_1,\gamma_2)} (\tau_0)\xi^{\frac12}\int_{\tau_0}^\infty\lambda_{(\gamma_1,\gamma_2)} ^{-1}(s)\,ds]\,d\xi = 0,\\&\int_0^\infty\frac{\rho^{\frac{1}{2}}(\xi)x_1^{(\gamma_1,\gamma_2)} (\xi)}{\xi^{\frac{3}{4}}}\sin[\lambda_{(\gamma_1,\gamma_2)} (\tau_0)\xi^{\frac12}\int_{\tau_0}^\infty\lambda_{(\gamma_1,\gamma_2)} ^{-1}(s)\,ds]\,d\xi = 0. 
 \end{split}\end{equation}
{\it{as well as condition \eqref{eq:cond1}}}.
\item {\it{Using the splitting 
\[
u(t, x) = u_{approx}^{(\gamma_1,\gamma_2)}(t, x) + \bar{\epsilon}(t, x),
\]
pass to the Fourier representation 
\[
R_{\gamma_1,\gamma_2}\bar{\epsilon} = x_d^{(\gamma_1,\gamma_2)}\phi_d(R_{\gamma_1,\gamma_2}) + \int_0^\infty x^{(\gamma_1,\gamma_2)}(\tau_{\gamma_1,\gamma_2}, \xi)\phi(R_{\gamma_1,\gamma_2}, \xi)\rho(\xi)\,d\xi,
\]
and use the analog of \eqref{eq:transport} to construct a solution 
\[
\underline{x}^{(\gamma_1,\gamma_2)}(\tau, \xi) = \left(\begin{array}{c}x_d^{(\gamma_1,\gamma_2)}(\tau)\\ x^{(\gamma_1,\gamma_2)}(\tau, \xi)\end{array}\right)
\]
'closely matching' the initial conditions $x_{0,1}^{(\gamma_1,\gamma_2)}, x_{0,1d}^{(\gamma_1,\gamma_2)}$. In fact, the method from \cite{CondBlow} furnishes such a solution with data
\begin{align*}
&\underline{x}^{(\gamma_1,\gamma_2)}(\tau_{\gamma_1,\gamma_2}, \xi)\big|_{t = t_0} =  \left(\begin{array}{c}x_{0d}^{(\gamma_1,\gamma_2)} + \triangle x_{0d}^{(\gamma_1,\gamma_2)}\\ x^{(\gamma_1,\gamma_2)}_0(\xi) + \triangle x^{(\gamma_1,\gamma_2)}_0(\xi)\end{array}\right)\\
&\mathcal{D}_{\tau}\underline{x}^{(\gamma_1,\gamma_2)}(\tau_{\gamma_1,\gamma_2}, \xi)\big|_{t = t_0} =  \left(\begin{array}{c}x_{1d}^{(\gamma_1,\gamma_2)} + \triangle x_{1d}^{(\gamma_1,\gamma_2)}\\ x^{(\gamma_1,\gamma_2)}_1(\xi) + \triangle x^{(\gamma_1,\gamma_2)}_1(\xi)\end{array}\right)\\
\end{align*}
}} 
\item {\it{Translating things back to the original  perturbation in terms of the old Fourier variables $x_0(\xi), x_1(\xi), x_{0d}, x_{1d}$, show that we have found an initial data pair corresponding to Fourier variables 
\[
x_0(\xi) +\triangle x_0(\xi), x_1(\xi)+\triangle x_1(\xi), x_{0d} +\triangle x_{0d}, x_{1d}+\triangle x_{1d},
\]
where the corrections $\triangle x_0(\xi)$ etc are small and depend in Lipschitz continuous fashion on the original data $x_0$ etc, with small Lipschitz constant. 
}}

\end{itemize}

\section{Construction of a two parameter family of approximate blow up solutions}

Here we construct the approximate blow up solutions $u_{approx}^{(\gamma_1,\gamma_2)}$ which replace the previous $u_{\nu}(t, x)$, see the decomposition \eqref{eq:modifieddecomp}. The idea behind the construction is to closely mimic the steps in section 2 of \cite{KS1}, which in turn follows closely the steps in section 2 of \cite{KST}. in particular, to describe the successive corrections in the construction, we shall rely on the same algebras of functions as in  \cite{KS1}. 
\\

In the sequel, we shall work mostly with respect to the scaling parameter $\lambda_{\gamma_1,\gamma_2}(t)$ given by \eqref{eq:lambdagammaonetwo}. To simplify the notation, we shall henceforth set 
\begin{equation}\label{eq:restlambda}
\lambda(t): = \lambda_{\gamma_1,\gamma_2}(t),\,R: = \lambda_{\gamma_1,\gamma_2}(t)r,\,R_{0,0}: = \lambda_{0,0}(t)r
\end{equation}

\begin{thm}\label{thm:main approximate} Let $\nu>0$, $t_0 = t_0(\nu)>0$ sufficiently small, and $\gamma_{1,2}\ll1 $. Also, let $N\gg 1$, $k_0 = [N\nu^{-1}]$,  $k_* = [\frac12N \nu^{-1}]$. Then there exists an approximate solution $u_{\text{approx}} = u_{\text{approx}}^{(\gamma_{1,2})}
$ for $\Box u = -u^5$ of the form (putting $\lambda(t): = \lambda_{\gamma_{1,2}}(t)$ for simplicity)
\[
u_{\text{approx}}^{(\gamma_{1,2})} = \lambda^{\frac12}(t)\big[W(R) + \frac{c}{(\lambda t)^2}R^2(1+R^2)^{-\frac12} + O((\lambda t)^{-2}\log RR^2(1+R^2)^{-\frac32})\big],
\]
such that the corresponding error 
\[
e_{\text{approx}} = \Box u_{\text{approx}} + u_{\text{approx}}^5
\]
is of the form 
\begin{align*}
&t^2e_{\text{approx}} \\&= [\big|\gamma_1\big| + \big|\gamma_2\big|]\big[O\big(\log t\frac{\lambda^{\frac12}R}{(\lambda t)^{k_0 + 4}}(1+(1-a)^{\frac12 + \frac{\nu}{2}})\big)\\&\hspace{4cm} + O\big(\log t\frac{\lambda^{\frac12}}{(\lambda t)^{k_0 + 2}}R^{-1}(1+(1-a)^{\frac12 + \frac{\nu}{2}})\big)\big]
\end{align*}
and such that the above expansions may be formally differentiated, where we use the notation $a = \frac{r}{t}$. Furthermore, writing $u_{\text{approx}}^{(\gamma_{1,2})} = u_{\text{approx}}^{(\gamma_{1,2})}(t, r, \gamma_{1,2}, \nu)$ we have the $\gamma$-dependence
\[
\partial_{\gamma_1}u_{\text{approx}}^{(\gamma_{1,2})} = O(t^{k_0\nu}\lambda^{\frac12}\frac{R}{(\lambda t)^2}),\,
\]
with symbol type behaviour with respect to the $\partial_{t,r}$ derivatives up to order two, and similarly for 
\[
\partial_{\gamma_2}u_{\text{approx}}^{(\gamma_{1,2})} = O(t^{k_0\nu}\log t\lambda^{\frac12}\frac{R}{(\lambda t)^2}),\,
\]
\end{thm}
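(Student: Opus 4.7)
The plan is to run the iterative construction of \cite{KS1} (which itself adapts \cite{KST}), with the scaling law $\lambda_{0,0}(t)=t^{-1-\nu}$ replaced throughout by $\lambda_{\gamma_1,\gamma_2}$, so that at $\gamma_{1,2}=0$ one recovers the solution $u_{\nu}$ of Theorem~\ref{thm:KST}. Writing $\lambda=\lambda_{0,0}(1+\mu)$ with
\[
\mu(t):=\gamma_1\cdot\frac{t^{k_0\nu}}{\langle t^{k_0\nu}\rangle}+\gamma_2\log t\cdot\frac{t^{k_0\nu}}{\langle t^{k_0\nu}\rangle}=O\big((|\gamma_1|+|\gamma_2||\log t|)(\lambda t)^{-k_0}\big),
\]
one notes that $\dot\mu$ and $\ddot\mu$ bring out additional $t^{-1}$ factors (from differentiating $\log t$, $t^{k_0\nu}$, and $\langle t^{k_0\nu}\rangle$), but stay of the same order of smallness relative to $\mu$ up to bounded factors on $(0,t_0]$. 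First I would seed the ansatz by $u_0:=W_{\lambda(t)}(x)$; since $W$ satisfies the static ground-state equation $\mathcal{L}W=W^{5}$ (after the $rv\mapsto v$ reduction), the error $e_0:=\Box u_0+u_0^5$ in self-similar coordinates is driven purely by $t$-derivatives of $\lambda$, of the schematic form $\lambda^{-1/2}t^2e_0\sim(t\dot\lambda/\lambda)^2 p_1(R)+(t^2\ddot\lambda/\lambda)p_2(R)$ with smooth, rapidly decaying profiles $p_j(R)$.

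Next I would run the same inductive correction scheme as in \cite{KS1}: at stage $k$, the current error is expanded in powers of $(\lambda t)^{-2}=t^{2\nu}$ and, near the light cone $a=1$, in the $\beta(i)$-hierarchy of Theorem~\ref{thm:KST}; the leading coefficient is then annihilated by adding a correction $v_{k+1}$ solving $\mathcal{L}v_{k+1}=(\text{source})$ with Dirichlet data, via the Green's function for $\mathcal{L}$ built from $\phi_0$ and a second linearly independent solution. Because $\mathcal{L}$ is unaffected by the choice of $\gamma_{1,2}$ and only the source terms change, the algebra of admissible profile classes from \cite{KS1} transfers \emph{verbatim}, and iterating to order $k_*=[N\nu^{-1}/2]$ produces the form $u_{\text{approx}}=\lambda^{1/2}[W(R)+c(\lambda t)^{-2}R^2(1+R^2)^{-1/2}+O((\lambda t)^{-2}\log R\,R^2(1+R^2)^{-3/2})]$ asserted in the statement.

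The crucial point is that the procedure at $\gamma_{1,2}=0$ reproduces the formal expansion of $u_{\nu}$ from Theorem~\ref{thm:KST}, whose residual error is absorbed into the parametrix error of that theorem and so does not contribute here. For nonzero $\gamma$, each stage of the iteration is \emph{linear} in its source, and the $\gamma$-dependent piece of $e_0$ is a sum of terms carrying factors $\mu$, $\dot\mu$, or $\ddot\mu$, hence of overall size $O((|\gamma_1|+|\gamma_2|)(\lambda t)^{-k_0}\log t)$; this prefactor is preserved through all subsequent iteration steps. The two weights $R$ and $R^{-1}$ in the stated bound on $e_{\text{approx}}$ correspond respectively to the bulk and boundary-layer contributions of the Green's function of $\mathcal{L}$, while the cone factor $(1+(1-a)^{1/2+\nu/2})$ is inherited from the most singular $\beta(i)$-term in \eqref{eq:expansion2} and is preserved under the substitution $\lambda_{0,0}\to\lambda_{\gamma_1,\gamma_2}$ because $\mu(t)$ depends only on $t$, not on $a$, and therefore factors outside the $a$-dependent profiles. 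For the $\partial_{\gamma_j}$-estimates, $\partial_{\gamma_j}$ commutes with the algebraic construction and the Green's function inversion, and the leading contribution comes from the bulk term
\[
\partial_{\gamma_j}W_{\lambda_{\gamma_1,\gamma_2}(t)}=(\partial_{\gamma_j}\lambda)\cdot\lambda^{-1}\big(\tfrac12+R\partial_R\big)W_\lambda,\qquad \partial_{\gamma_j}\lambda=O(t^{k_0\nu}\lambda(\log t)^{\delta_{j,2}}),
\]
with $(\tfrac12+R\partial_R)W_\lambda\sim \lambda^{1/2}R(1+R^2)^{-3/2}$, producing exactly the claimed $O(t^{k_0\nu}(\log t)^{\delta_{j,2}}\lambda^{1/2}R/(\lambda t)^2)$; differentiating higher corrections $v_k$ gives terms of the same or smaller size, and symbol behaviour under $\partial_{t,r}$ follows from the corresponding properties of the profiles in \cite{KS1}.

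The main technical obstacle will be to control the iteration near the light cone $a=1$: when $\partial_t$ acts on the $\gamma$-dependent modifications in $\mu$ it gets disentangled from the composite coordinates $R=\lambda(t)r$ and $a=r/t$ via $\partial_t|_r=-(\dot\lambda/\lambda)R\partial_R-at^{-1}\partial_a$, and one has to verify that the cone weight $(1+(1-a)^{1/2+\nu/2})$ is preserved under these operations and after subsequent inversions of $\mathcal{L}$. Since $\mu$ depends only on $t$, in principle these derivatives merely rescale coefficients within existing profile classes of \cite{KS1}, but careful bookkeeping is required to verify that no new singular profiles emerge near $a=1$ and that the residual non-$\gamma$ portion of the error stays subdominant to the $\gamma$-portion after $k_*$ iteration steps — this is the place where the precise choice $k_*=[N\nu^{-1}/2]$ (as opposed to $k_0$) enters, ensuring enough room below the $(\lambda t)^{-k_0}$ smallness of the $\gamma$-source.
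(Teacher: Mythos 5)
Your proposal of running the \cite{KS1} iteration with $\lambda_{0,0}$ replaced everywhere by $\lambda_{\gamma_1,\gamma_2}$ gets the shape of the ansatz and the size of the error right, but it misses the structural point of the theorem — Remark~\ref{rem:gamma smooth} — and would fail to deliver the asserted smoothness of $\partial_{\gamma_j}u_{\text{approx}}$.

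The issue is this. The \cite{KS1} function classes $\mathcal{Q},\mathcal{Q}'$ carry the light-cone singularity $(1-a)^{1/2+\nu/2}\log(1-a)$, so functions of that type are only $C^{1/2+\nu/2-}$ across $a=1$: one $\partial_{t,r}$-derivative already produces $(1-a)^{-1/2+\nu/2}$, which is not of symbol type. If you build the corrections $v_2, v_4,\ldots$ by running the \cite{KS1} elliptic-and-transport solves on the full $\gamma$-dependent error, those $v_{2k}$ will carry $\gamma$-dependent coefficients in front of precisely those $\mathcal{Q}$-type profiles. Smallness of the coefficient (your factor $(\lambda t)^{-k_0}\log t$) does not improve the H\"older class near $a=1$, so $\partial_{\gamma_j}v_{2k}$ would still be merely $C^{1/2+\nu/2-}$ at the light cone. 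That contradicts the claim that $\partial_{\gamma_j}u_{\text{approx}}$ has symbol-type behaviour up to two derivatives — the claim the whole paper leans on later, since it is what makes the re-parametrisation by $\gamma_{1,2}$ a \emph{smooth} modulation on top of the rough solution. Your closing remark that ``careful bookkeeping is required to verify that no new singular profiles emerge near $a=1$'' understates this: the singular profiles \emph{do} emerge unless the construction is reorganised.

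What the paper actually does (Lemma~\ref{lem:corrections}) is different in an essential way: the corrections $v_1,\ldots,v_{2k_*-1}$ are built so as to be \emph{identically} $\gamma$-independent. At each step the error is split as $e_k = e_k^1 + e_k^2 + e_k^3$, where $e_k^1$ is $\gamma$-independent and is expressed in the \emph{unperturbed} variables $R_{0,0},b=1/(\lambda_{0,0}t),b_1=\log t/(\lambda_{0,0}t)$ (not $R,\lambda_\gamma t$); only $e_k^1$ is fed into the Green's function for $L_0$ and into the transport solve, producing the next $v_j\in IS^m(\cdot,\mathcal{Q})$ that is verbatim the \cite{KS1} profile. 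The $\gamma$-dependent pieces $e_k^2$, which carry the $(\lambda t)^{-k_0}$-smallness, are \emph{deferred} across all $2k_*-1$ steps and only addressed at the end by two extra corrections $v_{\text{smooth},1},v_{\text{smooth},2}$, obtained by the same two solves but with profiles in the substantially smoother class $\mathcal{Q}_{\text{smooth}}$ (of class $C^{N+1/2-\nu/2-}$ across $a=1$): the smallness $(\lambda t)^{-k_0}$ of the deferred source translates into exponents $\beta(i)$ of size at least $[N\nu^{-1}]$ in the light-cone expansion, and it is this, not the numerical smallness, that makes $\partial_{\gamma_j}v_{\text{smooth},a}$ $C^2$ near $a=1$. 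This also has the consequence, noted in Remark~\ref{rem:gamma zero}, that at $\gamma=0$ the whole $\gamma$-dependent sector vanishes and $u_{\text{approx}}^{(0,0)}=u_\nu$ is exact. A related problem with claiming that the \cite{KS1} algebra transfers ``verbatim'' is that $\lambda_\gamma$ is not a pure power law — the $t^{k_0\nu}/\langle t^{k_0\nu}\rangle$ and $\log t$ factors break the separation of variables $\lambda^{1/2}(t)\mu^{-2k}(t)\times(\text{analytic in }R,a,b)$ that underlies Definition~\ref{defn:functionspace}; the paper circumvents this exactly by keeping the profile classes tied to $R_{0,0},b,b_1$ and treating the discrepancy between $W_{\lambda_{0,0}}$ and $W_{\lambda_\gamma}$ as one more deferred $\gamma$-error.
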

\begin{rem}\label{rem:gamma smooth} The key point here is the last part, which ensures that the $\gamma$ dependent part of the solutions $u_{\text{approx}}^{(\gamma_{1,2})}$ is smoother than the solutions themselves (they are only of class $H^{1+\frac{\nu}{2}-}$ regularity). 

\end{rem}
\begin{rem}\label{rem:gamma zero}Observe from the preceding construction that $e_{approx} = 0$ provided $\gamma_1 = \gamma_2 = 0$. Thus in that case the function $u_{approx}^{(0,0)}(t, x) = u_{\nu}(t, x)$ reproduces an exact solution as in Theorem~\ref{thm:KST}. 
\end{rem}
\begin{proof} We shall obtain the functions $u_{\text{approx}}^{(\gamma_{1,2})}$ by adding corrections $v_j$ to the bulk part $u_0: = W_{\lambda(t)}(r)$, the latter as in the paragraph following \eqref{eq:epseqn}. The precise description of these corrections is a bit cumbersome, but in principle elementary, as they arise by solving certain explicit ordinary differential equations. 
\\
The following definitions come directly from \cite{KS1}: 

\begin{defn}\label{def:Q}     
  We define $\mathcal Q$ to be the algebra of continuous functions $q:[0,1] \to
  \R$ with the following properties:

  (i) $q$ is analytic in $[0,1)$ with an even expansion at $0$ and with $q(0)=0$.

  (ii) Near $a=1$ we have an  expansion of the
  form
  \[
\begin{split}
  q(a) = q_0(a) + \sum_{i =1}^\infty  (1-a)^{ \beta(i) +1}\sum_{ j=0}^{\infty}
    q_{ij}(a) (\ln (1-a))^j
  \end{split}
  \]
  with analytic coefficients $q_0$, $q_{ij}$; if $\nu$
  is irrational, then $q_{ij}=0$ if~$j>0$.  The $\beta(i)$ are of the form
  \EQ{\label{menge}
  \sum_{k\in K} \big( (2k-3/2)\nu-1/2 \big) + \sum_{k\in K'} \big( (2k-1/2)\nu-1/2 \big)  
  }
  where $K,K'$ are finite sets of positive integers. 
  Moreover, only finitely many of the $q_{ij}$ are nonzero. 
\end{defn}

We remark that the exponents of $1-a$ in the above series all exceed~$\f12$ 
because of $\nu>0$.  For the errors $e_k$ we
introduce

\begin{defn}
   $\mathcal Q'$ is the space of continuous functions $q:[0,1) \to
  \R$ with the following properties:

  (i) $q$ is analytic in $[0,1)$ with an even expansion at $0$. 

  (ii) Near $a=1$ we have an expansion of the form
  \[\begin{split}
  q(a) =  q_0(a) + \sum_{i =1}^\infty  (1-a)^{ \beta(i)}\sum_{ j=0}^{\I}
    q_{ij}(a) (\ln (1-a))^j
  \end{split}
  \]
  with analytic coefficients $q_0$, $q_{ij}$, of which only finitely many are nonzero. The $\beta(i)$ are as above. 
\label{def:Qp}\end{defn}

By construction, $\mathcal Q\subset \mathcal Q'$. The family
$\cQ'$ is obtained by applying $a^{-1}\partial_a$ to
the algebra~$\cQ$. The exact number of $\log(1-a)$ factors can of course
be determined, but is irrelevant for our purposes. 
\\

The next definition is also taken from \cite{KS1}, except that we formulate it in terms of the variable $R_{0,0}$, which is independent of $\gamma_1,\gamma_2$. This shall be important in clarifying which of the corrections terms are independent of $\gamma_{1,2}$, and which indeed depend on these variables. 
\\
Introduce the variables $b(t) = \mu_{0,0}(t)^{-1},\,\mu_{0,0}(t) = \lambda_{0,0}(t)\cdot t$, as well as $b_1(t)$, which will represent $\frac{\log t}{\mu_{0,0}(t)} = \frac{\log t}{t\lambda_{0,0}(t)}$. Then
\begin{defn}\label{defn:functionspace}(a) $S^m(R_{0,0}^k(\log R_{0,0})^l, \mathcal{Q})$ is the class of analytic functions 
\[
v: [0,\infty)\times [0,1]\times [0, b_0]\times [0, b_0]\longrightarrow \R
\]
such that 
\begin{itemize}
\item $v$ is analytic as a function of $R_{0,0}, b, b_1$ and $v: [0, \infty)\times [0, b_0]\times[0, b_0]\longrightarrow \mathcal{Q}$.
\item $v$ vanishes of order $m$ relative to $R$, and $R^{-m}v$ has an even Taylor expansion at $R_{0,0} = 0$. 
\item $v$ has a convergent expansion  at $R_{0,0} = +\infty$. 
\[
v(R_{0,0}, a, b, b_1) = \sum_{i=0}^\infty \sum_{j=0}^{l+i}c_{ij}(a, b, b_1)R_{0,0}^{k-i}(\log R_{0,0})^j
\]
where the coefficients $c_{ij}(\cdot, b)\in \mathcal{Q}$ and $c_{ij}(a, b, b_1)$ are analytic in $b, b_1\in [0, b_0]$ for all $0\leq a\leq 1$. 
\end{itemize}
(b) $IS^m(R_{0,0}^k(\log R_{0,0})^l, \mathcal{Q})$ is the class of analytic functions $w$ on the cone $C_0$ which can be represented as 
\begin{align*}
&w(r, t) = v(R_{0,0}, a, b, b_1),\,v\in S^m(R_{0,0}^k(\log R_{0,0})^l, \mathcal{Q}),\,b = \frac{1}{\mu_{0,0}(t)},\,b_1 = \frac{\log t}{\mu_{0,0}(t)},\\&\mu_{0,0}(t) = t\cdot\lambda_{0,0}(t).
\end{align*}
(c) Denote by $\mathcal{Q}_{smooth}$ the algebra of continuous functions $q: [0,1]\longrightarrow\R$ with the following properties: 
\begin{itemize}
\item $q$ is analytic in $[0,1)$ with an even expansion at $0$ and with $q(0) = 0$.
\item Near $a = 1$ we have an expansion of the form 
\[
q(a) = q_0(a) + \sum_{i=1}^\infty (1-a)^{\beta(i)+ 1}\sum_{j=0}^\infty q_{ij}(a)\big(\log(1-a)\big)^j
\]
with analytic coefficients $q_0, q_{ij}$. The $\beta(i)$ are of the form 
\[
\sum_{k\in K,\,k\geq [N\nu^{-1}]}a_k\big((k-\frac12)\nu - \frac12\big)
\]
where $K$ consist of finite sets of natural numbers and $a_k\in \N$. Only finitely many of the $q_{ij}(a)$ are non-zero. 
\end{itemize}
Then define $S^m(R_{0,0}^k(\log R_{0,0})^l, \mathcal{Q}_{smooth}), IS^m(R_{0,0}^k(\log R_{0,0})^l, \mathcal{Q}_{smooth})$ as in (a), (b) above. We shall also use the notation $IS^m(R_{0,0}^k(\log R_{0,0})^l)$ to denote functions analytic in $b, b_1, R_{0,0}$ with the indicated vanishing and decay properties. 
\end{defn}

Observe that functions in $\mathcal{Q}_{smooth}$ are at least of regularity $C^{N+\frac12 - \frac{\nu}{2}-}$ at $a = 1$, and we can extend them past the light cone $a = 1$ by replacing $(1-a)$ by $|1-a|$ in the logarithmic terms. 
\\

The proof now proceeds by first building a solution $u_{prelim}$ by solving suitable elliptic problems approximating the wave equation \eqref{eq:critmain}, and finally adding a further correction to produce the $u_{approx}$, by solving a suitable wave equation via the parametrix method of \cite{KST}, \cite{KS1}. The method here in particular makes it clear that when $\gamma_1 = \gamma_2 = 0$ we simply reproduce the solutions if \cite{KST}, \cite{KS1}. 
To construct the preliminary approximate solution, we use 

\begin{lem}\label{lem:corrections} For any $k_*: = [\frac12N\nu^{-1}]\geq k\geq 1$ there exist corrections $v_{2k}, v_{2k-1}$ such that the approximations $u_{2k-1} = u_0+\sum_{j=1}^{2k-1}v_j$, $u_{2k} = u_0+\sum_{j=1}^{2k}v_j$ generate errors $e_{2k-1}, e_{2k}$ as below: 
\begin{align}
  v_{2k-1} &\in \frac{\lambda_{0,0}^{\frac12}}{\mu_{0,0}(t)^{2k}} IS^2(R_{0,0} \, (\log R_{0,0})^{m_{k}},\mathcal{Q})
  \label{v2k-1}\\
  t^2 e_{2k-1} &\in \frac{\lambda_{0,0}^{\frac12}}{\mu_{0,0}(t)^{2k}} IS^0(R_{0,0}\, (\log R_{0,0})^{p_{k}}, \mathcal{Q}')
  \label{e2k-1}\\
  v_{2k} &\in \frac{\lambda_{0,0}^{\frac12}}{\mu_{0,0}(t)^{2k+2}} IS^2(R_{0,0}^3\, (\log R_{0,0})^{p_{k}},\mathcal{Q}) \label{v2k}\\
  t^2 e_{2k} &\in \frac{\lambda_{0,0}^{\frac12}}{\mu_{0,0}(t)^{2k}} \big[ IS^0(R_{0,0}^{-1} \, (\log R_{0,0})^{q_{k}} ,\mathcal{Q})
   + b^2 IS^0(R_{0,0} \, (\log R_{0,0})^{q_{k}} ,\mathcal{Q}')    \big]
  \label{e2k}\end{align}
Here the functions $v_{2k-1}, v_{2k}$ are independent of $\gamma_{1,2}$, but not the errors $e_{2k-1}, e_{2k}$. 
Furthermore, we may pick two more corrections $v_{smooth,1}, v_{smooth, 2}$, such that 
\begin{align}
\partial_{\gamma_1}v_{smooth,1} &\in \frac{\lambda_{0,0}^{\frac12}}{\mu_{0,0}(t)^{k_0+2}} IS^2(R_{0,0},\mathcal{Q}_{smooth}),
  \label{vsmooth1}\\
  \partial_{\gamma_2}v_{smooth,1} &\in \log t\frac{\lambda_{0,0}^{\frac12}}{\mu_{0,0}(t)^{k_0+2}} IS^2(R_{0,0},\mathcal{Q}_{smooth}),
    \label{vsmooth11}\\
 \partial_{\gamma_1}v_{smooth,2} &\in \frac{\lambda_{0,0}^{\frac12}}{\mu_{0,0}(t)^{k_0+4}} IS^2(R_{0,0}^3 ,\mathcal{Q}_{smooth}),
   \label{vsmooth2}\\
  \partial_{\gamma_2}v_{smooth,2} &\in \log t\frac{\lambda_{0,0}^{\frac12}}{\mu_{0,0}(t)^{k_0+4}} IS^2(R_{0,0}^3 ,\mathcal{Q}_{smooth}),
    \label{vsmooth22}\\
 \end{align}
 such that the final error generated by $u_{\text{prelim}}: = u_0+\sum_{j=1}^{2k_*-1}v_j + \sum_{a = 1,2}v_{smooth,a}$ satisfies 
 \begin{align*}
&t^2e_{\text{prelim}}: = t^2(\Box u_{\text{prelim}} + u_{\text{prelim}}^5)\\
& \in \gamma_1  \frac{\lambda_{0,0}^{\frac12}}{\mu_{0,0}(t)^{k_0+2}} \big[IS^0(R_{0,0}^{-1}, \mathcal{Q}) + b^2IS^0(R_{0,0}, \mathcal{Q})\big]\\&
  + \gamma_2\log t \frac{\lambda_{0,0}^{\frac12}}{\mu_{0,0}(t)^{k_0+2}} \big[ IS^0(R_{0,0}^{-1}, \mathcal{Q}) +  b^2IS^0(R_{0,0}, \mathcal{Q})\big]
  + t^2\tilde{e}_{\text{prelim}}, 
 \end{align*}
 where the remaining error $t^2\tilde{e}_{\text{prelim}}$ does not depend on $\gamma_{1,2}$ and resides in 
 \[
 t^2\tilde{e}_{\text{prelim}}\in \frac{\lambda_{0,0}^{\frac12}}{\mu_{0,0}(t)^{2k_*}} IS^0(R_{0,0}\, (\log R_{0,0})^{p_{k_*}}, \mathcal{Q}')
  \label{e2k*-1}
  \]  
\end{lem}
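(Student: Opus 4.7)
The strategy is to mimic the inductive scheme of \cite{KS1}, Section~2 and \cite{KST}, Section~2, with the single new twist that the ambient scaling parameter is $\lambda_{\gamma_1,\gamma_2}$ rather than $\lambda_{0,0}$, and with a clean separation of the $\gamma$-independent and $\gamma$-dependent pieces of the evolving error. Write $\lambda_{\gamma_1,\gamma_2}=\lambda_{0,0}\cdot(1+\gamma_1 \psi_1(t)+\gamma_2\log t\cdot\psi_2(t))$ with $\psi_j(t)=t^{k_0\nu}/\langle t^{k_0\nu}\rangle$; the point is that $\psi_j$ is supported where $t$ is not too small and gains extra regularity in $a=r/t$ at the light cone. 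Starting from $u_0=W_{\lambda_{\gamma_1,\gamma_2}(t)}$, define $u_{2k}=u_0+\sum_{j=1}^{2k}v_j$ and compute the error $e_{2k}=\Box u_{2k}+u_{2k}^5$ purely in the original coordinates $(t,r)$. The new feature is that we insist on building every correction $v_{2k-1},v_{2k}$ as a universal function of $R_{0,0}$, $b=\mu_{0,0}^{-1}$, $b_1=\log t/\mu_{0,0}$, so these functions are by construction independent of $\gamma_{1,2}$; the $\gamma$-dependence is hidden only in the bulk $u_0$ and in the translation of $\partial_t$ into the $(R_{0,0},\tau_{0,0})$ frame.

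\textbf{The main iteration.} The inductive step is exactly that of \cite{KS1}: at each stage one splits $e_{2k-2}$ into its leading (elliptic-friendly) piece and a $b^2$-small remainder, then solves an elliptic ODE of the form $\mathcal{L}v_{2k-1}=$ (leading part) with $\mathcal{L}=-\partial_{R_{0,0}}^2-5W^4(R_{0,0})$, using the inversion lemmas of \cite{KS1} that preserve the algebras $IS^2(R_{0,0}^k(\log R_{0,0})^l,\mathcal{Q})$. One then corrects the next-order piece by $v_{2k}$ with $R_{0,0}^3$ leading behavior. The bookkeeping of exponents $m_k,p_k,q_k$ and the passage $\mathcal{Q}\to\mathcal{Q}'$ is identical to \cite{KS1}, and one iterates up to $k=k_\ast=[N\nu^{-1}/2]$. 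When evaluating $\Box u_{2k}+u_{2k}^5$, one Taylor-expands $\lambda_{\gamma_1,\gamma_2}$ in $\gamma_{1,2}$: the zeroth-order term in $\gamma$ reproduces \emph{exactly} the error of the \cite{KS1} construction, landing in the algebras \eqref{e2k-1} and \eqref{e2k} with the stated norms; the first-order terms in $\gamma_{1,2}$ are proportional to $\gamma_1\psi_1(t)$ or $\gamma_2\log t\,\psi_2(t)$, and because $\psi_j$ gains an extra factor $t^{k_0\nu}$ (hence $b^{-k_0}$) while producing only factors in $\mathcal{Q}_{smooth}$ at the shock $a=1$, this contribution to the error is one full $b^{k_0}$-smaller and lives in the smooth algebra.

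\textbf{The smooth corrections.} After running the main iteration up to $k_\ast$, the residual error splits as $e_{\text{prelim}}=e^{(0)}+e^{(\gamma)}$, where $e^{(0)}$ is the $\gamma$-independent error stated in \eqref{e2k-1} at $k=k_\ast$ (and lives in $\mathcal{Q}'$), and $e^{(\gamma)}$ is the $\gamma$-dependent piece of leading size $(\gamma_1+\gamma_2\log t)\lambda_{0,0}^{1/2}\mu_{0,0}^{-(k_0+2)}IS^0(R_{0,0}^{-1},\mathcal{Q}_{smooth})+b^2\,IS^0(R_{0,0},\mathcal{Q}_{smooth})$. The last step is to construct $v_{smooth,1}$ by solving $\mathcal{L}v_{smooth,1}=$ (leading $\gamma$-dependent source) via the same elliptic inversion, applied now to a source in $\mathcal{Q}_{smooth}$; the inversion preserves $\mathcal{Q}_{smooth}$, so $v_{smooth,1}\in\lambda_{0,0}^{1/2}\mu_{0,0}^{-(k_0+2)}IS^2(R_{0,0},\mathcal{Q}_{smooth})$. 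The $\gamma_1$ and $\gamma_2\log t$ factors are merely scalar multipliers, so the $\gamma$-derivatives \eqref{vsmooth1}--\eqref{vsmooth11} follow. The next correction $v_{smooth,2}$ kills the $b^2\,IS^0(R_{0,0})$ remainder exactly as $v_{2k}$ does in the main iteration, yielding \eqref{vsmooth2}--\eqref{vsmooth22}. Since both $v_{smooth,j}$ vanish at $\gamma_1=\gamma_2=0$, the construction reproduces the \cite{KS1} solution there, as required by Remark~\ref{rem:gamma zero}.

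\textbf{Main obstacle.} The bulk of the argument is routine bookkeeping, but the delicate point is to confirm that the leading $\gamma$-dependent pieces of the error really land in $\mathcal{Q}_{smooth}$ and not merely in $\mathcal{Q}'$. This requires expanding $\partial_t^2 W_{\lambda_{\gamma_1,\gamma_2}}$ and the cross terms in $u_0^5+\dots$ and checking that every $\gamma$-derivative produces a factor $\psi_j(t)$ of the required size \emph{together} with a regularity gain of at least $k_0\nu-\tfrac12$ additional orders of $(1-a)$ at the shock, so that the exponents in \eqref{menge} shift into the range defining $\mathcal{Q}_{smooth}$. Once this regularity-tracking at $a=1$ is verified, the rest of the inversion uses the same machinery as \cite{KS1} and closes the induction.
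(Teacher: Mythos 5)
Your overall scheme — build each $v_j$ as a universal function of $(R_{0,0},b,b_1)$ so that it is manifestly $\gamma$-independent, track the $\gamma$-dependent errors separately, and mop them up at the end with two extra corrections $v_{smooth,a}$ — is exactly the route the paper takes, and the headline claims of the lemma follow from it.

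However, your ``main obstacle'' paragraph misidentifies where the class $\mathcal{Q}_{smooth}$ actually enters, and the explanation you give there would not close the argument as stated. You claim that the first-order-in-$\gamma$ errors produced in the main iteration ``gain regularity in $a$'' and ``land in $\mathcal{Q}_{smooth}$'' because $\psi_j$ carries an extra $t^{k_0\nu}$ factor. This is not how the paper handles it. The $\gamma$-dependent errors arising in Steps~0--5 are of two distinct kinds: \emph{(i)} terms like $\epsilon_0$ stemming from $\partial_t^2 u_0$ and its interactions with $W_{\lambda_{0,0}}$, which are $C^\infty$ across $a=1$ and land in $IS^0(R_{0,0}^{-1})$ — strictly \emph{smoother} than $\mathcal{Q}_{smooth}$, since no shock is present at all; and \emph{(ii)} cross-interaction terms of the form $[u_0-\lambda_{0,0}^{1/2}W(\lambda_{0,0}r)]^{l_1}\cdots v_j^{l_3}$ with $l_1,l_3\geq 1$, which inherit the $\mathcal{Q}$-shock of $v_j$ \emph{without any regularity gain at $a=1$}; these are acceptable only because they carry the small factor $\mu_{0,0}^{-(k_0+2)}$, and are placed directly in $e_{\text{prelim}}$ rather than being further corrected. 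The splitting $u_0=[u_0-\lambda_{0,0}^{1/2}W(\lambda_{0,0}r)]+\lambda_{0,0}^{1/2}W(\lambda_{0,0}r)$ used to isolate these interaction terms is essential, and your sketch does not spell it out. Finally, the class $\mathcal{Q}_{smooth}$ enters only at the very last step: $v_{smooth,1}$ is built by an elliptic inversion applied to a $C^\infty$ source and is itself $C^\infty$; the shock-like singularity $(1-a)^{\beta+1}$ appears only in $v_{smooth,2}$, because this correction solves a wave-operator approximation $L_\beta q=\dots$, and it is the fast temporal decay $\mu_{0,0}^{-(k_0+4)}$ of the source (not any intrinsic regularity of $\psi_j$ in $a$) that forces $\beta$ to be large and places $v_{smooth,2}$ in $\mathcal{Q}_{smooth}$. (You also write $t^{k_0\nu}=b^{-k_0}$; since $b=\mu_{0,0}^{-1}=t^\nu$, it should be $b^{k_0}$, as your later sentence correctly uses.)
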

\begin{proof} 
We follow closely the procedure in \cite{KS1}, section 2. The only novelty is that we perturb around $u_0 = \lambda^{\frac12}(t)W(\lambda(t)r)$ as opposed to $\lambda_{0,0}^{\frac12}(t)W(\lambda_{0,0}(t)r)$, which will generate additional error terms during the construction of the $v_j$, $1\leq j\leq 2k_*-1$. We relegate these to the end of the procedure, and use the final two corrections $v_{smooth,a}$ to decimate this remaining error, leaving only $e_{\text{prelim}}$. 
\\

{\bf{Step 0}}: We put $u_0(t, r) = \lambda^{\frac12}(t)W(R)$, $R = \lambda(t)r$, $\lambda(t) = \lambda_{\gamma_1,\gamma_2}(t)$. Then (with $\cD = \frac12 + R\partial_R$)
\EQ{
e_0:=\partial_t^2u_0 &= \la^{\f12}(t) \Big [ \Big( \f{\la'}{\la}\Big)^2(t) (\cD^2 W)(R) + \Big(\f{\la'}{\la}\Big)^{\prime}(t) (\cD W)(R)  \Big] \\
t^2 e_0 &=: \la_{0,0}^{\f12}(t) \Big [ \om_1  \f{1-R_{0,0}^2/3}{(1+R_{0,0}^2/3)^{\f32}}   + \om_2 \f{ 9-30R_{0,0}^2+R_{0,0}^4}{(1+R_{0,0}^2/3)^{\f52} }  \Big]\\& 
+ \epsilon_0 \label{e0} =: t^2e_0^{0} + \epsilon_0
}
where we have 
\[
\epsilon_0\in  \gamma_1\frac{\lambda_{0,0}^{\frac12}}{\mu_{0,0}(t)^{k_0}} IS^0(R_{0,0}^{-1}) +   \gamma_2\frac{\lambda_{0,0}^{\frac12}}{\mu_{0,0}(t)^{k_0}}\log t IS^0(R_{0,0}^{-1})
\]
Further, importantly the constants $\omega_{1,2}$ do not depend on $\gamma_{1,2}$. We shall then treat $\epsilon_0$ as a lower order error which can be neglected in the first $k_0$ stages of the iteration process. 
\\

{\bf{Step 1}}: Introduce the operator 
\EQ{
L_0 &:= \p_{R_{0,0}}^2 + \frac{2}{R_{0,0}}\p_{R_{0,0}} +5 W^4(R_{0,0})
}
Then we solve 
\EQ{
\mu_{0,0}^2(t) L_0 v_1 = t^2 e_0^0,\quad v_1(0)=v_1'(0)=0
}
Introducing the conjugated operator $\tilde{L}_0 : = R_{0,0}L_0 R_{0,0}^{-1}$, which has fundamental system
\EQ{\label{tilde fundsys}
\tilde \fy_1(R_{0,0}) &:=\f{R_{0,0}(1-R_{0,0}^2/3)}{(1+R_{0,0}^2/3)^{\f32}}\\
\tilde \fy_2(R_{0,0}) &: = \f{1-2R_{0,0}^2 +R_{0,0}^4/9}{(1+R_{0,0}^2/3)^{\f32}}, 
}
we find the following expression for $v_1$: 
\begin{align*}
\mu_{0,0}^2(t)v_1(t, R_{0,0}) &= R_{0,0}^{-1} \Big( \tilde\fy_1(R_{0,0}) \int_0^{R_{0,0}} \tilde\fy_2(R_{0,0}')R_{0,0}' t^2 e_0^0(R_{0,0}')\, dR_{0,0}'\\&\hspace{3cm} -  \tilde\fy_2(R_{0,0}) \int_0^{R_{0,0}} \tilde\fy_1(R_{0,0}')R' t^2 e_0^0(R_{0,0}')\, dR_{0,0}'\Big)
\end{align*}
Then using \eqref{e0}, we infer 
\EQ{\label{eq:v1}
v_1(t,r) = \la_{0,0}^{\f12}(t)  \mu_{0,0}^{-2}(t)  (\om_1 f_1(R_{0,0}) + \om_2 f_2(R_{0,0})) =: \la_{0,0}^{\f12}(t) \mu_{0,0}^{-2}(t)  f(R_{0,0})
}
where further 
\EQ{\label{fasymp infty}
f_j(R_{0,0}) &= R_{0,0}(b_{1j} + b_{2j} R_{0,0}^{-1}+ R_{0,0}^{-2}\log R_{0,0}\;\fy_{1j}(R_{0,0}^{-2}) +  R_{0,0}^{-2} \fy_{2j}(R_{0,0}^{-1})  ) \\
&=: R_{0,0}(F_j(\rho) + \rho^{2} G_{j}(\rho^2)\log \rho)
}
where $\fy_{1j}, \fy_{2j}$ and $F_j, G_j$ are analytic around zero, with $\rho:=R_{0,0}^{-1}$. Moreover, the coefficients of these analytic functions do not depend on $\gamma_{1,2}$. 
\\

{\bf{Step 2}}: Here we analyse the error $e_1$ generated by the approximate solution $u_1 = u_0 + v_1$, which equals 
\EQ{\label{e1def}
e_1 =  &\p_t^2 v_1 - 10 u_0^3 v_1^2 -10 u_0^2 v_1^3 -5 u_0 v_1^4 - v_1^5\\
&\hspace{3cm} + 5\lambda_{0,0}^2(t)[\frac{\lambda^2(t)}{\lambda_{0,0}^2(t)}W^4(R) - W^4(R_{0,0})]v_1 + \epsilon_0.
}
Using the \eqref{eq:v1}, we can write $t^2 e_1$ as a sum as follows 
\[
t^2 e_1 = \sum_{j=1}^3 A_j + \epsilon_0,
\]
where up to sign, the terms are given by 
\begin{align*}
&A_1 = \la_{0,0}^{\f12}(t) \mu_{0,0}^{-2}(t)\sum_{k=2}^5\left(\begin{array}{c}5\\ k\end{array}\right)  \mu_{0,0}^{4 - 2k}(t)W^{5-k}(R_{0,0})f^k((R_{0,0}),\\
&A_2 =  \lambda_{0,0}^{\frac12}(t)\left(\left(t\p_t + t\la'_{0,0}(t)\la_{0,0}^{-1}(t)\cD\right)^2 - \left(t\p_t + t\la_{0,0}'(t)\la_{0,0}^{-1}(t)\cD\right) \right)\big(\mu_{0,0}^{-2}(t)  f(R_{0,0})\big),\\
&A_3 = \la_{0,0}^{\f12}(t) \sum_{k=1}^4 \left(\begin{array}{c}5\\ k\end{array}\right)  \mu_{0,0}^{2 - 2k}(t)f^k(R_{0,0})[W^{5-k}(R)\frac{\lambda^{\frac{5-k}{2}}(t)}{\lambda_{0,0}^{\frac{5-k}{2}}(t)} - W^{5-k}(R_{0,0})],
\end{align*}
with $\cD = \frac12 + R_{0,0}\partial_{R_{0,0}}$. Also, $\epsilon_0$ is as in Step 0. Then we can write 
\begin{align*}
A_2 &=  \la_{0,0}^{\f12}(t) \mu_{0,0}^{-2}(t)\big[(2\nu - (1+\nu)\cD)^2 - (2\nu - (1+\nu)\cD)\big]  f(R_{0,0})\\
& =: \la_{0,0}^{\f12}(t) \mu_{0,0}^{-2}(t)g(R_{0,0}),
\end{align*}
where the last term on the right admits an expansion like for $v_1$ in \eqref{eq:v1}, with coefficients that are independent of $\gamma_{1,2}$. 
\\
On the other hand, the term $A_3$ is dependent on $\gamma_{1,2}$, and can in fact be placed in the space 
\[
 \gamma_1\frac{\lambda_{0,0}^{\frac12}}{\mu_{0,0}(t)^{k_0}} IS^0(R_{0,0}^{-1}) +   \gamma_2\frac{\lambda_{0,0}^{\frac12}}{\mu_{0,0}(t)^{k_0}}\log t IS^0(R_{0,0}^{-1})
 \]
 We shall deal with it when we define $v_{smooth, a}$. At any rate, the error $e_1$ satisfies \eqref{e2k-1} for $k = 1$. 
\\ 

{\bf{Step 3}}: Choice of second correction $v_2$. It is in this step where the shock along the light cone, as evidenced by the expansion \eqref{eq:expansion2}, as well as the definition of $\mathcal{Q}$, is introduced into $u_{\text{approx}}^{(\gamma_{1,2})}$ (whence also into $u_{\text{approx}}^{(0,0)} = u_{\nu}$, the solutions being described in Theorem~\ref{thm:KST}). The key in this step shall be to ensure that the singular part of $v_2$ will be independent of $\gamma_{1,2}$. This we can achieve since by our preceding construction the principal part of the error $e_1$ is independent of $\gamma_{1,2}$. Write 
\[
e_1 = e^0_1 + t^{-2}\epsilon_1,\,\epsilon_1: =  A_3 + \epsilon_0.
\]
Then as in \cite{KS1}, equation (2.32),  we infer the leading behaviour of the term $e_1^0$ (where we change the notation with respect to \cite{KS1}), as follows:
\EQ{ \label{e10*}
t^2e_1^{00}(t,r)  &:=  \la^{\f12}_{0,0}(t) \mu_{0,0}^{-1}(t) ( c_1 a + c_2 b)
}
where we have $a = \frac{r}{t}$, $b= b(t) = \frac{1}{\mu_{0,0}(t)}$, and as remarked before the coefficients $c_j$ do not depend on $\gamma_{1,2}$. Also, recall
\[
\mu_{0,0}(t) = (\lambda_{0,0}(t)\cdot t). 
\]
The second correction will then be obtained by neglecting the effect of the potential term $5W^4(R)$, and setting
\EQ{\label{v_2trueeqn}
t^2 \big(v_{2,tt}-v_{2,rr}-\f2r v_{2,r}   \big) = -t^2 e_1^{00}  
}
To solve this we make the ansatz 
\EQ{\label{v2}
v_2(t,r) = \la_{0,0}(t)^{\f12}\big( \mu^{-1}_{0,0}(t) q_1(a) + \mu^{-2}_{0,0}(t) q_2(a) \big)
}
In fact, proceeding exactly as in \cite{KS1},  section 2.5, we then infer the equations
\EQ{\label{q1q2}
L_{\f{\nu-1}{2}} \; q_1 = c_1 a,\quad L_{\f{3\nu-1}{2}}\; q_2 = c_2,
}
where we set 
\EQ{\label{Lbg}
L_{\beta}:= (1-a^2)\p_a^2 + (2(\beta-1)a+2a^{-1})\p_a - \beta^2 + \beta.
}
In fact, our $\lambda_{0,0}, \mu_{0,0}$ are exactly the $\lambda, \mu$ in \cite{KS1}. To uniquely determine $q_{1,2}$, we impose the vanishing conditions
\[
q_j(0) = q_j'(0)= 0,\,j = 1,2. 
\]
As in \cite{KS1}, equation (2.44) one can then write (using $ a= \frac{R_{0,0}}{\mu_{0,0}(t)}$ where $R_{0,0}: = r\lambda_{0,0}(t)$)
\[
v_2 = \frac{ \la_{0,0}(t)^{\f12}}{\mu_{0,0}^2(t)}(R_{0,0}\tilde{q}_1(a) + q_2(a)),
\]
where now $\tilde{q}_1, q_2$ both have even power expansions around $a = 0$. 
In order to ensure the necessary parity of exponents in the power series expansions around $R_{0,0} = 0$ imposed by the definition of $\mathcal{Q}$, we sacrifice some accuracy in the approximation, relabel the preceding expression $v_2^0(t,r)$ (as in \cite{KS1}), and then use for the true correction $v_2$ the formula
\[
v_2 = \frac{ \la_{0,0}(t)^{\f12}}{\mu_{0,0}^2(t)}(R_{0,0}^2\langle R_{0,0}\rangle^{-1}\tilde{q}_1(a) + q_2(a)),\,\langle R_{0,0}\rangle  = \sqrt{R_{0,0}^2 + 1}. 
\]

Again by construction $\tilde{q}_1, q_2$ and thence $v_2$ do not depend on $\gamma_{1,2}$. 
\\

{\bf{Step 4}}: Here we analyse the error generated by the approximate solution $u_2 = u_0 + v_1 + v_2$, which is given by the expression 
\begin{align*}
 e_2 &= e_1 - e_1^{00} - 5u_1^4 v_2 - 10 u_1^3 v_2^2 - 10 u_1^2 v_2^3 - 5 u_1 v_2^4 - v_2^5\\& +  (  \p_{tt} -\p_{rr} - \f{2}{r}\p_r )(v_2-v_2^0) 
\end{align*}
Then according to the preceding we have 
\begin{align*}
& t^2(e_1 - e_1^{00}) - \epsilon_0\\&\in O(R_{0,0}^{-1} \la_{0,0}(t)^{\f12}\mu_{0,0}^{-2}(t))  + \gamma_1\frac{\lambda^{\frac12}(t)}{\mu^{k_0+2}(t)}IS^0(R_{0,0}) +  \gamma_2\frac{\lambda^{\frac12}}{\mu(t)^{k_0+2}}\log t IS^0(R_{0,0}), 
 \end{align*}
 where the first term $O(R_{0,0}^{-1} \la_{0,0}(t)^{\f12}\mu_{0,0}^{-2}(t))$ is independent of $\gamma_{1,2}$. The sum of the last two terms on the right will then be deferred until the last stage, when we define $v_{smooth,a}$. 
 Next, consider
 \begin{align*}
&t^2\big[-5u_1^4 v_2 - 10 u_1^3 v_2^2 - 10 u_1^2 v_2^3 - 5 u_1 v_2^4 - v_2^5 +  (  \p_{tt} -\p_{rr} - \f{2}{r}\p_r )(v_2-v_2^0)\big]\\& 
\end{align*}
Here the interaction terms $u_1^{5-j}v_2^j$, $j\leq 4$, are only of the smoothness implied by $\mathcal{Q}$, but do depend on $\gamma_{1,2}$ on account of $u_1 = u_0 + v_1$ and the $\gamma$-dependence of $u_0$. However, writing 
\[
u_1 = [u_0 - \lambda_{0,0}^{\frac12}W(\lambda_{0,0}r)] +[v_1+ \lambda_{0,0}^{\frac12}W(\lambda_{0,0}r)], 
\]
and expanding out $u_1^{5-j}$, we can place any term of the form 
\[
t^2 [u_0 - \lambda_{0,0}^{\frac12}W(\lambda_{0,0}r)]^{l_1}[v_1+\lambda_{0,0}^{\frac12}W(\lambda_{0,0}r)]^{l_2}v_2^{l_3},\,\sum l_j = 5,
\]
and with $l_1\geq 1,l_3\geq 1$ into 
\begin{align*}
   &\gamma_1  \frac{\lambda_{0,0}^{\frac12}}{\mu_{0,0}(t)^{k_0+2}} \big[IS^0(R_{0,0}^{-1}, \mathcal{Q}) + b^2IS^0(R_{0,0}, \mathcal{Q})\big]\\&
  + \gamma_2\log t \frac{\lambda_{0,0}^{\frac12}}{\mu_{0,0}(t)^{k_0+2}} \big[ IS^0(R_{0,0}^{-1}, \mathcal{Q}) +  b^2IS^0(R_{0,0}, \mathcal{Q})\big],
\end{align*}
and so this can be placed into $t^2 e_{\text{prelim}}$. Finally, the preceding also implies \eqref{e2k} for $k = 1$. 
\\

{\bf{Step 5}}: The inductive step. Here we again follow \cite{KS1}, section 2.7,  closely, but need to carefully keep track of various parts of $e_k$ First consider the case of even indices, i. e. assume $e_{2k-2}$, $2\leq k\leq k_*$, satisfies \eqref{e2k} with $k$ replaced by $k-1$, and more precisely, that we can decompose
\begin{align}
e_{2k-2} = e_{2k-2}^1 + e_{2k-2}^2 + e_{2k-2}^3\label{e2k-2tripledecomp}, 
\end{align}
where we have 
\begin{align*}
&t^2e_{2k-2}^1\in  \frac{\lambda_{0,0}^{\frac12}}{\mu_{0,0}(t)^{2k-2}} \big[ IS^0(R_{0,0}^{-1} \, (\log R_{0,0})^{q_{k-1}} ,\mathcal{Q})
   + b^2 IS^0(R_{0,0} \, (\log R_{0,0})^{q_{k-1}} ,\mathcal{Q}')    \big],\\
   &t^2e_{2k-2}^2\in  \gamma_1\frac{\lambda_{0,0}^{\frac12}}{\mu_{0,0}(t)^{k_0}} IS^0(R_{0,0}^{-1}) +   \gamma_2\frac{\lambda_{0,0}^{\frac12}}{\mu_{0,0}(t)^{k_0}}\log t IS^0(R_{0,0}^{-1}),
\end{align*}
the term $e_{2k-2}^1$ being independent of $\gamma_{1,2}$, while for the third term we have
\begin{align*}
t^2e_{2k-2}^3&\in \gamma_1  \frac{\lambda_{0,0}^{\frac12}}{\mu_{0,0}(t)^{k_0+2}} \big[IS^0(R_{0,0}^{-1}, \mathcal{Q}) + b^2IS^0(R_{0,0}, \mathcal{Q})\big]\\&
  + \gamma_2\log t \frac{\lambda_{0,0}^{\frac12}}{\mu_{0,0}(t)^{k_0+2}} \big[ IS^0(R_{0,0}^{-1}, \mathcal{Q}) +  b^2IS^0(R_{0,0}, \mathcal{Q})\big].
\end{align*}
We have verified such a structure for the case $k = 2$ in the preceding step. Then we introduce the correction $v_{2k-1}$ in order to improve the error $e_{2k-1}^1$, exactly mirroring Step 1 in section 2.7 of \cite{KS1}. We completely forget about $e_{2k-2}^3$ as it can be moved into the final error $e_{\text{prelim}}$, while we shall deal with the intermediate term $e_{2k-2}^2$ when introducing $v_{smooth,a}$. Returning to $v_{2k-1}$, and proceeding just as in Step 1, we see that $v_{2k-1}$ will satisfy \eqref{v2k-1}, and moreover be independent of $\gamma_{1,2}$. The error $e_{2k-1}$ generated by the approximation $u_0 + \sum_{j=1}^{2k-1}v_j$ will be mostly independent of $\gamma_{1,2}$, and satisfy \eqref{e2k-1}, except for the cross interaction terms of $v_{2k-1}$ and $u_0$, of the form $u_0^{5-j} v_{2k-1}^j$, $1\leq j\leq 4$. However, splitting 
\[
u_0 = [u_0 - \lambda_{0,0}^{\frac12}W(\lambda_{0,0}(t)r)] + [\lambda_{0,0}^{\frac12}W(\lambda_{0,0}(t)r)], 
\]
we may replace $u_0$ by $u_0 - \lambda_{0,0}^{\frac12}W(\lambda_{0,0}(t)r)$, and then the corresponding cross interactions, multiplied by $t^2$, can again be seen to be in 
\begin{align*}
&\gamma_1  \frac{\lambda_{0,0}^{\frac12}}{\mu_{0,0}(t)^{k_0+2}} \big[IS^0(R_{0,0}^{-1}, \mathcal{Q}) + b^2IS^0(R_{0,0}, \mathcal{Q})\big]\\&
  + \gamma_2\log t \frac{\lambda_{0,0}^{\frac12}}{\mu_{0,0}(t)^{k_0+2}} \big[ IS^0(R_{0,0}^{-1}, \mathcal{Q}) +  b^2IS^0(R_{0,0}, \mathcal{Q})\big],
\end{align*}
whence these error terms may be placed into $e_{\text{prelim}}$ and discarded. 
\\
The case of odd indices, i. e. departing from $e_{2k-1}$, $k\leq k_*$, is handled just the same. 
\\
Repeating this procedure leads to the $v_j$, $1\leq j\leq 2k_*-1$. Moreover, each of the errors generated satisfies a decomposition analogous to \eqref{e2k-2tripledecomp}, replacing \eqref{e2k} by \eqref{e2k-1} for odd indices. 
\\

{\bf{Step 6}}: Choice of $v_{smooth, a}$, $a = 1,2$. Here we depart from the approximation $u_{2k_* - 1} = u_0 + \sum_{j=1}^{2k_* - 1}v_j$, which generates an error $e_{2k_* - 1}$ satisfying \eqref{e2k-1} for $k = k_*$, as well as a decomposition 
\begin{equation}\label{e2k*decomp}
e_{2k_* - 1} = \sum_{j=1}^3 e_{2k_* - 1}^j
\end{equation}
analogous to \eqref{e2k-2tripledecomp}. Importantly, the first error 
\[
t^2e_{2k_* - 1}^1\in \frac{\lambda_{0,0}^{\frac12}}{\mu_{0,0}(t)^{2k_*}} IS^0(R_{0,0}\, (\log R_{0,0})^{p_{k_*}}, \mathcal{Q}')
\]
is independent of $\gamma_{1,2}$, and the last error $e_{2k_* - 1}^3$ may be placed into $e_{\text{prelim}}$, and so it remains to deal with the middle error which for technical reasons is still too large. Recall that the middle error satisfies
\[
t^2e_{2k_* - 1}^2\in \gamma_1\frac{\lambda_{0,0}^{\frac12}}{\mu_{0,0}(t)^{k_0}} IS^0(R_{0,0}^{-1}) +   \gamma_2\frac{\lambda_{0,0}^{\frac12}}{\mu_{0,0}(t)^{k_0}}\log t IS^0(R_{0,0}^{-1}),
\]
and in particular is $C^\infty$-smooth. 
Then set 
\[
\mu_{0,0}^2(t)L_0v_{smooth,1} = t^2e_{2k_* - 1}^2, 
\]
leading to 
\[
v_{smooth,1}\in  \gamma_1\frac{\lambda_{0,0}^{\frac12}}{\mu_{0,0}(t)^{k_0+2}} IS^2(R_{0,0}) +   \gamma_2\frac{\lambda_{0,0}^{\frac12}}{\mu_{0,0}(t)^{k_0+2}}\log t IS^2(R_{0,0})
\]
Then all errors generated by $v_{smooth,1}$ by interaction with the bulk part $u_{2k_*-1}$ can be placed into $e_{\text{prelim}}$. On the other hand, the error $t^2\partial_t^2v_{smooth,1}$ is of the same form as $v_{smooth,1}$. 
We next construct $v_{smooth,2}$, proceeding in analogy to Step 3, to improve the error generated by $\partial_t^2v_{smooth,1}$. The key here is that on the account of the rapid temporal decay of this term, the method of \cite{KS1} applied to it results in a term of sufficient smoothness, to be acceptable for a correction depending on $\gamma_{1,2}$. 
Specifically, we write the leading order term of $t^2\partial_t^2v_{smooth,1}$ in the form 
\[
(c_1+c_3\log t)\frac{\lambda_{0,0}^{\frac12}}{\mu_{0,0}(t)^{k_0+2}}R_{0,0} + (c_2 + c_4\log t)\frac{\lambda_{0,0}^{\frac12}}{\mu_{0,0}(t)^{k_0+2}}, 
\]
and then set (where the coefficients $c_{1,2}$ depend on $\gamma_{1,2}$)
\begin{align*}
&t^2 \big(\partial_t^2v_{smooth,2}-\partial_r^2v_{smooth,2}-\f2r \p_r v_{smooth,2}   \big)\\& = (c_1+c_3\log t)\frac{\lambda_{0,0}^{\frac12}}{\mu_{0,0}(t)^{k_0+2}}R_{0,0} + (c_2+ c_4\log t)\frac{\lambda_{0,0}^{\frac12}}{\mu_{0,0}(t)^{k_0+2}}. 
\end{align*}
Making the correct ansatz as in \cite{KS1} this is solved by 
\[
v_{smooth, 2}\in  \frac{\lambda_{0,0}^{\frac12}}{\mu_{0,0}(t)^{k_0+4}} IS^2(R_{0,0}^3 ,\mathcal{Q}_{smooth}) + \log t\frac{\lambda_{0,0}^{\frac12}}{\mu_{0,0}(t)^{k_0+4}} IS^2(R_{0,0}^3 ,\mathcal{Q}_{smooth}).
\]
The effect of this correction is that we replace the middle term in \eqref{e2k*decomp} by one in $e_{\text{prelim}}$, i. e. our final approximate solution 
\[
u_{\text{prelim}}: = u_0+\sum_{j=1}^{2k_*-1}v_j + \sum_{a = 1,2}v_{smooth,a}
\]
generates an error $e_{\text{prelim}}$ as claimed in the lemma. 

\end{proof}

In order to complete the proof of the Theorem~\ref{thm:main approximate}, we need to improve the approximate solution obtained in the preceding lemma a bit in order to replace the generated error $e_{\text{prelim}}$ by one which is smoother. More precisely, we need to get rid of the rough part of the error $\tilde{e}_{\text{prelim}}$. For this, we replace $u_{\text{prelim}}$ by 
\[
u_{approx}: = u_{\text{prelim}} + v,
\]
where $v$ solves the equation 
\[
\Box v + 5\tilde{u}_{\text{prelim}}^4v + \sum_{2\leq j\leq 5}{{5}\choose{j}}v^j\tilde{u}_{\text{prelim}}^{5-j} = -\tilde{e}_{\text{prelim}},
\]
where 
\[
\tilde{u}_{\text{prelim}} = u_{\text{prelim}} - v_{smooth} + \lambda_{0,0}^{\frac12}W(\lambda_{0,0}(t)r) - \lambda^{\frac12}W(\lambda(t)r),\, v_{smooth} = \sum_{a = 1}^2v_{smooth, a}
\]
is the $\gamma$-independent part of $ u_{\text{prelim}}$. 
Also, we shall impose vanishing of $v$ at $t = 0$. Then it is clear that $v$ will not depend on $\gamma_{1,2}$. The fact that such a $v$ can be computed with the required smoothness and bounds, provided $N$ is chosen large enough,  follows exactly as in \cite{KST}, see the discussion there after equation (3.1). Also, we have for any $t\in (0,t_0]$
\[
\big\|\nabla_{t,x}v(t)\big\|_{H^{\frac{\nu}{2}-}}\lesssim t^{N-3}
\]
Then we arrive at the error 
\begin{align*}
&\Box u_{approx} + u_{approx}^5\\
& = \Box u_{\text{prelim}} + u_{\text{prelim}}^5 + \sum_{2\leq j\leq 5}{{5}\choose{j}}v^ju_{\text{prelim}}^{5-j}\\
& + \Box v + 5\tilde{u}_{\text{prelim}}^4v\\
& + 5(-\tilde{u}_{\text{prelim}}^4 + u_{\text{prelim}}^4)v
\end{align*}
It follows that 
\begin{align}\label{eq:eapprox}
e_{approx} &= e_{\text{prelim}} - \tilde{e}_{\text{prelim}} + \sum_{2\leq j\leq 5}{{5}\choose{j}}v^j[u_{\text{prelim}}^{5-j} - \tilde{u}_{\text{prelim}}^{5-j} ]\nonumber\\&
+  5(-\tilde{u}_{\text{prelim}}^4 + u_{\text{prelim}}^4)v
\end{align}
This remaining error is easily seen to satisfy the claimed properties of the theorem. 
\end{proof}

\section{Modulation theory; determination of the parameters $\gamma_{1,2}$.}

\subsection{Re-scalings and the distorted Fourier transform}

The discussion following \eqref{eq:changeofframe} shows that we intend to pass to a slightly altered coordinate system, depending on the parameters $\gamma_{1,2}$, and given by 
\[
\tau_{\gamma_1,\gamma_2}, R_{\gamma_1,\gamma_2}
\]
differing from the old one which corresponded to $\tau_{0,0}, R_{0,0}$ (and which served as the basis for the discussion following \eqref{eq:epseqn}). We then have to reinterpret functions given in terms of $R_{0,0}$ as functions in terms of $R_{\gamma_1,\gamma_2}$, and understand the effect of such a change of scale on the distorted Fourier transform. Infinitesimally, this is explained in terms of Theorem~\ref{thm:transferenceop}, and we state here a simple variation on this theme: 

\begin{lem}\label{lem:changeofscale} Assume $\tilde{\epsilon}$ has the Fourier representation given by 
\[\tilde{\epsilon}(R)=\int_{0}^{\infty}x(\xi)\phi(R,\xi)\rho(\xi)\, d\xi + x_{d}\phi_{d}(R).	
\]
Then we have the formula 
\[
\mathcal{F}\big(\tilde{\epsilon}(e^{-\kappa}R)\big)(\xi) = x(e^{2\kappa}\xi) + \kappa\cdot\tilde{\mathcal{K}}_{\kappa}x + O_{\|\cdot\|_{\tilde{S}_1}}(\kappa\big|x_d\big|)
\]
where $\tilde{\mathcal{K}}_{\kappa}$ can be written as 
\[
\big(\tilde{\mathcal{K}}_{\kappa}x\big)(\xi) =  h_{\kappa}(\xi) x(e^{2\kappa}\xi) + \int_0^\infty \frac{F_{\kappa}(\xi, \eta)\rho(\eta)}{\xi - \eta}x(\eta)\,d\eta
\]
where $F_{\kappa}$ satisfies the same bounds as the function $F(\cdot, \cdot)$ in Theorem~\ref{thm:transferenceop}, and the function $h_{\kappa}$ is of class $C^\infty(0,\infty)$ and uniformly bounded (both in $\xi$ as well as $\kappa$), and satisfies symbol type bounds with respect to $\xi$. 

In particular, we have 
\[
\big\|\mathcal{F}\big(\tilde{\epsilon}(e^{\kappa}R)\big)\big\|_{\tilde{S}_1}\lesssim_{\tau_0,\kappa} (\big\|x\big\|_{\tilde{S}_1} + \big|x_d\big|). 
\] 
and more precisely, we have 
\[
\big\|\mathcal{F}\big(\tilde{\epsilon}(e^{\kappa}R)\big) - \big(\mathcal{F}(\tilde{\epsilon})\big)(e^{2\kappa}\xi)\big\|_{\tilde{S}_1}\lesssim_{\tau_0} \kappa (\big\|x\big\|_{\tilde{S}_1} + \big|x_d\big|). 
\]
as well as 
\[
\big\|\mathcal{F}\big(\tilde{\epsilon}(e^{\kappa}R)\big)\big\|_{\tilde{S}_1}\lesssim (1+ \tau_0\kappa)
\big\|\mathcal{F}\big(\tilde{\epsilon}(R)\big)\big\|_{\tilde{S}_1} + \kappa\big|x_d\big|. 
\] 

\end{lem}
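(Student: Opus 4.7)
The natural strategy is to realise the finite dilation $R\mapsto e^{-\kappa}R$ as the time-$\kappa$ flow of the infinitesimal generator $-R\partial_R$, apply Theorem~\ref{thm:transferenceop} to translate the flow to the distorted Fourier side, and solve the resulting linear ODE by Duhamel, treating the explicit part generated by $\tilde{\mathcal{A}}$ as the background and the commutator part $\mathcal{K}$ as a perturbation. Concretely, set $\tilde{\epsilon}_s(R):=\tilde{\epsilon}(e^{-s}R)$ for $s\in[0,\kappa]$; then $\partial_s\tilde{\epsilon}_s=-(R\partial_R)\tilde{\epsilon}_s$, and Theorem~\ref{thm:transferenceop} gives on the spectral side
\[
\partial_s\underline{x}_s=-(\tilde{\mathcal{A}}+\mathcal{K})\underline{x}_s,\qquad \underline{x}_0=\bigl(x_d,\,x(\xi)\bigr),
\]
whose solution is, by variation of constants,
\[
\underline{x}_\kappa=e^{-\kappa\tilde{\mathcal{A}}}\underline{x}_0-\int_0^\kappa e^{-(\kappa-t)\tilde{\mathcal{A}}}\mathcal{K}\,\underline{x}_t\,dt.
\]

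Because $\tilde{\mathcal{A}}$ acts only on the continuous part as the first-order differential operator $\tilde{\mathcal{A}}_c=-2\xi\partial_\xi-\tfrac32-\rho'(\xi)\xi/\rho(\xi)$, the semigroup $U_s:=e^{-s\tilde{\mathcal{A}}}$ can be written down explicitly by the method of characteristics: along $\xi(s)=e^{2s}\xi$ one computes $U_s x(\xi)=e^{3s/2}\sqrt{\rho(e^{2s}\xi)/\rho(\xi)}\,x(e^{2s}\xi)$, and $U_s$ is the identity on the discrete component. The symbol asymptotics of $\rho$ from Proposition~\ref{prop:Fourier} make the factor $e^{3s/2}\sqrt{\rho(e^{2s}\xi)/\rho(\xi)}-1$ of size $O(s)$, uniformly in $\xi$ and with symbol behaviour in $\xi$; hence the leading piece of $\underline{x}_\kappa$ splits as $x(e^{2\kappa}\xi)+\kappa\,h_\kappa(\xi)\,x(e^{2\kappa}\xi)$, which accounts for the first summand of $\kappa\tilde{\mathcal{K}}_\kappa x$ in the statement.

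The integral term in Duhamel produces the second summand. Here the action of $U_{\kappa-t}$ on $\mathcal{K}_{cc}$, followed by the change of variables $\eta\mapsto e^{2(\kappa-t)}\eta$ in the Calderon--Zygmund integral, preserves the kernel structure $F(\cdot,\cdot)\rho(\eta)/(\xi-\eta)$: one obtains a kernel $F_{\kappa,t}(\xi,\eta)\rho(\eta)/(\xi-\eta)$ in which $F_{\kappa,t}$ is built from $F$ composed with the common scaling $(\xi,\eta)\mapsto (e^{2(\kappa-t)}\xi,e^{2(\kappa-t)}\eta)$ together with the smooth $\rho$-ratios. Averaging in $t\in[0,\kappa]$ and replacing $\underline{x}_t$ by $\underline{x}_0$ up to an $O(\kappa^2)$ tail (which is absorbed by iterating Duhamel once and still fits the same integral template) yields the function $F_\kappa$. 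The off-diagonal blocks $\mathcal{K}_{cd},\mathcal{K}_{dc}$ together with the scalar $\mathcal{K}_{dd}=-\tfrac12$ control the discrete component via Gr\"onwall: $|x_{d,t}|\lesssim |x_d|+t\,\|x\|_{\tilde{S}_1}$, whence $-\int_0^\kappa U_{\kappa-t}\mathcal{K}_{cd}x_{d,t}\,dt$ contributes the $O_{\|\cdot\|_{\tilde{S}_1}}(\kappa|x_d|)$ error term.

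Finally, the three norm estimates reduce to (i) the boundedness of the dilation $x(\xi)\mapsto x(e^{2\kappa}\xi)$ on $\tilde{S}_1$, where only the low-frequency weight $\min\{\tau_{0,0}\xi^{1/2},1\}^{-1}$ fails to be scale invariant and, on the transition region $\tau_{0,0}^{-2}\le\xi\le e^{2\kappa}\tau_{0,0}^{-2}$, generates the advertised factor $1+\tau_0\kappa$; and (ii) the boundedness on $\tilde{S}_1$ of the operators $h_\kappa(\xi)\cdot\,$ and $x\mapsto\int F_\kappa(\xi,\eta)\rho(\eta)(\xi-\eta)^{-1}x(\eta)\,d\eta$, the latter being a standard weighted Calderon--Zygmund bound already used for $\mathcal{K}_{cc}$ in \cite{KST,CondBlow}. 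The main obstacle is the verification in Step~3 that the modified kernel $F_\kappa$ satisfies, uniformly in $\kappa$ in a fixed range, the same pointwise bounds as $F$ in Theorem~\ref{thm:transferenceop}; the high-frequency bounds $\xi+\eta\ge 1$ are essentially scale invariant, while the low-frequency regime $\xi+\eta\le 1$ has to be checked by hand, using the crucial fact that $U_s$ rescales both $\xi$ and $\eta$ by the \emph{same} factor so that the $F=O(\xi+\eta)$ decay at the origin is preserved under the flow.
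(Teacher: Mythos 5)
Your route is genuinely different from the paper's. The paper works directly with the finite dilation: it introduces the expression
\[
(\Xi_{\kappa} x)(\eta):= \Big\langle \int_0^\infty x(\xi)\phi(e^{-\kappa}R, e^{2\kappa}\xi)\rho(\xi)\,d\xi,\ \phi(R,\eta)\Big\rangle,
\]
and, following the proof of Theorem~5.1 in \cite{KST}, computes $(\eta-\xi)f_\kappa(\xi,\eta)$ as an explicit inner product against the potential difference $5\big[e^{-2\kappa}W^4(e^{-\kappa}R)-W^4(R)\big]$. This commutator identity produces the Calder\'on--Zygmund form $f_\kappa=\kappa\,\rho(\eta)F_\kappa(\xi,\eta)/(\xi-\eta)$ in closed form, in a single step, with no iteration; the diagonal factor comes out as $\Re\big[a_+(e^{2\kappa}\xi)/a_+(\xi)\big]$, and the adjustment to the true dilation $S_{e^{-\kappa}}$ is made through the auxiliary operator $\Psi$. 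You instead integrate the infinitesimal transference identity of Theorem~\ref{thm:transferenceop} and solve the resulting ODE in the flow parameter by Duhamel; your computation of the semigroup $U_s$ of $\tilde{\mathcal A}_c$ is correct, and the splitting of $U_\kappa x$ into $x(e^{2\kappa}\xi)$ plus an $O(\kappa)$ symbol multiplier is the right way to read off $h_\kappa$.

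The gap is in the step where you claim the $O(\kappa^2)$ Duhamel tail ``still fits the same integral template.'' In the lemma the error $O_{\|\cdot\|_{\tilde S_1}}(\kappa|x_d|)$ absorbs only the discrete-spectrum contributions; all $x$-dependent corrections, including the iterated Duhamel terms of size $O(\kappa^2\|x\|)$, must be shown to lie in $\kappa\,\tilde{\mathcal K}_\kappa x$ with $F_\kappa$ obeying the same pointwise bounds as $F$ \emph{uniformly in $\kappa$}. For that one needs a closure result for the operator class with kernels $\rho(\eta)F(\xi,\eta)/(\xi-\eta)$: stability under products $\mathcal K_{cc}\circ\mathcal K_{cc}$, under conjugation by the dilation semigroup $U_s$, and under averaging in $s$ — together with control of the off-diagonal blocks $\mathcal K_{cd},\mathcal K_{dc}$ appearing in the nested integrals. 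You assert this but do not prove it, and it is not automatic, precisely in the low-frequency regime $\xi+\eta\le 1$ that you yourself flag (the $O(\xi+\eta)$ vanishing of $F$ at the origin must be shown to survive the composition, not only the common rescaling). The paper's commutator computation produces $F_\kappa$ at once and so never faces this composition question.
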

\begin{proof} This is entirely analogous to the proof of Theorem 5.1 in \cite{KST}; in effect the latter deals with the 'infinitesimal version' of the current situation. Consider the expression 
\[
(\Xi_{\kappa} x)(\eta): = \langle \int_0^\infty x(\xi)\phi(e^{-\kappa}R, e^{2\kappa}\xi)\rho(\xi)\,d\xi,\,\phi(R, \eta)\rangle,
\]
where $x\in C_0^\infty(0,\infty)$. Under the latter restriction the integral converges absolutely. Then proceeding as in \cite{KST}, see in particular Lemma 4.6 and the proof of Theorem 5.1, we get 
\[
(\Xi_{\kappa} x)(\xi) = \Re\big[\frac{a_+(e^{2\kappa}\xi)}{a_+(\xi)}\big]x(\xi) + \int_0^\infty f_{\kappa}(\xi, \eta)x(\eta)\,d\eta,
\]
where $a_+$ is the function occurring in Prop.~\ref{prop:Fourier}.  
Here in order to determine the kernel $f_{\kappa}$ of the 'off-diagonal' operator at the end, we use 
\begin{align*}
&(\eta - \xi) f_{\kappa}(\xi, \eta)\\& = \langle \int_0^\infty x(\xi)5[e^{-2\kappa}W^4(e^{-\kappa}R) - W^4(R)]\phi(e^{-\kappa}R, e^{2\kappa}\xi)\rho(\xi)\,d\xi,\,\phi(R, \eta)\rangle
\end{align*}
Then by following the argument of \cite{KST}, proof of Theorem 5.1,  one infers that 
\[
f_{\kappa}(\xi, \eta) = \kappa\cdot\frac{\rho(\eta)F_{\kappa}(\xi, \eta)}{\xi - \eta}, 
\]
with $F_{\kappa}$ having the same asymptotic and vanishing properties as the kernel $F(\xi, \eta)$ in Theorem~\ref{thm:transferenceop}, uniformly in $\kappa\in [0,1]$, say. It remains to translate the properties of $\Xi_{\kappa}$ to those of the re-scaling operator. 
Let $\Psi$ be the operator which satisfies 
\[
\mathcal{F}\big(\Psi(\tilde{\epsilon})\big)(\xi) = e^{-2\kappa}\frac{\rho(\frac{\xi}{e^{2\kappa}})}{\rho(\xi)}x(\frac{\xi}{e^{2\kappa}})
\]
and leaves the discrete spectral part invariant, while $S_{e^{-\kappa}}(\tilde{\epsilon})(R) = \tilde{\epsilon}(\frac{R}{e^{\kappa}})$ is the scaling operator. Then we have 
\[
(\Xi_{\kappa} x)(\xi) = \mathcal{F}\big(S_{e^{-\kappa}}\Psi(\tilde{\epsilon})\big)(\xi). + O_{\|\cdot\|_{\tilde{S}_1}}(\kappa\big|x_d\big|).
\]
We conclude that 
\[
\mathcal{F}\big(S_{e^{-\kappa}}\tilde{\epsilon}\big)(\xi) = \Xi_{\kappa}\big(\mathcal{F}\big(\Psi^{-1}(\tilde{\epsilon})\big)\big) + O_{\|\cdot\|_{\tilde{S}_1}}(\kappa\big|x_d\big|).
\]
It follows that we can write 
\begin{align*}
\mathcal{F}\big(S_{e^{-\kappa}}\tilde{\epsilon}\big)(\xi) = &x(e^{2\kappa}\xi) + \big[e^{2\kappa}\Re\big[\frac{a_+(e^{2\kappa}\xi)}{a_+(\xi)}\big]\cdot\frac{\rho(e^{2\kappa}\xi)}{\rho(\xi)}-1\big]x(e^{2\kappa}\xi)\\
& + \int_0^\infty \widetilde{f_{\kappa}}(\xi, \eta)x(\eta)\,d\eta +  O_{\|\cdot\|_{\tilde{S}_1}}(\kappa\big|x_d\big|),
\end{align*}
where we put 
\[
\widetilde{f_{\kappa}}(\xi, \eta): = f_{\kappa}(\xi, \frac{\eta}{e^{2\kappa}})\cdot\frac{\rho(\eta)}{\rho(\frac{\eta}{e^{2\kappa}})}. 
\]
This implies the claims of the lemma. 

\end{proof}

\subsection{The effect of scaling the bulk part}

Here we investigate how changing the bulk part from $\lambda_{0,0}^{\frac12}W(\lambda_{0,0}r)$ to $\lambda^{\frac12}W(\lambda r)$ affects the distorted Fourier transform of the new perturbation term. Specifically, recall \eqref{eq:newdataperutbation}, which defines a new data pair $(\bar{\epsilon}_0, \bar{\epsilon}_1)$, which in turn uniquely define a new quadruple of Fourier components $(x_0^{(\gamma_1,\gamma_2)}, x_{0d}^{(\gamma_1,\gamma_2)}, x_1^{(\gamma_1,\gamma_2)}, x_{1d}^{(\gamma_1,\gamma_2)})$, via \eqref{eq:x0barepsilon0}, \eqref{eq:datatransference3}, \eqref{eq:datatransference4}. We can then derive the analogue of \eqref{eq:transport}, and try to repeat the iterative process in \cite{CondBlow}, but for this we shall have to ensure the two key vanishing conditions \eqref{eq:vanishing2}, as well as the condition \eqref{eq:cond1}. That this is indeed possible is the content of the following 

\begin{prop}\label{prop:choiceofgamma} Given a fixed $\nu\in (0, \nu_0]$, $t_0\in (0, 1]$, there is a $\delta_1 = \delta_1(\nu, t_0)>0$ small enough such that the following holds. 
Given a triple of data 
\[
(x_0(\xi), x_1(\xi), x_{0d})\in \tilde{S}\times \R
\]
and with  
\[
\big\|(x_0, x_1)\big\|_{\tilde{S}} + \big|x_{0d}\big|<\delta_1,
\]
there is a unique pair $\gamma_{1,2}$ with $\big|\gamma_1\big| + \big|\gamma_2\big|\lesssim_{\nu, t_0}\big\|(x_0, x_1)\big\|_{\tilde{S}}$ and a unique parameter $x_{1d}$ satisfying $\big|x_{1d}\big|\lesssim_\nu\big|x_{0d}\big|$  such that 
determining $(\epsilon_0, \epsilon_1)$ via \eqref{eq:x0epsilon0}, \eqref{eq:datatransference1}, \eqref{eq:datatransference2}, and from there $(\bar{\epsilon}_0, \bar{\epsilon}_1)$ via \eqref{eq:newdataperutbation} which in turn defines the quadruple of Fourier data $(x_0^{(\gamma_1,\gamma_2)}, x_{0d}^{(\gamma_1,\gamma_2)}, x_1^{(\gamma_1,\gamma_2)}, x_{1d}^{(\gamma_1,\gamma_2)})$, we have 
\begin{align*}
&A(\gamma_1,\gamma_2) : = \int_0^\infty\frac{\rho^{\frac{1}{2}}(\xi)x_1^{(\gamma_1,\gamma_2)} (\xi)}{\xi^{\frac{3}{4}}}\sin[\lambda_{\gamma_1,\gamma_2} (\tau_0)\xi^{\frac12}\int_{\tau_0}^\infty\lambda_{\gamma_1,\gamma_2} ^{-1}(s)\,ds]\,d\xi = 0\\
&B(\gamma_1,\gamma_2): = \int_0^\infty\frac{\rho^{\frac{1}{2}}(\xi)x_0^{(\gamma_1,\gamma_2)} (\xi)}{\xi^{\frac{1}{4}}}\cos[\lambda_{\gamma_1,\gamma_2} (\tau_0)\xi^{\frac12}\int_{\tau_0}^\infty\lambda_{\gamma_1,\gamma_2} ^{-1}(s)\,ds]\,d\xi = 0,
\end{align*}
and the discrete spectral part $(x_{0d}^{(\gamma_1,\gamma_2)}, x_{1d}^{(\gamma_1,\gamma_2)})$ satisfies the vanishing property of Lemma~\ref{lem:linhom}, \eqref{eq:cond1}, with respect to the scaling law $\lambda = \lambda_{\gamma_1,\gamma_2}$. 
We have the precise bound 
\begin{equation}\label{eq:gammabounds}
\big|\gamma_1 \lambda_{0,0}^{\frac12}t_0^{k_0\nu}\big| + \big|\gamma_2 \lambda_{0,0}^{\frac12}\log t_0 t_0^{k_0\nu}\big|\lesssim \tau_0\log\tau_0(\big\|(x_0,x_1)\big\|_{\tilde{S}} + \big|x_{0d}\big|). 
\end{equation}
Finally, we have the bound 
\begin{align*}
&\big\|x_0^{(\gamma_1,\gamma_2)} - \frac{\lambda_{0,0}}{\lambda}S_{\frac{\lambda_{0,0}^2}{\lambda^2}}x_0\big\|_{\tilde{S}_1} + \big\|x_1^{(\gamma_1,\gamma_2)} -  \frac{\lambda_{0,0}}{\lambda}S_{\frac{\lambda_{0,0}^2}{\lambda^2}}x_1\big\|_{\tilde{S}_2}\\&\hspace{5cm}\lesssim \log\tau_0\cdot\tau_0^{0+}\cdot(\big\|(x_0,x_1)\big\|_{\tilde{S}} + \big|x_{0d}\big|).
\end{align*}
where $S_{\frac{\lambda_{0,0}^2}{\lambda^2}}x_i(\xi)=x_{i}(\frac{\lambda_{0,0}^2}{\lambda^2}\xi)$ is the scaling operator.

\end{prop}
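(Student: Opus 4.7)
The plan is to set up a quantitative implicit function / contraction argument with the three scalar unknowns $(\gamma_1,\gamma_2,x_{1d})$, treating the data $(x_0,x_1,x_{0d})$ as a small Lipschitz parameter. Introduce the map
\[
\Phi(\gamma_1,\gamma_2,x_{1d};x_0,x_1,x_{0d})=\bigl(A(\gamma_1,\gamma_2),\,B(\gamma_1,\gamma_2),\,E(\gamma_1,\gamma_2,x_{1d})\bigr)\in\R^3,
\]
where $A,B$ are as in the statement and $E$ measures the failure of the discrete condition \eqref{eq:cond1} (with $\lambda$ replaced by $\lambda_{\gamma_1,\gamma_2}$) for the quadruple $(x_{0d}^{(\gamma_1,\gamma_2)},x_{1d}^{(\gamma_1,\gamma_2)})$. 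The goal is to produce a unique zero of $\Phi$ with Lipschitz dependence on the data. The whole construction collapses to the tautology $\bar\epsilon_i=\epsilon_i$, $R_{\gamma_1,\gamma_2}=R_{0,0}$, $x_i^{(\gamma_1,\gamma_2)}=x_i$ at $\gamma=0$; for nonzero $\gamma$ the deviations are controlled by Theorem~\ref{thm:main approximate} together with Lemma~\ref{lem:changeofscale}.

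\textbf{Base point estimates.} First I would evaluate $\Phi$ at $(\gamma_1,\gamma_2,x_{1d})=(0,0,\gamma_d x_{0d})$. At that point $u_{\text{approx}}^{(0,0)}=u_\nu$ by Remark~\ref{rem:gamma zero}, so $(\bar\epsilon_0,\bar\epsilon_1)=(\epsilon_0,\epsilon_1)$ and $x_i^{(0,0)}=x_i$. The integrals $A(0,0),B(0,0)$ then reduce to the vanishing integrals of Prop.~\ref{prop:lingrowthcond} evaluated on the initial data, and by Cauchy–Schwarz together with the asymptotics of $\rho$ these are bounded by $\tau_0\log\tau_0\,\|(x_0,x_1)\|_{\tilde S}$ (the $\tau_0$ factors coming from the oscillatory kernel and the low-$\xi$ density $\rho(\xi)\sim\xi^{-1/2}$). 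The component $E$ vanishes by construction. Thus $\Phi(0,0,\gamma_d x_{0d})=O\!\bigl(\tau_0\log\tau_0(\|(x_0,x_1)\|_{\tilde S}+|x_{0d}|)\bigr)$, which is the target size for \eqref{eq:gammabounds}.

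\textbf{Invertibility of the linearization.} Next I would compute $D_{(\gamma_1,\gamma_2,x_{1d})}\Phi$ at that reference point for vanishing data. Changing $\gamma_{1,2}$ has three effects on $(x_0^{(\gamma_1,\gamma_2)},x_1^{(\gamma_1,\gamma_2)})$: (i) the bulk difference $(u_\nu-u_{\text{approx}}^{(\gamma_1,\gamma_2)})[t_0]$ contributes, via Theorem~\ref{thm:main approximate}, a new Fourier piece proportional to $\partial_{\gamma_i}u_{\text{approx}}^{(\gamma_{1,2})}$; (ii) the passage from $R_{0,0}$ to $R_{\gamma_1,\gamma_2}=e^\kappa R_{0,0}$ acts as the scaling operator of Lemma~\ref{lem:changeofscale}, which vanishes when applied to zero data; (iii) the oscillating kernels $\cos/\sin[\lambda_{\gamma_1,\gamma_2}(\tau_0)\xi^{1/2}\int_{\tau_0}^\infty\lambda_{\gamma_1,\gamma_2}^{-1}]$ acquire $\gamma$-dependence, again a second-order effect in the data. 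So at the base point only (i) contributes. From \eqref{eq:lambdagammaonetwo} one has $\partial_{\gamma_1}\lambda(t_0)\sim t_0^{k_0\nu}\,\lambda_{0,0}(t_0)$ and $\partial_{\gamma_2}\lambda(t_0)\sim\log t_0\cdot t_0^{k_0\nu}\lambda_{0,0}(t_0)$, and similarly for $\dot\lambda(t_0)$ with an extra $k_0\nu/t_0$. The initial value and initial $\tau$-derivative of the perturbation produced by varying $\lambda(t_0)$ project to distinct linear combinations of the ``resonant'' distorted-Fourier profiles $\cos[\cdot],\sin[\cdot]$, so the $2\times 2$ block $D_{(\gamma_1,\gamma_2)}(A,B)$ is the product of a nondegenerate oscillatory-integral matrix (whose determinant is bounded below uniformly in small $\nu$) with the $\lambda(t_0),\dot\lambda(t_0)$ Jacobian above, whose determinant is of order $t_0^{2k_0\nu-1}|\log t_0|$; in particular it is invertible. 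For the $x_{1d}$ variable, $E$ depends linearly on $x_{1d}$ with derivative $1+O(\tau_0^{-1})$ by Lemma~\ref{lem:linhom}, and its coupling to $(\gamma_1,\gamma_2)$ is $O(\tau_0^{-1})$. The full Jacobian is therefore block-triangular and invertible, with inverse norm bounded by a $\nu,t_0$-dependent constant.

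\textbf{Contraction and bounds.} With the invertible linearization in hand, I would rewrite the equation $\Phi=0$ as a fixed point for the map
\[
T(\gamma_1,\gamma_2,x_{1d})=(\gamma_1,\gamma_2,x_{1d})-\bigl[D\Phi(0)\bigr]^{-1}\Phi(\gamma_1,\gamma_2,x_{1d}),
\]
and apply Banach's theorem on a ball of radius $\lesssim\tau_0\log\tau_0(\|(x_0,x_1)\|_{\tilde S}+|x_{0d}|)$, using Theorem~\ref{thm:main approximate} (smoothness in $\gamma$), Lemma~\ref{lem:changeofscale} (Lipschitz control of the rescaling), and direct bounds on the oscillatory integrals to verify that $T$ is a strict contraction. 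This yields existence, uniqueness, and the Lipschitz bound \eqref{eq:gammabounds}. The last estimate on $x_0^{(\gamma_1,\gamma_2)}-\tfrac{\lambda_{0,0}}{\lambda}S_{\lambda_{0,0}^2/\lambda^2}x_0$ then follows by combining Lemma~\ref{lem:changeofscale} (applied with $\kappa=\tfrac12\log(\lambda/\lambda_{0,0})=O(\log\tau_0\cdot\tau_0^{0+})$) with Theorem~\ref{thm:main approximate}, which estimates the bulk difference $u_\nu[t_0]-u_{\text{approx}}^{(\gamma_{1,2})}[t_0]$ in $H^{3/2+2\delta_0}\times H^{1/2+2\delta_0}$, and hence in $\tilde S$ by \eqref{eq:HS}.

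\textbf{Main obstacle.} The delicate point is the non-degeneracy of the $(\gamma_1,\gamma_2)\to(A,B)$ block. The two scaling-law variations produce essentially the same spatial profile at $t=t_0$ up to the scalar factor $\log t_0$, so the nondegeneracy must be extracted from the fact that $\gamma_1$ and $\gamma_2$ act differently on $\dot\lambda(t_0)$ (equivalently, on the ``momentum'' component which feeds into the $\sin$ integral $A$) than on $\lambda(t_0)$ (feeding into the $\cos$ integral $B$). Tracking the $\log t_0$-factor carefully through this computation is what produces a determinant of the stated size and allows the inverse function theorem to close with quantitative constants.
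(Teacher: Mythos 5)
Your strategy is essentially the paper's: reduce the three conditions to a fixed-point / implicit-function problem in $(\gamma_1,\gamma_2,x_{1d})$, where the $2\times 2$ block controlling $(\gamma_1,\gamma_2)$ via $(A,B)$ is approximately linear at leading order, and its non-degeneracy is extracted from the fact that $\gamma_1,\gamma_2$ act differently on $\dot\lambda(t_0)$ than on $\lambda(t_0)$ because of the extra $\partial_t\log t = t^{-1}$ term. The paper arrives at this in a more concrete way, deriving the system \eqref{eq:gammaonetwofirst}, \eqref{eq:gammaonetwosecond} via the non-degeneracy at $R=0$ of the bulk difference \eqref{eq:bulkterm2}, the vanishing of $v_{smooth}$ at $R=0$ \eqref{eq:vsmoothvanish}, and the spectral density asymptotics $\xi^{1/2}\rho(\xi)\to 1/(3\pi)$, but the underlying linear-algebra structure is what you describe.

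There is however a concrete error in the crux of your argument. You claim the determinant of the $(\gamma_1,\gamma_2)\mapsto(\lambda(t_0),\dot\lambda(t_0))$ Jacobian is ``of order $t_0^{2k_0\nu-1}|\log t_0|$.'' In fact the $\log t_0$ factor cancels exactly. Writing $g(t)=t^{k_0\nu}/\langle t^{k_0\nu}\rangle$ so that $\lambda=\lambda_{0,0}(1+\gamma_1 g+\gamma_2 g\log t)$, one has
$\partial_{\gamma_1}\lambda = g\lambda_{0,0}$,
$\partial_{\gamma_2}\lambda = (\log t)\,g\lambda_{0,0}$,
$\partial_{\gamma_1}\lambda_t = (g\lambda_{0,0})'$,
$\partial_{\gamma_2}\lambda_t = t^{-1}g\lambda_{0,0} + (\log t)(g\lambda_{0,0})'$,
so the determinant equals $t_0^{-1}[g(t_0)\lambda_{0,0}(t_0)]^2$, with no $\log t_0$ whatsoever. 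The same cancellation is visible in the paper's explicit system: its $2\times2$ matrix (in the variables $Y_j=\gamma_j t_0^{k_0\nu}$) has determinant $\tfrac{c}{2}e_2\lambda_{0,0}$, independent of $\log t_0$. This is not a cosmetic issue, because the two columns of the matrix \emph{are} parallel up to the $\log t_0$ factor in every entry except the lower-right, so the naive product of diagonals (which would carry $\log t_0$) is exactly cancelled by the off-diagonal product, and what survives is the smaller quantity $e_2$. Claiming a determinant with a $|\log t_0|$ factor is inconsistent with your own observation that the two scaling perturbations produce ``essentially the same spatial profile up to $\log t_0$''; if the determinant really retained $|\log t_0|$, there would be no cancellation to worry about. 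This would also feed through to an incorrect inverse-Jacobian estimate and thence to \eqref{eq:gammabounds}. Finally, you have not actually verified that the ``oscillatory-integral matrix'' factor is bounded below; in the paper this amounts to the concrete computation in \eqref{eg:Btildetilde} using \eqref{eq:bulkterm2}, \eqref{eq:vsmoothvanish} and the precise $\rho$-asymptotics, and these steps cannot be omitted.
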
  

\begin{proof} The strategy shall be to first fix the discrete spectral part to $(x_{0d}, x_{1d})$ while choosing $\gamma_{1,2}$, and at the end finalising the choice of $x_{1d}$ to satisfy the required co-dimension one condition. 

Observe that from our definition and the structure of $u_{approx}^{(\gamma_1,\gamma_2)}$, in particular Lem-\\ma~\ref{lem:corrections}, and the end of the proof of Theorem~\ref{thm:main approximate}, we can write 
\begin{equation}\label{eq:barepsilon0} 
\bar{\epsilon}_0 = \chi_{r\leq Ct_0}\big[\lambda_{0,0}^{\frac12}W(\lambda_{0,0}r) - \lambda_{\gamma_1,\gamma_2}^{\frac12}W(\lambda_{\gamma_1,\gamma_2}r) - v_{smooth}\big] + \epsilon_0,
\end{equation}
as well as 
\begin{equation}\label{eq:barepsilon1}
\bar{\epsilon}_1 = \chi_{r\leq Ct_0}\big[\partial_t\big[\lambda_{0,0}^{\frac12}W(\lambda_{0,0}r) - \lambda_{\gamma_1,\gamma_2}^{\frac12}W(\lambda_{\gamma_1,\gamma_2}r)\big] -  \partial_tv_{smooth}\big] + \epsilon_1,
\end{equation}
where we have introduced the notation $v_{smooth} = \sum_{a=1,2}v_{smooth,a}$, the latter as in the statement of Lemma~\ref{lem:corrections}. 
Also, it is implied that the expressions gets evaluated at $t = t_0$.
\\
We shall think of $\bar{\epsilon}_0, \bar{\epsilon}_1$ as functions of $R = \lambda_{\gamma_1,\gamma_2}(t_0)r$, and we shall keep the latter definition of $R$ for the rest of the paper, as this is the correct variable to use for the sequel. 

Observe that setting 
\[
\tilde{x}_0^{(\gamma_1,\gamma_2)}(\xi) = \int_0^\infty\phi(R, \xi)R\epsilon_0(R_{0,0}(R))\,dR,\,\tilde{x}_{0d}^{(\gamma_1,\gamma_2)} = \int_0^\infty\phi_d(R)R\epsilon_0(R_{0,0}(R))\,dR
\]
\begin{align*}
\tilde{x}_1^{(\gamma_1,\gamma_2)}(\xi) = &-\lambda_{\gamma_1,\gamma_2}^{-1}\big|_{t = t_0}\int_0^\infty\phi(R, \xi)R\epsilon_1(R_{0,0}(R))\,dR - \frac{\dot{\lambda}_{\gamma_1,\gamma_2}}{\lambda_{\gamma_1,\gamma_2}}\big|_{t = t_0}(\mathcal{K}_{cc}\tilde{x}_0^{(\gamma_1,\gamma_2)})(\xi)\\
 & - \frac{\dot{\lambda}_{\gamma_1,\gamma_2}}{\lambda_{\gamma_1,\gamma_2}}\big|_{t = t_0}(\mathcal{K}_{cd}\tilde{x}_{0d}^{(\gamma_1,\gamma_2)})(\xi),
\end{align*}
and finally(recall \eqref{eq:datatransference4})
\begin{align*}
\tilde{x}_{1d}^{(\gamma_1,\gamma_2)} &= - \lambda_{\gamma_1,\gamma_2}^{-1}\big|_{t = t_0}\int_0^\infty\phi_d(R)R\epsilon_1(R_{0,0}(R))\,dR - \frac{\dot{\lambda}_{\gamma_1,\gamma_2}}{\lambda_{\gamma_1,\gamma_2}}\big|_{t = t_0}\mathcal{K}_{dd}\tilde x_{0d}^{(\gamma_1,\gamma_2)}\\& - \frac{\dot{\lambda}_{\gamma_1,\gamma_2}}{\lambda_{\gamma_1,\gamma_2}}\big|_{t = t_0}\mathcal{K}_{dc}\tilde x_{0}^{(\gamma_1,\gamma_2)},
\end{align*}

then using Lemma~\ref{lem:changeofscale}, we have 
\[
\big\|\tilde{x}_0^{(\gamma_1,\gamma_2)}(\xi) - \frac{\lambda}{\lambda_{0,0}}x_0(\frac{\lambda^2}{\lambda_{0,0}^2}\xi)\big\|_{\tilde{S}_1}\lesssim_{\tau_0} \big|\gamma_1 t_0^{k_0\nu} + \gamma_2\log t_0\cdot t_0^{k_0\nu}\big|[\big\|x_0\big\|_{\tilde{S}_1} + \big|x_{0d}\big|],
\]
while we directly infer the bound 
\[
\big|\tilde{x}_{0d}^{(\gamma_1,\gamma_2)} - x_{0d}\big|\lesssim \big|\gamma_1 t_0^{k_0\nu} + \gamma_2\log t_0\cdot t_0^{k_0\nu}\big|(\tau_0\big\|x_0\big\|_{\tilde{S}_1} + \big|x_{0d}\big|).
\]
Similarly, we obtain 
\begin{align*}
&\big\|\tilde{x}_1^{(\gamma_1,\gamma_2)}(\xi) - \frac{\lambda}{\lambda_{0,0}}x_1(\frac{\lambda^2}{\lambda_{0,0}^2}\xi)\big\|_{\tilde{S}_2}\\&\lesssim \big|\gamma_1 t_0^{k_0\nu} + \gamma_2\log t_0\cdot t_0^{k_0\nu}\big|[\big\|x_1\big\|_{\tilde{S}_2} + \big\|x_0\big\|_{\tilde{S}_1}+\big|x_{1d}\big| + \tau_0^{-1}\big|x_{0d}\big|],
\end{align*}
as well as 
\[
\big|\tilde{x}_{1d}^{(\gamma_1,\gamma_2)} - x_{1d}\big|\lesssim \big|\gamma_1 t_0^{k_0\nu} + \gamma_2\log t_0\cdot t_0^{k_0\nu}\big|(\tau_0\big\|x_0\big\|_{\tilde{S}_1} + \big|x_{0d}\big|).
\]

taking advantage of the structure of $\mathcal{K}_{cc}, \mathcal{K}_{cd}$ as detailed in Theorem~\ref{thm:transferenceop}. Recall the quantities in \eqref{eq:vanishing}
\[
 B: = \int_0^\infty\frac{\rho^{\frac{1}{2}}(\xi)x_0(\xi)}{\xi^{\frac{1}{4}}}\cos[\nu\tau_0\xi^{\frac{1}{2}}]\,d\xi,\,A: = \int_0^\infty\frac{\rho^{\frac{1}{2}}(\xi)x_1(\xi)}{\xi^{\frac{3}{4}}}\sin[\nu\tau_0\xi^{\frac{1}{2}}]\,d\xi, 
\]
and thus formulated in terms of the original data $x_{0,1}(\xi)$, and independent of $\gamma_{1,2}$. Then denoting by $\tilde{A}(\gamma_1,\gamma_2)$, resp. $\tilde{B}(\gamma_1,\gamma_2)$ the quantity defined like $A(\gamma_1,\gamma_2)$, $B(\gamma_1,\gamma_2)$ in the statement of the proposition, but with $x_j^{(\gamma_1,\gamma_2)}$ replaced by $\tilde{x}_j^{(\gamma_1,\gamma_2)}$, $j = 1, 0$,  we infer after a change of variables that 
\begin{equation}\label{eq:tildeA}
\tilde{A}(\gamma_1,\gamma_2) = A + O\big(\big|\gamma_1 t_0^{k_0\nu} + \gamma_2\log t_0\cdot t_0^{k_0\nu}\big|\tau_0[\big\|x_1\big\|_{\tilde{S}_2} + \tau_0^{-1}\big\|x_0\big\|_{\tilde{S}_1} + \big|x_{1d}\big| +\tau_0^{-1}\big|x_{0d}\big| ]\big),
\end{equation}
\begin{equation}\label{eq:tildeB}
\tilde{B}(\gamma_1,\gamma_2) = B+ O\big(\big|\gamma_1 t_0^{k_0\nu} + \gamma_2\log t_0\cdot t_0^{k_0\nu}\big|\tau_0[\big\|x_0\big\|_{\tilde{S}_1} + \tau_0^{-1}\big|x_{0d}\big|]\big),
\end{equation}
Finally, in light of \eqref{eq:barepsilon0}, \eqref{eq:barepsilon1}, introduce the Fourier transforms of the 'bulk part differences' 
\[
\tilde{\tilde{x}}_0^{(\gamma_1,\gamma_2)}(\xi) = \int_0^\infty\phi(R, \xi)R\chi_{r\leq Ct_0}\big[ \lambda_{0,0}^{\frac12}W(\lambda_{0,0}r) - \lambda_{\gamma_1,\gamma_2}^{\frac12}W(\lambda_{\gamma_1,\gamma_2}r) - v_{smooth}\big] \,dR, 
\]
\begin{align*}
\tilde{\tilde{x}}_1^{(\gamma_1,\gamma_2)}(\xi) =  -\lambda_{\gamma_1,\gamma_2}^{-1}\int_0^\infty\phi(R, \xi)R\chi_{r\leq Ct_0}\big[\partial_t&\big[\lambda_{0,0}^{\frac12}W(\lambda_{0,0}r)\\& - \lambda_{\gamma_1,\gamma_2}^{\frac12}W(\lambda_{\gamma_1,\gamma_2}r)\big] -  \partial_tv_{smooth}\big] \,dR, 
\end{align*}
and label their contributions to the expressions $A(\gamma_1,\gamma_2)$, $B(\gamma_1,\gamma_2)$, by 
\begin{align*}
&\tilde{\tilde{A}}(\gamma_1,\gamma_2) : = \int_0^\infty\frac{\rho^{\frac{1}{2}}(\xi)\tilde{\tilde{x}}_1^{(\gamma_1,\gamma_2)} (\xi)}{\xi^{\frac{3}{4}}}\sin[\lambda_{\gamma_1,\gamma_2} (\tau_0)\xi^{\frac12}\int_{\tau_0}^\infty\lambda_{\gamma_1,\gamma_2} ^{-1}(s)\,ds]\,d\xi = 0\\
&\tilde{\tilde{B}}(\gamma_1,\gamma_2): = \int_0^\infty\frac{\rho^{\frac{1}{2}}(\xi)\tilde{\tilde{x}}_0^{(\gamma_1,\gamma_2)} (\xi)}{\xi^{\frac{1}{4}}}\cos[\lambda_{\gamma_1,\gamma_2} (\tau_0)\xi^{\frac12}\int_{\tau_0}^\infty\lambda_{\gamma_1,\gamma_2} ^{-1}(s)\,ds]\,d\xi = 0.
\end{align*}
Then the first two vanishing conditions of the proposition can be formulated as 
\[
0 = A(\gamma_1,\gamma_2) = \tilde{\tilde{A}}(\gamma_1,\gamma_2) + \tilde{A}(\gamma_1,\gamma_2),\,0 = B(\gamma_1,\gamma_2) = \tilde{\tilde{B}}(\gamma_1,\gamma_2) + \tilde{B}(\gamma_1,\gamma_2),
\]
and so, in light of \eqref{eq:tildeA}, \eqref{eq:tildeB}, we find 
\begin{equation}\label{eq:tildetildeA}
 \tilde{\tilde{A}}(\gamma_1,\gamma_2) = -A + O\big(\big|\gamma_1 t_0^{k_0\nu} + \gamma_2\log t_0\cdot t_0^{k_0\nu}\big|\tau_0[\big\|x_1\big\|_{\tilde{S}_2} + \tau_0^{-1}\big\|x_0\big\|_{\tilde{S}_1} + \tau_0^{-1}\big|x_{0d}\big| + \big|x_{1d}\big|]\big),
 \end{equation}
 \begin{equation}\label{eq:tildetildeB}
 \tilde{\tilde{B}}(\gamma_1,\gamma_2) = -B + O\big(\big|\gamma_1 t_0^{k_0\nu} + \gamma_2\log t_0\cdot t_0^{k_0\nu}\big|\tau_0[\big\|x_0\big\|_{\tilde{S}_1} + \tau_0^{-1}\big|x_{0d}\big|]\big).
 \end{equation}
It remains to compute $ \tilde{\tilde{A}}(\gamma_1,\gamma_2)$, $\tilde{\tilde{B}}(\gamma_1,\gamma_2)$ in terms of $\gamma_{1,2}$, which we now do: we can write 
\begin{equation}\label{eq:differenceofW}\begin{split}
\lambda_{0,0}^{\frac12}W(\lambda_{0,0}r) - \lambda_{\gamma_1,\gamma_2}^{\frac12}W(\lambda_{\gamma_1,\gamma_2}r) &= O(\big|\gamma_1 t_0^{k_0\nu} + \gamma_2\log t_0\cdot t_0^{k_0\nu}\big|)\lambda_{0,0}^{\frac12}\phi(R, 0)\\
& +  O(\big|\gamma_1 t_0^{k_0\nu} + \gamma_2\log t_0\cdot t_0^{k_0\nu}\big|^2),
\end{split}\end{equation}
Further, we find after writing $\xi\phi(R, \xi) = \mathcal{L}\phi(R, \xi)$ and performing integration by parts 
\begin{equation}\label{eq:diraclike}
\big|\int_0^\infty \phi(R, \xi)\chi_{r\leq Ct_0}\lambda_{0,0}^{\frac12}\phi(R, 0)\,dR\big|\lesssim_N \lambda_{0,0}^{\frac12}\frac{C\tau_0}{\langle C\tau_0\xi^{\frac12}\rangle^N},
\end{equation}
whence we infer 
\begin{equation}\label{eq:bulkterm1}\begin{split}
&\big|\int_0^\infty \phi(R, \xi)\chi_{r\leq Ct_0}R\big[\lambda_{0,0}^{\frac12}W(\lambda_{0,0}r) - \lambda_{\gamma_1,\gamma_2}^{\frac12}W(\lambda_{\gamma_1,\gamma_2}r)\big]\,dR\big|\\&\lesssim_N \lambda_{0,0}^{\frac12}\frac{C\tau_0}{\langle C\tau_0\xi^{\frac12}\rangle^N}\big|\gamma_1 t_0^{k_0\nu} + \gamma_2\log t_0\cdot t_0^{k_0\nu}\big| + \big|\gamma_1 t_0^{k_0\nu} + \gamma_2\log t_0\cdot t_0^{k_0\nu}\big|^2.
\end{split}\end{equation}
We also have the important {\it{non-degeneracy property}} 
 \begin{equation}\label{eq:bulkterm2}\begin{split}
 &\lim_{R\rightarrow 0 }R^{-1}\chi_{R\leq C\tau_0}R\big[ \lambda_{0,0}^{\frac12}W(\lambda_{0,0}r) - \lambda_{\gamma_1,\gamma_2}^{\frac12}W(\lambda_{\gamma_1,\gamma_2}r)\big]\\
& =  \frac12\lambda_{0,0}^{\frac12}[\gamma_1 t_0^{k_0\nu} + \gamma_2\log t_0\cdot t_0^{k_0\nu}] + O\big(\lambda_{0,0}^{\frac12}[\gamma_1 t_0^{k_0\nu} + \gamma_2\log t_0\cdot t_0^{k_0\nu}]^2\big),
 \end{split}\end{equation}
 in the sense that the principal term on the depends linearly on $\gamma_{1,2}$ with non-vanishing factor. 
 \\
 This is to be contrasted with the {\it{vanishing property}} 
  \begin{equation}\label{eq:vsmoothvanish}
 \lim_{R\rightarrow 0 }R^{-1}\chi_{R\leq C\tau_0}Rv_{smooth}(R) = 0,
 \end{equation}
which follows from Lemma~\ref{lem:corrections}, and finally, by another integration by parts argument similar to the one for the bulk term to get \eqref{eq:bulkterm1}, and exploiting the fine structure of $v_{smooth}$ from Lemma~\ref{lem:corrections}, we get 
 \begin{equation}\label{eq:vsmoothbound}\begin{split}
 \int_0^\infty\phi(R, \xi)R\chi_{R\leq C\tau_0}v_{smooth}(R)|_{t = t_{0}}\,dR =  \big|\gamma_1t_0^{k_0\nu} + \gamma_2\log t_0 t_0^{k_0\nu}\big| O_N\big(\lambda_{0,0}^{\frac12}\frac{C\tau_0}{\langle C\tau_0\xi^{\frac12}\rangle^N}\big)
 \end{split}\end{equation}
Finally, using the precise asymptotic relation $\lim_{\xi\rightarrow 0}\xi^{\frac12}\rho(\xi) = c$ where in fact one has $c = \frac{1}{3\pi}$, see Lemma 3.4 in \cite{DoKr}, we infer that 
 \begin{equation}\label{eg:Btildetilde}\begin{split}
  \tilde{\tilde{B}}(\gamma_1,\gamma_2) = &\int_0^\infty\frac{\tilde{\tilde{x}}_0^{(\gamma_1,\gamma_2)}(\xi)\rho^{\frac12}(\xi)}{\xi^{\frac14}}\cos[\nu\tau_0\xi^{\frac12}]\,d\xi + O_{t_0}\big(\big|\gamma_1 t_0^{k_0\nu} + \gamma_2\log t_0\cdot t_0^{k_0\nu}\big|^2\big)\\
  & =\lim_{R\rightarrow 0}cR^{-1}\int_0^\infty\phi(R, \xi)\tilde{\tilde{x}}_0^{(\gamma_1,\gamma_2)}(\xi)\rho(\xi)\,d\xi\\
  & + \int_0^\infty\tilde{\tilde{x}}_0^{(\gamma_1,\gamma_2)}(\xi)[\frac{\rho^{\frac12}(\xi)}{\xi^{\frac14}} - c\rho(\xi)]\cos[\nu\tau_0\xi^{\frac12}]\,d\xi\\
  & + c\int_0^\infty\tilde{\tilde{x}}_0^{(\gamma_1,\gamma_2)}(\xi)\rho(\xi)(\cos[\nu\tau_0\xi^{\frac12}] - 1)\,d\xi\\
  & +  O_{t_0}\big( \big|\gamma_1 t_0^{k_0\nu} + \gamma_2\log t_0\cdot t_0^{k_0\nu}\big|^2\big).\\
 \end{split}\end{equation}
Observe that the extra term $O_{t_0}\big(\big|\gamma_1 t_0^{k_0\nu} + \gamma_2\log t_0\cdot t_0^{k_0\nu}\big|^2\big)$ arises from replacing 
\[
\lambda_{\gamma_1,\gamma_2} (\tau_0)\xi^{\frac12}\int_{\tau_0}^\infty\lambda_{\gamma_1,\gamma_2} ^{-1}(s)\,ds
\]
by $\nu\tau_0$.  The last term on the right in the above identity is essentially quadratic and negligible in the sequel. The second and third terms are also negligible on account of the bounds \eqref{eq:bulkterm1}, \eqref{eq:vsmoothbound} from before for the Fourier transform of the bulk part as well as $v_{smooth}$: 
 for the second term, we get (for suitable $c>0$)
 \begin{align*}
 \big|\int_0^\infty\tilde{\tilde{x}}_0^{(\gamma_1,\gamma_2)}(\xi)[\frac{\rho^{\frac12}(\xi)}{\xi^{\frac14}} - c\rho(\xi)]\cos[\nu\tau_0\xi^{\frac12}]\,d\xi\big|\lesssim  \lambda_{0,0}^{\frac12}\tau_0^{-1}\big|\gamma_1t_0^{k_0\nu} + \gamma_2\log t_0 t_0^{k_0\nu}\big|,
 \end{align*}
 while the third term becomes small upon choosing $C$ sufficiently large: 
 \begin{align*}
&\big| \int_0^\infty\tilde{\tilde{x}}_0^{(\gamma_1,\gamma_2)}(\xi)\rho(\xi)(\cos[\nu\tau_0\xi^{\frac12}] - 1)\,d\xi\big|\\
&\lesssim C^{-1}\lambda_{0,0}^{\frac12}\big|\gamma_1t_0^{k_0\nu} + \gamma_2\log t_0 t_0^{k_0\nu}\big|
 \end{align*}
 Finally, for the first term above, we have according to the earlier limiting relations \eqref{eq:bulkterm2}, \eqref{eq:vsmoothvanish} the key relation
  \begin{align*}
 &\lim_{R\rightarrow 0}R^{-1}\int_0^\infty\phi(R, \xi)\tilde{\tilde{x}}_0^{(\gamma_1,\gamma_2)}(\xi)\rho(\xi)\,d\xi\\
 & = \frac12\lambda_{0,0}^{\frac12}[\gamma_1 t_0^{k_0\nu} + \gamma_2\log t_0\cdot t_0^{k_0\nu}] + O\big(\lambda_{0,0}^{\frac12}[\gamma_1 t_0^{k_0\nu} + \gamma_2\log t_0\cdot t_0^{k_0\nu}]^2\big).
 \end{align*}
Combining the preceding bounds and identities for the various terms in the above identity for $ \tilde{\tilde{B}}(\gamma_1,\gamma_2)$ and also recalling \eqref{eq:tildetildeB}, we have obtained the first relation determining $\gamma_{1,2}$, given by 
 \begin{equation}\label{eq:gammaonetwofirst}\begin{split}
 B = &-\frac{c}{2}\lambda_{0,0}^{\frac12}[\gamma_1 t_0^{k_0\nu} + \gamma_2\log t_0\cdot t_0^{k_0\nu}] + O\big(C^{-1}\lambda_{0,0}^{\frac12}\big|\gamma_1t_0^{k_0\nu} + \gamma_2\log t_0 t_0^{k_0\nu}\big|\big)\\& + O\big(\big|\gamma_1 t_0^{k_0\nu} + \gamma_2\log t_0\cdot t_0^{k_0\nu}\big|\tau_0[\big\|x_0\big\|_{\tilde{S}_1} + \tau_0^{-1}\big|x_{0d}\big|]\big)\\&+ O_{t_0}\big(\lambda_{0,0}^{\frac12}[\gamma_1 t_0^{k_0\nu} + \gamma_2\log t_0\cdot t_0^{k_0\nu}]^2\big),
 \end{split}\end{equation}
where the first term on the right dominates all the remaining error terms, provided the data are chosen small enough. \\
 To derive the second equation determining $\gamma_{1,2}$, we recall the formula \eqref{eq:datatransference3} for $x_1^{(\gamma_1,\gamma_2)}$, which hinges on $\bar{\epsilon}_1$. Then by combining \eqref{eq:barepsilon1} with \eqref{eq:differenceofW}, we have (using the notation $\Lambda: = \frac12 + R\partial_R$)
 \begin{align*}
 R\bar{\epsilon}_1&= R\partial_t\big[(t^{k_0\nu}\gamma_1 + \log t\cdot t^{k_0\nu}\gamma_2)\lambda_{0,0}^{\frac12}R^{-1}\phi(R, 0) - v_{smooth}\big]_{t = t_0} + R\epsilon_1\\
 & + O\big(\lambda_{0,0}^{\frac12}t_0^{k_0\nu-1}\log t_0(\sum\big|\gamma_j\big|)\big|t_0^{k_0\nu}\gamma_1 + \log t_0\cdot t_0^{k_0\nu}\gamma_2\big|\big)\\
 & = c_1t_0^{-1}(t^{k_0\nu}_0\gamma_1 + \log t_0\cdot t_0^{k_0\nu}\gamma_2)\lambda_{0,0}^{\frac12}\phi(R, 0)\\&
 + c_2t_0^{-1}(t_0^{k_0\nu}\gamma_1 + \log t_0\cdot t_0^{k_0\nu}\gamma_2)\lambda_{0,0}^{\frac12}(\Lambda^2W)(R)\\
 & + \gamma_2t_0^{k_0\nu-1}\lambda_{0,0}^{\frac12}\phi(R, 0) + O\big(\lambda_{0,0}^{\frac12}t_0^{k_0\nu-1}\log t_0(\sum\big|\gamma_j\big|)\big|t_0^{k_0\nu}\gamma_1 + \log t_0\cdot t_0^{k_0\nu}\gamma_2\big|)\big)\\
 & - R\partial_t v_{smooth} + R\epsilon_1. 
 \end{align*}
Using \eqref{eq:datatransference3} which gives 
\begin{align*}
 x_1^{(\gamma_1,\gamma_2)}(\xi) = &-\lambda_{\gamma_1,\gamma_2}^{-1}\int_0^\infty\phi(R, \xi)R\bar{\epsilon}_1(R)\,dR - \frac{\dot{\lambda}_{\gamma_1,\gamma_2}}{\lambda_{\gamma_1,\gamma_2}}\big|_{t = t_0}(\mathcal{K}_{cc}x_0^{(\gamma_1,\gamma_2)})(\xi)\\
 & - \frac{\dot{\lambda}_{\gamma_1,\gamma_2}}{\lambda_{\gamma_1,\gamma_2}}\big|_{t = t_0}(\mathcal{K}_{cd}x_{0d}^{(\gamma_1,\gamma_2)})(\xi),
\end{align*}
and also keeping in mind the corresponding relations \eqref{eq:datatransference1}, \eqref{eq:datatransference2}, we deduce in light of Lemma~\ref{lem:changeofscale} the identity 
\begin{align*}
x_1^{(\gamma_1,\gamma_2)}(\xi) = \sum_{j=1}^4 A_j + \sum_{j=1}^3 B_j + C
\end{align*}
where the $A_j$ are the terms coming from the 'bulk term' and are given by 
\begin{align*}
A_1 = -c_1(t_0\lambda_{\gamma_1,\gamma_2})^{-1}\lambda_{0,0}^{\frac12}(t_0^{k_0\nu}\gamma_1 + \log t_0\cdot t_0^{k_0\nu}\gamma_2)\int_0^\infty\phi(R, \xi)\chi_{R\leq C\tau_0}\lambda_{0,0}^{\frac12}\phi(R, 0)\,dR
\end{align*}
\begin{align*}
A_2 =  - c_2(t_0\lambda_{\gamma_1,\gamma_2})^{-1}\lambda_{0,0}^{\frac12}(t_0^{k_0\nu}\gamma_1 + \log t_0\cdot t_0^{k_0\nu}\gamma_2)\int_0^\infty\phi(R, \xi)\chi_{R\leq C\tau_0}\lambda_{0,0}^{\frac12}(\Lambda^2W)(R, 0)\,dR,
\end{align*}
\begin{align*}
A_3 =  - \lambda_{0,0}^{\frac12}\gamma_2t_0^{k_0\nu-1}\lambda_{\gamma_1,\gamma_2}^{-1}\int_0^\infty\phi(R, \xi)\chi_{R\leq C\tau_0}\lambda_{0,0}^{\frac12}\phi(R, 0)\,dR
\end{align*}
\begin{align*}
A_4 = - \lambda_{\gamma_1,\gamma_2}^{-1}\int_0^\infty\phi(R, \xi)\chi_{R\leq C\tau_0}R\partial_t v_{smooth}\,dR,
\end{align*}
while the terms $B_j$ arising from the perturbation are given by 
\begin{align*}
&B_1 = \frac{\lambda}{\lambda_{0,0}}x_1(\frac{\lambda^2}{\lambda_{0,0}^2}\xi),\,B_2 =  - \frac{\dot{\lambda}_{\gamma_1,\gamma_2}}{\lambda_{\gamma_1,\gamma_2}}\big|_{t = t_0}(\mathcal{K}_{cc}x_0^{(\gamma_1,\gamma_2)})(\xi),\\&B_3 = - \frac{\dot{\lambda}_{\gamma_1,\gamma_2}}{\lambda_{\gamma_1,\gamma_2}}\big|_{t = t_0}(\mathcal{K}_{cd}x_{0d}^{(\gamma_1,\gamma_2)})(\xi)
\end{align*}
Finally, $C$ is the error, which is given crudely by 
\begin{align*}
C &= O\big(\lambda_{0,0}^{\frac12}\big|\gamma_1 t_0^{k_0\nu} + \gamma_2\log t_0\cdot t_0^{k_0\nu}\big|\tau_0\big|[\big\|x_1\big\|_{\tilde{S}_1} + \tau_0^{-1}\big\|x_0\big\|_{\tilde{S}_2}+\big|x_{1d}\big| + \tau_0^{-1}\big|x_{0d}\big|]\big)\\
& + O_{t_0}\big((\sum_j|\gamma_j|)\big|\gamma_1 t_0^{k_0\nu} + \gamma_2\log t_0\cdot t_0^{k_0\nu}\big|\big).
\end{align*}
Inserting the preceding into the expression for $A(\gamma_1,\gamma_2)$ (as in the statement of the proposition) and proceeding as for the derivation for \eqref{eq:gammaonetwofirst}, as well as observing that 
\begin{align*}
&\tau_0^{-1}\int_0^\infty\frac{\rho^{\frac{1}{2}}(\xi)\mathcal{K}_{cc}x_0^{(\gamma_1,\gamma_2)})(\xi)}{\xi^{\frac{3}{4}}}\sin[\lambda_{\gamma_1,\gamma_2} (\tau_0)\xi^{\frac12}\int_{\tau_0}^\infty\lambda_{\gamma_1,\gamma_2} ^{-1}(s)\,ds]\,d\xi\\
& = D(x_0, x_{0d}) + D_1\lambda_{0,0}^{\frac12}(\gamma_1 t_0^{k_0\nu} + \gamma_2\log t_0\cdot t_0^{k_0\nu}) + O_{t_0}\big(\big|\gamma_1 t_0^{k_0\nu} + \gamma_2\log t_0\cdot t_0^{k_0\nu}\big|^2\big)
\end{align*}
where $D(x_0, x_{0d})$ is independent of $\gamma_{1,2}$ and is bounded by $\big|D(x_0, x_{0d})\big|\lesssim \big\|x_0\big\|_{\tilde{S}_1} + \big|x_{0d}\big|$, while $D_1 = D_1(\nu)$ is a suitable absolute constant, we infer the following identity: 
\begin{equation}\label{eq:mess14}\begin{split}
&\int_0^\infty\frac{\rho^{\frac{1}{2}}(\xi)x_1^{(\gamma_1,\gamma_2)} (\xi)}{\xi^{\frac{3}{4}}}\sin[\lambda_{\gamma_1,\gamma_2} (\tau_0)\xi^{\frac12}\int_{\tau_0}^\infty\lambda_{\gamma_1,\gamma_2} ^{-1}(s)\,ds]\,d\xi\\
& = A + \sum_{j=1,2}E_j + D(x_0, x_{0d}) + F.
\end{split}\end{equation}
Here the first term on the right arises on account of \eqref{eq:tildeA}, the two terms $E_{1,2}$ arise via the contribution of the bulk terms $A_1 - A_4$ (taking advantage of estimates like \eqref{eq:diraclike}), and are of the form 
\begin{align*}
E_1 = e_1\lambda_{0,0}^{\frac12}(\gamma_1 t_0^{k_0\nu} + \gamma_2\log t_0\cdot t_0^{k_0\nu}),\,E_2 = e_2\lambda_{0,0}^{\frac12}\gamma_2 t_0^{k_0\nu},\,e_2\neq 0, 
\end{align*}
and finally, the error term $F$ is of the form 
\begin{align*}
F &= O_{t_0}\big((\sum_j|\gamma_j|)\big|\gamma_1 t_0^{k_0\nu} + \gamma_2\log t_0\cdot t_0^{k_0\nu}\big|\big)\\
&+O\big(\lambda_{0,0}^{\frac12}\big|\gamma_1 t_0^{k_0\nu} + \gamma_2\log t_0\cdot t_0^{k_0\nu}\big|\tau_0\big|[\big\|x_1\big\|_{\tilde{S}_1} + \tau_0^{-1}\big\|x_0\big\|_{\tilde{S}_2}+\big|x_{1d}\big| + \tau_0^{-1}\big|x_{0d}\big|]\big)
\end{align*}
Equating the expression on the left of \eqref{eq:mess14} with $0$, we infer the second equation, analogous to \eqref{eq:gammaonetwofirst}: 
\begin{equation}\label{eq:gammaonetwosecond}\begin{split}
A &= -e_1\lambda_{0,0}^{\frac12}(\gamma_1 t_0^{k_0\nu} + \gamma_2\log t_0\cdot t_0^{k_0\nu}) - e_2\lambda_{0,0}^{\frac12}\gamma_2 t_0^{k_0\nu} - D(x_0, x_{0d})\\
& +  O_{t_0}\big((\sum_j|\gamma_j|)\big|\gamma_1 t_0^{k_0\nu} + \gamma_2\log t_0\cdot t_0^{k_0\nu}\big|\big)\\
&+O\big(\lambda_{0,0}^{\frac12}\big|\gamma_1 t_0^{k_0\nu} + \gamma_2\log t_0\cdot t_0^{k_0\nu}\big|\tau_0\big|[\big\|x_1\big\|_{\tilde{S}_1} + \tau_0^{-1}\big\|x_0\big\|_{\tilde{S}_2}+\big|x_{1d}\big| + \tau_0^{-1}\big|x_{0d}\big|]\big)
\end{split}\end{equation}
On account of the easily verified bounds
\[
\big|A\big|\lesssim \tau_0\big\|x_1\big\|_{\tilde{S}_2},\,\big|B\big|\lesssim \tau_0\big\|x_0\big\|_{\tilde{S}_1}, 
\]
we then infer 
\[
\big|\gamma_1 \lambda_{0,0}^{\frac12}t_0^{k_0\nu}\big| + \big|\gamma_2 \lambda_{0,0}^{\frac12}\log t_0 t_0^{k_0\nu}\big|\lesssim (\log \tau_0)\cdot\tau_0(\big\|(x_0,x_1)\big\|_{\tilde{S}} + \big|x_{0d}\big|).
\]
Recall that throughout the preceding discussion we kept the discrete spectral parts $(x_{0d}, x_{1d})$ of the initial perturbation $(\epsilon_1, \epsilon_2)$ fixed. If instead we allow $x_{1d}$ to vary, we can think of $\gamma_{1,2}$ as functions of $x_{1d}$, and moreover one easily checks that 
\[
\tilde{x}^{(\gamma_1,\gamma_2)}_{1d} = x_{1d} + O([\big\|x_{0}\big\|_{\tilde{S}_1} + \big\|x_1\big\|_{\tilde{S}_2} + \big|x_{0d}\big| + \big|x_{1d}\big|]^2).
\]
with a corresponding Lipschitz bound. It follows that there is a unique choice of $x_{1d}$ such that (for given $x_0,x_1, x_{0d}$) the pair $(\tilde{x}^{(\gamma_1,\gamma_2)}_{0d}, \tilde{x}^{(\gamma_1,\gamma_2)}_{1d})$ satisfies the linear compatibility relation \eqref{eq:cond1} with respect to the scaling parameter $\lambda = \lambda_{\gamma_1,\gamma_2}$. 
\\
The last bound of the proposition follows from the preceding formulas for $x_1^{(\gamma_1,\gamma_2)}$, as well as $x_0^{(\gamma_1,\gamma_2)}$ in terms of $x_1, x_0$. Specifically, one uses the fact that for the Fourier transform of the bulk term\footnote{This means the sum of the first three terms on the right.} in \eqref{eq:barepsilon0}, we have the asymptotics \eqref{eq:differenceofW}, \eqref{eq:diraclike} as well as \eqref{eq:vsmoothbound}, and we get 
\begin{equation}\label{eq:spikeS1}
\big\|\frac{C\tau_0}{\langle C\tau_0\xi^{\frac12}\rangle^N}\big\|_{\tilde{S}_1} \lesssim \tau_0^{-(1-\delta_0)}.
\end{equation}
\end{proof}

For later purposes, we also mention the following important Lipschitz continuity properties, which follow easily from the preceding proof: 
\begin{lem}\label{lem:Lipcont} Let $(\tilde{\gamma}_1,\tilde{\gamma}_2)$ the parameters associated with a data quadruple $(\underline{\tilde{x}}_0, \underline{\tilde{x}}_1)\in \tilde{S}$. Then using the notation from before and putting 
\[
\tilde{\lambda} = \lambda_{(\tilde{\gamma}_1,\tilde{\gamma_2})},
\]
we have 
\begin{align*}
&\big|(\gamma_1 - \tilde{\gamma}_1)\lambda_{0,0}^{\frac12}t_0^{k_0\nu}\big| + \big|(\gamma_2 - \tilde{\gamma}_2)\lambda_{0,0}^{\frac12}\log t_0 t_0^{k_0\nu}\big|\lesssim \tau_0\log\tau_0[\big\|(x_0 - \tilde{x}_0, x_1 - \tilde{x}_1)\big\|_{\tilde{S}}\\
&\hspace{8cm} + \big\|(x_0,x_1)\big\|_{\tilde{S}}\big|x_{0d} - \tilde{x}_{0d}\big|]. 
\end{align*}
\begin{align*}
&\big\|x_0^{(\gamma_1,\gamma_2)} - \tilde{x}_0^{(\tilde{\gamma}_1,\tilde{\gamma}_2)} - (\frac{\lambda_{0,0}}{\lambda}S_{\frac{\lambda_{0,0}^2}{\lambda^2}}x_0 - \frac{\lambda_{0,0}}{\tilde{\lambda}}S_{\frac{\lambda_{0,0}^2}{\tilde{\lambda}^2}}\tilde{x}_0)\big\|_{\tilde{S}_1}\\&\hspace{3cm} + \big\|x_1^{(\gamma_1,\gamma_2)} - \tilde{x}_1^{(\tilde{\gamma}_1,\tilde{\gamma}_2)} - (\frac{\lambda_{0,0}}{\lambda}S_{\frac{\lambda_{0,0}^2}{\lambda^2}}x_1 - \frac{\lambda_{0,0}}{\tilde{\lambda}}S_{\frac{\lambda_{0,0}^2}{\tilde{\lambda}^2}}\tilde{x}_1)\big\|_{\tilde{S}_2}\\&\lesssim \log\tau_0\cdot\tau_0^{0+}\cdot[\big\|(x_0 - \tilde{x}_0, x_1 - \tilde{x}_1)\big\|_{\tilde{S}} +  \big\|(x_0,x_1)\big\|_{\tilde{S}}\big|x_{0d} - \tilde{x}_{0d}\big|].
\end{align*}
Finally, we have the bound 
\begin{align*}
&\big|(x_{1d}^{(\gamma_1,\gamma_2)} - x_{1d}) - (\tilde{x}_{1d}^{(\gamma_1,\gamma_2)} - \tilde{x}_{1d})\big|\\&\lesssim [\big\|(x_0 - \tilde{x}_0, x_1 - \tilde{x}_1)\big\|_{\tilde{S}} + \big|x_{0d} - \tilde{x}_{0d}\big|]\cdot [\big\|(x_0,x_1)\big\|_{\tilde{S}} + \big|x_{0d}\big|].
\end{align*}
\end{lem}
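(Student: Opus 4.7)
The plan is to revisit the proof of Proposition~\ref{prop:choiceofgamma} in "differenced" form, comparing the two quadruples $(x_0,x_1,x_{0d},x_{1d})$ and $(\tilde x_0,\tilde x_1,\tilde x_{0d},\tilde x_{1d})$. The first bound on $\gamma_j-\tilde\gamma_j$ will come directly from subtracting the two copies of the $2\times 2$ system \eqref{eq:gammaonetwofirst}--\eqref{eq:gammaonetwosecond} that determine $(\gamma_1 \lambda_{0,0}^{1/2}t_0^{k_0\nu},\,\gamma_2\lambda_{0,0}^{1/2}\log t_0 \cdot t_0^{k_0\nu})$. The leading $2\times 2$ coefficient matrix (involving the constants $c,\,e_1,\,e_2$ with $e_2\neq 0$) is invertible uniformly in the data, while the right-hand side is built from $A,\,B,\,D(x_0,x_{0d})$, which are linear in $(x_0,x_1,x_{0d})$ with the trivial Lipschitz bounds $|A-\tilde A|\lesssim\tau_0\|x_1-\tilde x_1\|_{\tilde S_2}$, $|B-\tilde B|\lesssim\tau_0\|x_0-\tilde x_0\|_{\tilde S_1}$, $|D-\tilde D|\lesssim\|x_0-\tilde x_0\|_{\tilde S_1}+|x_{0d}-\tilde x_{0d}|$. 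The nonlinear remainder terms in \eqref{eq:gammaonetwofirst}, \eqref{eq:gammaonetwosecond} are quadratic in $(\gamma_j,\text{data})$, so when differenced they produce exactly the mixed term $\|(x_0,x_1)\|_{\tilde S}\cdot |x_{0d}-\tilde x_{0d}|$ appearing on the right-hand side of the claimed estimate, provided one absorbs a factor from the smallness $\delta_1$.

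For the second estimate, start from the explicit formulas \eqref{eq:x0barepsilon0}, \eqref{eq:datatransference3} that give $x_0^{(\gamma_1,\gamma_2)},x_1^{(\gamma_1,\gamma_2)}$ in terms of $\bar\epsilon_0,\bar\epsilon_1$, and plug in \eqref{eq:barepsilon0}, \eqref{eq:barepsilon1}. The contributions split into a "perturbation part" coming from $\epsilon_0,\epsilon_1$ and a "bulk part" coming from the difference $\lambda_{0,0}^{1/2}W(\lambda_{0,0}r)-\lambda^{1/2}W(\lambda r)$ and $v_{\text{smooth}}$. By Lemma~\ref{lem:changeofscale} applied with $\kappa=\log(\lambda_{0,0}/\lambda)$, the perturbation part equals exactly $\frac{\lambda_{0,0}}{\lambda}S_{\lambda_{0,0}^2/\lambda^2}x_j$ modulo an error of size $\kappa\cdot(\|x_j\|_{\tilde S_j}+|x_{0d}|)$ in $\tilde S_j$, and $|\kappa|\lesssim \log\tau_0\cdot\tau_0^{0+}$ by the first estimate of this very lemma. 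The bulk part is a multiple of the "spike" $\frac{C\tau_0}{\langle C\tau_0\xi^{1/2}\rangle^N}$ (cf.\ \eqref{eq:diraclike}, \eqref{eq:vsmoothbound}) with coefficient $\gamma_1 t_0^{k_0\nu}+\gamma_2\log t_0\cdot t_0^{k_0\nu}$, whose $\tilde S_1$ norm is controlled by \eqref{eq:spikeS1}. Differencing these two representations and invoking the first part for the $\gamma$-differences, together with the trivial Lipschitz continuity of the scaling operator $\lambda\mapsto\frac{\lambda_{0,0}}{\lambda}S_{\lambda_{0,0}^2/\lambda^2}$ acting on fixed data (which produces a term of the same rescaled form and is already subtracted on the left), yields the desired bound.

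The third estimate follows similarly: express $x_{1d}^{(\gamma_1,\gamma_2)}-x_{1d}$ via \eqref{eq:datatransference4} and \eqref{eq:barepsilon1}, and observe that the bulk contribution is exactly a scalar functional of $(\gamma_1,\gamma_2)$ modulo quadratic corrections (cf.\ the line $\tilde x_{1d}^{(\gamma_1,\gamma_2)}=x_{1d}+O([\cdots]^2)$ in the proof of Proposition~\ref{prop:choiceofgamma}), while the perturbation contribution involves $\mathcal{K}_{dd},\mathcal{K}_{dc}$ and thus is Lipschitz in $(x_0,x_{0d})$ with constant $O(1)$; the scaling change $\lambda\mapsto\tilde\lambda$ acting on $\phi_d$ contributes $O(\kappa)$. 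After differencing and using the first two parts of the lemma to estimate $|\gamma_j-\tilde\gamma_j|$, the stated bilinear bound emerges. The main obstacle throughout is the simultaneous dependence of the scaling law $\lambda=\lambda_{\gamma_1,\gamma_2}$ on the unknown parameters, which couples the linearisation of the map $(x_0,x_1,x_{0d})\mapsto(\gamma_1,\gamma_2,x_0^{(\gamma_1,\gamma_2)},x_1^{(\gamma_1,\gamma_2)})$ to itself; but this coupling is controlled by the smallness $\delta_1$ and the nondegeneracy of the leading $2\times 2$ matrix, so a single contraction-type argument (or equivalently the implicit function theorem with quantitative constants already verified in Proposition~\ref{prop:choiceofgamma}) closes the estimate.
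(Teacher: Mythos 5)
Your proposal is correct and matches the paper's intended approach: the paper states only that the lemma ``follows easily from the preceding proof'' (of Proposition~\ref{prop:choiceofgamma}), and your differencing of the $2\times 2$ linear system \eqref{eq:gammaonetwofirst}--\eqref{eq:gammaonetwosecond}, combined with re-running the explicit formulas \eqref{eq:barepsilon0}--\eqref{eq:barepsilon1} and Lemma~\ref{lem:changeofscale} in difference form, is exactly that argument. You also correctly identify the one subtlety the paper glosses over — the circular coupling of $\gamma_{1,2}$ to the rescaling $\lambda_{\gamma_1,\gamma_2}$ — and resolve it by the same quantitative implicit-function / smallness absorption already used in Proposition~\ref{prop:choiceofgamma}.
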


\section{Iterative construction of blow up solution almost matching the perturbed initial data}

Here we carry out the actual construction of the solution, as explained in the paragraph following \eqref{eq:vanishing2}. Thus departing from perturbed data 
\[
u_{\nu}[t_0] + (\epsilon_0, \epsilon_1), 
\]
where the perturbation $(\epsilon_0, \epsilon_1)$ is associated with a data quadruple $(\underline{x}_0, \underline{x}_1)$ as in \eqref{eq:x0epsilon0}, \eqref{eq:datatransference1}, \eqref{eq:datatransference2}, where $x_{1d}$, as well as parameters $\gamma_{1,2}$ have been computed according to Proposition~\ref{prop:choiceofgamma}, in terms of the Fourier data $(x_0(\xi), x_1(\xi), x_{0d})$, we then pass to a different representation of the data which coincides with the preceding data in a dilate of the light cone at time $t = t_0$, i. e. we have 
\[
\chi_{r\leq Ct_0}u_{approx}^{(0,0)}[t_0] + (\epsilon_1, \epsilon_2) = \chi_{r\leq Ct_0}u_{approx}^{(\gamma_1,\gamma_2)}[t_0] + (\bar{\epsilon}_1, \bar{\epsilon}_2). 
\]
Then, according to Proposition~\ref{prop:choiceofgamma}, the Fourier data associated to $(\bar{\epsilon}_1, \bar{\epsilon}_2)$ in reference to the coordinate $R: = \lambda_{\gamma_1,\gamma_2}(t_0)r$, satisfy the key vanishing relations 
\[
A(\gamma_1,\gamma_2) = B(\gamma_1,\gamma_2) = 0,
\]
these quantities  being defined as in Proposition~\ref{prop:choiceofgamma}. We shall now strive to evolve the data 
\[
u_{approx}^{(\gamma_1,\gamma_2)}[t_0] + (\bar{\epsilon}_1, \bar{\epsilon}_2)
\]
backwards in time from $t = t_0$, and thereby build another blow up solution with bulk part $u_{approx}^{(\gamma_1,\gamma_2)}(t, x)$ on the time slice $(0, t_0]\times \R^3$. 

\subsection{Formulation of the perturbation problem on Fourier side}

Re-iterating that we shall work with the coordinates 
\begin{equation}\label{eq:thecoordinates}
\tau: = \int_t^\infty \lambda_{\gamma_1,\gamma_2}(s)\,ds,\,R = \lambda_{\gamma_1,\gamma_2}(t)r,
\end{equation}
we shall write the desired solution in the form 
\begin{equation}\label{eq:thesolutionansatz}
u(t, x) = u_{approx}^{(\gamma_1,\gamma_2)}(t, x) + \epsilon(t, x),\,\epsilon[t_0] = (\bar{\epsilon}_1, \bar{\epsilon}_2), 
\end{equation}
and passing to the variable $\til{\eps}(\tau, R): = R\epsilon(\tau, R)$, we derive the following equation completely analogous to \eqref{eq:epseqn}: using from now on $\lambda(\tau) = \lambda_{\gamma_1,\gamma_2}(\tau)$, 
\begin{equation}\label{eq:tildeepsilon1}\begin{split}
&(\partial_{\tau} + \dot{\lambda}\lambda^{-1}R\partial_R)^2 \tilde{\eps} - \beta(\tau)(\partial_{\tau} + \dot{\lambda}\lambda^{-1}R\partial_R)\tilde{\eps} + \calL\tilde{\eps}\\
&=\lambda^{-2}(\tau)R[N_{approx}(\eps) + e_{approx}]+\partial_\tau(\dot{\lambda}\lambda^{-1})\tilde{\eps};\,\beta(\tau) = \dot{\lambda}(\tau)\lambda^{-1}(\tau), 
\end{split}\end{equation}
where we use the notation 
\[
RN_{approx}(\eps) = 5(u_{approx}^4 - u_0^4)\tilde{\eps} + RN(u_{approx}, \tilde{\eps}),
\]
\[
RN(u_{approx}, \tilde{\eps}) = R(u_{approx}+\frac{\tilde{\eps}}{R})^5 - R u_{approx}^5 - 5u_{approx}^4\tilde{\eps},
\]
and $u_{approx} = u_{approx}^{(\gamma_{1,2})}$. The source term $ e_{approx}$ is precisely the one in Theorem~\ref{thm:main approximate}. Also, observe that we may and shall include cutoffs to the right hand source terms of the form $\chi_{R\leq C\tau}$, since we are only interested in the behaviour of the solution inside the forward light cone emanating from the origin. 
Ideally we will want to match 
\[
\epsilon[t_{0}] = (\bar{\epsilon}_1, \bar{\epsilon}_2),
\]
but we shall have to deviate from this by a small error. In order to solve \eqref{eq:tildeepsilon1}, we pass to the distorted Fourier transform of $\tilde{\eps}$, by using the representation 
\[
\tilde{\eps}(\tau, R) = x_d(\tau)\phi_d(R) + \int_0^\infty x(\tau, \xi)\phi(R, \xi)\rho(\xi)\,d\xi.
\]
Writing 
\[
\underline{x}(\tau, \xi): = \left(\begin{array}{c}x_d(\tau)\\ x(\tau, \xi)\end{array}\right),\qquad \underline{\xi} = \binom{\xi_d}{\xi},
\]
we infer 
\begin{equation}\label{eq:transport1}
\big(\mathcal{D}_{\tau}^2 + \beta(\tau)\mathcal{D}_{\tau} + \underline{\xi}\big)\underline{x}(\tau, \xi) = \calR(\tau, \underline{x}) + \underline{f}(\tau, \underline{\xi}),\,\underline{f} = \left(\begin{array}{c}f_d\\f\end{array}\right),
\end{equation}
combined with the initial data (which in turn obey \eqref{eq:x0barepsilon0}, \eqref{eq:datatransference3}, \eqref{eq:datatransference4})
\begin{equation}\label{eq:xeqndata}
\underline{x}(\tau_0, \xi) = \left(\begin{array}{c}x_{0d}^{(\gamma_1,\gamma_2)}\\ x_{0}^{(\gamma_1,\gamma_2)}\end{array}\right),\,\mathcal{D}_{\tau}\underline{x}(\tau_0, \xi) = \left(\begin{array}{c}x_{1d}^{(\gamma_1,\gamma_2)}\\ x_{1}^{(\gamma_1,\gamma_2)}\end{array}\right),\,\tau_0 = \tau(t_0). 
\end{equation}

where we have
\begin{equation}\label{eq:Rterms}
\calR(\tau, \underline{x})(\xi) = \Big(-4\beta(\tau)\mathcal{K}\mathcal{D}_{\tau}\underline{x} - \beta^2(\tau)(\mathcal{K}^2 + [\mathcal{A}, \mathcal{K}] + \mathcal{K} +  \beta'\beta^{-2}\mathcal{K})\underline{x}\Big)(\xi)
\end{equation}
 with $\beta(\tau) = \frac{\dot{\lambda}(\tau)}{\lambda(\tau)}$, and 
 \begin{equation}\label{eq:fterms}\begin{split}
 &f(\tau, \xi) = \mathcal{F}\big( \lambda^{-2}(\tau)\chi_{R\lesssim\tau}\big[5(u_{approx}^4 - u_0^4)\tilde{\eps} + RN(u_{approx}, \tilde{\eps}) + R e_{approx}\big]\big)\big(\xi\big)\\
 &f_d(\tau) = \langle\lambda^{-2}(\tau)\chi_{R\lesssim\tau}\big[5(u_{approx}^4 - u_0^4)\tilde{\eps} + RN(u_{approx}, \tilde{\eps}) + R e_{approx}\big],\,\phi_d(R)\rangle.
 \end{split}\end{equation}
Also the key operator 
 \[
 \mathcal{D}_{\tau} = \partial_{\tau} + \beta(\tau)\mathcal{A},\quad \mathcal{A} = \left(\begin{array}{cc}0&0\\0&\mathcal{A}_c\end{array}\right)
 \]
 and we have 
 \[
 \mathcal{A}_c = -2\xi\partial_{\xi} - \Big (\frac{5}{2}  + \frac{\rho'(\xi)\xi}{\rho(\xi)} \Big).
 \]
The operator $\mathcal{K}$ is described in Theorem~\ref{thm:transferenceop}.

The main technical result of this article then furnishes a solution of \eqref{eq:transport1}, \eqref{eq:xeqndata} as follows: 

\begin{thm}\label{thm:IterationResult} Let $(x_0^{(\gamma_1, \gamma_2)}, x_1^{(\gamma_1,\gamma_2)})\in \tilde{S}, x_{ld}^{(\gamma_1,\gamma_2)}$, $l = 0, 1$, be as in Proposition~\ref{prop:choiceofgamma}, and assume $t_0$ is sufficiently small, or analogously, $\tau_0$ is sufficiently large. Then there exist corrections 
\[
(\triangle x_0^{(\gamma_1, \gamma_2)}, \triangle x_1^{(\gamma_1,\gamma_2)}),\,(\triangle x_{0d}^{(\gamma_1, \gamma_2)}, \triangle x_{1d}^{(\gamma_1,\gamma_2)})
\]
satisfying 
\[
\big\|(\triangle x_0^{(\gamma_1, \gamma_2)}, \triangle x_1^{(\gamma_1,\gamma_2)})\big\|_{\tilde{S}}\ll\big\|(x_0,x_1)\big\|_{\tilde{S}} + \big|x_{0d}\big|, 
\]
\[
\big|\triangle x_{0d}^{(\gamma_1, \gamma_2)}\big| + \big|\triangle x_{1d}^{(\gamma_1,\gamma_2)}\big|\ll\big\|(x_0,x_1)\big\|_{\tilde{S}} + \big|x_{0d}\big|,
\]
and such that the $(\triangle x_0^{(\gamma_1, \gamma_2)}, \triangle x_1^{(\gamma_1,\gamma_2)})$, $(\triangle x_{0d}^{(\gamma_1, \gamma_2)}, \triangle x_{1d}^{(\gamma_1,\gamma_2)})$ depend in Lipschitz continuous fashion on $(x_0,x_1, x_{0d})$ with respect to $\|\cdot\|_{\tilde{S}} + \big|\cdot\big|$ with Lipschitz constant $\ll 1$, and such that the equation \eqref{eq:transport1} with initial data 
\[
\big(x(\tau_0, \xi),\,(\mathcal D_\tau x)(\tau_0,\xi)\big) = \big(x_0^{(\gamma_1, \gamma_2)} + \triangle x_0^{(\gamma_1, \gamma_2)},\,x_1^{(\gamma_1, \gamma_2)} + \triangle x_1^{(\gamma_1, \gamma_2)}\big)
\]
\[
\big(x_d(\tau_0),\,\partial_\tau x_d(\tau_0)\big) = \big(x_{0d}^{(\gamma_1, \gamma_2)} + \triangle x_{0d}^{(\gamma_1, \gamma_2)},\,x_{1d}^{(\gamma_1, \gamma_2)} + \triangle x_{1d}^{(\gamma_1, \gamma_2)}\big)
\]
admits a solution $\underline{x}(\tau, \xi)$ for $\tau\geq \tau_0$ satisfying 
\[
\big\|(x(\tau, \cdot), \mathcal{D}_{\tau}x(\tau, \cdot)\big\|_{\tilde{S}} + \big|x_d(\tau)\big| + \big|\partial_{\tau}x_d(\tau)\big|\lesssim_\tau \|(x_0, x_1)\|_{\tilde{S}} + \big|x_{0d}\big|, 
\]
corresponding to $\tilde{\epsilon}(\tau, R)\in H_{loc}^{\frac{3}{2}+}$ where 
\[
\tilde{\epsilon}(\tau, R) =  x_d(\tau)\phi_d(R) + \int_0^\infty x(\tau, \xi)\phi(R, \xi)\rho(\xi)\,d\xi.
\]
Finally, we have energy decay within the light cone: 
\[
\lim_{t\rightarrow 0}\int_{|x|\leq t}\frac12 |\nabla_{t,x}\epsilon|^2\,dx = 0
\]
where we recall $\epsilon = R^{-1}\tilde{\epsilon}$. 
\end{thm}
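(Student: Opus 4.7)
The plan is to solve \eqref{eq:transport1}--\eqref{eq:xeqndata} via a Picard iteration in a Banach space $X$ of $\tau$-dependent vector-valued functions $\underline{x}(\tau, \xi) = (x_d(\tau), x(\tau, \xi))$ modelled on $\tilde S$, following the scheme of \cite{CondBlow}. Concretely, I would equip $X$ with a norm of the form
\begin{align*}
\|\underline{x}\|_X := \sup_{\tau \geq \tau_0} \tau^{-\sigma}\Bigl[\|(x(\tau,\cdot), (\mathcal D_\tau x)(\tau,\cdot))\|_{\tilde S} + |x_d(\tau)| + |\partial_\tau x_d(\tau)|\Bigr]
\end{align*}
for a small $\sigma > 0$, and carry out the iteration
\begin{align*}
\bigl(\mathcal D_\tau^2 + \beta(\tau)\mathcal D_\tau + \underline\xi\bigr)\underline{x}^{(n+1)}(\tau, \xi) = \calR(\tau, \underline{x}^{(n)})(\xi) + \underline{f}(\tau; \underline{x}^{(n)})(\xi),
\end{align*}
with $\underline{x}^{(0)}$ produced by applying the homogeneous parametrix of Lemma~\ref{lem:linhom} to the prescribed data, and each update combining this parametrix with a Duhamel integral against the source on the right.

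The key input on the linear side is that Proposition~\ref{prop:choiceofgamma} has fixed $\gamma_{1,2}$ precisely so that the vanishing relations \eqref{eq:vanishing2} hold at $\tau_0$; Proposition~\ref{prop:lingrowthcond} then guarantees that the homogeneous component of every iterate gives a function $P_c\tilde\epsilon^{(n)}(\tau, R)/R$ uniformly bounded in $R$ (with only acceptable growth in $\tau$), since the resonant $\phi_0$-mode has been suppressed. This is what allows the multilinear estimates for $\underline{f}$ and $\calR$ to close in $\tilde S$: the genuine nonlinearity $RN(u_{\text{approx}}, \tilde\eps)$ is estimated pointwise in $R$ using the $L^\infty$ bound on $\tilde\eps/R$, the linearised interaction $5(u_{\text{approx}}^4 - u_0^4)\tilde\eps$ inherits a smallness factor $(\lambda t)^{-2}$ from Theorem~\ref{thm:main approximate}, and the forcing $\lambda^{-2} R e_{\text{approx}}$ is harmless provided $N$ is chosen large enough in the construction of $u_{\text{approx}}^{(\gamma_{1,2})}$.

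The data corrections $\triangle x_0^{(\gamma_1, \gamma_2)}, \triangle x_1^{(\gamma_1, \gamma_2)}, \triangle x_{0d}^{(\gamma_1, \gamma_2)}, \triangle x_{1d}^{(\gamma_1, \gamma_2)}$ arise at each iteration step from two compatibility requirements. First, $\triangle x_{1d}^{(n+1)}$ is pinned down by the nonlinear analogue of the codimension-one condition \eqref{eq:cond1}: the exponentially unstable component of $x_d(\tau)$ must be cancelled against the contribution of $f_d$ evaluated on $\underline{x}^{(n)}$, exactly as in Lemma~\ref{lem:linhom} but now with a Duhamel correction. Second, the continuous corrections $(\triangle x_0^{(n+1)}, \triangle x_1^{(n+1)})$ must restore the vanishing of the two functionals $A(\gamma_1,\gamma_2), B(\gamma_1,\gamma_2)$ of Proposition~\ref{prop:choiceofgamma} when evaluated on the updated data, so that Proposition~\ref{prop:lingrowthcond} remains applicable to $\underline{x}^{(n+1)}$; since these functionals are linear in the continuous data and non-degenerate in the appropriate directions (already exploited in Proposition~\ref{prop:choiceofgamma}), the implicit function theorem produces these corrections as a small Lipschitz function of the remaining data.

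The main obstacle is closing the contraction uniformly: the multilinear estimates for $\calR$ and $\underline{f}$ must produce a contraction constant strictly smaller than $1$, with the smallness gained either from smallness of the data in $\tilde S$ or from the decay factor $(\lambda t)^{-2}$, and traded against the power $\tau^{\sigma}$ built into $\|\cdot\|_X$; the off-diagonal contributions coming from $\mathcal K$ are controlled by the kernel bounds of Theorem~\ref{thm:transferenceop}, and the effect of the rescaling $R_{0,0} \leftrightarrow R_{\gamma_1,\gamma_2}$ by Lemma~\ref{lem:changeofscale}. Once the fixed point is in hand, Lipschitz dependence of the correction quadruple on $(x_0, x_1, x_{0d})$ follows by differentiating the iteration scheme and invoking Lemma~\ref{lem:Lipcont}. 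Finally, the energy decay $\int_{|x| \leq t} \tfrac12 |\nabla_{t,x}\epsilon|^2\, dx \to 0$ as $t \to 0$ is read off from the weighted $\tilde S$-bound for $\underline{x}(\tau,\cdot)$ after translating back to the physical $(t, x)$ variables through the change of frame \eqref{eq:changeofframe}, the factor $\tau^{-\sigma}$ providing strictly positive power decay in $t$ of the localized energy.
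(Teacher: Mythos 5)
Your overall architecture matches the paper's — an iterative scheme based on the parametrix of Lemma~\ref{lem:linhom}, per-iterate data corrections chosen to restore the vanishing relations \eqref{eq:vanishing2} and the co-dimension one condition \eqref{eq:cond1}, and the source/linear terms handled via the multilinear structure of \eqref{eq:fterms} together with the smallness of $e_{approx}$ and the $L^\infty$ bound on $\tilde\epsilon/R$ from Proposition~\ref{prop:lingrowthcond}. However, there are two concrete gaps.

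First, your proposed norm $\|\underline{x}\|_X = \sup_\tau \tau^{-\sigma}[\cdots]$ is too coarse. In the paper (following \cite{CondBlow}) the Duhamel integral of the source terms is \emph{not} controlled in $\tilde S$ pointwise in $\tau$: at low frequencies one must split it into a forward tail $\triangle_{>\tau}x$ plus a free evolution $S(\tau)(\triangle\tilde x_0, \triangle\tilde x_1)$ of data satisfying \eqref{eq:vanishing11}--\eqref{eq:vanishing12}, and it is only these pieces that obey $\tilde S$-type bounds. Moreover the derivative $\mathcal D_\tau x$ is measured via the square-sum norm $\|\cdot\|_{Sqr}$ over dyadic time intervals in \eqref{eq:squaresumnorm}, which encodes integrability in $\tau$ that a weighted sup norm does not; this is what makes the Duhamel bounds summable in the iteration. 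With a plain $\tau^{-\sigma}$-weighted sup norm, the low-frequency resonance growth from Proposition~\ref{prop:lingrowthcond} (linear in $\tau$, not $\tau^\sigma$) and the lack of dyadic summability both prevent the contraction from closing.

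Second, the Lipschitz dependence is far more delicate than ``differentiate the iteration and invoke Lemma~\ref{lem:Lipcont}.'' When you differentiate in $\gamma_{1,2}$ (and hence in $(x_0,x_1,x_{0d})$), the derivative hits the rescaling factors $\lambda^2(\tau)/\lambda^2(\sigma)$ inside the parametrix and inside the distorted Fourier transform, which introduces an operator $\xi\partial_\xi$ applied to the iterate — a genuine loss of $\xi$-regularity. The paper copes with this by introducing the space $\triangle\tilde S$ (Definition~\ref{defn:roughspace}), in which $\partial_{\gamma_\kappa}\triangle x^{(j)}$ is decomposed into $(\xi\partial_\xi)$ of a ``strongly bounded'' piece plus a ``weakly bounded'' piece, and by proving the inductive bounds of Lemmas~\ref{lem:partialgammainductivebound}--\ref{lem:partialgammadecay} in that framework; the bad derivative is then tamed in the relevant integrals via integration by parts in $\xi$ against the oscillating $\cos$/$\sin$ factors. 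None of this is visible in your outline, and the Lipschitz conclusion cannot be obtained without some version of it.
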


\begin{rem}\label{rem:correction detail} In fact, the Fourier coefficients $(\triangle x_0^{(\gamma_1, \gamma_2)}, \triangle x_1^{(\gamma_1, \gamma_2)})$ will have a very specific form, which makes them well-behaved with respect to re-scalings (which hence don't entail smoothness loss when passing to differences). This shall be important when reverting to the original coordinates $R_{0,0}$ at time $t = t_0$, which were used to specify the perturbation $(x_0, x_1)$ to begin with. 
\end{rem}

\subsection{The proof of Theorem~\ref{thm:IterationResult}}

It is divided into two parts: the existence part for the solution, which follows essentially verbatim the scheme in \cite{CondBlow}, and the more delicate verification of Lipschitz dependence of the solution on the data $(x_0,x_1,x_{0d})$. Here the issue is the fact that there are re-scalings involved, and the very parametrix used to solve \eqref{eq:transport1}, as well as the source terms there, depend implicitly on $\gamma_{1,2}$, which in turn depend on $(x_0, x_1, x_{0d})$. 
\\

\subsubsection{Setup of the iteration scheme; the zeroth iterate}
Proceeding in close analogy to \cite{CondBlow}, we shall obtain the final solution $\underline{x}(\tau, \xi)$ of \eqref{eq:transport1} as the limit of a sequence of iterates $\underline{x}^{(j)}(\tau, \xi)$. To begin with, we introduce the zeroth iterate in the following proposition. The only difference compared to \cite{CondBlow} is the presence of the error term $e_{approx}$, whose dependence on $\gamma_{1,2}$ needs to be taken carefully into account. 
\\

To formulate the bounds on the successive iterates, we introduce a number of notations. First, we recall \eqref{eq:Stildenorm}, which is used to control data sets, and we also introduce the slightly stronger norm
\begin{equation}\label{eq:Snorm}
\big\|((x_0, x_1)\big\|_{S}: = \big\|x_0\big\|_{S_1} + \big\|x_1\big\|_{\tilde{S}_2}: = \big\|\langle\xi\rangle^{1+2\delta_0}\xi^{-\delta_0}x_0\big\|_{L^2_{d\xi}} + \big\|\langle\xi\rangle^{\frac12+2\delta_0}\xi^{-\delta_0}x_1\big\|_{L^2_{d\xi}}. 
\end{equation}

Denote the propagator \eqref{eq:linhomparam1} by $S(\tau)(x_0, x_1)$, and further introduce the inhomogeneous propagator solving the problem with source (this only involves the continuous spectral part)
\[
\big(\mathcal{D}_{\tau}^2 + \beta(\tau)\mathcal{D}_{\tau} + \xi\big)x(\tau, \xi) = h(\tau, \xi),\,(x(\tau_0, \xi) = 0,\,\mathcal{D}_{\tau}x(\tau_0, \xi) = 0
\]
by 
\begin{equation}\label{eq:inhompara}\begin{split}
&x(\tau, \xi) = \int_{\tau_0}^{\tau}U(\tau,\sigma)h(\sigma, \frac{\lambda^2(\tau)}{\lambda^2(\sigma)}\xi)\,d\sigma,\\& U(\tau, \sigma)=  \frac{\lambda^{\frac{3}{2}}(\tau)}{\lambda^{\frac{3}{2}}(\sigma)}\frac{\rho^{\frac{1}{2}}(\frac{\lambda^{2}(\tau)}{\lambda^{2}(\sigma)}\xi)}{\rho^{\frac{1}{2}}(\xi)}\frac{\sin\Big[\lambda(\tau)\xi^{\frac{1}{2}}\int_{\sigma}^{\tau}\lambda^{-1}(u)\,du\Big]}{\xi^{\frac12}}
\end{split}\end{equation}
Further, denote the evolution of the spectral part with inhomogeneous data 
\[
\big(\partial_{\tau}^2 + \beta(\tau)\partial_{\tau} + \xi_d\big)x_d(\tau) = h_d(\tau),\,x_d(\tau_0) = 0,\,\partial_{\tau}x_d(\tau_0) = 0
\]
and without exponential decay at infinity (for bounded $h_d$ for example) by 
\begin{equation}\label{eq:inhomparad}
x_d(\tau) = \int_{\tau_0}^\tau H(\tau, \sigma) h_d(\sigma)\,d\sigma,
\end{equation}
where we have (see Lemma~\ref{lem:linhom}) the bound $\big|H(\tau, \sigma) \big|\lesssim e^{-c|\tau-\sigma|}$ for some $c>0$. 
Following \cite{CondBlow}, we also introduce the somewhat complicated square-sum norms over dyadic time intervals and given by 
\begin{equation}\label{eq:squaresumnorm}
\big\|y(\tau, \xi)\big\|_{Sqr}: = \big(\sum_{\substack{N\gtrsim \tau_0\\N\,\text{dyadic}}}\sup_{\tau\sim N}(\frac{\lambda(\tau)}{\lambda(\tau_0)})^{4\delta_0}\big\|y(\tau, \cdot)\big\|_{L^2_{d\xi}}^2\big)^{\frac12},
\end{equation}
and we shall also use $\big\|y(\tau, \xi)\big\|_{Sqr(\xi<1)}, \big\|y(\tau, \xi)\big\|_{Sqr(\xi>1)}$ where $L^2_{d\xi}$ will be refined to $L^2_{d\xi}(\xi<1), L^2_{d\xi}(\xi>1)$. To define the zeroth iterate $\underline{x}^{(0)}$, we replace the source functions in the right-hand side of \eqref{eq:transport1} by 
\[
\left(\begin{array}{c} \langle \lambda^{-2}\chi_{R\lesssim\tau}R e_{approx}, \phi_d(R)\rangle\\ \mathcal{F}\big(\lambda^{-2}\chi_{R\lesssim\tau}R e_{approx}\big)\end{array}\right)
\]
and we abolish the linear term $\mathcal R$. That is $\underline{x}^{(0)}$ satisfies the equation 
\[
\big(\mathcal{D}_{\tau}^2 + \beta(\tau)\mathcal{D}_{\tau} + \underline{\xi}\big)\underline{x}^{(0)}(\tau, \xi) = \left(\begin{array}{c} \langle \lambda^{-2}\chi_{R\lesssim\tau}R e_{approx}, \phi_d(R)\rangle\\ \mathcal{F}\big(\lambda^{-2}\chi_{R\lesssim\tau}R e_{approx}\big)\end{array}\right)
\]

\begin{prop}\label{prop:zerothiterate} Assume the same setup as in Theorem~\ref{thm:IterationResult}. In particular, as before, everything depends on a basic data triple $(x_0(\xi), x_1(\xi), x_{0d})$ from which the fourth component $x_{1d}$ and further the new Fourier components $x_{0}^{(\gamma_1,\gamma_2)}$ etc are derived. There is a pair $(\triangle \tilde{\tilde{x}}_0^{(0)}, \triangle \tilde{\tilde{x}}_1^{(0)})\in \tilde{S}$, satisfying the bounds 
\begin{align*}
\big\|(\triangle \tilde{\tilde{x}}_0^{(0)}, \triangle \tilde{\tilde{x}}_1^{(0)})\big\|_{\tilde{S}}\lesssim \tau_0^{-(2-)}[\big\|(x_0, x_1)\big\|_{\tilde{S}} + \big|x_{0d}\big|], 
\end{align*}
and such that if we set for the continuous spectral part \begin{align*}
&x^{(0)}(\tau, \xi): =  \widetilde{x^{(0)}}(\tau, \xi)+S(\tau)\big(x_0^{(\gamma_1,\gamma_2)} + \triangle \tilde{\tilde{x}}_0^{(0)}, x_1^{(\gamma_1,\gamma_2)} + \triangle \tilde{\tilde{x}}_1^{(0)}\big),\\
&\widetilde{x^{(0)}}(\tau, \xi)
: =\int_{\tau_0}^{\tau}U(\tau,\sigma)\mathcal{F}\big(\lambda^{-2}\chi_{R\lesssim\tau}R e_{approx}\big)(\sigma, \frac{\lambda^2(\tau)}{\lambda^2(\sigma)}\xi)\,d\sigma\\
\end{align*}
then the following conclusions hold: for high frequencies $\xi>1$, we have 
\begin{align*}
&\sup_{\tau\geq \tau_0}(\frac{\tau}{\tau_0})^{-\kappa}\big\|\chi_{\xi>1}\widetilde{x^{(0)}}(\tau, \xi)\big\|_{S_1} + \sup_{\tau\geq \tau_0}(\frac{\tau}{\tau_0})^{\kappa}\big\|\chi_{\xi>1}\mathcal{D}_{\tau}\widetilde{x^{(0)}}(\tau, \xi)\big\|_{S_2}\\
&+ \big\|\xi^{\frac12+\delta_0}\mathcal{D}_{\tau}\widetilde{x^{(0)}}(\tau, \xi)\big\|_{Sqr(\xi>1)}\lesssim \tau_0^{-1}[\big\|(x_0, x_1)\big\|_{\tilde{S}} + \big|x_{0d}\big|].
\end{align*}
For low frequencies $\xi<1$, there is a decomposition 
\begin{align*}
\widetilde{x^{(0)}}(\tau, \xi) + S(\tau)\big(\triangle \tilde{\tilde{x}}_0^{(0)}, \triangle \tilde{\tilde{x}}_1^{(0)}\big) = \triangle_{>\tau}\widetilde{x^{(0)}}(\tau, \xi) + S(\tau)\big(\triangle\widetilde{x^{(0)}}_0(\xi), \triangle\widetilde{x^{(0)}}_1(\xi)\big)
\end{align*}
where the data $\big(\triangle\widetilde{x^{(0)}}_0(\xi), \triangle\widetilde{x^{(0)}}_1(\xi)\big)$ satisfy the vanishing conditions
\begin{equation}\label{eq:vanishing11}
\int_0^\infty \frac{\rho^{\frac12}(\xi)\triangle\widetilde{x^{(0)}}_0(\xi)}{\xi^{\frac14}}\cos[\lambda(\tau_0)\xi^{\frac12}\int_{\tau_0}^\infty\lambda^{-1}(u)\,du]\,d\xi = 0,
\end{equation}
\begin{equation}\label{eq:vanishing12}
\int_0^\infty \frac{\rho^{\frac12}(\xi)\triangle\widetilde{x^{(0)}}_1(\xi)}{\xi^{\frac34}}\sin[\lambda(\tau_0)\xi^{\frac12}\int_{\tau_0}^\infty\lambda^{-1}(u)\,du]\,d\xi = 0,
\end{equation}
and such that we have the bound 
\begin{align*}
&\big\|\big(\triangle\widetilde{x^{(0)}}_0(\xi), \triangle\widetilde{x^{(0)}}_1(\xi)\big)\big\|_{\tilde{S}} + \sup_{\tau\geq \tau_0}(\frac{\tau}{\tau_0})^{-\kappa}\big\|\chi_{\xi<1}\triangle_{>\tau}\widetilde{x^{(0)}}(\tau, \xi)\big\|_{S_1}\\
& + \big\|\xi^{-\delta_0}\mathcal{D}_{\tau}\triangle_{>\tau}\widetilde{x^{(0)}}(\tau, \xi)\big\|_{Sqr(\xi<1)}\lesssim \tau_0^{-1}[\big\|(x_0, x_1)\big\|_{\tilde{S}} + \big|x_{0d}\big|].
\end{align*}
Furthermore, letting $\triangle\tilde{\tilde{x}}_j^{(0)}, \triangle \tilde{\tilde{\bar{x}}}_j^{(0)}$, $j = 1,2$, be the corrections corresponding to two initial perturbation quadruples (where the component $x_{1d}$ is determined in terms of the other three ones via Proposition~\ref{prop:choiceofgamma})
\[
(\underline{x}_0, \underline{x}_1),\,(\underline{\bar{x}}_0, \underline{{\bar{x}}}_1), 
\]
we have 
we have 
\begin{align*}
\big\|(\triangle \tilde{\tilde{x}}_0^{(0)} -\triangle \tilde{\tilde{\bar{x}}}_0^{(0)}, \triangle \tilde{\tilde{x}}_1^{(0)} - \triangle \tilde{\tilde{\bar{x}}}_1^{(0)})\big\|_{\tilde{S}}\lesssim  \tau_0^{-(1-)}[\big\|(x_0- \bar{x}_0, x_1-\bar{x}_1)\big\|_{\tilde{S}} + \big|x_{0d} - \bar{x}_{0d}\big|].
\end{align*}
For the discrete spectral part, setting 
\begin{align*}
\triangle x^{(0)}_d(\tau): = \int_{\tau_0}^\infty H_d(\tau, \sigma)\langle \lambda^{-2}(\sigma)Re_{approx}, \phi_d(R)\rangle\,d\sigma,
\end{align*}
we have the bound 
\begin{align*}
\tau^2[\big|\triangle x^{(0)}_d(\tau)\big| + \big|\partial_{\tau}\triangle x^{(0)}_d(\tau)\big|]\lesssim \big\|(x_0, x_1)\big\|_{\tilde{S}} + \big|x_{0d}\big|.
\end{align*}
We also have the difference bound 
\begin{align*}
&\tau^2[\big|\triangle x^{(0)}_d(\tau) - \triangle \bar{x}^{(0)}_d(\tau)\big| + \big|\partial_{\tau}\triangle x^{(0)}_d(\tau) - \partial_{\tau}\triangle \bar{x}^{(0)}_d(\tau)\big|]\\&\lesssim \big\|(x_0 - \bar{x}_0, x_1 - \bar{x}_1)\big\|_{\tilde{S}} + \big|x_{0d} - \bar{x}_{0d}\big|.
\end{align*}
We shall then set 
\[
x^{(0)}_{d}(\tau): = x^{(\gamma_1,\gamma_2)}_{d}(\tau) + \triangle x^{(0)}_d(\tau),
\]
where $ x^{(\gamma_1,\gamma_2)}_{d}(\tau)$ is the 'free evolution' of the discrete spectral part constructed as in Lemma~\ref{lem:linhom} with data $(x_{0d}^{(\gamma_1,\gamma_2)}, x_{1d}^{(\gamma_1,\gamma_2)})$. 

\end{prop}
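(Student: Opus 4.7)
The plan is to mirror the construction of the zeroth iterate in \cite{CondBlow}, Section~4, tracking the new dependence on $\gamma_{1,2}$ that enters through $e_{\text{approx}}$. First I would estimate the Fourier transform of the source. Theorem~\ref{thm:main approximate} yields
\[
t^{2}|e_{\text{approx}}|\lesssim (|\gamma_{1}|+|\gamma_{2}|)\,|\log t|\,\lambda^{1/2}\bigl((\lambda t)^{-(k_{0}+4)}R+(\lambda t)^{-(k_{0}+2)}R^{-1}\bigr)\bigl(1+(1-a)^{1/2+\nu/2}\bigr)
\]
inside the forward light cone, with symbol-type behaviour under $\partial_{t,r}$. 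Using $\xi\phi(R,\xi)=\mathcal L\phi(R,\xi)$ for repeated integration by parts and the pointwise control of $\phi$ from Proposition~\ref{prop:Fourier}, I obtain symbol-type bounds on $\mathcal F(\lambda^{-2}\chi_{R\lesssim\sigma}Re_{\text{approx}})(\sigma,\xi)$ and its $\partial_{\xi}$-derivative of size $(|\gamma_{1}|+|\gamma_{2}|)\sigma^{-(k_{0}+C)}|\log\sigma|\langle\xi\rangle^{-N}$ for arbitrary $N$ and a fixed $C$, with an analogous bound for the pairing against $\phi_{d}$. Invoking Proposition~\ref{prop:choiceofgamma} to estimate the $\gamma$'s in terms of the data and using that $k_{0}=[N\nu^{-1}]$ is at our disposal, integration of this source against $U(\tau,\sigma)$ over $\sigma\in[\tau_{0},\infty)$ produces an overall factor of $\tau_{0}^{-M}$ with $M$ as large as desired. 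The Lipschitz dependence in $(x_{0},x_{1},x_{0d})$ is then inherited from Lemma~\ref{lem:Lipcont} and the smooth $\gamma$-dependence stated in Theorem~\ref{thm:main approximate}.

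Inserting the source into the parametrix \eqref{eq:inhompara}, for $\xi>1$ the claimed $S_{1}$, $S_{2}$ and $Sqr(\xi>1)$ bounds follow from the standard $TT^{*}$/stationary-phase estimates of \cite{CondBlow}, Section~4, with the weights $(\tau/\tau_{0})^{\pm\kappa}$ absorbed by the polynomial $\sigma$-gain from the previous step. For $\xi<1$ I would split
\[
\int_{\tau_{0}}^{\tau}U(\tau,\sigma)(\cdots)\,d\sigma=\int_{\tau_{0}}^{\infty}U(\tau,\sigma)(\cdots)\,d\sigma-\int_{\tau}^{\infty}U(\tau,\sigma)(\cdots)\,d\sigma,
\]
and expand the sine in $U(\tau,\sigma)$ via $\int_{\sigma}^{\tau}=\int_{\sigma}^{\infty}-\int_{\tau}^{\infty}$ to rewrite the first integral as $S(\tau)(y_{0},y_{1})$ for explicit profiles $(y_{0},y_{1})$ of size $\lesssim\tau_{0}^{-(2-)}(\|(x_{0},x_{1})\|_{\tilde S}+|x_{0d}|)$ in $\tilde S$; the second integral is precisely $\triangle_{>\tau}\widetilde{x^{(0)}}$, and the $\sigma$-decay of the source combined with the uniform control of the low-frequency kernel yields the required $Sqr(\xi<1)$ bound.

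To enforce the two vanishing relations \eqref{eq:vanishing11}, \eqref{eq:vanishing12} for $(\triangle\widetilde{x^{(0)}}_{0},\triangle\widetilde{x^{(0)}}_{1}):=(y_{0},y_{1})+(\triangle \tilde{\tilde{x}}_{0}^{(0)},\triangle \tilde{\tilde{x}}_{1}^{(0)})$, I would parametrise $\triangle \tilde{\tilde{x}}_{j}^{(0)}=\alpha_{j}\psi_{j}$ with two fixed smooth test profiles $\psi_{0,1}\in C_{0}^{\infty}(0,1)$ and solve the resulting $2\times 2$ linear system in $(\alpha_{0},\alpha_{1})$. Since \eqref{eq:vanishing11} involves only $\widetilde{x^{(0)}}_{0}$ and \eqref{eq:vanishing12} only $\widetilde{x^{(0)}}_{1}$, the matrix is diagonal, and its entries are non-zero by the same $\lim_{\xi\to 0}\xi^{1/2}\rho(\xi)=\tfrac{1}{3\pi}\neq 0$ non-degeneracy exploited in Proposition~\ref{prop:choiceofgamma}. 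Consequently $(\triangle \tilde{\tilde{x}}_{0}^{(0)},\triangle \tilde{\tilde{x}}_{1}^{(0)})$ inherits both the $\tau_{0}^{-(2-)}$-smallness and the data-Lipschitz dependence of $(y_{0},y_{1})$. Finally $\triangle x_{d}^{(0)}$ is produced by convolving the exponentially decaying kernel $H(\tau,\sigma)$ from \eqref{eq:inhomparad} against the discrete-mode source; the rapid $\sigma$-decay of $\langle\lambda^{-2}Re_{\text{approx}},\phi_{d}\rangle$ derived in the first step immediately yields the claimed $\tau^{-2}$ bound, with the stated difference estimate following from the same computation applied to the source difference.

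The main obstacle is the joint enforcement of the two low-frequency vanishing conditions by an $\tilde S$-small, data-Lipschitz correction: one must verify that the $2\times 2$ matrix above is genuinely invertible with a data-independent constant, and that the off-diagonal entries (present because $\psi_{0,1}$ are not exactly aligned with the $\sin/\cos$-weights) remain strictly sub-leading. All other steps closely parallel \cite{CondBlow}, once the $\gamma_{1,2}$-bookkeeping of the first step is in place.
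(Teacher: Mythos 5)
Two steps in your sketch do not hold as stated, and one design choice clashes with the way the paper uses this proposition downstream.

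\textbf{The $\langle\xi\rangle^{-N}$ bound.} You claim that integration by parts via $\xi\phi(R,\xi)=\mathcal{L}\phi(R,\xi)$ yields symbol bounds $\lesssim\langle\xi\rangle^{-N}$ for arbitrary $N$ on $\mathcal{F}(\lambda^{-2}\chi_{R\lesssim\sigma}Re_{\text{approx}})$. That would require $e_{\text{approx}}$ to be smooth in $R$ up to order $N$, but it is not: Theorem~\ref{thm:main approximate} shows the error carries a factor $(1-a)^{\frac12+\frac{\nu}{2}}$ across the light cone, so $e_{\text{approx}}$ is only of regularity $H^{1+\frac{\nu}{2}-}_{loc}$. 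You can integrate by parts at most a bounded number of times before hitting the shock, so only a fixed, small power of $\langle\xi\rangle^{-1}$ is available. The paper's Step~1 sidesteps this by using the $L^2$-based Sobolev bound of Lemma~\ref{lem:Sobolev}, i.e.\ estimating $\|\lambda^{-2}\chi_{R\lesssim\sigma}R(e_{\text{prelim}}-\tilde e_{\text{prelim}})\|_{H^{1+2\delta_0}_{dR}}$, which is exactly compatible with the $(1-a)^{\frac12+\frac{\nu}{2}}$ regularity since $\delta_0\ll\nu$.

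\textbf{The claimed $\tau_0^{-M}$ gain.} You then argue that since $k_0=[N\nu^{-1}]$ is at our disposal, the $\sigma^{-(k_0+C)}$ decay of the source plus Proposition~\ref{prop:choiceofgamma} produces $\tau_0^{-M}$ for any $M$. This cancellation does not happen. Proposition~\ref{prop:choiceofgamma} gives $|\gamma_j\lambda_{0,0}^{\frac12}t_0^{k_0\nu}|\lesssim\tau_0\log\tau_0\,(\|(x_0,x_1)\|_{\tilde S}+|x_{0d}|)$; since $t_0\sim\tau_0^{-1/\nu}$ this means $|\gamma_j|$ carries a factor $\tau_0^{k_0+\cdots}$ which \emph{exactly compensates} the $(\lambda t)^{-k_0}$ factor in $e_{\text{approx}}$. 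So increasing $k_0$ gains nothing in $\tau_0$: this is precisely why the paper's \eqref{eq: eprelimdiff} combines the source bound with \eqref{eq:gammabounds} and ends up with the fixed powers $\tau_0^{-(2-)}$ for the data correction and $\tau_0^{-1}$ for the propagator bounds, not an arbitrarily large power. Your proof cannot use the shortcut of sending $M\to\infty$; you must carry out the $\sigma$-integration with the genuine power from \eqref{eq: eprelimdiff} and \eqref{eq:gammabounds}.

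\textbf{The choice of correction profile.} Picking generic $\psi_{0,1}\in C_0^\infty(0,1)$ with the required non-degenerate pairings would in fact be enough to solve the two decoupled scalar conditions for Proposition~\ref{prop:zerothiterate} alone. However, Remark~\ref{rem:correction detail} and the proof of Proposition~\ref{prop:DataLipschitzCont} rely essentially on the specific choice $\triangle\tilde{\tilde x}_j^{(0)}=\alpha_j\,\mathcal{F}\bigl(\chi_{R\leq C\tau_0}\phi(R,0)\bigr)$: this profile is the distorted Fourier transform of a fixed \emph{physical-space} function, which transforms cleanly under the $R_{0,0}\leftrightarrow R_{\gamma_1,\gamma_2}$ rescaling via Lemma~\ref{lem:changeofscale}. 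An abstract $\psi_j\in C_0^\infty(0,1)$ on the Fourier side does not have this property, so you would lose the smooth $\gamma$-dependence that Step~5 of the paper's proof and the ensuing Lipschitz estimates require when translating back to the original coordinates $R_{0,0}$ at $t=t_0$. In short: your profile choice proves the proposition but would silently break the argument a few pages later; better to adopt the paper's choice.

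The remaining parts — the $\int_{\tau_0}^\tau=\int_{\tau_0}^\infty-\int_\tau^\infty$ split, the diagonal $2\times2$ linear system, and the $H_d(\tau,\sigma)$ convolution for the discrete mode — do match the paper's strategy.
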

\begin{rem} Observe that the formula for the continuous spectral part $x^{(0)}(\tau, \xi)$ arises by adding the term $S(\tau)\big( \triangle \tilde{\tilde{x}}_0^{(0)},  \triangle \tilde{\tilde{x}}_1^{(0)}\big)$ to the Duhamel type parametrix coming from Lemma~\ref{lem:linhom}. The reason for such a correction term, which is already present in \cite{CondBlow}, comes from poor low frequency bounds for the term 
\[
\int_{\tau_0}^{\tau}U(\tau,\sigma)\mathcal{F}\big(\lambda^{-2}\chi_{R\lesssim\tau}R e_{approx}\big)(\sigma, \frac{\lambda^2(\tau)}{\lambda^2(\sigma)}\xi)\,d\sigma, 
\]
and more generally, for any such term occurring in the iterative scheme. The idea then is to write this bad term (for small $\xi$) in the form 
\[
\triangle_{>\tau}\widetilde{x^{(0)}}(\tau, \xi) + S(\tau)\big(\triangle\widetilde{x^{(0)}}_0(\xi), \triangle\widetilde{x^{(0)}}_1(\xi)\big),
\]
by replacing the integral $\int_{\tau_0}^{\tau}$ by one over $\int_{\tau}^\infty$. Since the components 
\[
\triangle\widetilde{x^{(0)}}_0(\xi), \triangle\widetilde{x^{(0)}}_1(\xi)
\]
don't necessarily satisfy the vanishing conditions \eqref{eq:vanishing11}, \eqref{eq:vanishing12}, we need to add the corrections $ \triangle \tilde{\tilde{x}}_0^{(0)},  \triangle \tilde{\tilde{x}}_1^{(0)}$. Importantly, these can be chosen to be much smaller than the initial data $x_{0,1}, x_{0d}$. This procedure is explained in greater detail in \cite{CondBlow}.
\end{rem}
\begin{proof} We follow the same outline of steps as for example in the proof of Prop. 8. 1 in \cite{CondBlow}. 
\\

{\bf{Step 1}}: {\it{Proof of the high frequency bound}}. Recall \eqref{eq:eapprox}. Correspondingly, we shall write $\mathcal{F}\big(\lambda^{-2}\chi_{R\lesssim\tau}R e_{approx}\big)$ as the sum of several terms. We shall prove the somewhat more delicate square-sum type bound, the remaining bounds being more of the same.
\\

{\it{The contribution of $ e_{\text{prelim}} - \tilde{e}_{\text{prelim}}$.}} Write 
\begin{align*}
\Xi_1(\tau, \xi): = \int_{\tau_0}^{\tau}U(\tau,\sigma)\mathcal{F}\big(\lambda^{-2}\chi_{R\lesssim\tau}R(e_{\text{prelim}} - \tilde{e}_{\text{prelim}}) \big)(\sigma, \frac{\lambda^2(\tau)}{\lambda^2(\sigma)}\xi)\,d\sigma
\end{align*}
We need to bound $\big\|\xi^{\frac12+\delta_0}\mathcal{D}_{\tau}\Xi_1(\tau, \xi)\big\|_{Sqr(\xi>1)}$. Observe that 
\begin{equation}\label{eq:Dtaueffect}
 \mathcal{D}_{\tau}\big[\int_{\tau_0}^{\tau}U(\tau,\sigma)g(\sigma, \frac{\lambda^2(\tau)}{\lambda^2(\sigma)}\xi)\,d\sigma\big] = \int_{\tau_0}^{\tau}V(\tau,\sigma)g(\sigma, \frac{\lambda^2(\tau)}{\lambda^2(\sigma)}\xi)\,d\sigma,
 \end{equation}
 where we set 
 \[
 V(\tau,\sigma): = \frac{\lambda^{\frac{3}{2}}(\tau)}{\lambda^{\frac{3}{2}}(\sigma)}\frac{\rho^{\frac{1}{2}}(\frac{\lambda^{2}(\tau)}{\lambda^{2}(\sigma)}\xi)}{\rho^{\frac{1}{2}}(\xi)}\cos\Big[\lambda(\tau)\xi^{\frac{1}{2}}\int_{\sigma}^{\tau}\lambda^{-1}(u)\,du\Big].
 \]
 In light of Prop.~\ref{prop:Fourier}, we infer the inequality 
 \[
\frac{\lambda^{\frac32}(\tau)}{\lambda^{\frac32}(\sigma)}\frac{\rho^{\frac12}(\frac{\lambda^2(\tau)}{\lambda^2(\sigma)}\xi)}{\rho^{\frac12}(\xi)}\lesssim\frac{\lambda^2(\tau)}{\lambda^2(\sigma)},\,\xi>1, 
\]
and this implies 
\begin{align*}
&\big\|\xi^{\frac12+}\mathcal{D}_{\tau}\Xi_1(\tau, \xi)\big\|_{L^2_{d\xi}(\xi>1)}\\&\lesssim \int_{\tau_0}^{\tau}\frac{\lambda^2(\tau)}{\lambda^2(\sigma)}\big\|\xi^{\frac12+\delta_0}\mathcal{F}\big(\lambda^{-2}(\sigma)\chi_{R\lesssim\tau}R(e_{\text{prelim}} - \tilde{e}_{\text{prelim}})\big)(\sigma, \frac{\lambda^2(\tau)}{\lambda^2(\sigma)}\xi)\big\|_{L^2_{d\xi}(\xi>1)}\,d\sigma
\end{align*}
Referring to the same proposition for the isometry properties of the distorted Fourier transform, as well as Lemma~\ref{lem:Sobolev}, we obtain
 \begin{align*}
&\frac{\lambda^2(\tau)}{\lambda^2(\sigma)}\big\|\xi^{\frac12+\delta_0}\mathcal{F}\big(\lambda^{-2}(\sigma)\chi_{R\lesssim\tau}R(e_{\text{prelim}} - \tilde{e}_{\text{prelim}})\big)(\sigma, \frac{\lambda^2(\tau)}{\lambda^2(\sigma)}\xi)\big\|_{L^2_{d\xi}(\xi>1)}\\&
\lesssim (\frac{\lambda^2(\tau)}{\lambda^2(\sigma)})^{-\delta_0-\frac14}\big\|\xi^{\frac12+\delta_0}\mathcal{F}\big(\lambda^{-2}(\sigma)\chi_{R\lesssim\tau}R(e_{\text{prelim}} - \tilde{e}_{\text{prelim}})\big)(\sigma,\cdot)\big\|_{L^2_{d\rho}}\\
&\lesssim  (\frac{\lambda^2(\tau)}{\lambda^2(\sigma)})^{-\delta_0-\frac14}\big\|\lambda^{-2}(\sigma)\chi_{R\lesssim\tau}R(e_{\text{prelim}} - \tilde{e}_{\text{prelim}})\big\|_{H^{1+2\delta_0}_{dR}}.
\end{align*}
Referring to the end of Lemma~\ref{lem:corrections}, as well as the definitions preceding that lemma, for the structure of $e_{\text{prelim}} - \tilde{e}_{\text{prelim}}$, and finally also using the key bound \eqref{eq:gammabounds}, we infer the estimate 
\begin{equation}\label{eq: eprelimdiff}\begin{split}
&\big\|\lambda^{-2}(\sigma)\chi_{R\lesssim\tau}R(e_{\text{prelim}} - \tilde{e}_{\text{prelim}})\big\|_{H^{1+2\delta_0}_{dR}}\\&\lesssim \sigma^{-2}\cdot\sigma^{\frac12(1+\nu^{-1})-k_0-2+}\cdot \log\tau_0\tau_0^{k_0+1 - \frac12(1+\nu^{-1})-}\cdot\sigma^{\frac12}\cdot[\big\|(x_0, x_1)\big\|_{\tilde{S}} + \big|x_{0d}\big|].
\end{split}\end{equation}
Finally integrating this over $\sigma \in [\tau_0, \tau]$, we get 
\begin{align*}
&\big\|\xi^{\frac12+\delta_0}\mathcal{D}_{\tau}\Xi_1(\tau, \xi)\big\|_{L^2_{d\xi}(\xi>1)}\lesssim \tau_0^{-\frac32-}\cdot (\frac{\lambda^2(\tau)}{\lambda^2(\tau_0)})^{-\delta_0-\frac14}\cdot[\big\|(x_0, x_1)\big\|_{\tilde{S}} + \big|x_{0d}\big|].
\end{align*}
In turn inserting this bound into the definition \eqref{eq:squaresumnorm}, we find 
\begin{align*}
\big\|\xi^{\frac12+}\mathcal{D}_{\tau}\Xi_1(\tau, \xi)\big\|_{Sqr(\xi>1)}\lesssim \tau_0^{-\frac32-}\cdot[\big\|(x_0, x_1)\big\|_{\tilde{S}} + \big|x_{0d}\big|], 
\end{align*}
which is indeed better than what we need. 
\\

{\it{The contribution of the expression 
\[
 G(\tau, R): = \sum_{2\leq j\leq 5}{{5}\choose{j}}v^j[u_{\text{prelim}}^{5-j} - \tilde{u}_{\text{prelim}}^{5-j} ]
+  5(-\tilde{u}_{\text{prelim}}^4 + u_{\text{prelim}}^4)v,
\]
}}
where we recall $v, u_{\text{prelim}}, \tilde{u}_{\text{prelim}}$, is described in the last part of the proof of Theorem~\ref{thm:main approximate}. In particular, we have the bound 
\[
\big\|v(\tau, R)\big\|_{H^{1+2\delta_0}_{dR}}\lesssim \tau^{\frac12(2+\nu^{-1})-2k_*}, 
\]
with $k_*$ defined as in Theorem~\ref{thm:main approximate}.
Then setting 
\begin{align*}
&\Xi_2(\tau, \xi): = \int_{\tau_0}^{\tau}U(\tau,\sigma)g(\sigma, \frac{\lambda^2(\tau)}{\lambda^2(\sigma)}\xi)\,d\sigma,\,
\end{align*}
where we set 
\[
g(\tau, \xi) = \mathcal{F}\big(\lambda^{-2}(\tau)\chi_{R\lesssim\tau}R G(\tau, R)\big)(\xi),
\]
we infer by a similar argument as for the preceding contribution the bound 
\begin{align*}
\big\|\xi^{\frac12+\delta_0}\mathcal{D}_{\tau}\Xi_2(\tau, \xi)\big\|_{L^2_{d\xi}(\xi>1)}&\lesssim \int_{\tau_0}^{\tau}\frac{\lambda^2(\tau)}{\lambda^2(\sigma)}\big\|\xi^{\frac12+\delta_0}g(\sigma, \frac{\lambda^2(\tau)}{\lambda^2(\sigma)}\xi)\big\|_{L^2_{d\xi}(\xi>1)}\,d\sigma\\
&\lesssim  \int_{\tau_0}^{\tau}(\frac{\lambda^2(\tau)}{\lambda^2(\sigma)})^{-\delta_0 - \frac14}\big\|\lambda^{-2}(\sigma)R G(\sigma, R)\big\|_{H^{1+2\delta_0}_{dR}(R\lesssim\sigma)}\,d\sigma.
\end{align*}
On the other hand, the proof of Theorem~\ref{thm:main approximate} easily implies the crude bound 
\[
\big\|\lambda^{-2}(\sigma)R G(\sigma, R)\big\|_{H^{1+2\delta_0}_{dR}(R\lesssim\sigma)}\lesssim \sigma^{-N}[\big\|(x_0, x_1)\big\|_{\tilde{S}} + \big|x_{0d}\big|],\,N\gg 1,\,
\]
and so we obtain 
\[
\big\|\xi^{\frac12+\delta_0}\mathcal{D}_{\tau}\Xi_2(\tau, \xi)\big\|_{L^2_{d\xi}(\xi>1)}\lesssim \tau_0^{-N}(\frac{\lambda^2(\tau_0)}{\lambda^2(\tau)})^{\frac14 + \delta_0}[\big\|(x_0, x_1)\big\|_{\tilde{S}} + \big|x_{0d}\big|]. 
\]
This in turn furnishes the bound 
\begin{align*}
\big\|\xi^{\frac12+}\mathcal{D}_{\tau}\Xi_2(\tau, \xi)\big\|_{Sqr(\xi>1)}\lesssim \tau_0^{-N}\cdot[\big\|(x_0, x_1)\big\|_{\tilde{S}} + \big|x_{0d}\big|], 
\end{align*}
which is much better than what we need. 
\\

{\bf{Step 2}}: {\it{Choice of the corrections $(\triangle \tilde{\tilde{x}}_0^{(0)}, \triangle \tilde{\tilde{x}}_1^{(0)})$}}. In analogy to \cite{CondBlow}, we shall pick these corrections in the specific form 
\begin{equation}\label{eq:alphabeta}
\triangle \tilde{\tilde{x}}_0^{(0)}(\xi) = \alpha\mathcal{F}\big(\chi_{R\leq C\tau}\phi(R, 0)\big)(\xi),\,\triangle \tilde{\tilde{x}}_1^{(0)}(\xi) = \beta\mathcal{F}\big(\chi_{R\leq C\tau}\phi(R, 0)\big)(\xi),
\end{equation}
and we need to determine the parameters $\alpha, \beta$ in order to force the required vanishing conditions \eqref{eq:vanishing11}, \eqref{eq:vanishing12} for $\triangle\widetilde{x^{(0)}}_0(\xi), \triangle\widetilde{x^{(0)}}_1(\xi)$. The latter quantities are given by
\begin{align*}
&\triangle\widetilde{x^{(0)}}_0(\xi) = \int_{\tau_0}^\infty U(\tau_0, \sigma)\mathcal{F}\big(\lambda^{-2}\chi_{R\lesssim\tau}R e_{approx}\big)(\sigma, \frac{\lambda^2(\tau_0)}{\lambda^2(\sigma)}\xi)\,d\sigma + \triangle \tilde{\tilde{x}}_0^{(0)}(\xi)\\
&\triangle\widetilde{x^{(0)}}_1(\xi) = \int_{\tau_0}^\infty V(\tau_0, \sigma)\mathcal{F}\big(\lambda^{-2}\chi_{R\lesssim\tau}R e_{approx}\big)(\sigma, \frac{\lambda^2(\tau_0)}{\lambda^2(\sigma)}\xi)\,d\sigma + \triangle \tilde{\tilde{x}}_1^{(0)}(\xi),\\
\end{align*}
where we recall \eqref{eq:Dtaueffect}. Thus writing $\tilde{\triangle}\widetilde{x^{(0)}}_j(\xi): = \triangle\widetilde{x^{(0)}}_j(\xi) -  \triangle \tilde{\tilde{x}}_j^{(0)}(\xi)$, $j = 0, 1$, we need the following simple 
\begin{lem}\label{lem:ABfunctionaleapprox} We have the bounds 
\begin{align*}
\big|\int_0^\infty \frac{\rho^{\frac12}(\xi)\tilde{\triangle}\widetilde{x^{(0)}}_0(\xi)}{\xi^{\frac14}}\cos[\lambda(\tau_0)\xi^{\frac12}\int_{\tau_0}^{\infty}\lambda^{-1}(u)\,du]\,d\xi\big|\lesssim \tau_0^{-(1-)}[\big\|(x_0,x_1)\big\|_{\tilde{S}}+|x_{0d}|],
\end{align*}
\begin{align*}
\big|\int_0^\infty \frac{\rho^{\frac12}(\xi)\tilde{\triangle}\widetilde{x^{(0)}}_1(\xi)}{\xi^{\frac34}}\sin[\lambda(\tau_0)\xi^{\frac12}\int_{\tau_0}^{\infty}\lambda^{-1}(u)\,du]\,d\xi\big|\lesssim \tau_0^{-(1-)}[\big\|(x_0,x_1)\big\|_{\tilde{S}}+|x_{0d}|],
\end{align*}
\end{lem}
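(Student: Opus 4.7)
The plan is to substitute the Duhamel parametrix formulas \eqref{eq:inhompara} for $U(\tau_0,\sigma)$ and its $\mathcal{D}_\tau$-counterpart $V(\tau_0,\sigma)$ (see \eqref{eq:Dtaueffect}) into the definitions of $\tilde{\triangle}\widetilde{x^{(0)}}_0$ and $\tilde{\triangle}\widetilde{x^{(0)}}_1$ respectively, interchange the $\sigma$- and $\xi$-integrals, and perform the change of variables $\eta = \frac{\lambda^2(\tau_0)}{\lambda^2(\sigma)}\xi$ so that the distorted Fourier transform appears evaluated at its natural time $\sigma$. A brief bookkeeping shows that in both cases the resulting inner integral carries the common weight $\eta^{-3/4}\rho^{1/2}(\eta)$ and the common outer Jacobian $\lambda(\tau_0)/\lambda(\sigma)\sim (\tau_0/\sigma)^{1+\nu^{-1}}$; the two cases differ only in the particular combination of sine and cosine in the trigonometric kernel.

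A product-to-sum identity then decomposes the kernel into two pieces: a \emph{light-cone piece} with phase $\lambda(\sigma)\eta^{1/2}(T_\infty-T_\sigma)=\lambda(\sigma)t(\sigma)\eta^{1/2}$, where we used the elementary identity $\int_\sigma^\infty\lambda^{-1}(u)\,du = t(\sigma)$ (from the change of variables $du=-\lambda(t)\,dt$); and a \emph{high-phase piece} with phase $\lambda(\sigma)\eta^{1/2}(T_\infty+T_\sigma)$, which is $\gtrsim\sigma+\tau_0$. For the first piece, one recognizes the inner integral as essentially the inverse distorted Fourier transform of $\mathcal{F}(\lambda^{-2}\chi_{R\lesssim\sigma}Re_{approx})(\sigma,\cdot)$ evaluated at the light cone $R=\lambda(\sigma)t(\sigma)$, i.e.\ the physical source at $a=1$. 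The structure of $e_{approx}$ from Theorem~\ref{thm:main approximate}, in particular the factor $(1-a)^{1/2+\nu/2}$ inherited from the $\mathcal{Q}$-algebra (Lemma~\ref{lem:corrections}), combined with the crucial estimate \eqref{eq:gammabounds} bounding $\gamma_{1,2}$ by $\log\tau_0\cdot\tau_0\,(\|(x_0,x_1)\|_{\tilde{S}}+|x_{0d}|)$ and with bounds on $\|\lambda^{-2}\chi Re_{approx}\|_{H^{1+2\delta_0}_{dR}}$ analogous to \eqref{eq: eprelimdiff}, then produces a $\sigma$-decay of order $\sigma^{-(k_0+1+)}$ times the data norm. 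The high-phase piece is handled by a single integration by parts in $\eta$: the $\eta$-derivative of the phase supplies a factor $\lambda(\sigma)(T_\infty+T_\sigma)\gtrsim\sigma$ in the denominator, while the $\eta$-derivatives of the weight and of $\hat{F}(\sigma,\eta):=\mathcal{F}(\lambda^{-2}\chi Re_{approx})(\sigma,\eta)$ cost only harmless powers of $\eta$ compatible with the symbol bounds of $\rho$ and with $H^{2+2\delta_0}_{dR}$-type bounds on the source (the latter via the transference of $R\partial_R$ provided by Theorem~\ref{thm:transferenceop}).

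Integrating both contributions in $\sigma\in[\tau_0,\infty)$ against the Jacobian $\lambda(\tau_0)/\lambda(\sigma)$ yields the claimed bound $\tau_0^{-(1-)}[\|(x_0,x_1)\|_{\tilde{S}}+|x_{0d}|]$. The main technical obstacle will be the low-frequency endpoint $\eta\to 0^+$, where the weight $\eta^{-3/4}\rho^{1/2}(\eta)\sim\eta^{-1/2}$ is singular: for the light-cone piece one must balance this against the linearization $\sin(\lambda(\sigma)t(\sigma)\eta^{1/2})\sim\lambda(\sigma)t(\sigma)\eta^{1/2}$ (using $\lambda(\sigma)t(\sigma)\sim\sigma$), whereas for the high-phase piece the integration-by-parts argument requires verifying that $\hat{F}(\sigma,\eta)$ remains regular at $\eta=0$, which follows from the non-degeneracy identities \eqref{eq:bulkterm1} and \eqref{eq:vsmoothbound} already invoked in the proof of Proposition~\ref{prop:choiceofgamma}.
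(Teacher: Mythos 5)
Your plan is a genuinely different approach from the paper's, and it is worth comparing carefully, because the paper's proof is considerably more elementary.

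\textbf{What the paper actually does.} The paper substitutes the Duhamel kernel $U(\tau_0,\sigma)$ directly, does \emph{not} exploit the phase structure of the trigonometric factors beyond a single interpolation, and simply splits into three frequency regimes: (i) $\xi<1$, (ii) $1\leq\xi<\lambda^2(\sigma)/\lambda^2(\tau_0)$ (intermediate, where $\eta<1$), and (iii) $\xi>\lambda^2(\sigma)/\lambda^2(\tau_0)$ (where $\eta>1$). In regimes (i) and (ii) it uses the $L^1_{dR}\to L^\infty_{d\eta}$ mapping property of the distorted Fourier transform (via the pointwise bound on $\phi(R,\xi)$), and in regime (iii) it uses $H^s_{dR}\to L^2_{d\rho}$ bounds. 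The singularity of $\sin[\cdots]/\xi^{1/2}$ at $\xi=0$ is handled by the cheap interpolation $|\sin\theta|\le\min(1,|\theta|)\lesssim|\theta|^{\epsilon}$, producing the harmless $\xi^{0+}\tau_0^{0+}$ loss. That is all: no light-cone decomposition, no integration by parts, no phase analysis.

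\textbf{What you do differently.} Your change of variables $\eta=\frac{\lambda^2(\tau_0)}{\lambda^2(\sigma)}\xi$ and the bookkeeping showing both integrals carry the common weight $\eta^{-3/4}\rho^{1/2}(\eta)\cdot\lambda(\tau_0)/\lambda(\sigma)$ are correct and nicely streamline the Jacobian. The identity $\int_\sigma^\infty\lambda^{-1}(u)\,du=t(\sigma)$, hence light-cone phase $=\nu\sigma\eta^{1/2}$, is a genuine and clean observation. The product-to-sum split into a light-cone piece and a high-phase piece with phase $\gtrsim\sigma$ is a legitimate stationary-phase-type decomposition that the paper does not use. This is conceptually sharper than the paper's crude pointwise bounds and, if it went through, would locate exactly where the source bounds are being used. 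However, there are genuine gaps:

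\begin{itemize}
\item Small arithmetic slip: since $\rho(\eta)\sim\eta^{-1/2}$ for $\eta\to 0^+$ (Prop.~\ref{prop:Fourier}), the low-frequency weight is $\eta^{-3/4}\rho^{1/2}(\eta)\sim\eta^{-1}$, not $\eta^{-1/2}$. Your linearization $\sin[\nu\sigma\eta^{1/2}]\lesssim\nu\sigma\eta^{1/2}$ fixes the light-cone piece anyway (bringing it down to $\eta^{-1/2}$, integrable), but the error propagates to your estimate of the high-phase piece.
\item The claim that the light-cone inner integral is ``essentially the inverse distorted Fourier transform evaluated at $R=\lambda(\sigma)t(\sigma)$'' is heuristic: the generalized eigenfunctions $\phi(R,\eta)$ only asymptotically match a pure plane wave $e^{\pm iR\eta^{1/2}}$ for $R\gg1$ and $R\eta^{1/2}\gg 1$; for small $R$ or small $R\eta^{1/2}$ the weight $\eta^{-3/4}\rho^{1/2}(\eta)\sin[\cdots]$ does not reproduce $\phi(R,\eta)\rho(\eta)$. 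You would need to make this precise via the expansion \eqref{eq:FourierExpansion}, and in effect you would again split into $\eta\lessgtr 1$ and end up with the paper's $L^1/L^2$ bounds.
\item The integration by parts for the high-phase piece does not close at $\eta\to0^+$ as written. The phase derivative gives a factor $\sim\eta^{1/2}/(\lambda(\sigma)(T_\infty+T_\sigma))$, while $\partial_\eta[\eta^{-1}\hat{F}]\sim\eta^{-2}$ even when $\hat{F}$ is bounded and smooth at $\eta=0$. The net integrand $\sim\eta^{-3/2}$ is not integrable. Your proposed remedy --- ``verifying that $\hat{F}(\sigma,\eta)$ remains regular at $\eta=0$'' via \eqref{eq:bulkterm1}, \eqref{eq:vsmoothbound} --- gives boundedness at $\eta=0$, not vanishing, and hence does not cure the $\eta^{-3/2}$. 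What \emph{would} work is to also linearize the high-phase sine for $\eta\lesssim\sigma^{-2}$ and integrate by parts only for $\eta\gtrsim\sigma^{-2}$, but that is not in the write-up and, once one does that, the argument becomes essentially equivalent to the paper's interpolation.
\end{itemize}

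In short: your decomposition is a legitimate and more structured alternative, and the observations (change of variables, light-cone identity) are correct. But the execution has an incompletely treated low-frequency endpoint for the high-phase piece, and the light-cone piece is only heuristically matched to the physical source. The paper's simpler route (pure frequency regimes, $L^1$ and $H^s$ bounds, a mild $\xi^{0+}\tau_0^{0+}$ interpolation loss) avoids all of these issues and is what you should aim for unless you can make the light-cone/high-phase decomposition fully rigorous.
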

\begin{proof} We again refer to \eqref{eq:eapprox} to split this into a number of bounds. We consider here the contribution of 
\[
 e_{\text{prelim}} - \tilde{e}_{\text{prelim}},
 \]
 the remaining terms being treated similarly. We distinguish between three frequency regimes: 
 \\
 {\it{(i): $\xi<1$.}} Here we get 
 \begin{align*}
 &\big|\tilde{\triangle}\widetilde{x^{(0)}}_0(\xi)\big|\\&\lesssim \xi^{-\frac12+}\tau_0^{0+}\int_{\tau_0}^\infty \frac{\lambda(\tau_0)}{\lambda(\sigma)}\big|\mathcal{F}\big(\lambda^{-2}(\sigma)\chi_{R\lesssim\tau}R(e_{\text{prelim}} - \tilde{e}_{\text{prelim}})\big)(\sigma, \frac{\lambda^2(\tau)}{\lambda^2(\sigma)}\xi)\big|\,d\sigma\\
\end{align*}
Referring to Lemma~\ref{lem:corrections}, we have (using the point wise bounds on $\phi(R, \xi)$ in Proposition~\ref{prop:Fourier})
\begin{equation}\label{eq:Linftyfor F}\begin{split}
&\big|\mathcal{F}\big(\lambda^{-2}(\sigma)\chi_{R\lesssim\tau}R(e_{\text{prelim}} - \tilde{e}_{\text{prelim}})\big)(\sigma, \frac{\lambda^2(\tau)}{\lambda^2(\sigma)}\xi)\big|\\
&\lesssim \big\|\lambda^{-2}(\sigma)\chi_{R\lesssim\tau}R(e_{\text{prelim}} - \tilde{e}_{\text{prelim}})\big)\big\|_{L^1_{dR}}\\
&\lesssim \sigma^{-2}\cdot\sigma^{\frac12(1+\nu^{-1})-k_0-2+}\cdot \log\tau_0\tau_0^{k_0+1 - \frac12(1+\nu^{-1})-}\cdot\sigma\cdot[\big\|(x_0, x_1)\big\|_{\tilde{S}} + \big|x_{0d}\big|].
\end{split}\end{equation}
Inserting this in the preceding $\sigma$-integral for $\tilde{\triangle}\widetilde{x^{(0)}}_0(\xi)$, we find 
\begin{align*}
\big|\tilde{\triangle}\widetilde{x^{(0)}}_0(\xi)\big|\lesssim \xi^{-\frac12+}\tau_0^{-1+}\cdot[\big\|(x_0, x_1)\big\|_{\tilde{S}} + \big|x_{0d}\big|].
\end{align*}
In turn recalling the asymptotics for the spectral density $\rho(\xi)$ from Proposition~\ref{prop:Fourier}, we obtain 
 \begin{align*}
&\big|\int_0^1\frac{\rho^{\frac12}(\xi)\tilde{\triangle}\widetilde{x^{(0)}}_0(\xi)}{\xi^{\frac14}}\cos[\lambda(\tau_0)\xi^{\frac12}\int_{\tau_0}^{\infty}\lambda^{-1}(u)\,du]\,d\xi\big|\\&\lesssim \tau_0^{-(1-)}[\big\|(x_0,x_1)\big\|_{\tilde{S}} + \big|x_{0d}\big|]\cdot\int_0^1\xi^{-(1-)}\,d\xi\\
&\lesssim \tau_0^{-(1-)}[\big\|(x_0,x_1)\big\|_{\tilde{S}} + \big|x_{0d}\big|].
 \end{align*}
 
  {\it{(ii): $1\leq \xi<\frac{\lambda^2(\sigma)}{\lambda^2(\tau_0)}$.}} Denote the contribution to $\tilde{\triangle}\widetilde{x^{(0)}}_0$ under this restriction $\tilde{\triangle}\widetilde{x^{(0)}}_{01}$.
 Again referring to the $\rho$-asymptotics from Proposition~\ref{prop:Fourier} an recalling \eqref{eq:inhompara}, we infer 
 \begin{align*}
&\big|\tilde{\triangle}\widetilde{x^{(0)}}_{01}\big|\\&\lesssim \xi^{-1}\int_{\tau_0}^\infty \chi_{\xi<\frac{\lambda^2(\sigma)}{\lambda^2(\tau_0)}}\frac{\lambda(\tau_0)}{\lambda(\sigma)}\big|\mathcal{F}\big(\lambda^{-2}(\sigma)\chi_{R\lesssim\tau}R(e_{\text{prelim}} - \tilde{e}_{\text{prelim}})\big)(\sigma, \frac{\lambda^2(\tau)}{\lambda^2(\sigma)}\xi)\big|\,d\sigma\\
&\lesssim \xi^{-\frac32}\tau_0^{-(1-)}[\big\|(x_0,x_1)\big\|_{\tilde{S}} + \big|x_{0d}\big|],
 \end{align*}
 where we have used the same asymptotics for $\big|\mathcal{F}\big(\ldots\big)\big|$ as in {\it{(i)}}. In turn, this implies 
  \begin{align*}
 &\big|\int_1^\infty\frac{\rho^{\frac12}(\xi)\tilde{\triangle}\widetilde{x^{(0)}}_{01}(\xi)}{\xi^{\frac14}}\cos[\lambda(\tau_0)\xi^{\frac12}\int_{\tau_0}^{\infty}\lambda^{-1}(u)\,du]\,d\xi\big|\lesssim \tau_0^{-(1-)}[\big\|(x_0,x_1)\big\|_{\tilde{S}} + \big|x_{0d}\big|].
 \end{align*}

{\it{(iii): $\xi > \frac{\lambda^2(\sigma)}{\lambda^2(\tau_0)}$.}} Here we use that for the corresponding contribution to $\tilde{\triangle}\widetilde{x^{(0)}}_0$, which we call $\tilde{\triangle}\widetilde{x^{(0)}}_{02}$, we have 
\begin{align*}
&\big\|\xi\tilde{\triangle}\widetilde{x^{(0)}}_{02}(\xi)\big\|_{L^2_{d\xi}}\\&\lesssim \int_{\tau_0}^\infty\big\|\xi^{\frac12}\frac{\lambda^2(\tau_0)}{\lambda^2(\sigma)}\mathcal{F}\big(\lambda^{-2}(\sigma)\chi_{R\lesssim\tau}R(e_{\text{prelim}} - \tilde{e}_{\text{prelim}})\big)(\sigma, \frac{\lambda^2(\tau)}{\lambda^2(\sigma)}\xi)\big\|_{L^2_{d\xi}(\xi > \frac{\lambda^2(\sigma)}{\lambda^2(\tau_0)})}\,d\sigma\\
&\lesssim  \int_{\tau_0}^\infty\big\|\xi^{\frac12}\mathcal{F}\big(\lambda^{-2}(\sigma)\chi_{R\lesssim\tau}R(e_{\text{prelim}} - \tilde{e}_{\text{prelim}})\big)(\sigma,\cdot)\big\|_{L^2_{d\rho}}\,d\sigma\\
&\lesssim  \int_{\tau_0}^\infty\big\|\big(\lambda^{-2}(\sigma)\chi_{R\lesssim\tau}R(e_{\text{prelim}} - \tilde{e}_{\text{prelim}})\big)(\sigma,\cdot)\big\|_{H^1_{dR}}\,d\sigma\\
&\lesssim \tau_0^{-\frac32}[\big\|(x_0,x_1)\big\|_{\tilde{S}} + \big|x_{0d}\big|], 
\end{align*}
where we have used \eqref{eq: eprelimdiff}. We conclude by Cauchy-Schwarz that 
\begin{align*}
&\big|\int_1^\infty\frac{\rho^{\frac12}(\xi)\tilde{\triangle}\widetilde{x^{(0)}}_{02}(\xi)}{\xi^{\frac14}}\cos[\lambda(\tau_0)\xi^{\frac12}\int_{\tau_0}^{\infty}\lambda^{-1}(u)\,du]\,d\xi\big|\\
&\lesssim \big\|\xi\tilde{\triangle}\widetilde{x^{(0)}}_{02}(\xi)\big\|_{L^2_{d\xi}}\lesssim \tau_0^{-\frac32}[\big\|(x_0,x_1)\big\|_{\tilde{S}} + \big|x_{0d}\big|]. 
\end{align*}
The contributions of the remaining terms forming $e_{approx}$ are handled similarly, as is the second estimate of the lemma involving $\tilde{\triangle}\widetilde{x^{(0)}}_{1}$. 
\end{proof}

We next use the same argument as in \eqref{eg:Btildetilde} to infer the asymptotic relations (for $C\gg 1, \tau_0\gg 1$)
\begin{align*}
\big|\int_0^\infty\frac{\rho^{\frac12}(\xi)\mathcal{F}\big(\chi_{R\leq C\tau}\phi(R, 0)\big)(\xi)}{\xi^{\frac14}}\cos[\lambda(\tau_0)\xi^{\frac12}\int_{\tau_0}^{\infty}\lambda^{-1}(u)\,du]\,d\xi\big|\sim 1, 
\end{align*}
\begin{align*}
\big|\int_0^\infty\frac{\rho^{\frac12}(\xi)\mathcal{F}\big(\chi_{R\leq C\tau}\phi(R, 0)\big)(\xi)}{\xi^{\frac34}}\sin[\lambda(\tau_0)\xi^{\frac12}\int_{\tau_0}^{\infty}\lambda^{-1}(u)\,du]\,d\xi\big|\sim \tau_0, 
\end{align*}
The preceding lemma in conjunction with these asymptotics implies that the vanishing relations \eqref{eq:vanishing11}, \eqref{eq:vanishing12} will be satisfied for $\alpha, \beta$ in \eqref{eq:alphabeta} satisfying
\[
\big|\alpha\big|\lesssim \tau_0^{-(1-)}[\big\|(x_0,x_1)\big\|_{\tilde{S}} + \big|x_{0d}\big|],\,\big|\beta\big|\lesssim \tau_0^{-(2-)}[\big\|(x_0,x_1)\big\|_{\tilde{S}} + \big|x_{0d}\big|].
\]
Then {\bf{Step 2}} is concluded by observing the bounds \eqref{eq:diraclike}, \eqref{eq:spikeS1}, as well as the analogous bound (recalling \eqref{eq:Snorm})
\[
\big\|\frac{C\tau_0}{\langle C\tau_0\xi^{\frac12}\rangle^N}\big\|_{\tilde{S}_1} \lesssim \tau_0^{\delta_0},
\]
whence 
\[
\big\|\triangle \tilde{\tilde{x}}_0^{(0)}\big\|_{\tilde{S_1}} + \big\|\triangle \tilde{\tilde{x}}_1^{(0)}\big\|_{\tilde{S_2}}\lesssim \tau_0^{-(2-)}[\big\|(x_0,x_1)\big\|_{\tilde{S}} + \big|x_{0d}\big|].
\]

{\bf{Step 3}}: {\it{Proof of the low frequency bounds}}. Here we control $\triangle_{>\tau}\widetilde{x^{(0)}}(\tau, \xi)$ in the low frequency regime $\xi<1$. The choice of $\triangle\widetilde{x^{(0)}}_0, \triangle\widetilde{x^{(0)}}_1$ at the beginning of {\bf{Step 2}} imply that 
\begin{align*}
&\triangle_{>\tau}\widetilde{x^{(0)}}(\tau, \xi) = 
-  \int_{\tau}^{\infty}U(\tau, \sigma)\cdot\mathcal{F}\big(\lambda^{-2}(\sigma)\chi_{R\lesssim\sigma}R(e_{approx})\big)(\sigma, \frac{\lambda^2(\tau)}{\lambda^2(\sigma)}\xi)\,d\sigma,
\end{align*}
and in light of the asserted bounds of the proposition, we need to control 
\[
(\frac{\tau}{\tau_0})^{-\kappa}\big\|\chi_{\xi<1}\triangle_{>\tau}\widetilde{x^{(0)}}(\tau, \xi)\big\|_{S_1},\,\big\|\xi^{-\delta_0}\mathcal{D}_{\tau}\triangle_{>\tau}\widetilde{x^{(0)}}(\tau, \xi)\big\|_{Sqr(\xi<1)}.
\]
We show here how to bound the first quantity, the second being more of the same. We use that 
\begin{align*}
\big\|\xi^{-\delta_0}U(\tau, \sigma)\big\|_{L^2_{d\xi}(\xi<1)}\lesssim \tau^{\delta_0}\cdot\frac{\lambda(\tau)}{\lambda(\sigma)},
\end{align*}
which then implies
\begin{align*}
&\big\|\xi^{-\delta_0}\triangle_{>\tau}\widetilde{x^{(0)}}(\tau, \xi)\big\|_{L^2_{d\xi}(\xi<1)}\\
&\lesssim  \tau^{\delta_0}\int_{\tau}^{\infty}\frac{\lambda(\tau)}{\lambda(\sigma)}\big\|\mathcal{F}\big(\lambda^{-2}(\sigma)\chi_{R\lesssim\sigma}R(e_{approx})\big)(\sigma, \frac{\lambda^2(\tau)}{\lambda^2(\sigma)}\xi)\big\|_{L^\infty_{d\xi}}\,d\sigma
\end{align*}
Then as usual we distinguish between the different parts of $e_{approx}$. For example, for the contribution of the principal part $e_{\text{prelim}} - \tilde{e}_{\text{prelim}}$, we get by arguing as in {\it{(i)}} of the proof of the preceding lemma
\begin{align*}
&\tau^{0+}\int_{\tau}^{\infty}\frac{\lambda(\tau)}{\lambda(\sigma)}\big\|\mathcal{F}\big(\lambda^{-2}(\sigma)\chi_{R\lesssim\sigma}R(e_{\text{prelim}} - \tilde{e}_{\text{prelim}})\big)(\sigma, \frac{\lambda^2(\tau)}{\lambda^2(\sigma)}\xi)\big\|_{L^\infty_{d\xi}}\,d\sigma\\
&\lesssim \tau^{0+}\cdot\frac{\tau_0^{k_0+1}\log\tau_0}{\lambda_{0,0}^{\frac12}(\tau_0)}\cdot\int_{\tau}^{\infty}\frac{\lambda(\tau)}{\lambda(\sigma)}\sigma^{-k_0-3}\lambda_{0,0}^{\frac12}(\sigma)\,d\sigma\cdot[\big\|(x_0,x_1)\big\|_{\tilde{S}} + \big|x_{0d}\big|]\\
&\lesssim \tau_0^{-(1-)}\cdot[\big\|(x_0,x_1)\big\|_{\tilde{S}} + \big|x_{0d}\big|].
\end{align*}
This is even better than what we need, since we have omitted the weight $(\frac{\tau}{\tau_0})^{-\kappa}$. The remaining terms in $e_{approx}$ lead to similar contributions.
\\
{\bf{Step 4}}: {\it{Control over the data $(\triangle\widetilde{x^{(0)}}_0(\xi), \triangle\widetilde{x^{(0)}}_1(\xi))$ for the free part in the low frequency regime.}} In light of the low frequency bound established in the preceding step, it suffices to establish the high-frequency bound, i. e. restrict to $\xi>1$. Thus in light of \eqref{eq:Stildenorm}  we need to bound 
\[
\big\|\xi^{1+\delta_0}\triangle\widetilde{x^{(0)}}_0(\xi)\big\|_{L^2_{d\xi}(\xi>1)} + \big\|\xi^{\frac12+\delta_0}\triangle\widetilde{x^{(0)}}_1(\xi)\big\|_{L^2_{d\xi}(\xi>1)}.
\]
where $(\triangle\widetilde{x^{(0)}}_0(\xi), \triangle\widetilde{x^{(0)}}_1(\xi))$ are defined as at the beginning of {\bf{Step 2}}. We shall establish the desired estimate for $\triangle\widetilde{x^{(0)}}_0$ and the contribution of $e_{\text{prelim}} - \tilde{e}_{\text{prelim}}$, the remaining contributions as well as the term $\triangle\widetilde{x^{(0)}}_1$ being more of the same. Note that on account of the final bound of {\bf{Step 2}}, the correction terms $\triangle \tilde{\tilde{x}}_0^{(0)}, \triangle \tilde{\tilde{x}}_1^{(0)}$ satisfy the required bounds. The norm $\big\|\xi^{1+}\triangle\widetilde{x^{(0)}}_0(\xi)\big\|_{L^2_{d\xi}(\xi>1)}$ can be bounded by 
\begin{equation}\label{eq:againtwoterms}\begin{split}
&\big\| \xi^{1+\delta_0}\int_{\tau_0}^{\infty}U(\tau, \sigma)\cdot\mathcal{F}\big(\lambda^{-2}(\sigma)\chi_{R\lesssim\sigma}R(e_{\text{prelim}} - \tilde{e}_{\text{prelim}})\big)(\sigma, \frac{\lambda^2(\tau_0)}{\lambda^2(\sigma)}\xi)\,d\sigma\big\|_{L^2_{d\xi}(\xi>1)}\\
&\lesssim \int_{\tau_0}^{\infty}\frac{\lambda(\tau_0)}{\lambda(\sigma)}\big\|\xi^{\delta_0}\mathcal{F}\big(\lambda^{-2}(\sigma)\chi_{R\lesssim\sigma}R(e_{\text{prelim}} - \tilde{e}_{\text{prelim}})\big)(\sigma, \frac{\lambda^2(\tau_0)}{\lambda^2(\sigma)}\xi)\big\|_{L^2_{d\xi}(1<\xi<\frac{\lambda^2(\sigma)}{\lambda^2(\tau_0)})}\,d\sigma\\
& + \int_{\tau_0}^{\infty}\frac{\lambda^2(\tau_0)}{\lambda^2(\sigma)}\big\|\xi^{\frac12+\delta_0}\mathcal{F}\big(\lambda^{-2}(\sigma)\chi_{R\lesssim\sigma}R(e_{\text{prelim}} - \tilde{e}_{\text{prelim}})\big)(\sigma, \frac{\lambda^2(\tau_0)}{\lambda^2(\sigma)}\xi)\big\|_{L^2_{d\xi}(\xi>\frac{\lambda^2(\sigma)}{\lambda^2(\tau_0)})}\,d\sigma.\\
\end{split}\end{equation}
Then recalling parameter $\kappa = 2(1+\nu^{-1})\delta_0$ , the first term on the right (intermediate frequencies) is bounded by 
\begin{align*}
& \int_{\tau_0}^{\infty}\frac{\lambda(\tau_0)}{\lambda(\sigma)}\big\|\xi^{\delta_0}\mathcal{F}\big(\lambda^{-2}(\sigma)\chi_{R\lesssim\sigma}R(e_{\text{prelim}} - \tilde{e}_{\text{prelim}})\big)(\sigma, \frac{\lambda^2(\tau_0)}{\lambda^2(\sigma)}\xi)\big\|_{L^2_{d\xi}(1<\xi<\frac{\lambda^2(\sigma)}{\lambda^2(\tau_0)})}\,d\sigma\\
&\lesssim \int_{\tau_0}^{\infty}(\frac{\sigma}{\tau_0})^{\kappa}\big\|\mathcal{F}\big(\lambda^{-2}(\sigma)\chi_{R\lesssim\sigma}R(e_{\text{prelim}} - \tilde{e}_{\text{prelim}})\big)(\sigma, \frac{\lambda^2(\tau_0)}{\lambda^2(\sigma)}\xi)\big\|_{L^\infty_{d\xi}}\,d\sigma
\end{align*}
and further recalling \eqref{eq:Linftyfor F}, this is bounded by 
\begin{align*}
&\lesssim \log\tau_0\cdot\tau_0^{k_0+1-}\cdot[\big\|(x_0,x_1)\big\|_{\tilde{S}} + \big|x_{0d}\big|]\cdot\int_{\tau_0}^\infty(\frac{\sigma}{\tau_0})^{\kappa}\cdot\frac{\lambda_{0,0}^{\frac12}(\sigma)}{\lambda_{0,0}^{\frac12}(\tau_0)}\cdot\sigma^{-k_0 - 3+}\,d\sigma\\
&\lesssim \tau_0^{-(1-)}[\big\|(x_0,x_1)\big\|_{\tilde{S}} + \big|x_{0d}\big|]
\end{align*}
The second term on the right of \eqref{eq:againtwoterms} (large frequencies) is bounded by 
\begin{align*}
&\int_{\tau_0}^{\infty}\frac{\lambda^2(\tau_0)}{\lambda^2(\sigma)}\big\|\xi^{\frac12+\delta_0}\mathcal{F}\big(\lambda^{-2}(\sigma)\chi_{R\lesssim\sigma}R(e_{\text{prelim}} - \tilde{e}_{\text{prelim}})\big)(\sigma, \frac{\lambda^2(\tau_0)}{\lambda^2(\sigma)}\xi)\big\|_{L^2_{d\xi}(\xi>\frac{\lambda^2(\sigma)}{\lambda^2(\tau_0)})}\,d\sigma\\
&\lesssim \int_{\tau_0}^{\infty}(\frac{\sigma}{\tau_0})^{\kappa}\big\|\lambda^{-2}(\sigma)\chi_{R\lesssim\sigma}R(e_{\text{prelim}} - \tilde{e}_{\text{prelim}})(\sigma, \cdot)\big\|_{H^{1+2\delta_0}_{dR}}\,d\sigma\\
&\lesssim \tau_0^{-(\frac32-)}[\big\|(x_0,x_1)\big\|_{\tilde{S}} + \big|x_{0d}\big|], 
\end{align*}
where we have taken advantage of \eqref{eq: eprelimdiff}.
\\

{\bf{Step 5}}: {\it{Lipschitz continuity of the corrections $(\triangle \tilde{\tilde{x}}_0^{(0)}(\xi),  \triangle \tilde{\tilde{x}}_1^{(0)}(\xi))$ with respect to the original perturbations $(x_0, x_1, x_{0d})$.}} Here we prove the final assertion of the proposition. 
We note that on account of our construction of $(\triangle \tilde{\tilde{x}}_0^{(0)}(\xi),  \triangle \tilde{\tilde{x}}_1^{(0)}(\xi))$ in {\bf{Step 2}}, their dependence on $(x_0, x_1)$ comes solely through the coefficients $\alpha, \beta$.
We consider the first of these, the second being treated similarly. Then recall that we have 
\[
\alpha = \frac{c_1(\gamma_1,\gamma_2)}{c_2(\gamma_1,\gamma_2)},
\]
where we have introduced the functions 
\[
c_1(\gamma_1,\gamma_2) = -\int_0^\infty \frac{\rho^{\frac12}(\xi)\tilde{\triangle}\widetilde{x^{(0)}}_0(\xi)}{\xi^{\frac14}}\cos[\lambda(\tau_0)\xi^{\frac12}\int_{\tau_0}^{\infty}\lambda^{-1}(u)\,du]\,d\xi,
\]
\[
c_2(\gamma_1,\gamma_2) = \int_0^\infty \frac{\rho^{\frac12}(\xi)\tilde{\triangle}\mathcal{F}\big(\chi_{R\leq C\tau}\phi(R, 0)\big)(\xi)}{\xi^{\frac14}}\cos[\lambda(\tau_0)\xi^{\frac12}\int_{\tau_0}^{\infty}\lambda^{-1}(u)\,du]\,d\xi,
\]
and we also recall the notation, introduced shortly before Lemma~\ref{lem:ABfunctionaleapprox}
\begin{equation}\label{eq:tildetilde}
\tilde{\triangle}\widetilde{x^{(0)}}_0(\xi) = \int_{\tau_0}^\infty U(\tau_0, \sigma)\mathcal{F}\big(\lambda^{-2}\chi_{R\lesssim\sigma}R e_{approx}\big)(\sigma, \frac{\lambda^2(\tau_0)}{\lambda^2(\sigma)}\xi)\,d\sigma.
\end{equation}
Observe that there is dependence on $\gamma_{1,2}$ via $\lambda(\tau) = \lambda_{\gamma_1,\gamma_2}(\tau)$, $\tau_0 = \int_{t_0}^\infty \lambda(s)\,ds$, as well as 
\[
e_{approx} = e_{approx}(\tau_{0,0}, R_{0,0}, \gamma_{1,2}),
\]
with $R_{0,0}$ defined as in \eqref{eq:restlambda}, and $\tau_{0,0} = \int_{t}^\infty s^{-1-\nu}\,ds$, and we interpret $R_{0,0}$ as a function of $\tau, R,\gamma_{1,2}$, and $\tau_{0,0}$ as a function of $\tau, \gamma_{1,2}$. Then writing 
\[
\tilde{e}_{approx}(\tau, R, \gamma_{1,2}): = e_{approx}(\tau_{0,0}(\tau, \gamma_{1,2}), R_{0,0}(R,\tau, \gamma_{1,2}), \gamma_{1,2}),
\]
one derives after some algebraic manipulations a relation of the form 
\begin{equation}\label{eq:tildeegammapartial}
\partial_{\gamma_j}\tilde{e}_{approx} = A_j(\tau, \gamma_{1,2})R\partial_R\tilde{e}_{approx} + B_j(\tau, \gamma_{1,2})\tau\partial_{\tau}\tilde{e}_{approx} + \partial_{\gamma_j}e_{approx},
\end{equation}
where the coefficients are given in terms of 
\begin{equation}\label{eq:coeffs}\begin{split}
&A_j(\tau, \gamma_{1,2}) = \frac{\lambda}{\lambda_{0,0}}\partial_{\gamma_j}\big(\frac{\lambda_{0,0}}{\lambda}\big) - \partial_{\gamma_j}\tau_{0,0}(\partial_{\tau}\tau_{0,0})^{-1}\cdot \frac{\lambda}{\lambda_{0,0}}\partial_{\tau}\big(\frac{\lambda_{0,0}}{\lambda}\big),\\
&B_j(\tau, \gamma_{1,2}) = \tau^{-1}\cdot \partial_{\gamma_j}\tau_{0,0}(\partial_{\tau}\tau_{0,0})^{-1}.
\end{split}\end{equation}
In light of the definition \eqref{eq:lambdagammaonetwo} as well as \eqref{eq:gammabounds}, we infer the bounds 
\begin{align*}
&\big|A_j(\tau, \gamma_{1,2})\big|\lesssim \tau^{-k_0}\log\tau + \tau^{-k_0}\log\tau\cdot O_{\tau_0}\big(\big\|(x_0,x_1)\big\|_{\tilde{S}} + \big|x_{0d}\big|\big),\\
&\big|B_j(\tau, \gamma_{1,2})\big|\lesssim \tau^{-k_0}\log\tau.
\end{align*}
As for the integration kernel $U(\tau_0, \sigma)$, recalling \eqref{eq:inhompara}, we find 
\begin{align*}
&\big|\partial_{\gamma_j}U(\tau_0, \sigma)\big|\\&\lesssim \log\tau_0\tau_0^{-k_0}\frac{\lambda^{\frac32}(\tau_0)}{\lambda^{\frac32}(\sigma)}\frac{\rho^{\frac12}(\frac{\lambda^2(\tau_0)}{\lambda^2(\sigma)}\xi)}{\rho^{\frac12}(\xi)}\big(\tau_0+\big|\frac{\sin[\lambda(\tau_0)\xi^{\frac12}\int_{\tau_0}^{\sigma}\lambda^{-1}(u)\,du]}{\xi^{\frac12}}\big|\big)
\end{align*}
Finally, we can bound $\partial_{\gamma_j}c_1(\gamma_1,\gamma_2)$. Observe the crude bounds 
\begin{align*}
&\big|\tilde{\triangle}\widetilde{x^{(0)}}_0(\xi)\big|\lesssim \xi^{-(\frac12-)}\tau_0^{0+}\cdot \tau_0^{-2}\big(\big\|(x_0,x_1)\big\|_{\tilde{S}} + \big|x_{0d}\big|\big),\,\xi<1,\\
&\big|\tilde{\triangle}\widetilde{x^{(0)}}_0(\xi)\big|\lesssim \xi^{-(\frac32+)}\cdot \tau_0^{-2}\big(\big\|(x_0,x_1)\big\|_{\tilde{S}} + \big|x_{0d}\big|\big),\,\xi\geq 1,\\ 
\end{align*}
which follow from \eqref{eq:tildetilde}, \eqref{eq:inhompara}, as well as Theorem~\ref{thm:main approximate} and the bound \eqref{eq:gammabounds}. Again taking advantage of the $\rho$-asymptotics from Proposition~\ref{prop:Fourier}, we infer 
\begin{equation}\label{eq:partialgammac1first}\begin{split}
&\big|\int_0^\infty \frac{\rho^{\frac12}(\xi)\tilde{\triangle}\widetilde{x^{(0)}}_0(\xi)}{\xi^{\frac14}}\partial_{\gamma_j}\big(\cos[\lambda(\tau_0)\xi^{\frac12}\int_{\tau_0}^{\infty}\lambda^{-1}(u)\,du]\big)\,d\xi\big|\\&\lesssim \tau_0^{-(1-)}\big(\big\|(x_0,x_1)\big\|_{\tilde{S}} + \big|x_{0d}\big|\big).
\end{split}\end{equation}
The preceding point wise bound for $\partial_{\gamma_j}U(\tau_0, \sigma)$ easily reveals that a similar bound is obtained when $\tilde{\triangle}\widetilde{x^{(0)}}_0(\xi)$ in the preceding is replaced by 
\[
\int_{\tau_0}^\infty \partial_{\gamma_j}U(\tau_0, \sigma)\mathcal{F}\big(\lambda^{-2}\chi_{R\lesssim\sigma}R e_{approx}\big)(\sigma, \frac{\lambda^2(\tau_0)}{\lambda^2(\sigma)}\xi)\,d\sigma.
\]
It then remains to consider the case when the operator $\partial_{\gamma_j}$ falls on the Fourier transform 
\[
\mathcal{F}\big(\lambda^{-2}\chi_{R\lesssim\sigma}R e_{approx}\big)(\sigma, \frac{\lambda^2(\tau_0)}{\lambda^2(\sigma)}\xi)
\]
in \eqref{eq:tildetilde}, which we handle schematically as follows. Note that when $\partial_{\gamma_j}$ falls on $\lambda(\tau)$, we obtain a function bounded by a $\lesssim \tau^{-k_0}\log\tau\cdot\lambda(\tau)$, in light of \eqref{eq:lambdagammaonetwo}. Further, recall \eqref{eq:tildeegammapartial} as well as \eqref{eq:coeffs} and the bounds following it, as well as Theorem~\ref{thm:transferenceop} which gives a translation of $R\partial_R$ to the Fourier side. In all, we infer a schematic relation of the form 
\begin{align*}
\partial_{\gamma_1}\big[\mathcal{F}\big(\lambda^{-2}(\sigma)\chi_{R\lesssim\sigma}Re_{approx}\big)(\sigma, \frac{\lambda^2(\tau_0)}{\lambda^2(\sigma)}\xi)\big] = \sum_{j=1}^5 A_j, 
\end{align*}
with the following terms on the right: writing $G(\sigma, R) = \lambda^{-2}(\sigma)\chi_{R\lesssim\sigma}Re_{approx}$, 
\begin{align*}
&A_1 = \sigma^{-k_0}\lambda(\sigma)\mathcal{F}(G)(\sigma, \frac{\lambda^2(\tau_0)}{\lambda^2(\sigma)}\xi),\,A_2 =  \sigma^{-k_0}\lambda(\sigma)(\xi\partial_{\xi})[\mathcal{F}(G)(\sigma, \frac{\lambda^2(\tau_0)}{\lambda^2(\sigma)}\xi)]\\
&A_3 = \sigma^{-k_0}\lambda(\sigma)[\mathcal{K}\mathcal{F}(G)](\sigma, \frac{\lambda^2(\tau_0)}{\lambda^2(\sigma)}\xi),\,A_4 = \sigma^{-k_0}\lambda(\sigma)[(\sigma\partial_{\sigma})\mathcal{F}(G)](\sigma, \frac{\lambda^2(\tau_0)}{\lambda^2(\sigma)}\xi)\\
&A_5 =  \mathcal{F}\big(\lambda^{-2}(\sigma)\chi_{R\lesssim\sigma}R\partial_{\gamma_1}e_{approx}\big),
\end{align*}
with a similar relation for $\partial_{\gamma_2}$ but with $\sigma^{-k_0}$ replaced by $\sigma^{-k_0}\log\sigma$. 
\\
But then performing integration by parts with respect to $\xi$ or $\sigma$ as needed, and recalling the point wise bounds on $\tilde{\triangle}\widetilde{x^{(0)}}_0(\xi)$, we infer 
\begin{equation}\label{eq:delicatebound1}
\big|\int_{\tau_0}^\infty U(\tau_0, \sigma)\big(\sum_{j=1}^4 A_j\big)\,d\sigma\big|\lesssim_{\tau_0}\big\|(x_0,x_1)\big\|_{\tilde{S}} + \big|x_{0d}\big|
\end{equation}
Finally also recalling the structure of $e_{approx}$ from Theorem~\ref{thm:main approximate}, we get the bound 
\begin{equation}\label{eq:delicatebound2}
\big|\int_{\tau_0}^\infty U(\tau_0, \sigma)A_5\big|\lesssim \tau_0^{-(k_0+2-)}\lambda_{0,0}^{\frac12}(\tau_0)
\end{equation}
It is this last term which is dominant, of course. Combining \eqref{eq:partialgammac1first} and the remark following it with \eqref{eq:delicatebound1}, \eqref{eq:delicatebound2}, we finally obtain the bound (for $j = 1,2$)
\begin{equation}\label{eq:partialgammacone}
\big|\partial_{\gamma_j}c_1\big|\lesssim \tau_0^{-(k_0+2-)}\lambda_{0,0}^{\frac12}(\tau_0) + O_{t_0}\big(\big\|(x_0,x_1)\big\|_{\tilde{S}} + \big|x_{0d}\big|\big)
\end{equation}
A simple variation of the preceding arguments also implies the much easier bound 
\begin{equation}\label{eq:partialgammactwo}
\big|\partial_{\gamma_j}c_2\big|\lesssim \tau_0^{-(k_0-1)}\log\tau_0\cdot\lambda_{0,0}(\tau_0). 
\end{equation}
Combining \eqref{eq:partialgammacone}, \eqref{eq:partialgammactwo}, and also recalling $c_2\sim 1$ from the end of {\bf{Step 2}}, we finally obtain the desired estimate 
\begin{equation}\label{eq:partialgammaalphabound}
\big|\partial_{\gamma_j}\alpha\big|\lesssim \tau_0^{-(k_0+2-)}\lambda_{0,0}^{\frac12}(\tau_0) + O_{t_0}\big(\big\|(x_0,x_1)\big\|_{\tilde{S}} + \big|x_{0d}\big|\big).
\end{equation}
Finally, comparing the corrections 
\[
\triangle \tilde{\tilde{x}}_0^{(0)}(\xi) = \alpha\mathcal{F}\big(\chi_{R\leq C\tau}\phi(R, 0)\big)(\xi),\,\triangle \tilde{\tilde{\bar{x}}}_0^{(0)}(\xi) = \bar{\alpha}\mathcal{F}\big(\chi_{R\leq C\tau}\phi(R, 0)\big)(\xi),
\]
corresponding to different data quadruples $\underline{x}_j, \underline{\bar{x}}_j$, we find 
\begin{align*}
\big\|\triangle \tilde{\tilde{x}}_0^{(0)} - \triangle \tilde{\tilde{\bar{x}}}_0^{(0)}\big\|_{\tilde{S}_1}\lesssim &\big|\alpha - \bar{\alpha}\big|\cdot \tau_0^{\delta_0}\\
&\lesssim\big(\sum_j \big|\partial_{\gamma_j}\alpha\big|\cdot \big|\gamma_j - \bar{\gamma}_j\big|\big)\cdot \tau_0^{\delta_0},\\
\end{align*}
where we have used a bound at the end of {\bf{Step 2}} for the first inequality, and the preceding can be bounded by 
\begin{align*}
\lesssim \big[\tau_0^{-(1-)} + O_{t_0}\big(\big\|(x_0,x_1)\big\|_{\tilde{S}} + \big|x_{0d}\big|\big)\big]\cdot \big(\big\|(x_0-\bar{x}_0,x_1 - \bar{x}_1)\big\|_{\tilde{S}} + \big|x_{0d} - \bar{x}_{0d}\big|\big),
\end{align*}
in light of \eqref{eq:partialgammaalphabound} as well as Lemma~\ref{lem:Lipcont}. This is the desired Lipschitz dependence of $\triangle \tilde{\tilde{x}}_0^{(0)}(\xi)$ on the data, provided the latter are chosen small enough (depending on $\tau_0$), and that of $\triangle \tilde{\tilde{x}}_1^{(0)}(\xi)$ is similar.  
\\

This concluded the proof of the proposition for the continuous spectral part, and we omit the much simpler routine estimates for the discrete spectral part. 
\end{proof}

\subsubsection{Setup of the iteration scheme; the higher iterates}

We next add a sequence of corrections $\triangle x^{(j)}(\tau, \xi)$ to the zeroth iterate in order to arrive at a solution of \eqref{eq:transport1}, but with data differing slightly from \eqref{eq:xeqndata}. Specifically, we set for the first iterate
\[
\underline{x}^{(1)} = \underline{x}^{(0)} + \underline{\triangle x}^{(1)}
\]
where 
\begin{equation}\label{eq:Deltatransport1}
\big(\mathcal{D}_{\tau}^2 + \beta(\tau)\mathcal{D}_{\tau} + \underline{\xi}\big)\underline{\triangle x}^{(1)}(\tau, \xi) = \calR(\tau, \underline{x}^{(0)}) + \underline{\triangle f}^{(0)}(\tau, \underline{\xi}),
\end{equation}
and we recall \eqref{eq:Rterms} and further use the notation 
\begin{align*}
&\triangle f^{(0)}(\tau, \xi) = \mathcal{F}\big( \lambda^{-2}(\tau)\big[5(u_{approx}^4 - u_0^4)\tilde{\eps}^{(0)} + RN(u_{approx}, \tilde{\eps}^{(0)})\big]\big)\big(\xi\big),\\
&\triangle f^{(0)}_d(\tau) = \langle  \lambda^{-2}(\tau)\big[5(u_{approx}^4 - u_0^4)\tilde{\eps}^{(0)} + RN(u_{approx}, \tilde{\eps}^{(0)}), \phi_d(R)\rangle. 
\end{align*}
and further, we naturally set 
\[
\tilde{\eps}^{(0)}(\tau, R) = x^{(0)}_d(\tau)\phi_d(R) + \int_0^\infty \phi(R, \xi)x^{(0)}(\tau, \xi)\rho(\xi)\,d\xi. 
\]
For the higher corrections $\underline{\triangle x}^{(j)}$, $j\geq 2$, defining the higher iterates, we set correspondingly 
\begin{equation}\label{eq:Deltatransportj}
\big(\mathcal{D}_{\tau}^2 + \beta(\tau)\mathcal{D}_{\tau} + \underline{\xi}\big)\underline{\triangle x}^{(j)}(\tau, \xi) = \calR(\tau, \underline{\triangle x}^{(j-1)}) + \underline{\triangle f}^{(j-1)}(\tau, \xi),
\end{equation}
and we use the definitions 
\begin{align*}
&\triangle f^{(j-1)}(\tau, \xi) = \mathcal{F}\big( \lambda^{-2}(\tau)\chi_{R\lesssim\tau}\big[5(u_{approx}^4 - u_0^4)\triangle\tilde{\eps}^{(j-1)} + RN(u_{approx}, \triangle\tilde{\eps}^{(j-1)})\big]\big)\big(\xi\big),\\
&\triangle f^{(j-1)}_d(\tau) \\&\hspace{1cm} =\int_0^\infty  \lambda^{-2}(\tau)\chi_{R\lesssim\tau}\big[5(u_{approx}^4 - u_0^4)\triangle\tilde{\eps}^{(j-1)} + RN(u_{approx}, \triangle\tilde{\eps}^{(j-1)})\big]\phi_d(R)\,dR
\end{align*}
where we set 
\[
\triangle\tilde{\eps}^{(j-1)}(\tau, R) = \int_0^\infty \phi(R, \xi)\triangle x^{(j-1)}(\tau, \xi)\rho(\xi)\,d\xi + \triangle x^{(j-1)}_d(\tau)\phi_d(R),\,j\geq 2. 
\]
The fact that {\it{upon using suitable initial conditions}} these equations yield in fact iterates which rapidly converge to zero in a suitable sense follows exactly as in \cite{CondBlow}, and so we formulate the corresponding result, which is a summary of Propositions 9. 1 - 9. 6 and most importantly Corollary 12.2, Corollary 12.3 in \cite{CondBlow}:  

\begin{prop}\label{prop:keyiteratebounds}
For each $j\geq 1$, there exists a pair $(\triangle\tilde{\tilde{x}}_0^{(j)}, \triangle\tilde{\tilde{x}}_1^{(j)})\in \tilde{S}$, and such that if we set up the inductive scheme (recall \eqref{eq:inhompara})
\begin{equation}\label{eq:iterativestepcont}\begin{split}
&\triangle x^{(j)}(\tau, \xi) = \\&\int_{\tau_0}^{\tau}U(\tau, \sigma)[\calR(\tau, \underline{\triangle x}^{(j-1)}) + \triangle f^{(j-1)}(\tau, \underline{\xi})](\sigma, \frac{\lambda^2(\tau)}{\lambda^2(\sigma)}\xi)\,d\sigma\\
& + S(\tau)\big(\triangle\tilde{\tilde{x}}_0^{(j)}, \triangle\tilde{\tilde{x}}_1^{(j)}\big)
\end{split}\end{equation}
for the continuous spectral part, while we set (recall \eqref{eq:inhomparad})
\begin{equation}\label{eq:iterativestepdisc}\begin{split}
&\triangle_d x^{(j)}(\tau) = \int_{\tau_0}^\infty H_d(\tau, \sigma)\cdot[\calR_d(\tau, \underline{\triangle x}^{(j-1)}) + \triangle_d f^{(j-1)}(\tau, \underline{\xi})](\sigma)\,d\sigma,
\end{split}\end{equation}
then we obtain control over the iterates in the following precise sense: there is a splitting 
\begin{align*}
\triangle x^{(j)}(\tau, \xi) = \triangle_{>\tau} x^{(j)}(\tau, \xi) + S(\tau)\big(\triangle\tilde{x}_0^{(j)}, \triangle\tilde{x}_1^{(j)})
\end{align*}
in which $\triangle\tilde{x}_0^{(j)}, \triangle\tilde{x}_1^{(j)}$ satisfy the vanishing conditions 
\begin{align}
&\int_0^\infty\frac{\rho^{\frac12}(\xi)\triangle\tilde{x}_0^{(j)}(\xi)}{\xi^{\frac14}}\cos[\lambda(\tau_0)\int_{\tau_0}^\infty\lambda^{-1}(u)\,du]\,d\xi = 0\label{eq:vanishing11}\\
&\int_0^\infty\frac{\rho^{\frac12}(\xi)\triangle\tilde{x}_1^{(j)}(\xi)}{\xi^{\frac34}}\sin[\lambda(\tau_0)\int_{\tau_0}^\infty\lambda^{-1}(u)\,du]\,d\xi = 0\label{eq:vanishing21},
\end{align}
and such that if we set 
\begin{align*}
&\widetilde{\triangle x^{(j)}}(\tau, \xi) = \int_{\tau_0}^{\tau}U(\tau, \sigma)\cdot[\calR(\tau, \underline{\triangle x}^{(j-1)}) + \triangle f^{(j-1)}(\tau, \underline{\xi})](\sigma, \frac{\lambda^2(\tau)}{\lambda^2(\sigma)}\xi)\,d\sigma
\end{align*}
and introduce the quantities (with $\kappa = 2(1+\nu^{-1})\delta_0$)
\begin{equation}\label{eq:measureofiterates}\begin{split}
\triangle A_j:&=
 \sup_{\tau\geq \tau_0}(\frac{\tau_0}{\tau})^{\kappa}\big\|\chi_{\xi>1}\triangle x^{(j)}(\tau, \xi)\big\|_{S_1} + \big\|\xi^{\frac12+\delta_0}\mathcal{D}_{\tau}\widetilde{\triangle x^{(j)}}(\tau, \xi)\big\|_{Sqr(\xi>1)}\\
& + \sup_{\tau\geq \tau_0}(\frac{\tau_0}{\tau})^{\kappa}\big\|\chi_{\xi<1}\triangle_{>\tau}\triangle x^{(j)}(\tau, \xi)\big\|_{S_1} +\big\|\xi^{-\delta_0}\mathcal{D}_{\tau}\triangle_{>\tau}\triangle x^{(j)}(\tau, \xi)\big\|_{Sqr(\xi<1)}\\
& + \big\|(\triangle\tilde{x}_0^{(j)}, \triangle\tilde{x}_1^{(j)})\big\|_{\tilde{S}} + \big\|(\triangle\tilde{\tilde{x}}^{(j)}_0, \triangle\tilde{\tilde{x}}^{(j)}_1)\big\|_{\tilde{S}} + \sup_{\tau\geq\tau_0}\tau^{(1-)}(|\triangle x_d^{(j)}(\tau)| +|\partial_{\tau}\triangle x_d^{(j)}(\tau)|),
\end{split}\end{equation}
where we recall \eqref{eq:squaresumnorm} for the definition of $\big\|\cdot\big\|_{Sqr}$, then we have exponential decay 
\[
\triangle A_j\lesssim_{\delta} \delta^j[\big\|(x_0,x_1)\big\|_{\tilde{S}} + \big|x_{0d}\big|]
\]
for any given $\delta>0$, provided $\tau_0$ is sufficiently large (or equivalently, $t_0$ is sufficiently small). 
In particular, the series 
\[
\underline{x}(\tau, \xi) = \underline{x}^{(0)}(\tau, \xi) + \sum_{j\geq 1}\underline{\triangle x}^{(j)}(\tau, \xi),
\]
converges with 
\begin{align*}
&\sup_{\tau\geq \tau_0}(\frac{\tau_0}{\tau})^{\kappa}\big\|\xi^{1+\delta_0}x(\tau, \xi)\big\|_{L^2_{d\xi}(\xi>1)} +  \sup_{\tau\geq \tau_0}(\frac{\tau_0}{\tau})^{-\kappa}\big\|\xi^{\frac12+\delta_0}\mathcal{D}_{\tau}x(\tau, \xi)\big\|_{L^2_{d\xi}(\xi>1)}\\&\lesssim \big\|(x_0,x_1)\big\|_{\tilde{S}} + \big|x_{0d}\big|.
 \end{align*}
Also, for low frequencies, i. e. $\xi<1$, there is a decomposition
\[
x(\tau, \xi) = x_{>\tau}(\tau, \xi) + S(\tau)\big(\tilde{x}_0, \tilde{x}_1\big)
\]
such that $\tilde{x}_0, \tilde{x}_1$ satisfy the natural analogues of \eqref{eq:vanishing11}, \eqref{eq:vanishing21}, and we have the bounds
\begin{align*}
&\sup_{\tau\geq \tau_0}(\frac{\tau_0}{\tau})^{\kappa}\big\|\xi^{-\delta_0}x(\tau, \xi)\big\|_{L^2_{d\xi}(\xi<1)} +  \sup_{\tau\geq \tau_0}(\frac{\tau_0}{\tau})^{-\kappa}\big\|\xi^{-\delta_0}\mathcal{D}_{\tau}x(\tau, \xi)\big\|_{L^2_{d\xi}(\xi<1)}\\
& +  \big\|(\tilde{x}_0, \tilde{x}_1)\big\|_{\tilde{S}}\lesssim \big\|(x_0,x_1)\big\|_{\tilde{S}} + \big|x_{0d}\big|.
 \end{align*}
 Finally, we also have 
 \[
 \sup_{\tau\geq \tau_0}\tau^{1-}\big|x_d(\tau) - x^{(0)}_d(\tau)\big|\lesssim \big\|(x_0,x_1)\big\|_{\tilde{S}} + \big|x_{0d}\big|.
 \]
 The function 
 \[
 u(\tau, R) = u_{approx}(\tau, R) + \tilde{\epsilon}(\tau, R)
 \]
 with 
 \[
  \tilde{\epsilon}(\tau, R): = x_d(\tau)\phi_d(R) + \int_0^\infty \phi(R, \xi)x(\tau, \xi)\rho(\xi)\,d\xi
  \]
  is then the desired solution of \eqref{eq:tildeepsilon1}, satisfying the properties in terms of its Fourier transform specified in Theorem~\ref{thm:IterationResult}. In fact, we set 
  \[
  \triangle x_\kappa^{(\gamma_1,\gamma_2)} = \sum_{j\geq 1}\triangle\tilde{\tilde{x}}_{\kappa}^{(j)},\, \triangle x_{\kappa d}^{(\gamma_1,\gamma_2)} = \sum_{j\geq 1}\partial_{\tau}^{\kappa}\triangle x_{d}^{(j)}|_{\tau = \tau_0},\,\kappa = 0,1. 
  \]
\end{prop}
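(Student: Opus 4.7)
The plan is to follow the iterative scheme of \cite{CondBlow} essentially verbatim, with the modifications being (a) that the linearization is now around $u_{approx}^{(\gamma_1,\gamma_2)}$ rather than $u_{\nu}$ and (b) the scaling law $\lambda = \lambda_{\gamma_1,\gamma_2}$ depends on $\gamma_{1,2}$ chosen in Proposition~\ref{prop:choiceofgamma}. Both modifications are of lower-order nature in the bounds, since $u_{approx}^{(\gamma_1,\gamma_2)} - u_{approx}^{(0,0)}$ is small and quantitatively comparable to the smooth corrections $v_{smooth}$. The base case $j=1$ is essentially carried out in Proposition~\ref{prop:zerothiterate}: treating $\underline{x}^{(0)}$ as a known input, we insert it into $\mathcal{R}(\tau,\underline{x}^{(0)})$ and $\underline{\triangle f}^{(0)}$, run the parametrix \eqref{eq:iterativestepcont}, \eqref{eq:iterativestepdisc}, and add $S(\tau)(\triangle\tilde{\tilde{x}}_0^{(1)},\triangle\tilde{\tilde{x}}_1^{(1)})$ chosen in the form \eqref{eq:alphabeta} so as to restore \eqref{eq:vanishing11}, \eqref{eq:vanishing21} for the low-frequency-bad piece.

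For the inductive step, the central observation is that each application of the parametrix gains a small factor from two independent sources: the linear remainder term $\mathcal{R}(\tau,\underline{\triangle x}^{(j-1)})$ carries an overall factor $\beta(\tau) = \dot\lambda/\lambda \lesssim \tau^{-1}$ (with $\beta^2$ for the lower-order pieces), while the nonlinear source $\underline{\triangle f}^{(j-1)}$ can be split into a linear-in-$\triangle\tilde\eps^{(j-1)}$ piece with rapidly decaying coefficient $\lambda^{-2}(u_{approx}^4 - u_0^4) = O(\tau^{-2})$ on the support $R\lesssim\tau$, plus a genuinely multilinear piece which is quadratic or higher in $\triangle\tilde\eps^{(j-1)}$ and hence gains a smallness factor of order $\triangle A_{j-1}$. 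Translating these source bounds through the parametrix using the key estimates
\[
\big\|\xi^{-\delta_0} U(\tau,\sigma)\big\|_{L^2_{d\xi}(\xi<1)} \lesssim \tau^{\delta_0}\frac{\lambda(\tau)}{\lambda(\sigma)}, \qquad \frac{\lambda^{\frac{3}{2}}(\tau)\rho^{\frac12}(\tfrac{\lambda^2(\tau)}{\lambda^2(\sigma)}\xi)}{\lambda^{\frac{3}{2}}(\sigma)\rho^{\frac12}(\xi)}\lesssim \frac{\lambda^2(\tau)}{\lambda^2(\sigma)},\;\xi>1,
\]
combined with the $Sqr$-norm machinery of \cite{CondBlow}, Sections 9--12, yields a recursive inequality $\triangle A_j \lesssim (\tau_0^{-\epsilon} + \triangle A_{j-1})\triangle A_{j-1}$, which for $\tau_0$ large enough gives the desired geometric decay $\triangle A_j \lesssim \delta^j[\|(x_0,x_1)\|_{\tilde S}+|x_{0d}|]$ for any prescribed $\delta$. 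In particular the linear error in \eqref{eq: eprelimdiff} together with the algebra estimates on $H^{1+2\delta_0}$ used in the high-frequency bound of Proposition~\ref{prop:zerothiterate} apply verbatim to the multilinear pieces once we absorb the $(\frac{\tau_0}{\tau})^{\kappa}$ weights into the $Sqr$-summation.

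The vanishing conditions \eqref{eq:vanishing11}, \eqref{eq:vanishing21} at each stage $j$ are enforced by choosing $(\triangle\tilde{\tilde{x}}_0^{(j)},\triangle\tilde{\tilde{x}}_1^{(j)})$ in the same two-parameter form $(\alpha_j,\beta_j)\mathcal{F}(\chi_{R\leq C\tau}\phi(R,0))$ as in Step 2 of Proposition~\ref{prop:zerothiterate}, with $(\alpha_j,\beta_j)$ determined from the computed $L^1_{dR}$-norms of the right-hand sides in \eqref{eq:Deltatransportj}, exploiting the non-degeneracy identities
\[
\int_0^\infty\frac{\rho^{\frac12}(\xi)\mathcal{F}(\chi_{R\leq C\tau}\phi(R,0))(\xi)}{\xi^{\frac14}}\cos[\lambda(\tau_0)\xi^{\frac12}\!\!\int_{\tau_0}^\infty\!\!\lambda^{-1}]\,d\xi \sim 1,
\]
and its $\sin$-analogue $\sim \tau_0$; these identities were established for $\lambda = \lambda_{\gamma_1,\gamma_2}$ during the proof of Proposition~\ref{prop:choiceofgamma}. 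The smallness of $\alpha_j,\beta_j$ is then controlled by the corresponding $L^1$-norms of the sources, which themselves inherit the geometric decay $\delta^j$ via Lemma~\ref{lem:ABfunctionaleapprox}-type estimates. Summing the Neumann series then produces the limit $\underline{x}(\tau,\xi)$ with the asserted bounds, the decompositions at low and high frequencies being obtained by telescoping the iterate decompositions.

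The main technical obstacle lies in the low-frequency regime $\xi < 1$ for the non-local, non-compact linear term $\beta(\tau)\mathcal{K}\mathcal{D}_\tau \underline{\triangle x}^{(j-1)}$ in $\mathcal{R}$: here neither straightforward $L^2_{d\xi}$ nor $L^\infty_{d\xi}$ bounds close on their own, and one is forced to pass to the $\triangle_{>\tau}$ / free-evolution splitting and use the Calderon--Zygmund structure of $\mathcal{K}_{cc}$ from Theorem~\ref{thm:transferenceop} to trade the singular factor $(\xi-\eta)^{-1}$ against the $\langle\xi\rangle^{-}$-weight in $\tilde S_1$. Once this bookkeeping is in place, the argument of \cite{CondBlow}, Corollary 12.2--12.3, carries over, and the remaining verification of the energy decay $\lim_{t\to 0}\int_{|x|\le t}\tfrac12|\nabla_{t,x}\epsilon|^2 \to 0$ follows from the Fourier bound $\|\langle\xi\rangle^{\frac{\nu}{4}} x(\tau,\cdot)\|_{L^2_{d\rho}} \to 0$ as $\tau\to\infty$, which is a direct consequence of the weighted $Sqr$-bounds together with Lemma~\ref{lem:Sobolev}.
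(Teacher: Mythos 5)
Your proposal follows the same high-level strategy as the paper, which is to defer to \cite{CondBlow} for the iterative bounds, and you correctly identify the two modifications (the linearization around $u_{approx}^{(\gamma_1,\gamma_2)}$ rather than $u_\nu$, and the perturbed scaling law $\lambda_{\gamma_1,\gamma_2}$). The sketch of the inductive smallness mechanism --- the $\beta(\tau)$ factor in $\mathcal{R}$, the decaying coefficient $\lambda^{-2}(u_{approx}^4-u_0^4)$, the multilinear gain from $\triangle f^{(j-1)}$ --- together with choosing $(\triangle\tilde{\tilde{x}}_0^{(j)},\triangle\tilde{\tilde{x}}_1^{(j)})$ in the two-parameter form $(\alpha_j,\beta_j)\mathcal{F}(\chi_{R\leq C\tau}\phi(R,0))$ to restore \eqref{eq:vanishing11}, \eqref{eq:vanishing21}, all match the scheme of \cite{CondBlow} and Proposition~\ref{prop:zerothiterate}; the paper itself gives essentially no more detail for this proposition than the bare citation.

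The one concrete inaccuracy is your concluding mechanism for the energy decay. The claim that $\|\langle\xi\rangle^{\nu/4}x(\tau,\cdot)\|_{L^2_{d\rho}}\to 0$ as $\tau\to\infty$ does not hold: the proposition's bounds carry the weight $(\tfrac{\tau_0}{\tau})^{\kappa}$ inside the suprema, so the Fourier-side norms of $x(\tau,\cdot)$ are permitted to grow like $\tau^{\kappa}$. The paper's actual argument for $\lim_{t\to 0}\int_{|x|\leq t}\tfrac12|\nabla_{t,x}\epsilon|^2\,dx=0$ (given at the end of the proof of Theorem~\ref{thm:IterationResult}) is different: one passes back to physical variables via
\[
\big\|r\epsilon_r\big\|_{L^2_{dr}(r\leq t)}\leq \lambda^{-\frac32}\big\|\tilde{\epsilon}_R\big\|_{L^2_{dR}(R\lesssim\tau)} + \lambda^{-\frac32}\big\|\tfrac{\tilde{\epsilon}}{R}\big\|_{L^2_{dR}(R\lesssim\tau)},
\]
and it is the power $\lambda^{-\frac32}(\tau)\sim\tau^{-\frac32(1+\nu^{-1})}$, dominating the mild $\tau^{\kappa}$ growth on the Fourier side (recall $\kappa=2(1+\nu^{-1})\delta_0$ with $\delta_0\ll 1$), that furnishes the decay $\big\|r\epsilon_r\big\|_{L^2_{dr}(r\leq t)}\lesssim\tau^{\frac53-\frac32(1+\nu^{-1})}\to 0$. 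The conclusion you want is therefore correct, but the justification as written would not close.
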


In fact, all of the assertions in the preceding long proposition follow exactly from the arguments in \cite{CondBlow} (the only difference being the slightly different scaling law $\lambda(\tau)$), and this will easily establish almost all of Theorem~\ref{thm:IterationResult}, {\it{except}} its last statement concerning  the {\it{Lipschitz continuous dependence}} of the initial data perturbation with respect to the initial perturbation $(x_0, x_1)$. This is a somewhat delicate point which requires a special argument, analogous to the one given for the corresponding assertion in Proposition~\ref{prop:zerothiterate}.  We formulate this as a separate proposition at the level of the iterative corrections: 

\begin{prop}\label{prop:DataLipschitzCont} If $(\triangle\tilde{\tilde{x}}_0^{(j)}, \triangle\tilde{\tilde{x}}_1^{(j)}), (\triangle\tilde{\tilde{\bar{x}}}_0^{(j)}, \triangle\tilde{\tilde{\bar{x}}}_1^{(j)})$, $j\geq 1$, are as in the preceding proposition and with respect to perturbations specified in terms of data quadruples
$(\underline{x}_0,\underline{x}_1)$ respectively $(\underline{\bar{x}}_0, \underline{\bar{x}}_1)\in \tilde{S}$, then for any given $\delta>0$ we have the Lipschitz bound 
\begin{align*}
&\big\|(\triangle\tilde{\tilde{x}}_0^{(j)} - \triangle\tilde{\tilde{\bar{x}}}_0^{(j)}, \triangle\tilde{\tilde{x}}_1^{(j)} - \triangle\tilde{\tilde{\bar{x}}}_1^{(j)})\big\|_{\tilde{S}}\\&\lesssim_{\delta}\tau_0^{-(1-)}\delta^{j}[\big\|(x_0 - \bar{x}_0,x_1 - \bar{x}_1)\big\|_{\tilde{S}} + \big|x_{0d} - \bar{x}_{0d}\big|], 
\end{align*}
provided $\tau_0$ is sufficiently large compared to $\delta$, and 
\[
\big\|(x_0,x_1)\big\|_{\tilde{S}} + \big\|(\bar{x}_0,\bar{x}_1)\big\|_{\tilde{S}} + \big|x_{0d}\big| + \big|\bar{x}_{0d}\big|
\]
is sufficiently small depending on $\tau_0$. 
\end{prop}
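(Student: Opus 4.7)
My plan is to prove the proposition by a joint induction on $j$, simultaneously tracking the Lipschitz dependence of both the iterates $\underline{\triangle x}^{(j)}$ and the corrections $(\triangle\tilde{\tilde{x}}_0^{(j)}, \triangle\tilde{\tilde{x}}_1^{(j)})$. Specifically, I would establish that the "difference version" of the quantity $\triangle A_j$ from \eqref{eq:measureofiterates}, formed by replacing each norm by the corresponding norm of the difference between the two perturbations $(\underline{x}_0,\underline{x}_1)$ and $(\underline{\bar x}_0,\underline{\bar x}_1)$, is bounded by $C_\delta \delta^j \tau_0^{-(1-)}[\|(x_0-\bar x_0,x_1-\bar x_1)\|_{\tilde S}+|x_{0d}-\bar x_{0d}|]$. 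The base case $j=0$ is precisely the final two assertions of Proposition~\ref{prop:zerothiterate}, together with the Lipschitz bound on $\triangle\tilde{\tilde{x}}_0^{(0)}$ established in its Step~5.

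For the inductive step, I would subtract the two versions of \eqref{eq:iterativestepcont} and decompose the result into three contributions: (i) the Duhamel integral $\int_{\tau_0}^\tau U(\tau,\sigma)\cdot[\ldots]\,d\sigma$ applied to the difference of integrands $\calR(\tau,\underline{\triangle x}^{(j-1)})-\calR(\tau,\underline{\triangle\bar x}^{(j-1)}) + \triangle f^{(j-1)}-\triangle\bar f^{(j-1)}$; (ii) the difference of the parametrix kernels $U(\tau,\sigma)-\bar U(\tau,\sigma)$ acting on the unperturbed source; and (iii) the free-propagator contribution $S(\tau)(\triangle\tilde{\tilde{x}}^{(j)}_0-\triangle\tilde{\tilde{\bar x}}^{(j)}_0,\triangle\tilde{\tilde{x}}^{(j)}_1-\triangle\tilde{\tilde{\bar x}}^{(j)}_1)$. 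For (i), the "direct" part of the difference (where only $\underline{\triangle x}^{(j-1)}$ changes) contracts by the same factor $\delta$ as in the existence argument of Proposition~\ref{prop:keyiteratebounds}, yielding by the inductive hypothesis a bound proportional to $\delta^j \tau_0^{-(1-)}$ times the data difference. The "indirect" part, where $u_{approx}^{(\gamma_1,\gamma_2)}$, $\lambda$, and the cutoff $\chi_{R\lesssim\tau}$ all shift because $\gamma_{1,2}$ shift, is controlled by Lemma~\ref{lem:Lipcont} together with the $\gamma$-derivative bounds on $u_{approx}^{(\gamma_{1,2})}$ in Theorem~\ref{thm:main approximate} and the gain factor $\tau^{-k_0}\log\tau$ from \eqref{eq:lambdagammaonetwo}; these contributions come with such strong polynomial temporal smallness that they can be absorbed into the $\delta^j \tau_0^{-(1-)}$ envelope without difficulty.

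Piece (ii) is handled exactly as in Step~5 of the proof of Proposition~\ref{prop:zerothiterate}: differentiating $U(\tau,\sigma)$ in $\gamma_j$ produces integrand terms of the form $A_1,\ldots,A_5$ studied there, which after integration by parts in $\sigma$ and $\xi$ yield the decay $\tau_0^{-(k_0+2-)}\lambda_{0,0}^{1/2}(\tau_0)$ plus $O_{t_0}$-error terms; combined with the Lipschitz bound on $\gamma_{1,2}$ from Lemma~\ref{lem:Lipcont}, this is again far better than what is required. For piece (iii), I would choose the new corrections in the form $\triangle\tilde{\tilde{x}}^{(j)}_0(\xi)=\alpha_j\,\mathcal F(\chi_{R\leq C\tau}\phi(R,0))(\xi)$ and analogously for $\triangle\tilde{\tilde{x}}^{(j)}_1$, with $\alpha_j=c^{(j)}_1/c_2$ forcing the vanishing relations \eqref{eq:vanishing11}, \eqref{eq:vanishing21}; here $c^{(j)}_1$ is a functional of $\underline{\triangle x}^{(j-1)}$ via the Duhamel integral, so its Lipschitz constant follows from the inductive hypothesis (direct part) plus the same $\partial_{\gamma_j}$-analysis used in Step~5 (indirect part), while $c_2\sim 1$ contributes negligibly. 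The quotient rule then yields the required Lipschitz bound for $\alpha_j$, $\beta_j$, and hence for the correction pair.

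The main obstacle is the coupled web of dependences: the parametrix $U(\tau,\sigma)$, the bulk part $u^{(\gamma_{1,2})}_{approx}$, the rescaled variable $R=\lambda_{\gamma_1,\gamma_2}(t)r$, the new time coordinate $\tau_0$, and the source $e_{approx}$ all depend implicitly on $(x_0,x_1,x_{0d})$ through $\gamma_{1,2}$. The delicate point is verifying that when $\partial_{\gamma_j}$ falls on any of these objects inside the iterative integrals, the resulting terms either retain the contractive factor $\delta$ from the existence scheme or else are small by a power of $\tau_0^{-1}$. Closing the induction therefore requires that the indirect contributions from $\gamma$-variation do not accumulate across iterations, which is guaranteed by the strong temporal decay $\tau^{-k_0}$ built into $\lambda_{\gamma_1,\gamma_2}$ via \eqref{eq:lambdagammaonetwo}, by the bound \eqref{eq:gammabounds}, and by the integration-by-parts scheme inherited from the zeroth-iterate analysis.
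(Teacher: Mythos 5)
Your plan to close the induction on a ``difference version'' of $\triangle A_j$ from \eqref{eq:measureofiterates} cannot work, and the obstruction is precisely the one you pass over as harmless. The quantity $\triangle A_j$ includes strong spectral norms ($S_1$, $Sqr$) of the \emph{functions} $\triangle x^{(j)}(\tau,\xi)$, $\mathcal{D}_\tau\triangle x^{(j)}$, etc. The indirect contributions to $\triangle x^{(j)}-\triangle\bar{x}^{(j)}$ --- those arising because $\gamma_{1,2}$ shifts to $\bar\gamma_{1,2}$ --- are \emph{not} small in those norms. Concretely, perturbing the rescaling factor $\lambda^2(\tau)/\lambda^2(\sigma)$ that sits inside the Duhamel integrand and inside $S(\tau)$ produces, to leading order, terms proportional to $\xi\partial_\xi g(\sigma,\frac{\lambda^2(\tau)}{\lambda^2(\sigma)}\xi)$ with coefficient $\sim(\gamma_\kappa-\bar\gamma_\kappa)$, and the operator $\xi\partial_\xi$ costs a half power of $\xi$-decay which no amount of temporal smallness $\tau^{-k_0}\log\tau$ or $\tau_0^{-1}$ can buy back. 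Your assertion that the indirect contributions ``come with such strong polynomial temporal smallness that they can be absorbed into the $\delta^j\tau_0^{-(1-)}$ envelope without difficulty'' conflates smallness of scalar coefficients with preservation of spectral regularity; the envelope is simply in the wrong norm for the indirect part, and the induction does not close.

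The paper's remedy, which you are missing, is to run the induction not on a norm-of-differences version of $\triangle A_j$ but on the $\gamma$-derivatives $\partial_{\gamma_\kappa}\triangle x^{(j)}$, measured in the purpose-built space $\triangle\tilde{S}$ of Definition~\ref{defn:roughspace}: one decomposes $\partial_{\gamma_\kappa}\triangle x^{(j)}=(\xi\partial_\xi)\triangle y+\triangle z$, with $\triangle y$ strongly bounded and $\triangle z$ only weakly bounded (losing $\xi^{-1/2}$ of spectral decay and a factor $\tau_0\lambda(\tau)/\lambda(\tau_0)$ of temporal decay, Definition~\ref{defn:strongandweakbounds}); Lemma~\ref{lem:partialgammainductivebound} then closes the induction in this weakened framework and Lemma~\ref{lem:partialgammadecay} gives the $\delta^j$ decay. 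The Lipschitz bound for the corrections $\triangle\tilde{\tilde{x}}^{(j)}_\kappa$ is recovered only at the very end, and only because they are rank one: $\triangle\tilde{\tilde{x}}^{(j)}_0=\alpha^{(j)}\mathcal{F}(\chi_{R\leq C\tau_0}\phi(R,0))$, $\triangle\tilde{\tilde{x}}^{(j)}_1=\beta^{(j)}\mathcal{F}(\chi_{R\leq C\tau_0}\phi(R,0))$, and the scalar coefficients $\alpha^{(j)},\beta^{(j)}$ are integrals against $\rho^{1/2}(\xi)\xi^{-1/4}\cos[\cdots]$ type weights, where an integration by parts in $\xi$ recovers the lost regularity. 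Your final quotient-rule step for $\alpha_j,\beta_j$ points in the right direction, but the inductive data it consumes must live in $\triangle\tilde{S}$ (where the Lipschitz information actually propagates), not in $S_{strong}$, which is unattainable for the differences.
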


To begin the sketch of the proof, we observe from the proofs of Proposition 7.1, 8.1, 9.1 in \cite{CondBlow} that the profiles of the corrections $\triangle\tilde{\tilde{x}}^{(j)}_{\kappa}$, $\kappa= 0,1$, are fixed up to a multiplication parameter, and more precisely we set 
\[
\triangle\tilde{\tilde{x}}^{(j)}_{0} = \alpha^{(j)}\mathcal{F}\big(\chi_{R\leq C\tau_0}\phi(R, 0)\big),\,\triangle\tilde{\tilde{x}}^{(j)}_{1} = \beta^{(j)}\mathcal{F}\big(\chi_{R\leq C\tau_0}\phi(R, 0)\big),
\]
whence the only dependence of the corrections $\triangle\tilde{\tilde{x}}^{(j)}_{\kappa}$ on the data $x_{0,1}$ reside in the coefficients $\alpha^{(j)}, \beta^{(j)}$. The latter, however, depend in a complex manner on the iterative functions $\triangle x^{(j)}$,$\triangle_d x^{(j)}$, and so we cannot get around analysing the (Lipschitz)-dependence of the latter on $x_{0,1}$. This latter task is rendered somewhat cumbersome by the fact that in each iterative step we use a parametrix which re-scales the ingredients (via the factors $\frac{\lambda^2(\tau)}{\lambda^2(\sigma)}$), which depend on $\gamma_{1,2}$ whence on $x_{0,1}$, and so differentiating with respect to $\gamma_j$ will result in a loss of smoothness. What saves things here is the fact that the coefficients $\alpha^{(j)}, \beta^{(j)}$ are given by certain integrals, which are well-behaved with respect to inputs with lesser regularity, as already seen in Step 5 of the proof of Proposition~\ref{prop:zerothiterate}: there differentiating the term $\mathcal{F}\big(\lambda^{-2}(\sigma)Re_{approx}\big)(\sigma, \frac{\lambda^2(\tau_0)}{\lambda^2(\sigma)}\xi)$ with respect to $\gamma_1$ results in a term (see the term $A_2$ in the list of terms preceding \eqref{eq:delicatebound1})
\[
  \tau_0^{-k_0}(\xi\partial_{\xi})\big[\mathcal{F}\big(\lambda^{-2}(\sigma)Re_{approx}\big)(\sigma, \frac{\lambda^2(\tau_0)}{\lambda^2(\sigma)}\xi)\big]
  \]
  which is of lesser regularity with respect to $\xi$, but the corresponding contribution to $\partial_{\gamma_j}\tilde{\triangle}\widetilde{x^{(0)}}_0(\xi)$ and thence to the integral 
  \[
  \int_0^\infty \frac{\rho^{\frac12}(\xi)\partial_{\gamma_j}\tilde{\triangle}\widetilde{x^{(0)}}_0(\xi)}{\xi^{\frac14}}\cos[\lambda(\tau_0)\xi^{\frac12}\int_{\tau_0}^{\infty}\lambda^{-1}(u)\,du]\,d\xi
  \]
  is then handled by integration by parts with respect to $\xi$. 
\\
The exact same type of observation applies to the higher order corrections $\triangle x^{(j)}(\tau, \xi)$ as well. 
\\

To render this intuition precise, we first need to exhibit a functional framework which will be preserved by the iterative steps and which adequately describes the $\gamma_j$ differentiated corrections $\triangle x^{(j)}$. To begin with, we introduce two types of norms:

\begin{defn}\label{defn:strongandweakbounds} Call a pair of functions $(\triangle y(\tau, \xi), \triangle y_d(\tau))$ {\it{strongly bounded}}, provided there exist $(\triangle \tilde{\tilde{y}}_0(\xi), \triangle \tilde{\tilde{y}}_1(\xi))\in \tilde{S}$, as well as $(\triangle \tilde{y}_0(\xi), \triangle \tilde{y}_1(\xi))\in \tilde{S}$, {\it{the latter satisfying the vanishing conditions}} \eqref{eq:vanishing11}, \eqref{eq:vanishing21}, such that if we set 
\begin{align*}
&\triangle y(\tau, \xi) = \triangle_{>\tau} y(\tau, \xi) + S(\tau)\big(\triangle \tilde{y}_0(\xi), \triangle \tilde{y}_1(\xi)\big),\\&\triangle y(\tau, \xi) = \widetilde{\triangle y}(\tau, \xi) + S(\tau)\big(\triangle \tilde{\tilde{y}}_0(\xi), \triangle \tilde{\tilde{y}}_1(\xi)\big)
\end{align*}
then we have (recall \eqref{eq:squaresumnorm})
\begin{align*}
+\infty&>\big\|(\triangle y(\tau, \xi), \triangle y_d(\tau))\big\|_{S_{strong}}\\&:=\sup_{\tau\geq \tau_0}(\frac{\tau_0}{\tau})^{\kappa}\big\|\chi_{\xi>1}\triangle y(\tau, \xi)\big\|_{S_1} +\big\|\xi^{\frac12+\delta_0}\mathcal{D}_{\tau}\widetilde{\triangle y}(\tau, \xi)\big\|_{Sqr(\xi>1)}\\
& + \sup_{\tau\geq \tau_0}(\frac{\tau_0}{\tau})^{\kappa}\big\|\chi_{\xi<1}\triangle_{>\tau}\triangle y(\tau, \xi)\big\|_{S_1} + \big\|\xi^{-\delta_0}\mathcal{D}_{\tau}\triangle_{>\tau}\triangle y(\tau, \xi)\big\|_{Sqr(\xi<1)}\\
& + \big\|(\triangle\tilde{y}_0, \triangle\tilde{y}_1)\big\|_{\tilde{S}} + \big\|(\triangle\tilde{\tilde{y}}_0, \triangle\tilde{\tilde{y}}_1)\big\|_{\tilde{S}} + \sup_{\tau\geq\tau_0}\tau^{(1-)}(|\triangle y_d(\tau)| + |\partial_{\tau}\triangle y_d(\tau)|).
\end{align*}
We call a pair of functions $(\triangle z(\tau, \xi), \triangle z_d(\tau))$ {\it{weakly bounded}}, provided there exist $(\triangle \tilde{\tilde{z}}_0(\xi), \triangle \tilde{\tilde{z}}_1(\xi))\in \tilde{S}$ as well as $(\triangle \tilde{z}_0(\xi), \triangle \tilde{z}_1(\xi))\in \tilde{S}$ {\it{not necessarily satisfying any vanishing conditions}}, such that if we set 
\begin{align*}
&\triangle z(\tau, \xi) = \triangle_{>\tau} z(\tau, \xi) + S(\tau)\big(\triangle \tilde{z}_0(\xi), \triangle \tilde{z}_1(\xi)\big),\\&\triangle z(\tau, \xi) = \widetilde{\triangle z}(\tau, \xi) + S(\tau)\big(\triangle \tilde{\tilde{z}}_0(\xi), \triangle \tilde{\tilde{z}}_1(\xi)\big)
\end{align*}
then we have 
\begin{align*}
&+\infty>\big\|(\triangle z(\tau, \xi), \triangle z_d(\tau))\big\|_{S_{weak}}:=\\
&\tau_0^{-1}\big[\sup_{\tau\geq \tau_0}(\frac{\lambda(\tau_0)}{\lambda(\tau)})^{2\delta_0 + 1}\big\|\chi_{\xi>1}\triangle z(\tau, \xi)\big\|_{\langle\xi\rangle^{-\frac12-}L^2_{d\xi}} + \big\|\xi^{\delta_0}\mathcal{D}_{\tau}\widetilde{\triangle z}(\tau, \xi)\big\|_{\tilde{Sqr}(\xi>1)}]\\
& + \tau_0^{-1}\big[\sup_{\tau\geq \tau_0}((\frac{\lambda(\tau_0)}{\lambda(\tau)})^{2\delta_0+1}\big\|\chi_{\xi<1}\triangle_{>\tau}\triangle z(\tau, \xi)\big\|_{S_1}+ \big\|\xi^{-\delta_0}\mathcal{D}_{\tau}\triangle_{>\tau}\triangle z(\tau, \xi)\big\|_{\tilde{Sqr}(\xi<1)}]\\
& + \big\|(\langle\xi\rangle^{-\frac12}\triangle\tilde{z}_0, \langle\xi\rangle^{-\frac12}\triangle\tilde{z}_1)\big\|_{\tilde{S}} + \big\|(\triangle\tilde{\tilde{z}}_0, \triangle\tilde{\tilde{z}}_1)\big\|_{\tilde{S}}+ \sup_{\tau\geq\tau_0}\frac{\tau\lambda(\tau_0)}{\tau_0\lambda(\tau)}|\langle\partial_{\tau}\rangle\triangle z_d(\tau)| .
\end{align*}
Here the norm $\big\|\cdot\big\|_{\tilde{Sqr}}$ is defined just like in \eqref{eq:squaresumnorm}, except that the power $4\delta_0$ is replaced by $-2 + 4\delta_0$. 
\end{defn}
Observe that by comparison to $\big\|\cdot\big\|_{S_{strong}}$, the norm $\big\|\cdot\big\|_{S_{weak}}$ loses $\xi^{-\frac12}$ in terms of decay for large $\xi$,  and we lose a factor $\tau_0\frac{\lambda(\tau)}{\lambda(\tau_0)}$ in terms of temporal decay.
\\
Using the preceding terminology, we can now introduce the proper norm to measure the expressions arising upon applying $\partial_{\gamma_j}$ to the corrections $\triangle x^{(j)}(\tau, \xi)$. Note that the dependence on the $\gamma_j$ results on the one hand from the parametrices 
\[
U(\tau, \sigma), S(\tau),
\]
 as well as from the expressions $e_{approx}$, $u_{approx}$, $u_0$ and $\lambda(\tau)$ in \eqref{eq:fterms}.  
To emphasise that we want to measure the differences of functions, we introduce the symbol $\triangle\tilde{S}$ for the relevant space:
\begin{defn}\label{defn:roughspace} We define $\triangle\tilde{S}$ as the space of pairs of functions $(\triangle x(\tau, \xi), \triangle x_d(\tau))$ which admit a decomposition 
\[
\triangle x(\tau, \xi) = (\xi\partial_{\xi})\triangle y(\tau, \xi) + \triangle z(\tau, \xi),\,\triangle x_d(\tau) = \triangle y_d(\tau) + \triangle z_d(\tau)
\]
such that $\triangle y$ is strongly bounded and $\triangle z$ is weakly bounded, and we then set 
\[
\big\|(\triangle x(\tau, \xi), \triangle x_d(\tau))\big\|_{\triangle \tilde{S}}: = \inf\big( \big\|(\triangle y(\tau, \xi), \triangle y_d(\tau))\big\|_{S_{strong}} + \big\|(\triangle z(\tau, \xi), \triangle z_d(\tau))\big\|_{S_{weak}}\big)
\]
where the infimum is over all decompositions into differentiated strongly bounded and weakly bounded functions.  
\end{defn}

We use the norm $\big\|\cdot\big\|_{\triangle \tilde{S}}$ to measure the pair quantities $\big(\partial_{\gamma_\kappa}\triangle x^{(j)}(\tau, \xi), \partial_{\gamma_\kappa}\triangle x^{(j)}_d(\tau)\big)$, where $\kappa = 1, 2$. To achieve this for all the corrections, we need an inductive step which infers the required bound for the next iterate, as well as rapid decay of these quantities. Correspondingly we have the following two lemmas: 
\begin{lem}\label{lem:partialgammainductivebound} Provided the $(\triangle x^{(j)}, \triangle x^{(j)}_d)$ are constructed as in Proposition~\ref{prop:keyiteratebounds}, and assuming the bounds there, we have 
\begin{align*}
&\big\|\big(\partial_{\gamma_\kappa}\triangle x^{(j)}(\tau, \xi), \partial_{\gamma_\kappa}\triangle x^{(j)}_d(\tau)\big)\big\|_{\triangle \tilde{S}}\\&\lesssim \tau_0^{-k_0+}\big\|(\triangle x^{(j-1)}, \triangle x^{(j-1)}_d)\big\|_{S_{strong}} + \tau_0^{0+}\big\|\big(\partial_{\gamma_\kappa}\triangle x^{(j-1)}(\tau, \xi), \partial_{\gamma_\kappa}\triangle x^{(j-1)}_d(\tau)\big)\big\|_{\triangle \tilde{S}},\\&\kappa = 1,2. 
\end{align*}
\end{lem}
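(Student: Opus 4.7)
The plan is to differentiate the defining relation \eqref{eq:iterativestepcont}--\eqref{eq:iterativestepdisc} for $\triangle x^{(j)}$ with respect to $\gamma_\kappa$ and sort the resulting contributions into two groups: (A) those in which $\partial_{\gamma_\kappa}$ lands on a $\gamma$-dependent structure (the parametrix kernel $U(\tau,\sigma)$, the coefficients of $\calR$ such as $\beta(\tau),\beta'(\tau),\mathcal{A}$, the rescaled argument $\tfrac{\lambda^2(\tau)}{\lambda^2(\sigma)}\xi$, the bulk terms $u_{approx}, u_0, e_{approx}$ entering $\triangle f^{(j-1)}$, and the prefactor $\lambda^{-2}(\tau)$); and (B) the single contribution where $\partial_{\gamma_\kappa}$ lands on the iterate $\triangle x^{(j-1)}$ itself. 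Group (A) will furnish the term bounded by $\tau_0^{-k_0+}\|(\triangle x^{(j-1)},\triangle x^{(j-1)}_d)\|_{S_{strong}}$, and group (B) the term bounded by $\tau_0^{0+}\|(\partial_{\gamma_\kappa}\triangle x^{(j-1)},\partial_{\gamma_\kappa}\triangle x^{(j-1)}_d)\|_{\triangle\tilde{S}}$.

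For group (A), the crucial observation is the schematic identity \eqref{eq:tildeegammapartial}--\eqref{eq:coeffs}: whenever $\partial_{\gamma_\kappa}$ hits a $\gamma$-dependent coefficient arising from $\lambda(\tau)=\lambda_{\gamma_1,\gamma_2}(\tau)$, one picks up a factor of size $\tau^{-k_0}\log\tau$ (uniformly small in $\tau\geq\tau_0$), together with either a direct derivative of the underlying quantity (controlled via Theorem~\ref{thm:main approximate} and the smoothness of $v_{smooth,a}$ established in Lemma~\ref{lem:corrections}) or a $\xi\partial_\xi$, resp.\ a $\sigma\partial_\sigma$, acting on the integrand. The $\sigma\partial_\sigma$-terms are absorbed by integration by parts in $\sigma$, exactly as in Step~5 of Proposition~\ref{prop:zerothiterate}. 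The $\xi\partial_\xi$-terms are the reason for the definition of $\triangle\tilde{S}$: writing the $\xi\partial_\xi$ outside of the $\sigma$-integral, they naturally produce an output of the form $(\xi\partial_\xi)(\text{strongly bounded})$, and the underlying strongly bounded piece is controlled in $S_{strong}$ by the same bilinear estimates used in \cite{CondBlow} applied to $\triangle x^{(j-1)}$, yielding the factor $\|(\triangle x^{(j-1)},\triangle x^{(j-1)}_d)\|_{S_{strong}}$ with the extra gain $\tau_0^{-k_0+}$ coming from the $\tau^{-k_0}\log\tau$ prefactor, as long as $k_0$ is much larger than the number $\kappa$ in \eqref{eq:measureofiterates}. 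The inhomogeneous data piece $S(\tau)(\triangle\tilde{\tilde{x}}_0^{(j)},\triangle\tilde{\tilde{x}}_1^{(j)})$ is handled as in Step~5 of the proof of Proposition~\ref{prop:zerothiterate}: the parameters $\alpha^{(j)},\beta^{(j)}$ defining it are obtained by dividing two integrals and an argument parallel to \eqref{eq:partialgammacone}--\eqref{eq:partialgammaalphabound} yields the same $\tau_0^{-k_0+}$ gain.

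For group (B), the contribution is precisely one iteration of the linear map underlying Proposition~\ref{prop:keyiteratebounds}, applied now to the pair $(\partial_{\gamma_\kappa}\triangle x^{(j-1)},\partial_{\gamma_\kappa}\triangle x^{(j-1)}_d)$ rather than to $(\triangle x^{(j-1)},\triangle x^{(j-1)}_d)$. The point is that the same argument proving contractivity of the iteration on the $\|\cdot\|_{S_{strong}}$-scale in \cite{CondBlow} applies verbatim to each of the two pieces in the decomposition $\partial_{\gamma_\kappa}\triangle x^{(j-1)} = (\xi\partial_\xi)\triangle y^{(j-1)} + \triangle z^{(j-1)}$: on the one hand, the weakly bounded part $\triangle z^{(j-1)}$, which satisfies the same structural estimates as a strongly bounded function but with a loss of $\langle\xi\rangle^{-\frac12}$ in $\xi$-decay and a relaxed temporal weight, is propagated through $\calR$ and $\triangle f^{(j-1)}$ by the same bilinear and algebra-type estimates (the extra $\langle\xi\rangle^{-\frac12}$ loss being harmless because the source terms in $\triangle f^{(j-1)}$ already lie in sufficiently smooth spaces); on the other hand, the $(\xi\partial_\xi)\triangle y^{(j-1)}$ piece, upon commuting $\xi\partial_\xi$ past the operators $\mathcal{A}_c$ and $\mathcal{K}$ via Theorem~\ref{thm:transferenceop} (producing lower-order commutators that fit in the weak norm) and past the rescaling $\xi\mapsto\tfrac{\lambda^2(\tau)}{\lambda^2(\sigma)}\xi$ in $U(\tau,\sigma)$ (which commutes with $\xi\partial_\xi$ up to a harmless factor), again yields a function of the form $(\xi\partial_\xi)(\text{strongly bounded}) + (\text{weakly bounded})$. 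Both pieces contract with constant $\tau_0^{0+}$, in complete analogy with the bound in \eqref{eq:measureofiterates}.

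The main obstacle is the last commutator argument: one must verify that the $\xi\partial_\xi$ structure is preserved by one step of the iteration, i.e.\ that applying the composition of the parametrix with $\calR$ and the nonlinearity to $(\xi\partial_\xi)\triangle y$ again produces an output decomposable as $(\xi\partial_\xi)(\text{strong}) + (\text{weak})$. This relies on the symbol-type bounds for the kernel of $\mathcal{K}$ from Theorem~\ref{thm:transferenceop} (which ensure that $[\xi\partial_\xi,\mathcal{K}]$ is of the same type as $\mathcal{K}$ itself) and on the explicit form \eqref{eq:inhompara} of $U(\tau,\sigma)$ (which shows that the commutator of $\xi\partial_\xi$ with the oscillatory factor $\sin[\lambda(\tau)\xi^{\frac12}\int_\sigma^\tau\lambda^{-1}]$ produces exactly the logarithmic loss that the weak norm is designed to absorb). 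Once this is in place, summing the contributions from (A) and (B) yields the claimed bound.
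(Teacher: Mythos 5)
Your proposal follows essentially the same route as the paper: you sort the contributions of $\partial_{\gamma_\kappa}$ into those hitting $\gamma$-dependent structures (parametrix kernels, rescaling factors, $u_{approx}$, $e_{approx}$, $\lambda(\tau)$) and the one hitting the iterate $\triangle x^{(j-1)}$ itself; you correctly identify that the first group is what the $\triangle\tilde S$-norm's $(\xi\partial_\xi)(\text{strong})+(\text{weak})$ decomposition is designed for and traces back, via \eqref{eq:tildeegammapartial}--\eqref{eq:coeffs}, to the $\tau^{-k_0}\log\tau$ smallness furnished by \eqref{eq:lambdagammaonetwo} and \eqref{eq:gammabounds}; and you correctly reduce the second group to running one step of the iteration on a decomposed input. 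The handling of the corrections $(\triangle\tilde{\tilde x}^{(j)}_0,\triangle\tilde{\tilde x}^{(j)}_1)$ through $\partial_{\gamma_\kappa}\alpha^{(j)},\partial_{\gamma_\kappa}\beta^{(j)}$ by integration by parts is also the paper's argument.

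The one genuine gap concerns how the $(\xi\partial_\xi)\triangle y^{(j-1)}$ piece is propagated through the nonlinear source $\triangle f^{(j-1)}$. Your proposal handles the $(\xi\partial_\xi)$ structure by commuting it past $\mathcal{A}_c$, $\mathcal{K}$, and past the rescaling $\xi\mapsto\frac{\lambda^2(\tau)}{\lambda^2(\sigma)}\xi$ inside $U(\tau,\sigma)$, and invoking the symbol-type kernel bounds of Theorem~\ref{thm:transferenceop}. That reasoning applies to the linear term $\calR$ and to the parametrix. But $\triangle f^{(j-1)}$ is a physical-space multiplication by powers of $u_{approx}$, so "commuting $\xi\partial_\xi$ past it" does not have a direct meaning and none of those kernel bounds control it. The paper flags precisely this as the delicate point: one must instead rewrite
\begin{align*}
\chi_{R\leq C\tau}\int_0^\infty \phi(R,\xi)\,(\xi\partial_\xi)\triangle y^{(l)}(\tau,\xi)\,\rho(\xi)\,d\xi
&= -\,\chi_{R\leq C\tau}\int_0^\infty(\partial_\xi\xi)\big[\phi(R,\xi)\rho(\xi)\big]\,\triangle y^{(l)}(\tau,\xi)\,d\xi,
\end{align*}
exploiting the spatial cutoff to justify the integration by parts in $\xi$, and then use the pointwise $L^\infty_{dR}$ bound (which carries an extra factor of $\tau$, absorbed by the weaker temporal weight built into $\|\cdot\|_{S_{weak}}$) together with the weaker $L^\infty_{dR}$ estimate for the $\triangle z^{(l)}$ piece. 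Without this step you do not obtain a bound for the contribution of $\partial_{\gamma_\kappa}\triangle f^{(j-1)}$ to $\triangle x^{(j)}$, which the paper then places in $S_{weak}$. You should make this mechanism explicit, as the commutator picture alone does not cover it.
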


\begin{lem}\label{lem:partialgammadecay} For any $\delta>0$, there is $\tau_*  = \tau_*(\delta)$ large enough such that if $\tau_0\geq \tau_*$, then we have 
\begin{align*}
\big\|\big(\partial_{\gamma_\kappa}\triangle x^{(j)}(\tau, \xi), \partial_{\gamma_\kappa}\triangle x^{(j)}_d(\tau)\big)\big\|_{\triangle \tilde{S}}\lesssim_{\delta} \tau_0^{-(k_0+2-)}\lambda^{\frac12}(\tau_0)\delta^j [1+O_{\tau_0}\big(\big\|(x_0, x_1)\big\|_{\tilde{S}} + \big|x_{0d}\big|\big)].
\end{align*}
\end{lem}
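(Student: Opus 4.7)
The plan is to run an induction on $j$ that combines the one-step estimate of Lemma~\ref{lem:partialgammainductivebound} with the exponential decay of the undifferentiated iterates from Proposition~\ref{prop:keyiteratebounds}. Setting
\[
M_j := \big\|(\partial_{\gamma_\kappa}\triangle x^{(j)},\,\partial_{\gamma_\kappa}\triangle x^{(j)}_d)\big\|_{\triangle\tilde S},\qquad D := \|(x_0,x_1)\|_{\tilde S}+|x_{0d}|,
\]
and invoking Proposition~\ref{prop:keyiteratebounds} with an auxiliary parameter $\tilde\delta\ll\delta$, Lemma~\ref{lem:partialgammainductivebound} yields the recursion
\[
M_j\ \le\ C\tau_0^{-k_0+}\tilde\delta^{\,j-1}D\ +\ C\tau_0^{0+}M_{j-1}.
\]
Given $\delta$, I would first fix the $0{+}$ exponent small enough, then require $\tau_0\ge\tau_*(\delta)$ to be large enough that the propagation constant $C\tau_0^{0+}$ is dominated by $\delta$, and choose $\tilde\delta\le\delta$; iterating from a base case at $j=1$ then produces the geometric decay $\delta^{\,j}$ with the claimed prefactor.

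The base case proceeds by direct analysis of $\partial_{\gamma_\kappa}\triangle x^{(1)}$ from the parametrix formula \eqref{eq:iterativestepcont}. Upon differentiation, $\partial_{\gamma_\kappa}$ may hit (i)~the kernel $U(\tau,\sigma)$, (ii)~the rescaled argument $\tfrac{\lambda^2(\tau)}{\lambda^2(\sigma)}\xi$, (iii)~the factor $\lambda(\tau)$ entering the source, or (iv)~the source $\mathcal R(\sigma,\underline{x}^{(0)})+\triangle f^{(0)}(\sigma,\xi)$ itself, which carries $\gamma$-dependence both through $u_{\mathrm{approx}}$, $u_0$ and through the zeroth iterate $\underline{x}^{(0)}$ built from $e_{\mathrm{approx}}$. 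The dominant contribution comes from (iv) when $\partial_{\gamma_\kappa}$ ultimately lands on $e_{\mathrm{approx}}$: by Theorem~\ref{thm:main approximate} together with the algebra of \eqref{eq:tildeegammapartial}--\eqref{eq:coeffs}, this piece has size $\tau_0^{-(k_0+2-)}\lambda^{\frac12}(\tau_0)$ up to logarithms, which accounts for the claimed prefactor. The remaining contributions (i)--(iii) are rougher, producing parasitic $\xi\partial_\xi$ or $\sigma\partial_\sigma$ derivatives (compare the schematic decomposition preceding \eqref{eq:delicatebound1}), but these are exactly what the space $\triangle\tilde S$ of Definition~\ref{defn:roughspace} is designed to accept via its weakly bounded component.

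The hard part will be to ensure that in the inductive step these rough contributions do not leak additional powers of $\tau_0$ that would spoil the geometric summation. Concretely, when $\partial_{\gamma_\kappa}$ lands on the parametrix kernel or on the rescaling factor at level $j$, the resulting loss of regularity would be fatal if measured in $\tilde S$; it must be recovered by integration by parts in $\xi$ or $\sigma$ against the oscillatory phase of $U(\tau,\sigma)$ (and, at the data level, against the oscillating functionals $\int_0^\infty\frac{\rho^{\frac12}(\xi)(\cdot)}{\xi^{1/4}}\cos[\lambda(\tau_0)\xi^{\frac12}\int_{\tau_0}^\infty\lambda^{-1}(u)\,du]\,d\xi$ and its $\sin$ companion), exactly as carried out in Step~5 of the proof of Proposition~\ref{prop:zerothiterate}. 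This is the mechanism that makes the iteration map contractive on $\triangle\tilde S$ modulo the $\tilde\delta^{j-1}D$-sized source term, and allows the recursion to close to the claimed geometric rate.
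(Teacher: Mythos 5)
Your recursion is set up correctly, and your analysis of the base case is right on target: the dominant contribution to $M_1$ indeed comes from $\partial_{\gamma_\kappa}$ landing on $e_{\mathrm{approx}}$, producing the prefactor $\tau_0^{-(k_0+2-)}\lambda^{1/2}(\tau_0)$ as in \eqref{eq:partialgammaalphabound}, and your discussion of how the rough terms (i)--(iii) are absorbed into $S_{\mathrm{weak}}$ via integration by parts mirrors Step~5 of Proposition~\ref{prop:zerothiterate}. However, the mechanism you propose for closing the induction is backwards. You write that one should require $\tau_0\geq\tau_*(\delta)$ large enough so that $C\tau_0^{0+}$ is \emph{dominated by} $\delta$ -- but $\tau_0^{0+}$ denotes a small \emph{positive} power of $\tau_0$ (cf.\ the square-sum norms with exponent $\kappa = 2(1+\nu^{-1})\delta_0>0$ in \eqref{eq:squaresumnorm}), so increasing $\tau_0$ makes $\tau_0^{0+}$ \emph{larger}, not smaller. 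A naive iteration of $M_j\leq C\tau_0^{-k_0+}\tilde\delta^{\,j-1}D + C\tau_0^{0+}M_{j-1}$ therefore accumulates a factor $(C\tau_0^{0+})^{j-1}$ on the base-case term, which cannot be beaten into $\delta^j$ by any choice of $\tau_*$. The paper's proof of this lemma does not close the iteration this way: it invokes the \emph{re-iteration} argument from sections~11--12 of \cite{CondBlow} (as the outline explicitly states), where one shows that the iteration map applied over a fixed block of $N_0$ steps gains a uniform small factor by exploiting the temporal decay built into the Duhamel integrals, even though a single application of the step bound of Lemma~\ref{lem:partialgammainductivebound} carries the harmless-looking but actually growing constant $\tau_0^{0+}$. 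This is exactly the same mechanism that makes Proposition~\ref{prop:keyiteratebounds} close (it is a restatement of Corollaries~12.2 and 12.3 of \cite{CondBlow}), and it is the piece your proposal omits.
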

Observe that the principal contribution here arises when the operator $\partial_{\gamma_\kappa}$ gets passed from one correction to the earlier one, until it arrives on the source term $e_{approx}$. All other terms arising can be bounded by 
\[
O_{\tau_0}\big(\big\|(x_0, x_1)\big\|_{\tilde{S}} + \big|x_{0d}\big|\big)
\]

The proofs of these lemmas follow very closely the arguments in \cite{CondBlow}, and we shall only indicate the outlines: 
\\

{\it{Outline of proof of Lemma~\ref{lem:partialgammainductivebound}}}: One may assume a decomposition 
\begin{align*}
&\big(\partial_{\gamma_\kappa}\triangle x^{(j-1)}(\tau, \xi), \partial_{\gamma_\kappa}\triangle x^{(j-1)}_d(\tau)\big)\\& = \big((\xi\partial_{\xi})\triangle_{\kappa} y^{(j-1)}(\tau, \xi) + \triangle_{\kappa} z^{(j-1)}(\tau, \xi), \triangle_{\kappa} y^{(j-1)}_d(\tau) + \triangle_{\kappa} z^{(j-1)}_d(\tau)\big)
\end{align*}
with, say, 
\begin{align*}
&\big\|(\triangle_{\kappa} y^{(j-1)}, \triangle_{\kappa} y^{(j-1)}_d)\big\|_{S_{strong}} + \big\|(\triangle_{\kappa} z^{(j-1)}, \triangle_{\kappa} z^{(j-1)}_d)\big\|_{S_{weak}}\\&
\lesssim \big\|\big(\partial_{\gamma_\kappa}\triangle x^{(j-1)}(\tau, \xi), \partial_{\gamma_\kappa}\triangle x^{(j-1)}_d(\tau)\big)\big\|_{\triangle \tilde{S}}
\end{align*}
Now let the operator $\partial_{\gamma_{\kappa}}$ fall on the expression for $\triangle x^{(j)}(\tau, \xi)$ in Proposition~\ref{prop:keyiteratebounds}, given by the parametrix \eqref{eq:iterativestepcont}.  Then if $\partial_{\gamma_{\kappa}}$ acts on the scaling factor in 
\[
[\calR(\tau, \underline{\triangle x}^{(j-1)}) + \triangle f^{(j-1)}(\tau, \underline{\xi})](\sigma, \frac{\lambda^2(\tau)}{\lambda^2(\sigma)}\xi), 
\]
 as well as in 
\[
 S(\tau)\big(\triangle\tilde{\tilde{x}}_0^{(j)}, \triangle\tilde{\tilde{x}}_1^{(j)}\big) ,
 \]
defined as in \eqref{lem:linhom}, then one can incorporate the corresponding term into $(\xi\partial_{\xi})\triangle_{\kappa} y^{(j)}(\tau, \xi)$.
On the other hand, if $\partial_{\gamma_{\kappa}}$ falls on the parametrix factors
\[
U(\tau, \sigma),\,V(\tau,\tau_0),\,U(\tau, \tau_0),
\]
where we recall \eqref{eq:Dtaueffect},  or on one of the $\gamma_{\kappa}$-dependent factors $u_{approx} - u_0, u_{approx}^l$ in $N_{approx}(\epsilon^{(j-1)}) - N_{approx}(\epsilon^{(j-2)})$ (recalling \eqref{eq:Deltatransportj}), we place the corresponding contribution into $\triangle z^{(j)}$. 
The required bounds follow essentially directly from the proofs of Proposition 7.1, 8.1, 9.1, 9.6 in \cite{CondBlow}. 
\\
On the other hand, if $\partial_{\gamma_{\kappa}}$ falls on $\underline{\triangle x}^{(j-1)}$ in $\calR(\tau, \underline{\triangle x}^{(j-1)})$, and we assume that 
\[
\partial_{\gamma_{\kappa}}\triangle x^{(j-1)} = (\xi\partial_{\xi})\triangle y^{(j-1)},\,\triangle y^{(j-1)}\in S_{strong},
\]
one notices that one can 'essentially' move the operator $(\xi\partial_{\xi})$ past the non-local operator $\calR$ modulo better errors which can be placed into $\triangle z^{(j)}$, and further to the outside of the parametrix. The situation is slightly more delicate provided $\partial_{\gamma_\kappa}$ falls on a factor $\triangle \tilde{\eps}^{(l)}$ in $\triangle f^{(j-1)}$, again recalling \eqref{eq:Deltatransportj} and the definition of $\triangle f^{(j-1)}$. Then writing 
 \[
 \triangle \tilde{\eps}^{(l)}(\tau, R) = \triangle x^{(l)}_d(\tau)\phi_d(R) + \int_0^\infty \phi(R, \xi)\triangle x^{(l)}(\tau, \xi)\rho(\xi)\,d\xi, 
 \]
 we exploit the spatial localisation forced by the cutoff $\chi_{R\lesssim\tau}$ in order to perform an integration by parts, provided 
 \[
 \partial_{\gamma_{\kappa}}\triangle x^{(l)} = (\xi\partial_{\xi})\triangle y^{(l)}.
 \]
 Thus write 
 \begin{align*}
 &\chi_{R\leq C\tau}\int_0^\infty \phi(R, \xi)(\xi\partial_{\xi})\triangle y^{(l)}(\tau, \xi)\rho(\xi)\,d\xi\\& = - \chi_{R\leq C\tau}\int_0^\infty(\partial_{\xi}\xi)[\phi(R, \xi)\rho(\xi)]\triangle y^{(l)}(\tau, \xi)\,d\xi,
 \end{align*}
 and then use the bound 
 \begin{align*}
\sup_{\tau\geq \tau_0}\tau^{-1}\big\|R^{-1}\chi_{R\leq C\tau}\int_0^\infty(\partial_{\xi}\xi)[\phi(R, \xi)\rho(\xi)]\triangle y^{(l)}(\tau, \xi)\,d\xi\big\|_{L^\infty_{dR}}\lesssim \big\|\triangle y^{(l)}\big\|_{S_{strong}}. 
 \end{align*}
Indeed, such a bound follows easily from the asymptotic expansions for $\phi(R, \xi)$ given by Prop.~\ref{prop:Fourier}.  If we assume 
 \[
  \partial_{\gamma_{\kappa}}\triangle x^{(l)} = \triangle z^{(l)}\in S_{weak}, 
 \]
we have the weaker estimate 
\begin{align*}
\sup_{\tau\geq \tau_0}\frac{\lambda(\tau_0)}{\lambda(\tau)}\big\|R^{-1}\chi_{R\leq C\tau}\int_0^\infty\phi(R, \xi)\rho(\xi)\triangle z^{(l)}(\tau, \xi)\,d\xi\big\|_{L^\infty_{dR}}\lesssim \big\|\triangle z^{(l)}\big\|_{S_{weak}}.
\end{align*}
Using these and arguing just as in the proof of Proposition 9.6 in \cite{CondBlow} yields the desired bound for the corresponding contribution of $\partial_{\gamma_{\kappa}}\triangle f^{(j-1)}$ to $\triangle x^{(j)}(\tau, \xi)$, which is placed in $S_{weak}$.\\ 
Next, consider the effect of $\partial_{\gamma_{\kappa}}$ on the free term, when it falls on the source term $(\triangle \tilde{\tilde{x}}^{(j)}_0, \triangle \tilde{\tilde{x}}^{(j)}_0)$. In light of the choice of these terms, see the paragraph after the statement of Proposition~\ref{prop:DataLipschitzCont}, we have 
\begin{align*}
\partial_{\gamma_{\kappa}}\triangle \tilde{\tilde{x}}^{(j)}_0 = (\partial_{\gamma_{\kappa}}\alpha^{(j)})\mathcal{F}(\chi_{R\leq C\tau_0}\phi(R, 0)),\,\partial_{\gamma_{\kappa}}\triangle \tilde{\tilde{x}}^{(j)}_1 = (\partial_{\gamma_{\kappa}}\beta^{(j)})\mathcal{F}(\chi_{R\leq C\tau_0}\phi(R, 0)),
\end{align*}
and we have 
\begin{align*}
\partial_{\gamma_{\kappa}}\alpha^{(j)}\sim\partial_{\gamma_{\kappa}}\int_0^\infty \frac{\rho^{\frac12}(\xi)\tilde{\triangle}\tilde{x}^{(j)}_0(\xi)}{\xi^{\frac14}}\cos[\lambda(\tau_0)\xi^{\frac12}\int_{\tau_0}^\infty\lambda^{-1}(u)\,du]\,d\xi, 
\end{align*}
where 
\begin{align*}
\tilde{\triangle}\tilde{x}^{(j)}_0(\xi) = \int_{\tau_0}^\infty U(\tau_0,\sigma)H(\sigma, \frac{\lambda^2(\tau_0)}{\lambda^2(\sigma)}\xi)\,d\sigma,
\end{align*}
and 
\[
H(\sigma, \xi): = [\calR(\tau, \underline{\triangle x}^{(j-1)}) + \triangle f^{(j-1)}(\tau, \underline{\xi})](\sigma, \xi).
\]
The performing integration by parts with respect to $\xi$ if necessary, one checks that 
\begin{align*}
&\big|\int_0^\infty \frac{\rho^{\frac12}(\xi)\partial_{\gamma_{\kappa}}\tilde{\triangle}\tilde{x}^{(j)}_0(\xi)}{\xi^{\frac14}}\cos[\lambda(\tau_0)\xi^{\frac12}\int_{\tau_0}^\infty\lambda^{-1}(u)\,du]\,d\xi\big|\\&
\lesssim \tau_0^{0+}[\tau_0^{-k}\big\|(\triangle x^{(j-1)}, \triangle x^{(j-1)}_d)\big\|_{S_{strong}} + \big\|(\partial_{\gamma_{\kappa}}\triangle x^{(j-1)}, \partial_{\gamma_{\kappa}}\triangle x^{(j-1)}_d)\big\|_{\triangle\tilde{S}}].
\end{align*}
This implies the required bound for $\partial_{\gamma_{\kappa}}\triangle \tilde{\tilde{x}}^{(j)}_0$, and the bound for $\partial_{\gamma_{\kappa}}\triangle \tilde{\tilde{x}}^{(j)}_1$ is similar. One then places 
\[
S(\tau)\big(\partial_{\gamma_{\kappa}}\triangle \tilde{\tilde{x}}^{(j)}_0, \partial_{\gamma_{\kappa}}\triangle \tilde{\tilde{x}}^{(j)}_1\big)
\]
into $S_{weak}$. 
\\
{\it{Outline of proof of Lemma~\ref{lem:partialgammadecay}}}. This follows in analogy to the arguments in sections 11 and 12 in \cite{CondBlow}, a key being re-iterating the iterative step leading from $\partial_{\gamma_{\kappa}}\triangle x^{(j-1)}_l$ to 
$\partial_{\gamma_{\kappa}}\triangle x^{(j-1)}_l$ by differentiating \eqref{eq:iterativestepcont}. 
\\

{\it{Completion of proof of Proposition~\ref{prop:DataLipschitzCont}}}. Recalling Lemma~\ref{lem:Lipcont} and also invoking Lemma~\ref{lem:partialgammadecay}, we find 
\begin{align*}
&\big\|(\triangle \tilde{\tilde{x}}^{(j)}_0 - \triangle \tilde{\tilde{\bar{x}}}^{(j)}_0, \triangle \tilde{\tilde{x}}^{(j)}_1 - \triangle \tilde{\tilde{\bar{x}}}^{(j)}_1)\big\|_{\tilde{S}}\\
&\lesssim_{\delta} [\big\|(x_0 - \bar{x}_0, x_1 - \bar{x}_1)\big\|_{\tilde{S}} + \big|x_{0d} - \bar{x}_{0d}\big|]\cdot\delta^j\big[\frac{\tau_0^{k_0+1}\log\tau_0}{\lambda_{0,0}^{\frac12}(\tau_0)}\cdot \tau_0^{-(k_0+2-)}\lambda^{\frac12}(\tau_0)\\
&\hspace{7cm} + \tau_0^{-(2-)}\big]
\end{align*}
Observe that the final $\tau_0^{-(2-)}$ arises when keeping $\lambda$ fixed and varying the initial data satisfying the vanishing conditions, just as in \cite{CondBlow}, while the more complicated expression preceding $\tau_0^{-(2-)}$ reflects the effect of changing $\gamma_{1,2}$. 
and so we finally get 
\begin{align*}
&\big\|(\triangle \tilde{\tilde{x}}^{(j)}_0 - \triangle \tilde{\tilde{\bar{x}}}^{(j)}_0, \triangle \tilde{\tilde{x}}^{(j)}_1 - \triangle \tilde{\tilde{\bar{x}}}^{(j)}_1)\big\|_{\tilde{S}}\lesssim_{\delta} \delta^j\tau_0^{-(1-)} [\big\|(x_0 - \bar{x}_0, x_1 - \bar{x}_1)\big\|_{\tilde{S}} + \big|x_{0d} - \bar{x}_{0d}\big|]
\end{align*}
This implies Proposition~\ref{prop:DataLipschitzCont}.

\subsubsection{Proof of Theorem~\ref{thm:IterationResult}} This is a consequence of Proposition~\ref{prop:DataLipschitzCont}. Recalling Proposition~\ref{prop:zerothiterate}, Proposition~\ref{prop:keyiteratebounds}, it suffices to set 
\begin{align*}
&(\triangle x^{(\gamma_1,\gamma_2)}_0, \triangle x^{(\gamma_1,\gamma_2)}_1) = \sum_{j=0}^\infty(\triangle \tilde{\tilde{x}}^{(j)}_0, \triangle \tilde{\tilde{x}}^{(j)}_1)\\
&(\triangle x^{(\gamma_1,\gamma_2)}_{0d}, \triangle x^{(\gamma_1,\gamma_2)}_{1d}) = \sum_{j=0}^\infty(\triangle x_d^{(j)}(\tau_0),\,\partial_{\tau}\triangle x_d^{(j)}(\tau_0))
\end{align*}
Then the correction $\tilde{\epsilon}(\tau, R)$ is given by its Fourier coefficients 
\[
\underline{x}(\tau, \xi) = \underline{x}^{(0)}(\tau, \xi) + \sum_{j=1}^\infty \underline{\triangle x}^{(j)}(\tau, \xi)
\]
The decaying bounds over $\big\|(\triangle x^{(j)}, \triangle x^{(j)}_d)\big\|_{S_{strong}} = \triangle A_j$ imply that (recalling \eqref{eq:HS})
\[
\tilde{\epsilon}(\tau, R) = x_d(\tau)\phi_d(R) + \int_0^\infty \phi(R, \xi)x(\tau, \xi)\rho(\xi)\,d\xi\in H^{\frac32+}_{dR, loc}
\] 
for any $\tau\geq \tau_0$, as desired. The fact that the local energy (restricted to $|x|\leq |t|$) vanishes asymptotically follows from 
\[
\big\|r\epsilon_r\big\|_{L^2_{dr}(r\leq t)}\leq \lambda^{-\frac32}\big\|\tilde{\epsilon}_R\big\|_{L^2_{dR}(R\lesssim\tau)} + \lambda^{-\frac32}\big\|\frac{\tilde{\epsilon}}{R}\big\|_{L^2_{dR}(R\lesssim\tau)},
\]
and invoking the Fourier representation to bound the $L^2$-norms on the right, resulting in 
\[
\big\|r\epsilon_r\big\|_{L^2_{dr}(r\leq t)}\lesssim \tau^{\frac53 - \frac32(1+\nu^{-1})}, 
\]
and similarly for $r\epsilon_t$. 

\subsection{Translation to original coordinate system}

In the preceding sections, we have obtained a singular solution of the form (the sum of the first four terms on the right representing $u_{approx}^{(\gamma_1,\gamma_2)}$ given by Theorem~\ref{thm:main approximate})
\[
u(\tau, R) = \lambda^{\frac12}(\tau)W(R) + \sum_{j=1}^{2k_*-1}v_j(\tau, R) + \sum_{a = 1,2}v_{smooth,a}(\tau, R) + v(\tau, R) + R^{-1}\tilde{\epsilon}(\tau, R),
\]
with the error term $\tilde{\epsilon}(\tau, R)$ given by the Fourier expansion
\[
\tilde{\epsilon}(\tau, R) = \int_0^\infty \phi(R, \xi)[x^{(0)}(\tau, \xi) + \sum_{j=1}^\infty \triangle x^{(j)}(\tau, \xi)]\rho(\xi)\,d\xi.
\]
At initial time $\tau = \tau_0$, setting $x(\tau, \xi): = x^{(0)}(\tau, \xi) + \sum_{j=1}^\infty \triangle x^{(j)}(\tau, \xi)$, we have from our construction 
\[
\big(x(\tau_0, \xi),\mathcal{D}_{\tau}x(\tau_0, \xi)\big) = (x_0^{(\gamma_1,\gamma_2)} + \triangle x_0^{(\gamma_1,\gamma_2)}, x_1^{(\gamma_1,\gamma_2)} + \triangle x_1^{(\gamma_1,\gamma_2)}),
\]
\[
x_d(\tau_0) = x_d^{(\gamma_1,\gamma_2)} + \triangle x_d^{(\gamma_1,\gamma_2)}
\]
where we recall 
\[
 \triangle x_l^{(\gamma_1,\gamma_2)}(\xi) = \sum_{j=1}^\infty  \triangle \tilde{\tilde{x}}_l^{(j)}(\xi),\,l = 1,2, \triangle x_{0d}^{(\gamma_1,\gamma_2)} = \sum_{j=0}^\infty \triangle x^{(j)}_{0d}(\tau_0)
 \]
 
The fact that we have added on the correction terms $ \triangle x_l^{(\gamma_1,\gamma_2)}(\xi)$ means that the data 
\[
\big(R^{-1}\tilde{\epsilon}(\tau_0, R),\,\partial_tR^{-1}\tilde{\epsilon}(\tau_0, R)\big)
\]
will no longer match the original data $(\bar{\epsilon}_1, \bar{\epsilon}_2)$, and we need to precisely quantify this correction {\it{at the level of the Fourier variables associated with the old radial variable $R_{0,0}$}}. Doing so requires recalling \eqref{eq:x0barepsilon0} - \eqref{eq:datatransference4} as well as Lemma~\ref{lem:changeofscale}. Assume that our construction has replaced the data $(\bar{\epsilon}_1, \bar{\epsilon}_2)$ in \eqref{eq:thesolutionansatz} by $(\bar{\epsilon}_1+\triangle \epsilon_1, \bar{\epsilon}_2 + \triangle \epsilon_2)$, we have the relations 
\begin{align*}
&R\triangle \epsilon_1(R) =  \int_0^\infty\phi(R, \xi)\triangle x_0^{(\gamma_1,\gamma_2)}(\xi)\rho(\xi)\,d\xi + \triangle x_d^{(\gamma_1,\gamma_2)}\phi_d(R),\\
&\triangle x_1^{(\gamma_1,\gamma_2)}(\xi) = -\lambda^{-1}(\tau_0)\int_0^\infty\phi(R, \xi)R\triangle \epsilon_2\,dR - \frac{\dot{\lambda}}{\lambda}\mathcal{K}_{cc}\triangle x_0^{(\gamma_1,\gamma_2)} - \frac{\dot{\lambda}}{\lambda}\mathcal{K}_{cd}\triangle x_d^{(\gamma_1,\gamma_2)},
\end{align*}
where we recall that $\lambda = \lambda_{\gamma_1,\gamma_2}$. Recalling the relation
\[
\chi_{r\leq Ct_0}u_{approx}^{(0,0)}[t_{0}] + (\epsilon_1, \epsilon_2) = \chi_{r\leq Ct_0}u_{approx}^{(\gamma_1,\gamma_2)}[t_{0}] + (\bar{\epsilon}_1, \bar{\epsilon}_2)
\]
for the initial data, we see that the initial data perturbation $(\epsilon_1, \epsilon_2)$ in \eqref{eq:perturbeddata1} has been replaced by 
\begin{equation}\label{eq:dataperturbed}
(\epsilon_1+\triangle \epsilon_1, \epsilon_2+\triangle \epsilon_2) + (1-\chi_{r\leq Ct_0})\cdot (u_{approx}^{(0,0)}[t_{0}] - u_{approx}^{(\gamma_1,\gamma_2)}[t_{0}]),
\end{equation}
Here we may suppress the term 
\[
(1-\chi_{r\leq Ct_0})\cdot (u_{approx}^{(0,0)}[t_{0}] - u_{approx}^{(\gamma_1,\gamma_2)}[t_{0}])
\]
since this will not affect the evolution in the backward light cone. 
In light of the fact that the corresponding Fourier variables $(x_0, x_1)$ were computed from $(\epsilon_1, \epsilon_2)$ via \eqref{eq:x0barepsilon0} - \eqref{eq:datatransference4} with $\gamma_{1,2} = 0$, we infer that the perturbed data \eqref{eq:dataperturbed} with the second part suppressed correspond to Fourier variables (with respect to the physical radial variable $R_{0,0}$) given by $(x_0+\triangle x_0, x_1 + \triangle x_1)$ for the continuous part and $x_d + \triangle x_d$ for the discrete part, where we have 
 \begin{align*}
 &\triangle x_0(\xi) = \int_0^\infty \phi(R_{0,0}, \xi)R_{0,0}\triangle \epsilon_1(R(R_{0,0}))\,dR_{0,0},\\
 &\triangle x_d = \int_0^\infty \phi_d(R_{0,0})R_{0,0}\triangle \epsilon_1(R(R_{0,0}))\,dR_{0,0} \\
 &\triangle x_1(\xi) = -\lambda_{0,0}^{-1}(\tau_0)\int_0^\infty\phi(R_{0,0}, \xi)R_{0,0}\triangle \epsilon_2\,dR_{0,0} - \frac{\dot{\lambda}_{0,0}}{\lambda_{0,0}}\mathcal{K}_{cc}\triangle x_0 - \frac{\dot{\lambda}_{0,0}}{\lambda_{0,0}}\mathcal{K}_{cd}\triangle x_d\\
 &\triangle x_{1d} = -\lambda_{0,0}^{-1}(\tau_0)\int_0^\infty\phi_d(R_{0,0})R_{0,0}\triangle \epsilon_2\,dR_{0,0} -  \frac{\dot{\lambda}_{0,0}}{\lambda_{0,0}}\mathcal{K}_{cc}\triangle x_d - \frac{\dot{\lambda}_{0,0}}{\lambda_{0,0}}\mathcal{K}_{dc}\triangle x_0\\
 \end{align*}
 Then using Lemma~\ref{lem:changeofscale} we easily infer 
 \begin{align*}
 \big\|\triangle x_0(\xi)\big\|_{\tilde{S}_1}\lesssim \big\|\triangle x_0^{(\gamma_1,\gamma_2)}\big\|_{\tilde{S}_1} + \big|\triangle x_{0d}^{(\gamma_1,\gamma_2)}\big|\lesssim \tau_0^{-(1-)}[\big\|(x_0, x_1)\big\|_{\tilde{S}} + \big|x_{0d}\big|],
 \end{align*}
 and similarly 
 \begin{align*}
 &\big\|\triangle x_1(\xi)\big\|_{\tilde{S}_2}\lesssim \tau_0^{-(1-)}[\big\|(x_0, x_1)\big\|_{\tilde{S}} + \big|x_{0d}\big|]
 \end{align*}
 For the discrete part of the correction, we get 
 \begin{align*}
 \big|\triangle x_{0d}\big| &= \big|\int_0^\infty \phi_d(R_{0,0})R_{0,0}\triangle \epsilon_1(R(R_{0,0}))\,dR_{0,0}\big|\\&\lesssim \tau_0^{-(1-)}\big|x_{0d}\big| + [\big\|(x_0, x_1)\big\|_{\tilde{S}} + \big|x_{0d}\big|]^2. 
 \end{align*}
 Finally, we observe that the discrete spectral part of $\epsilon_2 + \triangle \epsilon_2$ with respect to the radial variable $R_{0,0}$ is completely determined in terms of $(x_0,x_1), x_{0d}$ and in fact a Lipschitz function of these. 
 To conclude this discussion, we note that our precise choice of $\triangle \epsilon_l$, $l = 1,2$, as well as Theorem~\ref{thm:IterationResult} imply that the mapping 
 \[
\big(x_0, x_1, x_{0d}\big)\longrightarrow \big(\triangle x_0, \triangle x_1, \triangle x_{0d}\big)
\]
is Lipschitz with respect to the norm $\big\|(\cdot, \cdot)\big\|_{\tilde{S}} + \big|\cdot\big|$, with Lipschitz constant $\ll 1$. 

\section{Proof of Theorem~\ref{thm:Main}}

This is immediate from the preceding discussion: the implicit function theorem guarantees that the mapping 
\[
\big(x_0, x_1, x_{0d}\big)\longrightarrow  \big(x_0 + \triangle x_0, x_1 + \triangle x_1, x_{0d}+\triangle x_{0d}\big)
\]
is invertible on a sufficiently small open neighbourhood of the origin in $\tilde{S}\times \R$. Moreover, the second discrete spectral component $x_{1d} + \triangle x_{1d}$ is then uniquely determined as a Lipschitz function of 
\[
\big(x_0 + \triangle x_0, x_1 + \triangle x_1, x_{0d}+\triangle x_{0d}\big). 
\]

\centerline{\scshape Stefano Burzio }
\medskip
{\footnotesize
 \centerline{B\^{a}timent des Math\'ematiques, EPFL}
\centerline{Station 8, 
CH-1015 Lausanne, 
  Switzerland}
  \centerline{\email{stefano.burzio@epfl.ch}}
} 

\centerline{\scshape Joachim Krieger }
\medskip
{\footnotesize
 \centerline{B\^{a}timent des Math\'ematiques, EPFL}
\centerline{Station 8, 
CH-1015 Lausanne, 
  Switzerland}
  \centerline{\email{joachim.krieger@epfl.ch}}
} 

\end{document}